\newcommand{\vphi}{\varphi}
\newcommand{\veps}{\varepsilon}
\newcommand{\md}{\mathrm{d}}
\newcommand{\loc}{{\mathrm{loc}}}
\newcommand{\R}{\mathbb{R}}
\newcommand{\rmnum}[1]{\romannumeral #1} 
\newcommand{\Rmnum}[1]{\uppercase\expandafter{\romannumeral#1}} 
\newcommand{\Ee}{\mathcal{E}}
\newcommand{\Ff}{\mathcal{F}}
\newcommand{\calE}{\mathcal{E}}
\newcommand{\calF}{\mathcal{F}}
\newcommand{\calV}{\mathcal{V}}
\newcommand{\calW}{\mathcal{W}}
\newcommand{\frakE}{\mathfrak{E}}
\newcommand{\frakD}{\mathfrak{D}}
\newcommand{\frakR}{\mathfrak{R}}
\newcommand{\myset}[1]{\left\{#1\right\}}
\newcommand{\mybar}[1]{\overline{#1}}
\newtheorem{mythm}{Theorem}[section]
\newtheorem{myprop}[mythm]{Proposition}
\newtheorem{mylem}[mythm]{Lemma}
\newtheorem{mycor}[mythm]{Corollary}
\newtheorem{myrmk}[mythm]{Remark}
\newtheorem{mydef}[mythm]{Definition}
\begin{document}

\title{Local and Non-Local Dirichlet Forms on the Sierpi\'nski Carpet}
\author{Alexander Grigor'yan and Meng Yang}
\date{}

\maketitle

\abstract{We give a purely analytic construction of a self-similar local regular Dirichlet form on the Sierpi\'nski carpet using approximation of stable-like non-local closed forms which gives an answer to an open problem in analysis on fractals.}

\footnote{\textsl{Date}: \today}
\footnote{\textsl{MSC2010}: 28A80}
\footnote{\textsl{Keywords}: Sierpi\'nski carpet, non-local quadratic form, walk dimension, $\Gamma$-convergence, Brownian motion, effective resistance, heat kernel}
\footnote{The authors were supported by SFB701 and SFB1283 of the German Research Council (DFG). The second author is very grateful to Dr. Qingsong Gu for very helpful discussions. Part of the work was carried out while the second author was visiting the Chinese University of Hong Kong, he is very grateful to Prof. Ka-Sing Lau for the arrangement of the visit.}

\section{Introduction}

Sierpi\'nski carpet (SC) is a typical example of non p.c.f. (post critically finite) self-similar sets. It was first introduced by Wac\l aw Sierpi\'nski in 1916 which is a generalization of Cantor set in two dimensions, see Figure \ref{fig_SC}.

\begin{figure}[ht]
\centering
\includegraphics[width=0.5\textwidth]{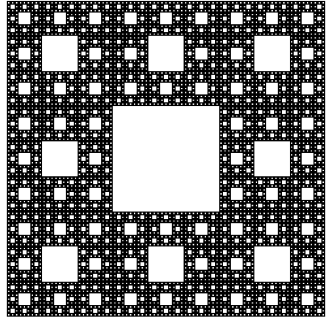}
\caption{Sierpi\'nski Carpet}\label{fig_SC}
\end{figure}

SC can be obtained as follows. Divide the unit square into nine congruent small squares, each with sides of length $1/3$, remove the central one. Divide each of the eight remaining small squares into nine congruent squares, each with sides of length $1/9$, remove the central ones, see Figure \ref{fig_construction}. Repeat above procedure infinitely many times, SC is the compact connected set $K$ that remains.

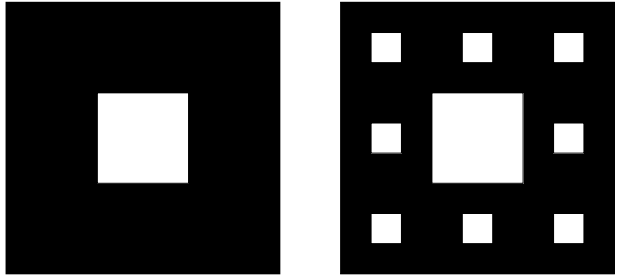
\begin{figure}[ht]
\centering
\begin{tikzpicture}[scale=0.4]
\draw[fill=black] (0,0)--(9,0)--(9,9)--(0,9)--cycle;
\draw[fill=white] (3,3)--(6,3)--(6,6)--(3,6)--cycle;

\draw[fill=black] (11,0)--(20,0)--(20,9)--(11,9)--cycle;
\draw[fill=white] (14,3)--(17,3)--(17,6)--(14,6)--cycle;
\draw[fill=white] (12,1)--(13,1)--(13,2)--(12,2)--cycle;
\draw[fill=white] (15,1)--(16,1)--(16,2)--(15,2)--cycle;
\draw[fill=white] (18,1)--(19,1)--(19,2)--(18,2)--cycle;
\draw[fill=white] (12,4)--(13,4)--(13,5)--(12,5)--cycle;
\draw[fill=white] (18,4)--(19,4)--(19,5)--(18,5)--cycle;
\draw[fill=white] (12,7)--(13,7)--(13,8)--(12,8)--cycle;
\draw[fill=white] (15,7)--(16,7)--(16,8)--(15,8)--cycle;
\draw[fill=white] (18,7)--(19,7)--(19,8)--(18,8)--cycle;

\end{tikzpicture}
\caption{The Construction of Sierpi\'nski Carpet}\label{fig_construction}
\end{figure}

In recent decades, self-similar sets have been regarded as underlying spaces for analysis and probability. Apart from classical Hausdorff measures, this approach requires the introduction of Dirichlet forms. Local regular Dirichlet forms or associated diffusions (also called Brownian motion (BM)) have been constructed in many fractals, see \cite{BP88,BB89,Lin90,KZ92,Kig93,Bar98,Kig01}. In p.c.f. self-similar sets including Sierpi\'nski gasket, this construction is relatively transparent, while similar construction on SC is much more involved.

For the first time, BM on SC was constructed by Barlow and Bass \cite{BB89} using \emph{extrinsic} approximation domains in $\R^2$ (see black domains in Figure \ref{fig_construction}) and time-changed reflected BMs in those domains. Technically, \cite{BB89} is based on the following two ingredients in approximation domains:
\begin{enumerate}[(a)]
\item\label{enum_a} Certain resistance estimates.
\item\label{enum_b} Uniform Harnack inequality for harmonic functions with Neumann boundary condition.
\end{enumerate}
For the proof of the uniform Harnack inequality, Barlow and Bass used certain probabilistic techniques based on Knight move argument (this argument was generalized later in \cite{BB99a} to deal also with similar problems in higher dimensions).

Subsequently, Kusuoka and Zhou \cite{KZ92} gave an alternative construction of BM on SC using \emph{intrinsic} approximation graphs and Markov chains in those graphs. However, in order to prove the convergence of Markov chains to a diffusion, they used the two aforementioned ingredients of \cite{BB89}, reformulated in terms of approximation graphs.

However, the problem of a purely analytic construction of a local regular Dirichlet form on SC (similar to that on p.c.f. self-similar sets) has been open until now and was explicitly raised by Hu \cite{Hu13}. The main result of this paper is a direct purely \emph{analytic} construction of a local regular Dirichlet form on SC.

The most essential ingredient of our construction is a certain resistance estimate in approximation graphs which is similar to the ingredient (\ref{enum_a}). We obtain the second ingredient---the uniform Harnack inequality in approximation graphs as a consequence of (\ref{enum_a}). A possibility of such an approach was mentioned in \cite{BCK05}. In fact, in order to prove a uniform Harnack inequality in approximation graphs, we extend resistance estimates from finite graphs to the infinite graphical SC (see Figure \ref{fig_graphSC}) and then deduce from them a uniform Harnack inequality-first on the infinite graph and then also on finite graphs. By this argument, we avoid the most difficult part of the proof in \cite{BB89}.

\begin{figure}[ht]
\centering
\begin{tikzpicture}[scale=0.3]

\foreach \x in {0,1,...,27}
\draw (\x,0)--(\x,28);

\foreach \y in {0,1,...,27}
\draw (0,\y)--(28,\y);

\foreach \x in {0,1,2}
\foreach \y in {0,1,2}
\draw[fill=white] (9*\x+3,9*\y+3)--(9*\x+6,9*\y+3)--(9*\x+6,9*\y+6)--(9*\x+3,9*\y+6)--cycle;

\draw[fill=white] (9,9)--(18,9)--(18,18)--(9,18)--cycle;

\foreach \x in {0,1,...,27}
\foreach \y in {0,0.5,1,...,27.5}
\draw[fill=black] (\x,\y) circle (0.08);

\foreach \y in {0,1,...,27}
\foreach \x in {0,0.5,1,...,27.5}
\draw[fill=black] (\x,\y) circle (0.08);

\draw[fill=white,draw=white] (9.25,9.25)--(17.75,9.25)--(17.75,17.75)--(9.25,17.75)--cycle;

\foreach \x in {0,1,2}
\foreach \y in {0,1,2}
\draw[fill=white,draw=white] (9*\x+3.25,9*\y+3.25)--(9*\x+5.75,9*\y+3.25)--(9*\x+5.75,9*\y+5.75)--(9*\x+3.25,9*\y+5.75)--cycle;

\end{tikzpicture}
\caption{The Infinite Graphical Sierpi\'nski Carpet}\label{fig_graphSC}
\end{figure}
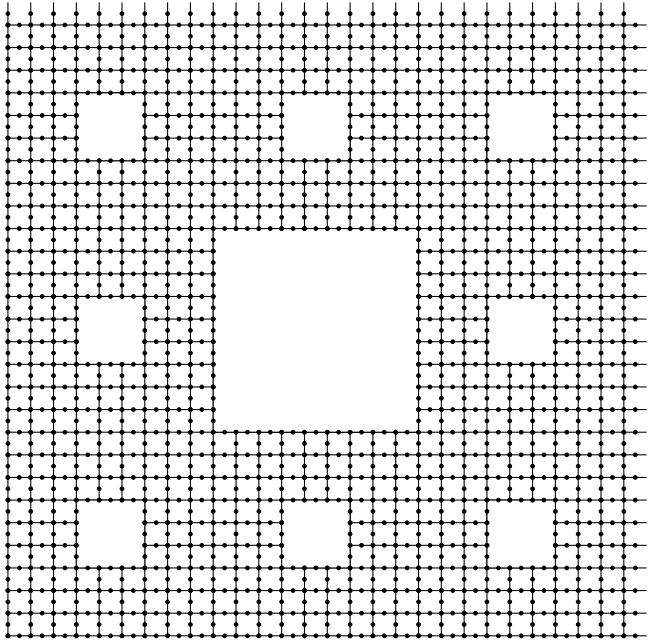

The self-similar local regular Dirichlet form $\calE_\loc$ on SC has the following self-similarity property. Let $f_0,\ldots,f_7$ be the contraction mappings generating SC. For all function $u$ in the domain $\calF_\loc$ of $\calE_\loc$ and for all $i=0,\ldots,7$, we have $u\circ f_i\in\calF_\loc$ and 
$$\calE_\loc(u,u)=\rho\sum_{i=0}^7\calE_\loc(u\circ f_i,u\circ f_i).$$
Here $\rho>1$ is a parameter from the aforementioned resistance estimates, whose exact value remains still unknown. Barlow, Bass and Sherwood \cite{BB90,BBS90} gave two bounds as follows:
\begin{itemize}
\item $\rho\in[7/6,3/2]$ based on shorting and cutting technique.
\item $\rho\in[1.25147,1.25149]$ based on numerical calculation.
\end{itemize}
McGillivray \cite{McG02} generalized above estimates to higher dimensions.

The heat semigroup associated with $\calE_\loc$ has a heat kernel $p_t(x,y)$ satisfying the following estimates: for all $x,y\in K,t\in(0,1)$
\begin{equation}\label{eqn_hk}
p_t(x,y)\asymp\frac{C}{t^{\alpha/\beta^*}}\exp\left(-c\left(\frac{|x-y|}{t^{1/\beta^*}}\right)^{\frac{\beta^*}{\beta^*-1}}\right),
\end{equation}
where $\alpha=\log8/\log3$ is the Hausdorff dimension of SC and
\begin{equation}\label{eqn_beta_up}
\beta^*:=\frac{\log(8\rho)}{\log3}.
\end{equation}
The parameter $\beta^*$ is called the \emph{walk dimension of BM} and is frequently denoted also by $d_w$. The estimates (\ref{eqn_hk}) were obtained by Barlow and Bass \cite{BB92,BB99a} and by Hambly, Kumagai, Kusuoka and Zhou \cite{HKKZ00}. Equivalent conditions of sub-Gaussian heat kernel estimates for local regular Dirichlet forms on metric measure spaces were explored by many authors, see Andres and Barlow \cite{AB15}, Grigor'yan and Hu \cite{GH14a,GH14b}, Grigor'yan, Hu and Lau \cite{GHL10,GHL15}, Grigor'yan and Telcs \cite{GT12}. We give an alternative proof of the estimates (\ref{eqn_hk}) based on the approach developed by the first author and others.

Consider the following stable-like non-local quadratic form
$$
\begin{aligned}
&\calE_\beta(u,u)=\int_K\int_K\frac{(u(x)-u(y))^2}{|x-y|^{\alpha+\beta}}\nu(\md x)\nu(\md y),\\
&\calF_\beta=\myset{u\in L^2(K;\nu):\calE_\beta(u,u)<+\infty},
\end{aligned}
$$
where $\alpha=\mathrm{dim}_{\mathcal{H}}K$ as above, $\nu$ is the normalized Hausdorff measure on $K$ of dimension $\alpha$, and $\beta>0$ is so far arbitrary. Then the \emph{walk dimension of SC} is defined as
\begin{equation}\label{eqn_beta_low}
\beta_*:=\sup\myset{\beta>0:(\Ee_\beta,\Ff_\beta)\text{ is a regular Dirichlet form on }L^2(K;\nu)}.
\end{equation}
Using the estimates (\ref{eqn_hk}) and subordination technique, it was proved in \cite{Pie00,GHL03} that $(\calE_\beta,\calF_\beta)$ is a regular Dirichlet form on $L^2(K;\nu)$ if $\beta\in(0,\beta^*)$ and that $\calF_\beta$ consists only of constant functions if $\beta>\beta^*$, which implies the identity
$$\beta_*=\beta^*.$$
In this paper, we give another proof of this identity without using the estimates (\ref{eqn_hk}), but using directly the definitions (\ref{eqn_beta_up}) and (\ref{eqn_beta_low}) of $\beta^*$ and $\beta_*$.

Barlow raised in \cite{Bar13} a problem of obtaining bounds of the walk dimension $\beta^*$ of BM without using directly $\calE_\loc$. We partially answer this problem by showing that
$$\beta_*\in\left[\frac{\log\left(8\cdot\frac{7}{6}\right)}{\log3},\frac{\log\left(8\cdot\frac{3}{2}\right)}{\log3}\right],$$
which gives then the same bound for $\beta^*$. However, the same bound for $\beta^*$ follows also from the estimate $\rho\in[7/6,3/2]$ mentioned above. We hope to be able to improve this approach in order to get better estimates of $\beta_*$ in the future.

Using the estimates (\ref{eqn_hk}) and subordination technique, it was proved in \cite{Pie08} that
\begin{equation}\label{eqn_approximation}
\varliminf_{\beta\uparrow\beta^*}(\beta^*-\beta)\calE_\beta(u,u)\asymp\calE_\loc(u,u)\asymp\varlimsup_{\beta\uparrow\beta^*}(\beta^*-\beta)\calE_\beta(u,u)
\end{equation}
for all $u\in\calF_\loc$.
This is similar to the following classical result
$$\lim_{\beta\uparrow2}(2-\beta)\int_{\R^n}\int_{\R^n}\frac{(u(x)-u(y))^2}{|x-y|^{n+\beta}}\md x\md y=C(n)\int_{\R^n}|\nabla u(x)|^2\md x,$$
for all $u\in W^{1,2}(\R^n)$, where $C(n)$ is some positive constant (see \cite[Example 1.4.1]{FOT11}). We reprove (\ref{eqn_approximation}) as a direct corollary of our construction without using the estimates (\ref{eqn_hk}).

The idea of our construction of $\calE_\loc$ is as follows. In the first step, we construct another quadratic form $E_\beta$ equivalent to $\calE_\beta$ and use it to prove the identity
\begin{equation}\label{eqn_walk}
\beta_*=\beta^*:=\frac{\log(8\rho)}{\log3}.
\end{equation}
It follows that $\calE_\beta$ is a regular Dirichlet form for all $\beta\in(\alpha,\beta^*)$. Then, we use another quadratic form $\frakE_\beta$, also equivalent to $\calE_\beta$, and define $\calE$ as a $\Gamma$-limit of a sequence $\myset{(\beta^*-\beta_n)\frakE_{\beta_n}}$ with $\beta_n\uparrow\beta^*$. We prove that $\calE$ is a regular closed form, where the main difficulty lies in the proof of the uniform density of the domain $\calF$ of $\calE$ in $C(K)$. However, $\calE$ is not necessarily Markovian, local or self-similar. In the last step, $\calE_\loc$ is constructed from $\calE$ by means of an argument from \cite{KZ92}. Then $\calE_\loc$ is a self-similar local regular Dirichlet form with a Kigami's like representation (\ref{eqn_Kigami}) which is similar to the representations in Kigami's construction on p.c.f. self-similar sets, see \cite{Kig01}. We use the latter in order to obtain certain resistance estimates for $\calE_\loc$, which imply the estimates (\ref{eqn_hk}) by \cite{GHL14,GH14a}.

Let us emphasize that the resistance estimates in approximation graphs and their conse\-qu\-ence---the uniform Harnack inequality, are mainly used in order to construct one \emph{good} function on $K$ with certain energy property and separation property, which is then used to prove the identity (\ref{eqn_walk}) and to ensure the non-triviality of $\calF$.

An important fact about the local regular Dirichlet form $\calE_\loc$ is that this Dirichlet form is a resistance form in the sense of Kigami whose existence gives many important corollaries, see \cite{Kig01,Kig03,Kig12}.

\section{Statement of the Main Results}
Consider the following points in $\R^2$:
$$p_0=(0,0),p_1=(\frac{1}{2},0),p_2=(1,0),p_3=(1,\frac{1}{2}),$$
$$p_4=(1,1),p_5=(\frac{1}{2},1),p_6=(0,1),p_7=(0,\frac{1}{2}).$$
Let $f_i(x)=(x+2p_i)/3$, $x\in\R^2$, $i=0,\ldots,7$. Then the Sierpi\'nski carpet (SC) is the unique non-empty compact set $K$ in $\R^2$ satisfying $K=\cup_{i=0}^7f_i(K)$.

Let $\nu$ be the normalized Hausdorff measure on $K$. Let $(\Ee_\beta,\Ff_\beta)$ be given by
$$
\begin{aligned}
&\Ee_\beta(u,u)=\int_K\int_K\frac{(u(x)-u(y))^2}{|x-y|^{\alpha+\beta}}\nu(\md x)\nu(\md y),\\
&\Ff_\beta=\myset{u\in L^2(K;\nu):\Ee_\beta(u,u)<+\infty},
\end{aligned}
$$
where $\alpha=\log8/\log3$ is Hausdorff dimension of SC, $\beta>0$ is so far arbitrary. Then $(\Ee_\beta,\Ff_\beta)$ is a quadratic form on $L^2(K;\nu)$ for all $\beta\in(0,+\infty)$. Note that $(\Ee_\beta,\Ff_\beta)$ is not necessary to be a regular Dirichlet form on $L^2(K;\nu)$ related to a stale-like jump process. The \emph{walk dimension} of SC is defined as
$$\beta_*:=\sup\myset{\beta>0:(\Ee_\beta,\Ff_\beta)\text{ is a regular Dirichlet form on }L^2(K;\nu)}.$$

Let
$$V_0=\myset{p_0,\ldots,p_7},V_{n+1}=\cup_{i=0}^7f_i(V_n)\text{ for all }n\ge0.$$
Then $\myset{V_n}$ is an increasing sequence of finite sets and $K$ is the closure of $\cup_{n=0}^\infty V_n$. Let $W_0=\myset{\emptyset}$ and 
$$W_n=\myset{w=w_1\ldots w_n:w_i=0,\ldots,7,i=1,\ldots,n}\text{ for all }n\ge1.$$
For all $w^{(1)}=w^{(1)}_1\ldots w^{(1)}_m\in W_m,w^{(2)}=w^{(2)}_1\ldots w^{(2)}_n\in W_n$, denote $w^{(1)}w^{(2)}$ as $w=w_1\ldots w_{m+n}\in W_{m+n}$ with $w_i=w^{(1)}_i$ for all $i=1,\ldots,m$ and $w_{m+i}=w^{(2)}_i$ for all $i=1,\ldots n$. For all $i=0,\ldots,7$, denote $i^n$ as $w=w_1\ldots w_n\in W_n$ with $w_k=i$ for all $k=1,\ldots,n$.

For all $w=w_1\ldots w_n\in W_n$, let
$$
\begin{aligned}
f_w&=f_{w_1}\circ\ldots\circ f_{w_n},\\
V_w&=f_{w_1}\circ\ldots\circ f_{w_n}(V_0),\\
K_w&=f_{w_1}\circ\ldots\circ f_{w_n}(K),\\
P_w&=f_{w_1}\circ\ldots\circ f_{w_{n-1}}(p_{w_n}),
\end{aligned}
$$
where $f_\emptyset=\mathrm{id}$ is the identity map.

Our semi-norm $E_\beta$ is given as follows.
$$E_\beta(u,u):=\sum_{n=1}^\infty3^{(\beta-\alpha)n}\sum_{w\in W_n}
{\sum_{\mbox{\tiny
$
\begin{subarray}{c}
p,q\in V_w\\
|p-q|=2^{-1}\cdot3^{-n}
\end{subarray}
$
}}}
(u(p)-u(q))^2.$$

Our first result is as follows.

\begin{mylem}\label{lem_equiv}
For all $\beta\in(\alpha,+\infty),u\in C(K)$, we have
$$E_\beta(u,u)\asymp\Ee_\beta(u,u).$$
\end{mylem}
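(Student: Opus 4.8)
The plan is to show the two-sided comparison $E_\beta(u,u)\asymp\Ee_\beta(u,u)$ by estimating the double integral $\Ee_\beta(u,u)$ from above and below by the discrete sum $E_\beta(u,u)$, using the natural decomposition of $K\times K$ according to the cells $K_w$. The key geometric fact is that for $w\in W_n$, the cell $K_w$ has diameter comparable to $3^{-n}$, and $\nu(K_w)=8^{-n}=3^{-\alpha n}$, so a pair of points $x\in K_w$, $y\in K_{w'}$ with $K_w,K_{w'}$ ``neighbouring'' at level $n$ contributes to the integrand a quantity of order $3^{(\alpha+\beta)n}(u(x)-u(y))^2$, and integrating over $K_w\times K_{w'}$ produces the weight $3^{(\alpha+\beta)n}\cdot 3^{-2\alpha n}=3^{(\beta-\alpha)n}$ that appears in $E_\beta$. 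The discrete sum in $E_\beta$ runs over the points $p,q\in V_w$ at distance $2^{-1}3^{-n}$, i.e. over edges of the level-$n$ approximation graph, which is exactly the combinatorial skeleton needed to telescope local oscillations.

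For the upper bound $\Ee_\beta(u,u)\lesssim E_\beta(u,u)$, I would split $K\times K$ into the diagonal part $\bigcup_n\bigcup_{w\in W_n}$ (pairs in cells that first separate at level $n$, together with their neighbours) plus the far-apart part. For $x,y$ with $|x-y|\asymp 3^{-n}$ one writes $u(x)-u(y)$ as a telescoping sum $\sum_{k\ge n}$ of differences of cell-averages (or of values at vertices of $V_w$) along a chain of neighbouring cells joining $x$ to $y$; here one uses connectivity of SC and the bounded overlap of cells to control the length of such chains. Applying Cauchy--Schwarz to the telescoping sum and integrating, each term of the form $(u(p)-u(q))^2$ with $p,q\in V_w$, $|p-q|=2^{-1}3^{-n}$, gets the coefficient $3^{(\beta-\alpha)n}$, and the geometric factor $3^{(\beta-\alpha)(k-n)}$ with $\beta>\alpha$ makes the sum over $k\ge n$ converge, yielding $\Ee_\beta(u,u)\lesssim\sum_n 3^{(\beta-\alpha)n}\sum_{w\in W_n}\sum_{p,q}(u(p)-u(q))^2=E_\beta(u,u)$. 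This is where the hypothesis $\beta>\alpha$ is essential: it guarantees summability of the chaining series.

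For the lower bound $\Ee_\beta(u,u)\gtrsim E_\beta(u,u)$, fix $n$ and a pair $p,q\in V_w$ with $|p-q|=2^{-1}3^{-n}$; the two points $p,q$ lie in, or on the boundary between, two neighbouring subcells $K_{wi}$, $K_{wj}$ of level $n+1$ (or one can work with a slightly deeper level so that $p,q$ are interior representatives). For $x$ ranging over a fixed-proportion sub-cell near $p$ and $y$ ranging over a fixed-proportion sub-cell near $q$, we have $|x-y|\asymp 3^{-n}$, hence
\[
\Ee_\beta(u,u)\ \gtrsim\ \sum_w \int\!\!\int 3^{(\alpha+\beta)n}(u(x)-u(y))^2\,\nu(\md x)\nu(\md y),
\]
and by the elementary inequality $(u(p)-u(q))^2\lesssim (u(p)-u(x))^2+(u(x)-u(y))^2+(u(y)-u(q))^2$ together with reintroducing the first and last terms as lower-level contributions, one bounds $(u(p)-u(q))^2$ after integration, gaining the factor $3^{-2\alpha n}$ from the two measures and leaving $3^{(\beta-\alpha)n}(u(p)-u(q))^2$. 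Summing over $w\in W_n$ and over $n$, and checking that the different scales $n$ are only boundedly overlapping (each region of $K\times K$ is used for $O(1)$ values of $n$, by the dyadic structure of the distances $3^{-n}$), gives $E_\beta(u,u)\lesssim\Ee_\beta(u,u)$.

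The main obstacle I expect is the bookkeeping in the chaining argument for the upper bound: one must choose, for each pair of neighbouring cells, a canonical short chain of cells through the interior of SC (avoiding the removed squares), show its length is uniformly bounded, and verify that when one sums the resulting weights over all pairs $x,y$ and all levels, each discrete term $(u(p)-u(q))^2$ is charged only $O(1)$ times with a summable geometric coefficient. Controlling this overlap --- equivalently, showing that the map from ``pairs of points in $K\times K$'' to ``edges of approximation graphs'' has bounded multiplicity after the telescoping --- together with the use of $\beta>\alpha$ for summability, is the technical heart of the proof; the rest is routine once the self-similar scaling of $\nu$ and $\mathrm{diam}\,K_w$ is in place.
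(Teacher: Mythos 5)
Your overall architecture is the same as the paper's: the lemma is proved there as two separate inequalities (Theorems \ref{thm_equiv1} and \ref{thm_equiv2}), both obtained by decomposing into cells $K_w$, telescoping through chains of nested or neighbouring cells, counting multiplicities, and using $\beta>\alpha$ for summability. Your sketch of the direction $\Ee_\beta\lesssim E_\beta$ (chains of vertices joining $x\in V_m$ to the distinguished point of its level-$n$ cell, Cauchy--Schwarz, multiplicity count) matches Theorem \ref{thm_equiv2} in all essentials.

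The genuine gap is in the direction $E_\beta\lesssim\Ee_\beta$. After averaging, the term to control is $\frac{1}{\nu(K_w)}\int_{K_w}(u(p)-u(x))^2\,\nu(\md x)$, a point-versus-cell comparison at the \emph{same} scale $3^{-n}$; making it a ``lower-level contribution'' requires telescoping through a nested sequence of cells shrinking to $p$. Each step of the telescoping via $(a+b)^2\le 2a^2+2b^2$ (or your three-term splitting, which costs a factor $3$ or $4$) multiplies the accumulated constant by at least $2$, while descending one level only gains a factor $3^{-(\beta-\alpha)}$ in the weight. Since $2\cdot3^{-(\beta-\alpha)}>1$ whenever $\alpha<\beta<\alpha+\log2/\log3$, the recursion as you describe it diverges precisely in the range of $\beta$ that matters most for this paper (e.g.\ for all $\beta$ in the interval of Theorem \ref{thm_bound}). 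The ``bounded overlap of scales'' you invoke does not help here: the loss is a genuinely accumulating constant per level, not an overlap issue. The paper resolves this by telescoping in jumps of $k$ levels at a time, with $k$ chosen so large that $2\cdot3^{-(\beta-\alpha)k}<1$ (the condition ``$1-(\beta-\alpha)k<0$'' in the proof of Theorem \ref{thm_equiv1}); a weighted Cauchy--Schwarz with weights $\theta^{i}$, $\theta$ close to $1$, would serve the same purpose. You also do not say how the recursion terminates: the paper stops the telescoping at a finite depth $l=n$ and controls the remainder $(u(p)-u(x^{(l)}))^2$ by the H\"older embedding of Lemma \ref{lem_holder}, which bounds pointwise oscillation by $cE(u)|x-y|^{\beta-\alpha}$ and whose contribution must then itself be summed over $n$ and shown finite. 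Both of these devices are needed to close your second inequality; without them the argument fails for small $\beta-\alpha$.
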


The second author has established similar equivalence on Sierpi\'nski gasket (SG), see \cite[Theorem 1.1]{MY17}.

We use Lemma \ref{lem_equiv} to give bound of walk dimension as follows.

\begin{mythm}\label{thm_bound}
\begin{equation}\label{eqn_bound_beta}
\beta_*\in\left[\frac{\log\left(8\cdot\frac{7}{6}\right)}{\log3},\frac{\log\left(8\cdot\frac{3}{2}\right)}{\log3}\right].
\end{equation}
\end{mythm}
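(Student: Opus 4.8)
The plan is to use Lemma~\ref{lem_equiv} to translate the question about $\beta_*$ into a question about the finiteness (resp.\ triviality) of $E_\beta$, and then to estimate $E_\beta$ by comparison with discrete resistances on the approximation graphs $V_n$. Recall that $\rho$ is the resistance scaling factor coming from the graph resistance estimates, and that $\beta^*=\log(8\rho)/\log 3$. So the bounds $\rho\in[7/6,3/2]$ from \cite{BB90,BBS90} give exactly the claimed interval for $\beta^*$; it therefore suffices to prove the identity $\beta_*=\beta^*$, or at least the two one-sided inequalities $\beta_*\ge \log(8\cdot\tfrac76)/\log 3$ and $\beta_*\le \log(8\cdot\tfrac32)/\log 3$, directly from the resistance estimates without invoking the heat kernel bounds \eqref{eqn_hk}.

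First I would set up the discrete Dirichlet forms: for each $n$, let $\mathcal{E}^{(n)}(u,u)=\sum_{w\in W_n}\sum_{p,q\in V_w,\,|p-q|=2^{-1}3^{-n}}(u(p)-u(q))^2$ be the unit-conductance energy on the $n$-th level graph, so that $E_\beta(u,u)=\sum_{n\ge 1}3^{(\beta-\alpha)n}\mathcal{E}^{(n)}(u,u)$. The key structural input is the resistance estimate: the effective resistance between opposite sides of the level-$n$ graph is comparable to $\rho^n$, and more generally $\mathcal{E}^{(n+1)}$ and $\rho\,\mathcal{E}^{(n)}$ are comparable after taking harmonic extensions, i.e.\ there are constants such that for the harmonic extension one has $\mathcal{E}^{(n+1)}(Hu,Hu)\asymp \rho^{-1}\mathcal{E}^{(n)}(u,u)$ when restricted appropriately. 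The upshot is a two-sided estimate: on one hand, for a test function with a fixed nonzero level-$0$ energy, the level-$n$ energies decay no faster than $\rho^{-n}$, forcing $\sum_n 3^{(\beta-\alpha)n}\rho^{-n}<\infty$, i.e.\ $3^{\beta-\alpha}\rho^{-1}<1$, i.e.\ $\beta<\alpha+\log\rho/\log 3=\log(8\rho)/\log 3=\beta^*$; this gives $\beta_*\le\beta^*$. On the other hand, using that $\mathcal{E}^{(n)}(u,u)\lesssim \rho^{-n}\cdot(\text{diam-scale factor})$ for the specific ``good'' function furnished by the Harnack inequality, together with a chaining/telescoping argument over the finite graphs, one shows $E_\beta(u,u)<\infty$ for a dense family of functions when $3^{\beta-\alpha}\rho^{-1}<1$; combined with the closedness and regularity machinery this yields $\beta_*\ge\beta^*$. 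Plugging in $\rho\in[7/6,3/2]$ then gives \eqref{eqn_bound_beta}.

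Concretely, for the upper bound $\beta_*\le \log(8\cdot\tfrac32)/\log 3$ I would argue by contradiction: if $(\mathcal{E}_\beta,\mathcal{F}_\beta)$ were a regular Dirichlet form for some $\beta>\log(8\cdot\tfrac32)/\log 3\ge\beta^*$, then $\mathcal{F}_\beta$ would be uniformly dense in $C(K)$, so it would contain a nonconstant function $u$; by Lemma~\ref{lem_equiv}, $E_\beta(u,u)<\infty$, hence $3^{(\beta-\alpha)n}\mathcal{E}^{(n)}(u,u)\to 0$. But the resistance lower bound $\mathcal{E}^{(n)}(u,u)\gtrsim \rho^{-n}\,\mathcal{E}^{(1)}(u,u)$ (obtained by successively contracting the level-$n$ graph onto the level-$1$ graph and using that effective resistance only increases under such contractions, scaled by the resistance estimate with the constant $3/2$ as the worst case) forces $3^{(\beta-\alpha)n}\rho^{-n}\to 0$, contradicting $3^{\beta-\alpha}>3\cdot\tfrac{\log(8\cdot 3/2)}{\log 3 }$\ldots more precisely contradicting $\beta>\alpha+\log(3/2)/\log 3$ once $\rho\le 3/2$. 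For the lower bound $\beta_*\ge \log(8\cdot\tfrac76)/\log 3$, I would exhibit a uniformly dense subalgebra of $C(K)$ (piecewise-harmonic functions adapted to the cell structure) on which $E_\beta$ is finite whenever $\beta<\alpha+\log(7/6)/\log 3$, using the resistance \emph{upper} bound with constant $7/6$ together with the self-similar summation $\sum_n 3^{(\beta-\alpha)n}(7/6)^{-n}\cdot 3^{\alpha n}<\infty$ after accounting for the number $8^n$ of cells, then verify the Markovian and closedness properties to conclude $(\mathcal{E}_\beta,\mathcal{F}_\beta)$ is a regular Dirichlet form.

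The main obstacle will be the lower bound: producing enough functions of finite $E_\beta$-energy to guarantee uniform density of $\mathcal{F}_\beta$ in $C(K)$. This is precisely where the ``good function'' with controlled energy and separation properties—built from the resistance estimates and the uniform Harnack inequality on the approximation graphs—is needed, since on a non-p.c.f.\ set like SC one cannot simply write down harmonic coordinates as on a p.c.f.\ fractal. I expect the finiteness $E_\beta(u,u)<\infty$ for the building-block functions to reduce, via Lemma~\ref{lem_equiv} and the self-similarity of $E_\beta$, to the convergence of a geometric series governed by $3^{\beta-\alpha}/\rho$, so that the only genuinely hard analytic content is (i) the graph resistance estimates themselves (assumed available from the earlier development) and (ii) arranging that the finite-energy functions separate points and form a subalgebra, so that the Stone–Weierstrass theorem delivers uniform density. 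Once density is in hand, checking that $E_\beta$ (equivalently $\mathcal{E}_\beta$) is closed and Markovian is routine, and the bound \eqref{eqn_bound_beta} follows by inserting the numerical bounds on $\rho$.
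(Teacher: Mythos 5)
Your proposal takes a genuinely different route from the paper, and in its present form it also has real gaps. The paper's proof of Theorem \ref{thm_bound} is deliberately elementary and self-contained: for the lower bound it constructs one explicit function $U(x,y)=f(x)$, where $f$ is defined on triadic points by the self-similar interpolation $f(\frac{3i+1}{3^{n+1}})=\frac57 f(\frac{i}{3^n})+\frac27 f(\frac{i+1}{3^n})$, $f(\frac{3i+2}{3^{n+1}})=\frac27 f(\frac{i}{3^n})+\frac57 f(\frac{i+1}{3^n})$ (the weights coming from minimizing $3x^2+2(x-y)^2+3(1-y)^2$, whose minimum value is $6/7$); this gives the exact identity that the level-$n$ discrete energy of $U$ equals $(6/7)^n$, hence $E_\beta(U,U)<+\infty$ whenever $3^{\beta-\alpha}<7/6$, and Proposition \ref{prop_lower} (Stone--Weierstrass applied to the algebra $\Ff_\beta$, which separates points because both $f(x)$ and $f(y)$ belong to it) yields regularity. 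For the upper bound the paper restricts a candidate $u$ to the subset $[0,1]\times\mathcal{C}$, where the discrete minimizer is explicitly $\tilde u(x,y)=x$ with level-$n$ energy $(2/3)^n$, forcing $3^{\beta-\alpha}\cdot\frac23<1$. Neither direction uses the resistance estimates of Section \ref{sec_resistance}, the Harnack inequality, or the good function of Section \ref{sec_good}; the paper states explicitly that the point of Theorem \ref{thm_bound} is to get the bound without that machinery. Your plan, by contrast, is to first prove $\beta_*=\beta^*$ and then insert $\rho\in[7/6,3/2]$, which is precisely the indirect route the paper avoids here (it occupies Sections \ref{sec_resistance}--\ref{sec_walk}).

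Beyond the difference in route, two steps in your outline would fail as written. First, you invoke an exact renormalization $\calE^{(n+1)}(Hu,Hu)\asymp\rho^{-1}\calE^{(n)}(u,u)$ for harmonic extensions; on SC no such compatibility is available --- this is exactly the obstruction that distinguishes SC from p.c.f.\ sets --- and the paper only establishes the weak monotonicity $a_n(u)\le Ca_{n+m}(u)$ (Theorem \ref{thm_monotone1}), whose proof itself requires the full resistance estimates. Second, your convergence criterion for the lower bound, $\sum_n 3^{(\beta-\alpha)n}(7/6)^{-n}\cdot 3^{\alpha n}<+\infty$, is wrong: the level-$n$ energy of the candidate function is already the total over all $8^n$ cells and is $\lesssim(7/6)^{-n}$, so the extra factor $3^{\alpha n}=8^n$ double-counts; with it the series equals $\sum_n\left(3^{\beta}\cdot\frac67\right)^n$, which diverges for every $\beta>\alpha$, so the criterion as stated proves nothing. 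The correct series is $\sum_n\left(3^{\beta-\alpha}\cdot\frac67\right)^n$. Finally, the dense family of finite-energy functions cannot simply be ``piecewise-harmonic functions adapted to the cell structure,'' since these are not definable before the form is built; you would need to supply either the paper's explicit $f$ above or (as in Section \ref{sec_walk}) the single good function of Proposition \ref{prop_u} together with its reflections and compositions with cell maps, fed into Stone--Weierstrass.
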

This estimate follows also from the results of \cite{BB90} and \cite{BBS90} where the same bound 
for $\beta^*$ was obtained by means of shorting and cutting techniques, while the identity $\beta_{*}=\beta^{*}$ 
follows from the sub-Gaussian heat kernel estimates by means of subordination technique. 
Here we prove the estimate (\ref{eqn_bound_beta}) of $\beta_*$ directly, without using heat kernel or subordination technique.

We give a direct proof of the following result.

\begin{mythm}\label{thm_walk}
$$\beta_*=\beta^*:=\frac{\log(8\rho)}{\log3},$$
where $\rho$ is some parameter in resistance estimates.
\end{mythm}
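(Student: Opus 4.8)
\emph{Plan.} The plan is to establish $\beta_*\le\beta^*$ and $\beta_*\ge\beta^*$ separately, in each case using Lemma \ref{lem_equiv} to pass from $\Ee_\beta$ to the combinatorial semi-norm $E_\beta$ and then invoking the resistance estimates in the approximation graphs. It is convenient to write $a_n(u)=\sum_{w\in W_n}\sum_{p,q\in V_w,\,|p-q|=2^{-1}3^{-n}}(u(p)-u(q))^2$ for the level-$n$ graph energy, so that (using $3^{-\alpha}=1/8$)
$$E_\beta(u,u)=\sum_{n=1}^\infty\Big(\tfrac{3^\beta}{8}\Big)^n a_n(u),\qquad\text{and}\qquad \tfrac{3^\beta}{8\rho}<1\iff\beta<\beta^*.$$
The input I would take from the resistance estimates is: the effective resistance between two opposite faces of $K$ in the level-$n$ network is $\asymp\rho^n$ (hence, by self-similarity, the resistance across a cell $K_w$, $w\in W_m$, in the level-$n$ network with $n\ge m$ is $\asymp\rho^{n-m}$), together with its consequence, the uniform Harnack inequality, which yields a non-constant \emph{good} function $\phi\in C(K)$ with $a_n(\phi)\le C\rho^{-n}$ for all $n$, and more generally harmonic-type functions on the cells with the same sharp energy decay.

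\emph{Upper bound $\beta_*\le\beta^*$.} For $\beta>\beta^*$ I would show that $\Ff_\beta\cap C(K)$ contains only constants, so that it is not dense in $C(K)$ and $(\Ee_\beta,\Ff_\beta)$ cannot be a regular Dirichlet form. Given a non-constant $u\in C(K)$, continuity provides $m$, disjoint cells $K_w,K_{w'}$ with $w,w'\in W_m$, and $c,\delta>0$ such that $u\le c$ on $K_w$ and $u\ge c+\delta$ on $K_{w'}$. For $n\ge m$ the normal contraction $v=\big(0\vee\delta^{-1}(u-c)\big)\wedge1$ vanishes on $V_w$, equals $1$ on $V_{w'}$, and has $a_n(v)\le\delta^{-2}a_n(u)$; but $a_n(v)$ is at least the reciprocal of the effective resistance between $V_w$ and $V_{w'}$ in the level-$n$ network, which by chaining $O(3^m)$ cells is $\le C3^m\rho^{\,n-m}$. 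Hence $a_n(u)\ge c'(m,\delta)\rho^{-n}$ for $n\ge m$, so that $E_\beta(u,u)\ge c'(m,\delta)\sum_{n\ge m}(3^\beta/8\rho)^n=+\infty$ (because $3^\beta>8\rho$), and Lemma \ref{lem_equiv} gives $\Ee_\beta(u,u)=+\infty$, a contradiction.

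\emph{Lower bound $\beta_*\ge\beta^*$.} For $\alpha<\beta<\beta^*$, the form $(\Ee_\beta,\Ff_\beta)$ is automatically symmetric, Markovian (normal contractions do not increase $\Ee_\beta$) and closed (Fatou along an $L^2$-convergent subsequence), so it remains to see that it is regular. First I would observe that $\Ff_\beta\subseteq C(K)$: for a level-$n$ edge $\{p,q\}$ one has $(u(p)-u(q))^2\le a_n(u)\le C(8/3^\beta)^n$ (the last bound because the series above converges), and $8/3^\beta<1$ since $\beta>\alpha$, so telescoping in the connected graphs $V_n$ yields a $\tfrac{\beta-\alpha}{2}$-Hölder estimate; applying this to continuous mollifications of an arbitrary $u\in\Ff_\beta$ and passing to the limit by Arzel\`a--Ascoli produces a continuous version of $u$, whereupon $\Ff_\beta\cap C(K)=\mathcal A:=\myset{u\in C(K):E_\beta(u,u)<\infty}$. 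Then I would show $\mathcal A$ is dense in $C(K)$: the Leibniz-type inequality $a_n(uv)\le2\|u\|_\infty^2a_n(v)+2\|v\|_\infty^2a_n(u)$ makes $\mathcal A$ a subalgebra of $C(K)$ containing the constants, and it separates points because, using $\phi$ and the maps $f_w$, for each coordinate $i$ and each $m$ one can build $g^{(i)}_m\in C(K)$ with $g^{(i)}_m\to x_i$ uniformly (uniform Harnack controls the oscillation of the harmonic-type extension inside each cell) and $a_n(g^{(i)}_m)\le C_m\rho^{-n}$, hence $E_\beta(g^{(i)}_m,g^{(i)}_m)<\infty$ since $3^\beta<8\rho$; as $x_1,x_2$ separate the points of $K\subseteq\R^2$, so do the $g^{(i)}_m$ for large $m$. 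By Stone--Weierstrass, $\mathcal A$ is dense in $C(K)$, hence in $L^2(K;\nu)$, and $(\Ee_\beta,\Ff_\beta)$ is a regular Dirichlet form. Combining the two bounds gives $\beta_*=\beta^*$.

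\emph{Main difficulty.} The geometric-series bookkeeping above is routine; the hard part is the resistance estimate $R_n\asymp\rho^n$ in the approximation graphs and the resulting uniform Harnack inequality, which are what actually produce the good function $\phi$ and the harmonic-type functions with the sharp decay $a_n\asymp\rho^{-n}$. This is exactly what is needed on the range $\beta\in[2,\beta^*)$ --- non-empty because $\rho>9/8$ --- on which Lipschitz functions (for which only $a_n\lesssim(8/9)^n$ holds, enough merely for $\beta<2$) no longer have finite $\Ee_\beta$-energy, so one is forced to use genuinely ``smooth'', i.e. harmonic-type, functions on $K$. A secondary technical point is the mollification needed before Lemma \ref{lem_equiv} can be used on $L^2$-functions in $\Ff_\beta$ that are not a priori continuous.
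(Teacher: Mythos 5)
Your overall strategy coincides with the paper's: both bounds are pushed through $E_\beta$ via Lemma \ref{lem_equiv}, the upper bound comes from the resistance estimates of Section \ref{sec_resistance} (the paper tests against the explicit minimizers $u_n$ with $D_n(u_n,u_n)=(R_n^V)^{-1}\asymp\rho^{-n}$, you test an arbitrary non-constant continuous function against the resistance upper bound of Corollary \ref{cor_resist_upper}; both are correct), and the lower bound reduces regularity to point separation via Stone--Weierstrass, with non-triviality of $\Ff_\beta$ supplied by the good function of Proposition \ref{prop_u}. Where you genuinely diverge is the separation step. The paper runs a case analysis on the location of the two points and glues shifted copies of $u$ and of its transpose $v(x,y)=u(y,x)$ across the eight level-one cells, using the symmetries $u(x,y)=u(x,1-y)=1-u(1-x,y)$ to make the piecewise definitions consistent; you instead tile rescaled copies of $\phi$ over the level-$m$ cells to manufacture functions $g^{(i)}_m$ converging uniformly to the coordinate functions. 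If completed, your route is arguably cleaner: it does not even need the strict separation property of Proposition \ref{prop_u}(3), hence makes lighter use of the Harnack inequality.

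The one claim you leave unproved --- that $g^{(i)}_m\in C(K)$ with $a_n(g^{(i)}_m)\le C_m\rho^{-n}$ --- is, however, exactly where the non-p.c.f.\ difficulty sits, and the parenthetical appeal to the uniform Harnack inequality does not address it. Adjacent level-$m$ cells of SC meet along line segments, not finitely many points, so the cell-by-cell definition $g^{(1)}_m=3^{-m}\bigl(j+\phi\circ f_w^{-1}\bigr)$ on the $j$-th column of cells is well defined only because $\phi$ vanishes on the left edge of $K$, equals $1$ on the right edge, and satisfies $\phi(x,y)=\phi(x,1-y)$ (which matches the top edge of each cell with the bottom edge of the cell above it). These boundary values and symmetries of the good function must be stated and used; without them the tiled function need not be continuous, and the alternative reading of your sentence --- harmonic extension of prescribed boundary data inside each cell --- would require energy bounds for harmonic extensions that are not available at this stage and that the paper deliberately avoids. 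Once the matching is checked, the energy bound does follow, since for $n\ge m$ one gets $a_n(g^{(1)}_m)=8^m3^{-2m}a_{n-m}(\phi)$ in your normalization of $a_n$, whence $E_\beta(g^{(1)}_m,g^{(1)}_m)\lesssim C_m\sum_n\bigl(3^{\beta-\alpha}\rho^{-1}\bigr)^n<+\infty$ for $\beta<\beta^*$; the remaining ingredients of your argument (the subalgebra inequality, the H\"older embedding of $\Ff_\beta$ into $C(K)$ via Lemma \ref{lem_holder}, closedness and Markovianity) are sound.
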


Hino and Kumagai \cite{HK06} established other equivalent semi-norms as follows. For all $n\ge1,u\in L^2(K;\nu)$, let
$$P_nu(w)=\frac{1}{\nu(K_w)}\int_{K_w}u(x)\nu(\md x),w\in W_n.$$
For all $w^{(1)},w^{(2)}\in W_n$, denote $w^{(1)}\sim_nw^{(2)}$ if $\mathrm{dim}_{\mathcal{H}}(K_{w^{(1)}}\cap K_{w^{(2)}})=1$. Let
$$\frakE_\beta(u,u):=\sum_{n=1}^\infty3^{(\beta-\alpha)n}
{\sum_{\mbox{\tiny
$
\begin{subarray}{c}
w^{(1)}\sim_nw^{(2)}
\end{subarray}
$
}}}
\left(P_nu(w^{(1)})-P_nu(w^{(2)})\right)^2.$$

\begin{mylem}\label{lem_equivHK}(\cite[Lemma 3.1]{HK06})
For all $\beta\in(0,+\infty),u\in L^2(K;\nu)$, we have
$$\frakE_\beta(u,u)\asymp\Ee_\beta(u,u).$$
\end{mylem}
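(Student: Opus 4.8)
The plan is to prove the two estimates $\frakE_\beta(u,u)\lesssim\Ee_\beta(u,u)$ and $\Ee_\beta(u,u)\lesssim\frakE_\beta(u,u)$ separately, in both cases reducing the Besov form to a sum over triadic scales. Setting $\Phi(r):=\int_K\int_K\indi_{\{|x-y|<r\}}(u(x)-u(y))^2\,\nu(\md x)\nu(\md y)$ and using $|x-y|^{-(\alpha+\beta)}=(\alpha+\beta)\int_{|x-y|}^\infty r^{-(\alpha+\beta)-1}\,\md r$, one gets $\Ee_\beta(u,u)\asymp\int_K\bigl(u-\int_Ku\,\md\nu\bigr)^2\,\md\nu+\sum_{n\ge1}3^{(\alpha+\beta)n}\Phi(3^{-n})$, the first (variance) term being trivially bounded by $C\,\Ee_\beta(u,u)$. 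For $w\in W_n$ introduce the ``fattened cell'' $\widetilde{K_w}:=\bigcup\myset{K_v:v\in W_n,\,K_v\cap K_w\ne\emptyset}$. The argument rests on three elementary properties of the cell structure of SC: every cell meets a bounded number of cells of its own generation; $\widetilde{K_w}$ is a union of boundedly many such cells with $\nu(\widetilde{K_w})\asymp\nu(K_w)=3^{-\alpha n}$; and $\myset{|x-y|<3^{-n}}\subseteq\bigcup_{w\in W_n}K_w\times\widetilde{K_w}\subseteq\myset{|x-y|\le C\,3^{-n}}$. Together with the identity $\int_A(u-\overline u_A)^2\,\md\nu=\tfrac1{2\nu(A)}\int_A\int_A(u(x)-u(y))^2\,\nu(\md x)\nu(\md y)$ (with $\overline u_A$ the $\nu$-mean over $A$), these give the intermediate comparison $\Ee_\beta(u,u)\asymp\sum_{n\ge0}3^{\beta n}\sum_{w\in W_n}\int_{\widetilde{K_w}}(u-\overline u_{\widetilde{K_w}})^2\,\md\nu$.

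The bound $\frakE_\beta\lesssim\Ee_\beta$ is then immediate: if $w^{(1)}\sim_nw^{(2)}$, then $K_{w^{(1)}},K_{w^{(2)}}\subseteq\widetilde{K_{w^{(1)}}}$, so by Jensen's inequality $|P_nu(w^{(1)})-P_nu(w^{(2)})|^2\le2|P_nu(w^{(1)})-\overline u_{\widetilde{K_{w^{(1)}}}}|^2+2|P_nu(w^{(2)})-\overline u_{\widetilde{K_{w^{(1)}}}}|^2\le C\,\nu(K_{w^{(1)}})^{-1}\int_{\widetilde{K_{w^{(1)}}}}(u-\overline u_{\widetilde{K_{w^{(1)}}}})^2\,\md\nu$. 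Multiplying by $3^{(\beta-\alpha)n}=3^{\beta n}\nu(K_{w^{(1)}})$, summing over the boundedly many neighbours $w^{(2)}$ of $w^{(1)}$ and then over $w^{(1)}\in W_n$ and $n$, gives $\frakE_\beta(u,u)\lesssim\sum_{n\ge0}3^{\beta n}\sum_{w\in W_n}\int_{\widetilde{K_w}}(u-\overline u_{\widetilde{K_w}})^2\,\md\nu\asymp\Ee_\beta(u,u)$.

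For $\Ee_\beta\lesssim\frakE_\beta$ I would bound $\Phi(3^{-n})$ by the covering above and, for $x\in K_{w^{(1)}}$, $y\in K_{w^{(2)}}$ with $K_{w^{(1)}}\cap K_{w^{(2)}}\ne\emptyset$, split $u(x)-u(y)=(u(x)-P_nu(w^{(1)}))+(P_nu(w^{(1)})-P_nu(w^{(2)}))+(P_nu(w^{(2)})-u(y))$; the boundedly many corner-adjacent pairs $w^{(1)},w^{(2)}$ (intersection of dimension $0$) are reduced to $\sim_n$-differences by inserting a common edge-neighbour. This yields $\Ee_\beta(u,u)\lesssim\frakE_\beta(u,u)+S_1$ with $S_1:=\sum_{n\ge0}3^{\beta n}\sum_{w\in W_n}\int_{K_w}(u-P_nu(w))^2\,\md\nu$, so it remains to absorb $S_1$. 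The orthogonal (Pythagorean) decomposition $\int_{K_w}(u-P_nu(w))^2\,\md\nu=\sum_{m>n}\sum_{v\in W_m,\,K_v\subseteq K_w}\nu(K_v)\,|P_mu(v)-P_{m-1}u(v^-)|^2$ — valid for $u\in L^2(K;\nu)$ by martingale convergence, where $v^-\in W_{m-1}$ is the parent of $v$ and $P_0u(\emptyset):=\int_Ku\,\md\nu$ — together with interchange of summation and the hypothesis $\beta>0$, gives $S_1\asymp R:=\sum_{m\ge1}3^{(\beta-\alpha)m}\sum_{v\in W_m}|P_mu(v)-P_{m-1}u(v^-)|^2$. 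Finally, for $v=v^-i$ one has $P_mu(v)-P_{m-1}u(v^-)=\tfrac18\sum_{j=0}^{7}\bigl(P_mu(v)-P_mu(v^-j)\bigr)$, and since the eight children of any cell form a cycle for the relation $\sim_m$, each difference $P_mu(v)-P_mu(v^-j)$ telescopes along at most seven consecutive (hence $\sim_m$-adjacent) siblings; Cauchy–Schwarz then gives $|P_mu(v)-P_{m-1}u(v^-)|^2\le C\sum|P_mu(\cdot)-P_mu(\cdot)|^2$, the sum over consecutive children of $v^-$, and summation over $v$ and $m$ yields $R\lesssim\frakE_\beta(u,u)$.

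The step I expect to be the main obstacle is exactly the last one, controlling the parent–child (``radial'') oscillations $P_mu(v)-P_{m-1}u(v^-)$, equivalently the term $S_1$, by $\frakE_\beta$: a direct comparison of such an oscillation with the Besov integrand produces a spurious $\log(1/|x-y|)$ factor, coming from pairs $x,y$ that lie near cell walls at many scales at once, so one is forced to use the global geometry of SC — namely that the subdivision of each cell into its eight children is \emph{connected}, indeed a cycle under edge-adjacency — in order to re-express radial oscillations entirely through lateral ones of the same generation. The remaining ingredients are routine: the bounded multiplicity and bounded degree of the cell adjacency graph, the reduction of corner contacts to short chains of edge contacts, and the convergence of the geometric series (which uses $\alpha+\beta>0$ throughout, and $\beta>0$ for the equivalence $S_1\asymp R$).
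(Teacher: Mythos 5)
Your argument is essentially correct, but note that the paper does not prove this lemma at all: it is quoted verbatim from \cite[Lemma 3.1]{HK06}, so there is no internal proof to compare against. What you have written is a self-contained proof specialized to SC, and it follows the same general strategy as Hino--Kumagai (dyadic reduction of $\Ee_\beta$ to cell oscillations, control of the ``lateral'' differences by edge-adjacency, and absorption of the ``radial'' parent--child term via the martingale decomposition and the connectedness of the children graph). All the steps check out: the intermediate comparison $\Ee_\beta(u,u)\asymp\sum_{n\ge0}3^{\beta n}\sum_{w\in W_n}\int_{\widetilde{K_w}}(u-\overline u_{\widetilde{K_w}})^2\,\md\nu$ follows from Lemma \ref{lem_equiv1} and Corollary \ref{cor_arbi} together with the bounded overlap of the fattened cells; the bound $\frakE_\beta\lesssim\Ee_\beta$ is the easy Jensen step; and your identification of the crux --- that $S_1\asymp R$ requires $\beta>0$ and that $R\lesssim\frakE_\beta$ requires the eight children of a cell to form a cycle under $\sim_m$ --- is exactly right. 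Two geometric assertions are used without proof and deserve a line each: (i) that two cells of the same generation meeting only at a corner always admit a common edge-neighbour (true for SC because no two removed ``holes'' touch: every hole lies at distance at least one ninth of its parent's side from that parent's boundary, so at most one of the four lattice squares around any interior vertex is removed); and (ii) that cell boundaries carry zero $\nu$-measure (true since $\alpha>1$), which is needed both for the a.e.\ partition underlying the Pythagorean identity and for the covering $\myset{|x-y|<3^{-n}}\subseteq\bigcup_{w}K_w\times\widetilde{K_w}$. With those two facts supplied, your proof is complete and gives an alternative to the citation of \cite{HK06}.
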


We combine $E_\beta$ and $\frakE_\beta$ to construct a local regular Dirichlet form on $K$ using $\Gamma$-convergence technique as follows.

\begin{mythm}\label{thm_BM}
There exists a self-similar strongly local regular Dirichlet form $(\Ee_{\loc},\Ff_{\loc})$ on $L^2(K;\nu)$ satisfying
\begin{align}
&\Ee_{\loc}(u,u)\asymp\sup_{n\ge1}3^{(\beta^*-\alpha)n}\sum_{w\in W_n}
{\sum_{\mbox{\tiny
$
\begin{subarray}{c}
p,q\in V_w\\
|p-q|=2^{-1}\cdot3^{-n}
\end{subarray}
$
}}}
(u(p)-u(q))^2,\label{eqn_Kigami}\\
&\Ff_{\loc}=\myset{u\in C(K):\sup_{n\ge1}3^{(\beta^*-\alpha)n}\sum_{w\in W_n}
{\sum_{\mbox{\tiny
$
\begin{subarray}{c}
p,q\in V_w\\
|p-q|=2^{-1}\cdot3^{-n}
\end{subarray}
$
}}}
(u(p)-u(q))^2<+\infty}.\nonumber
\end{align}
\end{mythm}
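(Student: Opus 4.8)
The plan is to build $(\Ee_\loc,\Ff_\loc)$ in three stages, following the strategy outlined in the introduction. First, using Theorem \ref{thm_walk} we know $\beta^*=\beta_*$, so that for each $\beta\in(\alpha,\beta^*)$ the form $(\Ee_\beta,\Ff_\beta)$ — equivalently, by Lemma \ref{lem_equiv} and Lemma \ref{lem_equivHK}, the forms $E_\beta$ and $\frakE_\beta$ — is a regular Dirichlet form on $L^2(K;\nu)$. Pick a sequence $\beta_n\uparrow\beta^*$ and consider the rescaled forms $(\beta^*-\beta_n)\frakE_{\beta_n}$. I would first establish tightness/uniform-boundedness estimates: there should exist one "good" function $g$ on $K$, produced from the resistance estimates in approximation graphs and the resulting uniform Harnack inequality, whose energies $(\beta^*-\beta_n)\frakE_{\beta_n}(g,g)$ stay bounded and bounded below, and which separates points in a suitable quantitative sense. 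Then I would extract a $\Gamma$-convergent subsequence (using the separability/compactness of the relevant function space and the standard compactness theorem for $\Gamma$-convergence of quadratic forms on $L^2(K;\nu)$), and let $\calE$ be the $\Gamma$-limit with domain $\calF$. By general $\Gamma$-convergence theory $\calE$ is a nonnegative closed form; using $g$ and the self-similar structure one shows $\calF$ is nontrivial.

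The second stage is to upgrade $\calE$ to something with good pointwise structure. The $\Gamma$-limit $\calE$ need not be Markovian, local, or self-similar, and its domain $\calF$ a priori only sits in $L^2(K;\nu)$. The crucial point is to prove that $\calF\subset C(K)$ and in fact $\calF$ is uniformly dense in $C(K)$: this is where the resistance estimates and uniform Harnack inequality do the real work, giving an oscillation/Morrey-type bound
$$|u(x)-u(y)|^2\le C\,|x-y|^{\beta^*-\alpha}\,\calE(u,u)$$
for $u\in\calF$, uniformly along the approximating forms $(\beta^*-\beta_n)\frakE_{\beta_n}$ (for which the analogous bound should hold with constants independent of $n$, again via Harnack). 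Granting this, every $u\in\calF$ has a continuous version, and density in $C(K)$ follows by exhibiting enough functions in $\calF$ — e.g. using $g$, its images under the $f_w$, and finite sums/lattice operations — separating points of $K$ and then invoking Stone–Weierstrass together with the Markovian property one recovers in the next step.

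The third stage is the symmetrization/averaging argument from Kusuoka–Zhou \cite{KZ92}. The forms $E_\beta$ (the vertex-based semi-norms on $V_n$) are manifestly invariant under the symmetry group of the unit square, and by Lemma \ref{lem_equiv} they are comparable to $\Ee_\beta$, hence to $\frakE_\beta$. Averaging $\calE$ over the finitely many symmetries of $K$, and then running the self-similar renormalization $u\mapsto\sum_{i=0}^7 u\circ f_i$ to produce a fixed point (a compactness argument in the cone of closed forms comparable to the fixed reference semi-norm $\sup_n 3^{(\beta^*-\alpha)n}\sum_{w\in W_n}\sum_{p,q}(u(p)-u(q))^2$, cf. the Kigami-type representation), yields a form $(\Ee_\loc,\Ff_\loc)$ that is symmetric, Markovian, self-similar, and — because the reference semi-norm only sees nearest-neighbour differences inside each cell — strongly local. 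Comparability of $\Ee_\loc$ with the displayed supremum in \eqref{eqn_Kigami}, and the identification of $\Ff_\loc$ with the stated class, then come from Lemma \ref{lem_equiv} and the uniform estimates already in hand; regularity (density of $\Ff_\loc\cap C(K)=\Ff_\loc$ in $C(K)$ and as a form domain) follows from the second stage. I expect the \textbf{main obstacle} to be precisely the uniform continuity/density estimate in stage two: controlling the oscillation of functions of bounded $(\beta^*-\beta_n)\frakE_{\beta_n}$-energy uniformly in $n$ requires the full strength of the resistance estimates on the infinite graphical carpet and the uniform Harnack inequality derived from them, and passing this bound through the $\Gamma$-limit (lower semicontinuity gives one inequality cheaply, but getting a genuinely nontrivial, point-separating limit form is delicate).
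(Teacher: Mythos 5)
Your overall architecture coincides with the paper's: a good function built from the resistance estimates and the uniform Harnack inequality guarantees non-triviality and point separation; a subsequence of $(\beta^*-\beta_n)\frakE_{\beta_n}$ is extracted so as to $\Gamma$-converge to a regular closed form $(\calE,\calF)$, with $\calF\subseteq C(K)$ and uniformly dense in $C(K)$ by a Stone--Weierstrass argument; and a Kusuoka--Zhou renormalization then produces $(\calE_\loc,\calF_\loc)$ with self-similarity, strong locality and the Markov property. However, the step you leave implicit --- identifying the $\Gamma$-limit with the Kigami-type supremum in (\ref{eqn_Kigami}) --- is exactly where the paper's key new input enters, and it is genuinely missing from your proposal: the weak monotonicity results $a_n(u)\le Ca_{n+m}(u)$ and $b_n(u)\le Cb_{n+m}(u)$ (Theorems \ref{thm_monotone1} and \ref{thm_monotone2}, themselves consequences of the resistance estimates), combined with the elementary fact that a sequence satisfying $x_n\le Cx_{n+m}$ has Abel means $(1-\lambda)\sum_n\lambda^nx_n$ comparable to $\sup_nx_n$ (Proposition \ref{prop_ele}). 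Without this, nothing prevents the $\Gamma$-limit of the rescaled sums $(\beta^*-\beta_n)\sum_k3^{(\beta_n-\beta^*)k}b_k(\cdot)$ from being degenerate or incomparable to $\sup_ka_k(\cdot)$; ``Lemma \ref{lem_equiv} and the uniform estimates already in hand'' do not by themselves deliver (\ref{eqn_Kigami}), and even the inclusion $\calF\subseteq C(K)$ is obtained in the paper by using this monotonicity to deduce $\frakE_{\beta_m}(u,u)<+\infty$ from $\calE(u,u)<+\infty$ before invoking the H\"older embedding.

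Two smaller corrections to the last stage. The paper does not symmetrize over the dihedral group of the square: it sets $\mybar{\Ee}^{(n)}(u,u)=\rho^n\sum_{w\in W_n}\Ee(u\circ f_w,u\circ f_w)$, takes Ces\`aro averages $\tilde{\Ee}^{(n)}=\frac{1}{n}\sum_{l=1}^n\mybar{\Ee}^{(l)}$, and passes to a diagonal subsequence along a countable dense subset of $(\Ff,\Ee_1)$; the identity $\mybar{\Ee}^{(n+1)}(u,u)=\rho\sum_{i=0}^7\mybar{\Ee}^{(n)}(u\circ f_i,u\circ f_i)$ then telescopes to give exact self-similarity in the limit. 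Strong locality is deduced from that self-similarity by decomposing at a scale finer than the distance between the supports --- your heuristic that the reference semi-norm ``only sees nearest-neighbour differences inside each cell'' is not itself an argument --- and Markovianity is then derived from strong locality via $\calE_\loc(u^+,u^-)=0$ and $\calE_\loc(u,u)=\calE_\loc(1-u,1-u)$. These last points are repairs within your framework rather than a change of route, but the weak monotonicity is an essential ingredient that the proposal does not supply.
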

By uniqueness result in \cite{BBKT10}, we have above local regular Dirichlet form coincides with that given by \cite{BB89} and \cite{KZ92}.

We have a direct corollary that non-local Dirichlet forms can approximate local Dirichlet form as follows.

\begin{mycor}\label{cor_approx}
There exists some positive constant $C$ such that for all $u\in\Ff_\loc$
$$\frac{1}{C}\Ee_\loc(u,u)\le\varliminf_{\beta\uparrow\beta^*}(\beta^*-\beta)\Ee_\beta(u,u)\le\varlimsup_{\beta\uparrow\beta^*}(\beta^*-\beta)\Ee_{\beta}(u,u)\le C\Ee_\loc(u,u).$$
\end{mycor}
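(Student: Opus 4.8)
The plan is to derive Corollary \ref{cor_approx} by combining the already-established equivalences with the $\Gamma$-convergence setup used to build $\calE_\loc$ in Theorem \ref{thm_BM}. First I would recall that the local form was obtained (up to the Kusuoka--Zhou-type argument producing $\calE_\loc$) as a $\Gamma$-limit of a sequence $(\beta^*-\beta_n)\frakE_{\beta_n}$ for some $\beta_n\uparrow\beta^*$, and that along that sequence the $\Gamma$-limit $\calE$ satisfies $\calE(u,u)\asymp\sup_n 3^{(\beta^*-\alpha)n}\sum_{w\in W_n}\sum_{p,q}(u(p)-u(q))^2$ on $C(K)$, which is exactly $\calE_\loc(u,u)$ up to constants by \eqref{eqn_Kigami}. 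The key point is that this comparison must be upgraded from ``along a subsequence'' to ``for the full limit superior and limit inferior as $\beta\uparrow\beta^*$'', and for that the monotonicity-type structure of the discrete energies in $n$ together with Lemma \ref{lem_equiv} and Lemma \ref{lem_equivHK} is what does the work.

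The main steps, in order, would be: (1) Fix $u\in\Ff_\loc$. Using Lemma \ref{lem_equiv} and Lemma \ref{lem_equivHK}, there is a constant $C_1\ge1$, independent of $\beta$ and $u$, with $C_1^{-1}\frakE_\beta(u,u)\le\calE_\beta(u,u)\le C_1\frakE_\beta(u,u)$ and similarly $C_1^{-1}E_\beta(u,u)\le\calE_\beta(u,u)\le C_1 E_\beta(u,u)$; so it suffices to control $(\beta^*-\beta)E_\beta(u,u)$ as $\beta\uparrow\beta^*$. (2) Write $E_\beta(u,u)=\sum_{n\ge1}3^{(\beta-\alpha)n}a_n$ where $a_n:=\sum_{w\in W_n}\sum_{|p-q|=2^{-1}3^{-n}}(u(p)-u(q))^2\ge0$, and set $S:=\calE_\loc(u,u)\asymp\sup_n 3^{(\beta^*-\alpha)n}a_n$, which is finite precisely because $u\in\Ff_\loc$. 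From $3^{(\beta^*-\alpha)n}a_n\le C_2 S$ we get $a_n\le C_2 S\, 3^{-(\beta^*-\alpha)n}$, hence $(\beta^*-\beta)E_\beta(u,u)\le C_2 S(\beta^*-\beta)\sum_{n\ge1}3^{-(\beta^*-\beta)n}=C_2 S\,\frac{\beta^*-\beta}{3^{\beta^*-\beta}-1}$, and letting $\beta\uparrow\beta^*$ the last factor tends to $1/\ln 3$; this gives the upper bound $\varlimsup_{\beta\uparrow\beta^*}(\beta^*-\beta)E_\beta(u,u)\le (C_2/\ln 3)\,S$, hence after reinserting the $C_1$'s and the constant in \eqref{eqn_Kigami} the claimed inequality $\varlimsup_{\beta\uparrow\beta^*}(\beta^*-\beta)\calE_\beta(u,u)\le C\,\calE_\loc(u,u)$. (3) For the lower bound, fix $N\ge1$; since $a_n\ge0$, $E_\beta(u,u)\ge 3^{(\beta-\alpha)N}a_N$, so $(\beta^*-\beta)E_\beta(u,u)\ge(\beta^*-\beta)3^{(\beta-\alpha)N}a_N\to 0$ termwise, which is not enough — so instead I would use that $\calE_\loc$ appears as the $\Gamma$-limit: by the $\liminf$-inequality of $\Gamma$-convergence applied to the constant sequence $u_n\equiv u$, $\calE(u,u)\le\varliminf_n(\beta^*-\beta_n)\frakE_{\beta_n}(u,u)$, and combined with the lower bound $\calE(u,u)\ge c\,\calE_\loc(u,u)$ from the construction this yields $c\,\calE_\loc(u,u)\le\varliminf_{\beta\uparrow\beta^*}(\beta^*-\beta)\frakE_\beta(u,u)\le C_1\varliminf_{\beta\uparrow\beta^*}(\beta^*-\beta)\calE_\beta(u,u)$; strictly speaking this gives the $\liminf$ along $\beta_n$, and to get it along all $\beta\uparrow\beta^*$ one observes that $\beta\mapsto(\beta^*-\beta)E_\beta(u,u)$ varies continuously and the estimates in (2) are uniform, so any sequence achieving the full $\varliminf$ can be compared to a nearby $\beta_n$. (4) Assemble (2) and (3) with a single constant $C$, using $\varliminf\le\varlimsup$ for the trivial middle inequality.

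The hard part will be step (3): the termwise bound on $E_\beta$ goes the wrong way for a lower bound, so the lower estimate genuinely has to come from the $\Gamma$-liminf inequality established during the construction of $\calE$ in Theorem \ref{thm_BM}, and one must be careful that the $\Gamma$-convergence was set up along a specific sequence $\beta_n\uparrow\beta^*$ while the corollary asserts a statement for the continuous limit $\beta\uparrow\beta^*$. Resolving this requires either noting that the choice of $\beta_n$ in the construction was arbitrary (so the $\Gamma$-limit, hence the comparison with $\calE_\loc$, holds along every sequence) or quantifying the variation of $(\beta^*-\beta)\calE_\beta(u,u)$ in $\beta$ near $\beta^*$ using the uniform estimates of step (2); I would adopt the former, remarking that $\calE_\loc$ is independent of the approximating sequence by the uniqueness in \cite{BBKT10}, so the $\liminf$-inequality holds for all $\beta\uparrow\beta^*$. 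Everything else — the geometric-series computation, the uniformity of the constants in Lemma \ref{lem_equiv} and Lemma \ref{lem_equivHK}, and the comparison with \eqref{eqn_Kigami} — is routine.
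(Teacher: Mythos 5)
Your upper bound (step (2)) is essentially the paper's argument, but your lower bound (step (3)) has a genuine gap, and the detour through $\Gamma$-convergence is not how the paper closes it. The $\Gamma$-$\liminf$ inequality only controls $\varliminf_{k}(\beta^*-\beta_{k})\frakE_{\beta_{k}}(u,u)$ along the \emph{particular} sequence $\myset{\beta_k}$ (in fact, a subsequence of it) used to produce the limit form $\calE$ in Proposition \ref{prop_gamma}; the corollary asserts a bound on the $\varliminf$ over the continuous limit $\beta\uparrow\beta^*$, which is an infimum over \emph{all} sequences and can a priori be strictly smaller. Your proposed repair via the uniqueness theorem of \cite{BBKT10} does not close this: uniqueness of the self-similar local Dirichlet form says nothing about whether an arbitrary sequence $\beta_k'\uparrow\beta^*$ yields a $\Gamma$-convergent family, nor about whether the resulting limit is bounded below by $c\,\calE_\loc$ with a constant $c$ uniform over all such sequences. (One could patch this by rerunning Proposition \ref{prop_gamma} and the lower-bound half of Theorem \ref{thm_gamma} along an arbitrary subsequence and checking that all constants depend only on the weak monotonicity constant, but that is considerably more work than needed.) Your continuity remark also does not suffice on its own, since $\beta\mapsto(\beta^*-\beta)E_\beta(u,u)$ has no uniform modulus of continuity near $\beta^*$ that would let you transfer a $\liminf$ from one sequence to a nearby one.

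The paper has already built exactly the tool that makes the lower bound elementary and handles the continuous limit directly: Theorem \ref{thm_monotone1} gives $a_n(u)\le C a_{n+m}(u)$, so by Proposition \ref{prop_ele}(2), $\sup_{n\ge1}a_n(u)\le C\varliminf_{n\to+\infty}a_n(u)$, and by the first inequality of Proposition \ref{prop_ele}(1) (which is stated for the continuous limit $\lambda\uparrow1$), with $\lambda=3^{\beta-\beta^*}$ and $(1-\lambda)\asymp(\beta^*-\beta)$ as $\beta\uparrow\beta^*$,
$$\calE_\loc(u,u)\asymp\sup_{n\ge1}a_n(u)\le C\varliminf_{n\to+\infty}a_n(u)\le C\varliminf_{\beta\uparrow\beta^*}(\beta^*-\beta)\sum_{n=1}^\infty3^{(\beta-\beta^*)n}a_n(u)\asymp C\varliminf_{\beta\uparrow\beta^*}(\beta^*-\beta)\calE_\beta(u,u),$$
with no $\Gamma$-convergence and no sequence-versus-continuum issue. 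Two smaller points: your step (1) tacitly assumes the comparison constant in Lemma \ref{lem_equiv} is uniform in $\beta$ near $\beta^*$ — this is true (the parameter $k$ in the proof of Theorem \ref{thm_equiv1} can be fixed for $\beta$ in a compact subinterval of $(\alpha,+\infty)$) but should be said; and Lemma \ref{lem_equivHK} is not actually needed here, since Lemma \ref{lem_equiv} alone relates $\calE_\beta$ to $E_\beta=\sum_n3^{(\beta-\beta^*)n}a_n(u)$, which is the quantity the monotonicity result controls.
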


Let us introduce the notion of Besov spaces. Let $(M,d,\mu)$ be a metric measure space and $\alpha,\beta>0$ two parameters. Let
$$
\begin{aligned}
&\left[u\right]_{B^{2,2}_{\alpha,\beta}(M)}&=\sum_{n=1}^\infty3^{(\alpha+\beta)n}\int\limits_M\int\limits_{d(x,y)<3^{-n}}(u(x)-u(y))^2\mu(\md y)\mu(\md x),\\
&\left[u\right]_{B^{2,\infty}_{\alpha,\beta}(M)}&=\sup_{n\ge1}3^{(\alpha+\beta)n}\int\limits_M\int\limits_{d(x,y)<3^{-n}}(u(x)-u(y))^2\mu(\md y)\mu(\md x),\\
\end{aligned}
$$
and
$$
\begin{aligned}
B_{\alpha,\beta}^{2,2}(M)&=\myset{u\in L^2(M;\mu):[u]_{B^{2,2}_{\alpha,\beta}(M)}<+\infty},\\
B_{\alpha,\beta}^{2,\infty}(M)&=\myset{u\in L^2(M;\mu):[u]_{B^{2,\infty}_{\alpha,\beta}(M)}<+\infty}.\\
\end{aligned}
$$
By the following Lemma \ref{lem_equiv1} and Lemma \ref{lem_holder}, we have $\Ff_\beta=B^{2,2}_{\alpha,\beta}(K)$ for all $\beta\in(\alpha,+\infty)$.

We characterize $(\Ee_\loc,\Ff_\loc)$ on $L^2(K;\nu)$ as follows.

\begin{mythm}\label{thm_Besov}
$\Ff_\loc=B_{\alpha,\beta^*}^{2,\infty}(K)$ and $\calE_\loc(u,u)\asymp[u]_{B^{2,\infty}_{\alpha,\beta^*}(K)}$ for all $u\in\Ff_\loc$.
\end{mythm}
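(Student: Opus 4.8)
The plan is to identify $\Ff_\loc$ with the Besov space $B^{2,\infty}_{\alpha,\beta^*}(K)$ by comparing the seminorm in \eqref{eqn_Kigami} with the Besov seminorm $[u]_{B^{2,\infty}_{\alpha,\beta^*}(K)}$, exploiting that we already have, by Theorem \ref{thm_BM}, the equivalence $\Ee_\loc(u,u)\asymp\sup_{n\ge1}3^{(\beta^*-\alpha)n}\sum_{w\in W_n}\sum_{p,q\in V_w,\,|p-q|=2^{-1}3^{-n}}(u(p)-u(q))^2$. The strategy is thus to prove, for every $u\in C(K)$, the two-sided bound
$$
\sup_{n\ge1}3^{(\beta^*-\alpha)n}\sum_{w\in W_n}\;{\sum_{\mbox{\tiny$\begin{subarray}{c}p,q\in V_w\\|p-q|=2^{-1}3^{-n}\end{subarray}$}}}(u(p)-u(q))^2\;\asymp\;\sup_{n\ge1}3^{(\alpha+\beta^*)n}\int_K\int_{|x-y|<3^{-n}}(u(x)-u(y))^2\,\nu(\md y)\,\nu(\md x),
$$
with the understanding that the left side being finite forces $u$ to have a continuous version (which is exactly what the $E_{\beta^*}$-type control gives via a Morrey–Sobolev/Hölder embedding on $K$, i.e. the analogue of Lemma \ref{lem_holder} at the critical exponent). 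Once this equivalence of seminorms is in hand, the identification of the domains and the energy comparison are immediate.

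The two inequalities should be carried out separately and largely parallel the proof of Lemma \ref{lem_equiv}, but with $\sum_n 3^{(\beta-\alpha)n}$ replaced by $\sup_n 3^{(\beta^*-\alpha)n}$ and the finite-sum-of-scales bookkeeping replaced by a single-scale comparison. For the direction ``Besov $\lesssim$ Kigami'': fix $n$, cover the near-diagonal region $\{|x-y|<3^{-n}\}$ by pairs of cells $K_w,K_{w'}$ with $w,w'\in W_m$ for an $m$ comparable to $n$ (say $m=n$ or $n+$const) that are equal or neighbouring, write $u(x)-u(y)$ as a telescoping sum along a chain of vertices in $V_w\cup V_{w'}$ of bounded length (using connectivity of the graphical SC, as in the proof of Lemma \ref{lem_equiv}), apply Cauchy–Schwarz to the bounded-length chain, integrate in $x,y$ over the cells using $\nu(K_w)\asymp 8^{-n}=3^{-\alpha n}$, and sum over $w$; the scaling factors $3^{\alpha n}$ from two copies of the measure and $3^{\beta^* n}$ combine with $3^{-2\alpha n}$ from the measures and a factor from the number of relevant cell-pairs to reproduce $3^{(\beta^*-\alpha)n}$ times the vertex sum, after taking $\sup_n$. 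For the reverse direction ``Kigami $\lesssim$ Besov'': given a vertex pair $p,q\in V_w$ with $|p-q|=2^{-1}3^{-n}$, replace the point values $u(p),u(q)$ by cell-averages $P_Nu$ over subcells of generation $N\gg n$ containing $p$ resp.\ $q$ (the oscillation being controlled by continuity of $u$, or more quantitatively by the same chaining applied at finer scales and then summed — this is where one must be a little careful to keep constants scale-independent), and then bound the average differences by the near-diagonal double integral at the appropriate scale; alternatively one can pass through $\frakE_{\beta}$ of Lemma \ref{lem_equivHK} and Lemma \ref{lem_equiv}, noting that the cell-average seminorm $\frakE_\beta$ at a single scale $n$ is exactly (up to constants) the single-scale Besov quantity, so that the whole statement reduces to showing $\Ff_\loc$ is characterised by $\sup_n$ of the $n$-th term of $\frakE_{\beta^*}$, which by the construction of $\Ee_\loc$ via $\Gamma$-limits of $(\beta^*-\beta_n)\frakE_{\beta_n}$ is essentially built in.

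The main obstacle is the reverse direction at the \emph{critical} exponent $\beta=\beta^*$: unlike in Lemma \ref{lem_equiv}, where $\beta<\beta^*$ gives summable geometric series in the number of scales and lets one trade point values for averages with a convergent error, at $\beta^*$ there is no room to spare and one must show that the error incurred in replacing $u(p)$ by a deep cell-average is \emph{uniformly} controlled by the same $\sup_n$ quantity rather than by a divergent sum of it. Concretely, one needs a statement of the form: if $M:=\sup_{n}3^{(\beta^*-\alpha)n}\sum_{w\in W_n}\sum_{p,q}(u(p)-u(q))^2<\infty$, then for any vertex $p\in V_n$ and any $N>n$, $|u(p)-P_Nu(w)|^2\lesssim M\cdot 3^{-(\beta^*-\alpha)n}$ uniformly in $N$ (so in particular $u$ is continuous and the oscillation at scale $n$ is $\lesssim (M\,3^{-(\beta^*-\alpha)n})^{1/2}$). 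This is precisely the critical-exponent Hölder/Morrey estimate on SC and is the analogue of Lemma \ref{lem_holder}; I expect it to follow from the resistance estimates / chaining argument already used elsewhere in the paper, together with the telescoping-over-dyadic-scales trick in which the geometric factor $3^{-(\beta^*-\alpha)k}$, $k\ge n$, is genuinely summable even though the \emph{number of scales} is not — i.e. the smallness is in the scale parameter, not in a ratio of term counts. Granting this lemma, assembling the four displayed inequalities and invoking Theorem \ref{thm_BM} finishes the proof.
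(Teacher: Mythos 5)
Your proposal is correct and follows essentially the same route as the paper: the paper proves the seminorm equivalence for every $\beta\in(\alpha,+\infty)$ (Proposition \ref{prop_equiv_local}) by rerunning the proof of Lemma \ref{lem_equiv} with $\sum_n$ replaced by $\sup_n$, using the $B^{2,\infty}$-version of the H\"older embedding (Lemma \ref{lem_holder_local}, quoted from \cite{GHL03}) exactly where you invoke your critical Morrey estimate, and then appeals to (\ref{eqn_Kigami}). The ``critical-exponent obstacle'' you single out is not actually present: the geometric series in the proof of Lemma \ref{lem_equiv} converge for every $\beta>\alpha$ (the smallness sits in the scale factors $3^{-(\beta-\alpha)ki}$ with $k$ chosen large and $4^{k-n}3^{\beta(n-k)}$, just as you observe at the end), so nothing degenerates at $\beta=\beta^*$.
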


We give a direct proof of this theorem using (\ref{eqn_Kigami}) and thus avoiding heat kernel estimates, while using some geometric properties of SC. Similar characterization of the domains of local regular Dirichlet forms was obtained in \cite{Jon96} for SG, \cite{Pie99} for simple nested fractals and \cite{HW06} for p.c.f. self-similar sets. In \cite{Pie00,GHL03,KS05}, the characterization of the domains of local regular Dirichlet forms was obtained in the setting of metric measure spaces assuming heat kernel estimates.

Finally, using (\ref{eqn_Kigami}) of Theorem \ref{thm_BM}, we give an alternative proof of sub-Gaussian heat kernel estimates as follows.

\begin{mythm}\label{thm_hk}
$(\calE_\loc,\calF_\loc)$ on $L^2(K;\nu)$ has a heat kernel $p_t(x,y)$ satisfying
$$p_t(x,y)\asymp\frac{C}{t^{\alpha/\beta^*}}\exp\left(-c\left(\frac{|x-y|}{t^{1/\beta^*}}\right)^{\frac{\beta^*}{\beta^*-1}}\right),$$
for all $x,y\in K,t\in(0,1)$.
\end{mythm}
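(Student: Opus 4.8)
The plan is to obtain the two-sided sub-Gaussian heat kernel estimate from the Kigami-type representation (\ref{eqn_Kigami}) of Theorem \ref{thm_BM} by verifying the equivalent conditions for such estimates established in \cite{GHL14,GH14a}. The route goes through resistance estimates: since $(\calE_\loc,\calF_\loc)$ turns out to be a resistance form in the sense of Kigami, the effective resistance metric is comparable to a power of the Euclidean metric, and this, together with volume regularity of the Hausdorff measure $\nu$, is exactly what is needed.

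\medskip

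\textbf{Step 1: Resistance estimates.} Using (\ref{eqn_Kigami}), I would first show that for any two opposite faces of a cell $K_w$ with $w\in W_n$, the effective resistance between them is comparable to $\rho^{-n}=3^{-(\beta^*-\alpha)n}$. The upper bound comes from using (\ref{eqn_Kigami}) to bound energy of a test function localized to the cell (self-similarity reduces this to the case $n=0$); the lower bound comes from the variational (monopole/dipole) characterization of resistance together with the fact that the ``good'' separating function constructed earlier in the paper (via the uniform Harnack inequality) has controlled energy. From the cell-to-cell estimates and chaining I would deduce that the resistance metric $R(x,y)$ satisfies
$$
R(x,y)\asymp |x-y|^{\beta^*-\alpha},
$$
for all $x,y\in K$, where $\beta^*-\alpha=\log\rho/\log3$. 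Here one also uses that $K$ is, loosely speaking, uniformly connected so that resistances across chains of cells add up correctly (no short-circuiting collapses them).

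\medskip

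\textbf{Step 2: Volume regularity and the general heat-kernel criterion.} The measure $\nu$ is Ahlfors regular of dimension $\alpha$ with respect to the Euclidean metric: $\nu(B(x,r))\asymp r^\alpha$. Rewriting this in the resistance metric $R$, balls $B_R(x,r)$ have $\nu(B_R(x,r))\asymp r^{\alpha/(\beta^*-\alpha)}$, so $(K,R,\nu)$ is Ahlfors regular of dimension $d_f:=\alpha/(\beta^*-\alpha)$. By the theory of resistance forms and the equivalence theorems of \cite{GHL14} (see also \cite{GH14a}), a resistance form on an Ahlfors-regular space for which the resistance metric is a power of the original metric automatically satisfies the sub-Gaussian estimate with walk dimension $\beta^* = d_f + 1$ in resistance-metric terms, which translates back to the exponent $\beta^*=\log(8\rho)/\log3$ in the Euclidean metric. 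Concretely, I would check the hypotheses of the relevant theorem: (i) $(\calE_\loc,\calF_\loc)$ is a regular Dirichlet form (Theorem \ref{thm_BM}); (ii) it is strongly local (Theorem \ref{thm_BM}); (iii) the resistance estimate of Step 1 holds; (iv) $\nu$ is $\alpha$-regular. These yield both the Nash-type upper bound and the near-diagonal lower bound, and the standard chaining argument (using geodesic-like chains of cells in $K$) upgrades the near-diagonal lower bound to the full off-diagonal Gaussian-type lower bound.

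\medskip

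\textbf{Main obstacle.} The genuinely hard point is Step 1, specifically the \emph{lower} bound for the resistance across a cell, equivalently the claim that $\calE_\loc$ does not ``see too little'' energy between opposite faces. This is where the good separating function with its energy and separation properties — produced earlier from the resistance estimates in the approximation graphs and the uniform Harnack inequality — is essential: it provides an admissible function in the variational problem with the correct order of energy, preventing the resistance from being smaller than $\rho^{-n}$. One must also be careful that the constants in the chaining arguments (both for resistances and for the heat-kernel lower bound) are uniform across scales, which again reduces, via the self-similarity in (\ref{eqn_Kigami}), to finitely many cases at scale $n=1$. Once the resistance estimate $R(x,y)\asymp|x-y|^{\beta^*-\alpha}$ is in hand, the passage to (\ref{eqn_hk}) is a direct citation of \cite{GHL14,GH14a} applied to the Ahlfors-regular resistance metric measure space $(K,R,\nu)$.
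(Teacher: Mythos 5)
Your overall architecture coincides with the paper's: both reduce the theorem to the two-sided resistance estimate $R(x,y)\asymp|x-y|^{\beta^*-\alpha}$ together with the $\alpha$-regularity of $\nu$, and then invoke the Green function estimates of \cite[Proposition 6.11]{GHL14} and the heat kernel machinery of \cite[Theorem 3.14]{GH14a} (with the caveat, which the paper notes, that compactness of $K$ restricts the estimate to $t\in(0,1)$). Where you differ is in how the resistance estimate is obtained. The paper gets the upper bound $R(x,y)\lesssim|x-y|^{\beta^*-\alpha}$ in one line from the H\"older embedding of $B^{2,\infty}_{\alpha,\beta^*}(K)$ (Lemma \ref{lem_holder_local} applied through (\ref{eqn_Kigami})), and the lower bound from a scaling identity for $R(x,B(x,r)^c)$ under the maps $y\mapsto x+C(y-x)$, followed by the monotonicity $R(x,y)\ge R(x,B(x,|x-y|)^c)$ --- no chaining and no explicit cell-by-cell analysis; your route through face-to-face resistances of cells is workable but heavier. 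One direction of your Step 1 is stated backwards: exhibiting a single admissible function of controlled energy, whether a localized cutoff or the good separating function, can only produce a \emph{lower} bound on a resistance, since it bounds the infimum in $R^{-1}=\inf\calE$ from above; the \emph{upper} bound on $R$ requires an energy lower bound valid for \emph{all} admissible functions, which is exactly what the H\"older estimate (or, in your language, the discrete resistance estimates fed through (\ref{eqn_Kigami})) supplies. With that correction your sketch goes through and is a legitimate alternative to the paper's scaling argument.
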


This paper is organized as follows. In Section \ref{sec_equiv}, we prove Lemma \ref{lem_equiv}. In Section \ref{sec_bound}, we prove Theorem \ref{thm_bound}. In Section \ref{sec_resistance}, we give resistance estimates. In Section \ref{sec_harnack}, we give uniform Harnack inequality. In Section \ref{sec_monotone}, we give two weak monotonicity results. In Section \ref{sec_good}, we construct one good function. In Section \ref{sec_walk}, we prove Theorem \ref{thm_walk}. In Section \ref{sec_BM}, we prove Theorem \ref{thm_BM}. In Section \ref{sec_Besov}, we prove Theorem \ref{thm_Besov}. In Section \ref{sec_hk}, we prove Theorem \ref{thm_hk}.

NOTATION. The letters $c,C$ will always refer to some positive constants and may change at each occurrence. The sign $\asymp$ means that the ratio of the two sides is bounded from above and below by positive constants. The sign $\lesssim$ ($\gtrsim$) means that the LHS is bounded by positive constant times the RHS from above (below).

\section{Proof of Lemma \ref{lem_equiv}}\label{sec_equiv}

We need some preparation as follows.

\begin{mylem}\label{lem_equiv1}(\cite[Lemma 2.1]{MY17})
For all $u\in L^2(K;\nu)$, we have
$$\int_K\int_K\frac{(u(x)-u(y))^2}{|x-y|^{\alpha+\beta}}\nu(\md x)\nu(\md y)\asymp\sum_{n=0}^\infty3^{(\alpha+\beta)n}\int_K\int_{B(x,3^{-n})}(u(x)-u(y))^2\nu(\md y)\nu(\md x).$$
\end{mylem}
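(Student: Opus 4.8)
The plan is the standard dyadic‑annulus comparison, organized around a single auxiliary quantity. For $x\in K$ and $n\ge 0$ introduce the annuli
\[A_n(x)=\myset{y\in K:3^{-n-1}\le|x-y|<3^{-n}},\]
so that $B(x,3^{-n})\cap K=\bigcup_{m\ge n}A_m(x)$ up to the $\nu$‑null point $\myset{x}$. I would compare both sides of the asserted equivalence to
\[S(u):=\sum_{m=0}^{\infty}3^{(\alpha+\beta)m}\int_K\int_{A_m(x)}(u(x)-u(y))^2\,\nu(\md y)\,\nu(\md x),\]
allowing all quantities to take the value $+\infty$ (so Tonelli's theorem applies freely).

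First I would treat the right-hand side. Using the annular decomposition of $B(x,3^{-n})$ and interchanging the two summations,
\[\sum_{n=0}^{\infty}3^{(\alpha+\beta)n}\int_K\int_{B(x,3^{-n})}(u(x)-u(y))^2\,\nu(\md y)\,\nu(\md x)=\sum_{m=0}^{\infty}\Big(\sum_{n=0}^{m}3^{(\alpha+\beta)n}\Big)\int_K\int_{A_m(x)}(u(x)-u(y))^2\,\nu(\md y)\,\nu(\md x),\]
and since $\alpha+\beta>0$ the inner geometric sum satisfies $\sum_{n=0}^{m}3^{(\alpha+\beta)n}\asymp 3^{(\alpha+\beta)m}$, so the right-hand side is $\asymp S(u)$. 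Next, on $A_m(x)$ one has $|x-y|\asymp 3^{-m}$, hence $|x-y|^{-(\alpha+\beta)}\asymp 3^{(\alpha+\beta)m}$; summing over $m\ge 0$ and noting $\bigcup_{m\ge 0}A_m(x)=\myset{y\in K:0<|x-y|<1}$ gives
\[\int_K\int_{\myset{y\in K:\,|x-y|<1}}\frac{(u(x)-u(y))^2}{|x-y|^{\alpha+\beta}}\,\nu(\md y)\,\nu(\md x)\asymp S(u).\]
The full double integral on the left of the lemma dominates this last integral, so at this stage I already obtain $S(u)\lesssim$ (left-hand side), and therefore (right-hand side) $\lesssim$ (left-hand side).

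It remains to prove the reverse inequality, which by the above reduces to controlling the "far" region $\myset{|x-y|\ge 1}$. There $1\le|x-y|\le\mathrm{diam}\,K=\sqrt2$, so the kernel is $\asymp 1$, and it suffices to show
\[\int_K\int_{\myset{y\in K:\,|x-y|\ge 1}}(u(x)-u(y))^2\,\nu(\md y)\,\nu(\md x)\ \lesssim\ \int_K\int_{B(x,1)}(u(x)-u(y))^2\,\nu(\md y)\,\nu(\md x),\]
the right-hand side being exactly the $n=0$ term of the lemma's right-hand side. This is the only step that uses the geometry of $K$, and it is where I expect the (mild) difficulty to lie. I would prove it by chaining: the eight level‑one cells $\myset{K_w:w\in W_1}$ cover $K$, each has $\nu(K_w)=\tfrac18$, and they are arranged in a ring so that any two of them are joined by a chain $K_{v^{(0)}},\dots,K_{v^{(N)}}$ of level‑one cells with $N$ bounded by an absolute constant, $K_{v^{(i)}}\cap K_{v^{(i+1)}}\neq\emptyset$, and $\mathrm{diam}(K_{v^{(i)}}\cup K_{v^{(i+1)}})<1$. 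For $x\in K_{v^{(0)}}$, $y\in K_{v^{(N)}}$ and points $z_i\in K_{v^{(i)}}$ with $z_0=x$, $z_N=y$, Cauchy–Schwarz gives $(u(x)-u(y))^2\le N\sum_{i=0}^{N-1}(u(z_i)-u(z_{i+1}))^2$; averaging the free points $z_1,\dots,z_{N-1}$ over their cells against $\nu$ and using $z_{i+1}\in B(z_i,1)$, each resulting term is bounded by a constant multiple of $\int_K\int_{B(x,1)}(u(x)-u(y))^2\,\nu(\md y)\,\nu(\md x)$. Summing over the finitely many pairs of level‑one cells yields the displayed estimate, and combining the three steps completes the proof.
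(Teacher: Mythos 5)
Your proposal is correct and follows the same triadic-annulus decomposition used in the proof of \cite[Lemma 2.1]{MY17}, to which the paper simply defers (``the proofs \dots are essentially the same \dots except that the contraction ratio $1/2$ is replaced by $1/3$''). The one genuine supplement you provide is the chaining bound for the far region $\myset{|x-y|\ge1}$ --- which is non-null here because $\mathrm{diam}\,K=\sqrt2>1$, unlike the gasket case of \cite{MY17} --- and your level-one-cell chain (with $\mathrm{diam}(K_{v^{(i)}}\cup K_{v^{(i+1)}})<1$ for intersecting cells) handles correctly the only point where the quoted substitution is not literally sufficient.
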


\begin{mycor}\label{cor_arbi}(\cite[Corollary 2.2]{MY17})
Fix arbitrary integer $N\ge0$ and real number $c>0$. For all $u\in L^2(K;\nu)$, we have
$$\int_K\int_K\frac{(u(x)-u(y))^2}{|x-y|^{\alpha+\beta}}\nu(\md x)\nu(\md y)\asymp\sum_{n=N}^\infty3^{(\alpha+\beta)n}\int_K\int_{B(x,c3^{-n})}(u(x)-u(y))^2\nu(\md y)\nu(\md x).$$
\end{mycor}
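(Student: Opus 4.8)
The plan is to deduce everything from Lemma \ref{lem_equiv1}. Write
$$Q_{r,M}(u):=\sum_{n=M}^{\infty}3^{(\alpha+\beta)n}\int_K\int_{B(x,r3^{-n})}(u(x)-u(y))^2\,\nu(\md y)\,\nu(\md x),$$
so that the right-hand side of the asserted equivalence is $Q_{c,N}(u)$, while Lemma \ref{lem_equiv1} says $\Ee_\beta(u,u)\asymp Q_{1,0}(u)$. Thus it suffices to prove $Q_{c,N}(u)\asymp Q_{1,0}(u)$ for each fixed integer $N\ge 0$ and real $c>0$. Note that the implied constants are allowed to depend on $N$ and $c$, and indeed they must: for $u(x)=x_1$ (the first coordinate function on $K$) and $\beta\in(\alpha,2)$ one checks $Q_{c,N}(u)\asymp 3^{(\beta-2)N}\to 0$ as $N\to\infty$, whereas $\Ee_\beta(u,u)$ is a fixed positive number.

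First I would remove the radius parameter $c$. Choosing an integer $m\ge 0$ with $3^{-m}\le c\le 3^m$ gives $B(x,3^{-(n+m)})\subseteq B(x,c3^{-n})\subseteq B(x,3^{-(n-m)})$, and shifting the summation index by $\pm m$ (which multiplies the sum by the fixed factor $3^{\pm(\alpha+\beta)m}$) yields $3^{-(\alpha+\beta)m}Q_{1,N+m}(u)\le Q_{c,N}(u)\le 3^{(\alpha+\beta)m}Q_{1,N-m}(u)$. When a summation index $k$ becomes negative, $B(x,3^{-k})=B(x,3^{|k|})=K$ because $\mathrm{diam}\,K=\sqrt2<3$, so that term equals $3^{(\alpha+\beta)k}\int_K\int_K(u(x)-u(y))^2\nu(\md y)\nu(\md x)$; since moreover $\int_K\int_K(u(x)-u(y))^2\nu(\md y)\nu(\md x)\le(\mathrm{diam}\,K)^{\alpha+\beta}\Ee_\beta(u,u)\asymp Q_{1,0}(u)$, the finitely many such terms are harmless. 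Hence the statement reduces to showing $Q_{1,M}(u)\asymp Q_{1,0}(u)$ for every fixed integer $M$, and here we may assume $M\ge 0$. The inequality $Q_{1,M}(u)\le Q_{1,0}(u)$ is trivial; for the converse,
$$Q_{1,0}(u)=Q_{1,M}(u)+\sum_{n=0}^{M-1}3^{(\alpha+\beta)n}\int_K\int_{B(x,3^{-n})}(u(x)-u(y))^2\,\nu(\md y)\,\nu(\md x),$$
is a finite sum of correction terms, and since $3^{(\alpha+\beta)M}\int_K\int_{B(x,3^{-M})}(u(x)-u(y))^2\nu(\md y)\nu(\md x)\le Q_{1,M}(u)$, it is enough to bound each $\int_K\int_{B(x,3^{-n})}(u(x)-u(y))^2\nu(\md y)\nu(\md x)$ with $0\le n<M$ by a constant multiple of $\int_K\int_{B(x,3^{-M})}(u(x)-u(y))^2\nu(\md y)\nu(\md x)$.

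The crux is therefore the following \emph{going-up inequality}: for fixed radii $0<r\le R\le\mathrm{diam}\,K$,
$$\int_K\int_{B(x,R)}(u(x)-u(y))^2\,\nu(\md y)\,\nu(\md x)\ \le\ C(r,R)\int_K\int_{B(x,r)}(u(x)-u(y))^2\,\nu(\md y)\,\nu(\md x).$$
I would prove it by chaining. Because $K$ is connected and self-similar it obeys a chain condition: there is an integer $L=L(r,R)$ such that any $x,y\in K$ with $|x-y|<R$ can be joined by a chain $x=z_0,z_1,\dots,z_\ell=y$ of at most $L$ points of $K$ with $|z_i-z_{i+1}|<r/2$ for all $i$. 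Then $(u(x)-u(y))^2\le L\sum_i(u(z_i)-u(z_{i+1}))^2$. Integrating over $\{(x,y):|x-y|<R\}$ and averaging each increment $(u(z_i)-u(z_{i+1}))^2$ over an intermediate point ranging in $B(z_i,r)\cap B(z_{i+1},r)$ — a set of $\nu$-measure $\gtrsim r^\alpha$, by the chain condition together with the $\alpha$-regularity $\nu(B(x,s))\asymp s^\alpha$ — one collapses, using Cauchy--Schwarz and Fubini, each term into $\lesssim\int_K\int_{B(z,r)}(u(z)-u(w))^2\nu(\md w)\nu(\md z)$, with a constant depending only on $r$ and $R$. (When $R$ is small compared with $r$ one intermediate point suffices and the averaging is immediate; the general case follows by iterating a bounded number of times, or directly with a longer chain.) Applying this with $r=3^{-M}$ and $R=3^{-n}$ closes the argument of the preceding paragraph.

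I expect the going-up inequality — specifically the chain condition of $K$ and the uniform lower bound $\nu(B(z_i,r)\cap B(z_{i+1},r))\gtrsim r^\alpha$ — to be the only non-routine ingredient; the remaining steps are bookkeeping with geometric series. Combining the three steps gives $Q_{c,N}(u)\asymp Q_{1,0}(u)\asymp\Ee_\beta(u,u)$, as claimed. $\square$
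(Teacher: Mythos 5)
Your argument is correct and follows the route the paper intends: the paper gives no proof of this corollary, deferring to \cite[Corollary~2.2]{MY17} (with contraction ratio $1/3$ in place of $1/2$) and remarking that the chain condition of SC is needed, and your reduction to $Q_{1,M}\asymp Q_{1,0}$ by index/radius bookkeeping plus a chaining step for the going-up inequality is exactly that argument; the observation that the constants must depend on $N$ (via $u(x)=x_1$) is a nice sanity check. The only place I would tighten the writeup is the averaging in the chaining step: rather than averaging the increment $(u(z_i)-u(z_{i+1}))^2$ over a single intermediate point (which still leaves the fixed chain points $z_i$ inside the increments), replace each interior chain point $z_i$ by an independent variable $w_i\in B(z_i,r/4)$ and average over all of them, so that every increment is between genuine integration variables at mutual distance $<r$ and Fubini together with $\nu(B(z_i,r/4))\gtrsim r^{\alpha}$ collapses each term as you describe.
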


The proofs of above results are essentially the same as those in \cite{MY17} except that contraction ratio $1/2$ is replaced by $1/3$. We also need the fact that SC satisfies the chain condition, see \cite[Definition 3.4]{GHL03}.

The following result states that a Besov space can be embedded in some H\"older space.
\begin{mylem}\label{lem_holder}(\cite[Theorem 4.11 (\rmnum{3})]{GHL03})
Let $u\in L^2(K;\nu)$ and
$$E(u):=\int_K\int_K\frac{(u(x)-u(y))^2}{|x-y|^{\alpha+\beta}}\nu(\md x)\nu(\md y),$$
then
$$|u(x)-u(y)|^2\le cE(u)|x-y|^{\beta-\alpha}\text{ for }\nu\text{-almost every }x,y\in K,$$
where $c$ is some positive constant.
\end{mylem}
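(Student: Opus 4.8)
The plan is to prove the Morrey-type estimate $|u(x)-u(y)|^2 \le c\,E(u)\,|x-y|^{\beta-\alpha}$ for $\nu$-a.e. $x,y\in K$ by a dyadic telescoping argument, comparing the values of $u$ averaged over carpet cells of decreasing size. First I would fix the equivalent form of $E(u)$ given by Lemma \ref{lem_equiv1}, namely $E(u)\asymp \sum_{n\ge 0} 3^{(\alpha+\beta)n}\int_K\int_{B(x,3^{-n})}(u(x)-u(y))^2\,\nu(\md y)\,\nu(\md x)$. For a ball $B=B(z,r)$ with $r\asymp 3^{-n}$, write $u_B=\frac{1}{\nu(B)}\int_B u\,\md\nu$ for the average. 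Using the Ahlfors regularity $\nu(B(z,r))\asymp r^\alpha$ of the Hausdorff measure on SC, one gets the standard bound
\[
|u_{B(z,r/3)} - u_{B(z,r)}|^2 \;\le\; \frac{c}{\nu(B(z,r/3))^2}\int_{B(z,r)}\int_{B(z,r)}(u(x)-u(y))^2\,\nu(\md x)\,\nu(\md y)\;\le\; c\,r^{-2\alpha}\,I_n(z),
\]
where $I_n(z)$ is the inner double integral at scale $n$. Summing the geometric-type series $\sum_n r_n^{-\alpha}\big(r_n^{-\alpha}I_n(z)\big)^{1/2}$ via Cauchy–Schwarz against the weights $3^{(\alpha+\beta)n/2}$, and using that $\beta>\alpha$ so that the complementary series $\sum_n 3^{-(\beta-\alpha)n}$ converges, shows that the averages $u_{B(z,3^{-n})}$ form a Cauchy sequence converging to a value $\tilde u(z)$ for $\nu$-a.e. $z$ (the Lebesgue point of $u$), with the telescoping estimate $|\tilde u(z) - u_{B(z,3^{-n})}|^2 \le c\,E(u)\,3^{-(\beta-\alpha)n}$.

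Next I would chain two such estimates. Given $x,y\in K$ with $3^{-(n+1)}\le |x-y|<3^{-n}$, I compare $\tilde u(x)$ and $\tilde u(y)$ by passing through the balls $B(x,3^{-n})$ and $B(y,3^{-n})$: since $|x-y|<3^{-n}$ these two balls are comparable to, and overlap with, a common ball $B$ of radius $\asymp 3^{-n}$, so $|u_{B(x,3^{-n})} - u_{B(y,3^{-n})}|^2 \le c\,3^{n\alpha}I_n$ is again controlled by the scale-$n$ term. Combining with the two telescoping bounds gives $|\tilde u(x)-\tilde u(y)|^2 \le c\,E(u)\,3^{-(\beta-\alpha)n} \asymp c\,E(u)\,|x-y|^{\beta-\alpha}$, which is the claim (with $u$ replaced by its a.e.-defined representative $\tilde u$).

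The step I expect to be the main obstacle is the geometric input needed to make the chaining legitimate on SC rather than on $\R^n$: one must know that for any $x,y\in K$ and any $r\ge |x-y|$ the intrinsic balls $B(x,r)\cap K$ and $B(y,r)\cap K$ are connected-through overlapping carpet cells and that $\nu$ is Ahlfors $\alpha$-regular uniformly, so that averages over comparable balls are genuinely comparable. This is exactly where the chain condition for SC (cited after Corollary \ref{cor_arbi}) enters, and it is what replaces the trivial Euclidean convexity argument. Once the chain condition and Ahlfors regularity are in hand, the remainder is the routine dyadic summation sketched above; the full details are carried out in \cite[Theorem 4.11]{GHL03}, and for SC specifically the computation is identical to the one on SG in \cite{MY17} with contraction ratio $1/2$ replaced by $1/3$.
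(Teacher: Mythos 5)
Your argument is correct and is essentially the proof of the cited result \cite[Theorem 4.11]{GHL03}: the paper gives no proof of this lemma but imports it from that reference, and your route---telescoping the ball averages $u_{B(z,3^{-n})}$, bounding each increment by $3^{\alpha n}$ times the localized energy at scale $n$ via Cauchy--Schwarz, and summing the resulting geometric series using $\beta>\alpha$ together with Lemma \ref{lem_equiv1}/Corollary \ref{cor_arbi} and the Ahlfors $\alpha$-regularity of $\nu$---is exactly the standard one. The only inaccuracy is your identification of the chain condition as the essential geometric input: the comparison of $u_{B(x,3^{-n})}$ with $u_{B(y,3^{-n})}$ for $|x-y|<3^{-n}$ needs only that both balls lie in a common ball of comparable radius and have measure $\asymp 3^{-\alpha n}$, i.e.\ $\alpha$-regularity, and indeed your own chaining step never invokes connectivity through cells; the chain condition is needed elsewhere (for upper bounds on the walk dimension), not for this embedding.
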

\begin{myrmk}
If $E(u)<+\infty$, then $u\in C^{\frac{\beta-\alpha}{2}}(K)$.
\end{myrmk}

Note that the proof of above lemma does not rely on heat kernel.

We divide Lemma \ref{lem_equiv} into the following Theorem \ref{thm_equiv1} and Theorem \ref{thm_equiv2}. The idea of the proofs of these theorems comes form \cite{Jon96}. But we do need to pay special attention to the difficulty brought by non p.c.f. property.

\begin{mythm}\label{thm_equiv1}
For all $u\in C(K)$, we have
$$\sum_{n=1}^\infty3^{(\beta-\alpha)n}\sum_{w\in W_n}
{\sum_{\mbox{\tiny
$
\begin{subarray}{c}
p,q\in V_w\\
|p-q|=2^{-1}\cdot3^{-n}
\end{subarray}
$
}}}
(u(p)-u(q))^2\lesssim\int_K\int_K\frac{(u(x)-u(y))^2}{|x-y|^{\alpha+\beta}}\nu(\md x)\nu(\md y).$$
\end{mythm}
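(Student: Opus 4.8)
The plan is to bound the discrete energy on the left by the integral on the right by comparing, at each scale $n$, the squared increment $(u(p)-u(q))^2$ for neighbouring vertices $p,q\in V_w$ against averages of $u$ over small cells, and then summing these estimates using Corollary \ref{cor_arbi} to absorb the geometric factors. Since the right-hand side is, up to constants, $\sum_{m\ge 0}3^{(\alpha+\beta)m}\int_K\int_{B(x,c3^{-m})}(u(x)-u(y))^2\nu(\md y)\nu(\md x)$ for any fixed constant $c>0$, I have freedom to choose whatever comparison radius is convenient.

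First I would fix $w\in W_n$ and two points $p,q\in V_w$ with $|p-q|=2^{-1}3^{-n}$. The key step is a ``chaining through cell averages'' argument: introduce the level-$(n+k)$ averages $a_v = \frac{1}{\nu(K_v)}\int_{K_v}u\,\md\nu$ for subcells $K_v\subset K_w$, and write $u(p)$ as a telescoping limit $u(p)=a_{w}+\sum_{k\ge 0}(a_{v_{k+1}}-a_{v_k})$ along a decreasing sequence of cells $K_{v_0}\supset K_{v_1}\supset\cdots$ shrinking to $p$ (using continuity of $u$), and similarly for $q$. This reduces $(u(p)-u(q))^2$ to a sum of controlled differences $(a_v-a_{v'})^2$ of cell averages where $v,v'$ are adjacent cells at the same level, each of which is bounded by $\frac{1}{\nu(K_v)\nu(K_{v'})}\int_{K_v}\int_{K_{v'}}(u(x)-u(y))^2\,\md\nu\,\md\nu$ by Cauchy--Schwarz, hence by $C\,3^{\alpha m}\int_{K_v}\int_{B(x,c3^{-m})}(u(x)-u(y))^2\,\md\nu\,\md\nu$ at the appropriate level $m=n+k$, since $\nu(K_v)\asymp 3^{-\alpha m}$ and adjacent cells lie within distance $c3^{-m}$. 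A Cauchy--Schwarz (or weighted $\ell^2$) step is needed to handle the infinite telescoping sum, which produces a summable factor in $k$ provided $\beta>\alpha$ — this is exactly where the hypothesis $\beta\in(\alpha,+\infty)$ enters.

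Next I would sum over the neighbouring pairs $p,q$ in $V_w$ (a bounded number, uniformly in $w$ and $n$), then over $w\in W_n$, then over $n\ge 1$. Here I must be careful that a given pair of small cells $(K_v,K_{v'})$ at level $m$ is reached from only boundedly many triples $(n,w,\{p,q\})$ with $n\le m$ and a controlled weight; the non-p.c.f. structure of SC means that cells can intersect along whole line segments and a vertex of $V_w$ can belong to many cells, so the combinatorial bookkeeping — counting how many chains pass through a fixed small cell — is more delicate than in the p.c.f. case treated in \cite{Jon96}. After interchanging the order of summation, the scale-$n$ weight $3^{(\beta-\alpha)n}$ combines with the per-term weight $3^{\alpha m}$ and the summable-in-$k$ factor to give exactly $3^{(\alpha+\beta)m}$ times the local double integral over $B(x,c3^{-m})$, summed over $m$, which by Corollary \ref{cor_arbi} is $\lesssim \calE_\beta(u,u)$.

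The main obstacle I anticipate is precisely this last combinatorial step: controlling, uniformly in all scales, the multiplicity with which a fixed pair of adjacent small cells is used across the telescoping chains originating from coarser neighbouring vertex pairs, and verifying that the resulting weights still sum (using $\beta>\alpha$) to the clean factor $3^{(\alpha+\beta)m}$. This is where the failure of the post-critically-finite property must be confronted — one cannot simply track a bounded number of ``cell paths'' as on the Sierpiński gasket — and it is likely the technical heart of the proof, requiring the chain condition of SC and the bounded geometry of the neighbouring relation $\sim_n$ to keep all multiplicities uniformly bounded.
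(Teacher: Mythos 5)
Your plan follows essentially the same route as the paper's proof: both arguments compare a vertex pair $p,q\in V_w$ to averages over a nested sequence of cells shrinking to $p$ (resp.\ to $q$), bound the resulting differences of cell averages by localized double integrals via Cauchy--Schwarz, and close with Corollary \ref{cor_arbi}. The one genuine point of divergence is how the fine end of the chain is handled. You telescope infinitely, $u(p)=a_w+\sum_{k\ge0}(a_{v_{k+1}}-a_{v_k})$ with $K_{v_k}\downarrow\{p\}$ (note the consecutive cells are nested at successive levels, not adjacent at the same level), and control the infinite sum by a weighted Cauchy--Schwarz with weights $\theta^k$ for any $\theta\in(3^{-(\beta-\alpha)},1)$; this is self-contained and uses $\beta>\alpha$ exactly where you say it does. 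The paper instead truncates the telescope after $l=n$ steps of stride $k$ and bounds the leftover term $(u(p)-u(x^{(l)}))^2$ by the H\"older embedding of Lemma \ref{lem_holder}, choosing $k$ large enough that both resulting geometric series converge; this imports \cite[Theorem 4.11]{GHL03} (whose proof uses the chain condition) but avoids the weighted $\ell^2$ bookkeeping. Both routes work, and yours has the small advantage of not invoking Lemma \ref{lem_holder} in this direction. One correction of emphasis: the combinatorial multiplicity that you identify as the likely technical heart is in fact the easy part and needs neither the chain condition nor any delicate counting. For fixed $n$ and depth $k$, the map $(w,p)\mapsto v_k=ww_{n+1}^{k}$ from vertex pairs to level-$(n+k)$ words is injective, and distinct words of the same length index cells with disjoint interiors, so each small cell is used at most once and the sum over all $(w,p)$ of the localized double integrals is dominated by a single integral over $K$ at that scale --- this is precisely how the paper disposes of the issue in one line. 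The failure of the p.c.f.\ property causes no trouble in this direction; it is only the reverse inequality (Theorem \ref{thm_equiv2}) where the geometry of intersecting cells has to be confronted.
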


\begin{proof}
First fix $n\ge1,w=w_1\ldots w_n\in W_n$, consider
$$
{\sum_{\mbox{\tiny
$
\begin{subarray}{c}
p,q\in V_w\\
|p-q|=2^{-1}\cdot3^{-n}
\end{subarray}
$
}}}
(u(p)-u(q))^2.$$
For all $x\in K_w$, we have
$$(u(p)-u(q))^2\le2(u(p)-u(x))^2+2(u(x)-u(q))^2.$$
Integrating with respect to $x\in K_w$ and dividing by $\nu(K_w)$, we have
$$(u(p)-u(q))^2\le\frac{2}{\nu(K_w)}\int_{K_w}(u(p)-u(x))^2\nu(\md x)+\frac{2}{\nu(K_w)}\int_{K_w}(u(x)-u(q))^2\nu(\md x),$$
hence
$$
{\sum_{\mbox{\tiny
$
\begin{subarray}{c}
p,q\in V_w\\
|p-q|=2^{-1}\cdot3^{-n}
\end{subarray}
$
}}}
(u(p)-u(q))^2\le2\cdot2\cdot2\sum_{p\in V_w}\frac{1}{\nu(K_w)}\int_{K_w}(u(p)-u(x))^2\nu(\md x).$$
Consider $(u(p)-u(x))^2$, $p\in V_w$, $x\in K_w$. There exists $w_{n+1}\in\myset{0,\ldots,7}$ such that $p=f_{w_1}\circ\ldots\circ f_{w_n}(p_{w_{n+1}})$. Let $k,l\ge1$ be integers to be determined, let
$$w^{(i)}=w_1\ldots w_nw_{n+1}\ldots w_{n+1}$$
with $ki$ terms of $w_{n+1}$, $i=0,\ldots,l$. For all $x^{(i)}\in K_{w^{(i)}}$, $i=0,\ldots,l$, we have
$$
\begin{aligned}
(u(p)-u(x^{(0)}))^2&\le2(u(p)-u(x^{(l)}))^2+2(u(x^{(0)})-u(x^{(l)}))^2\\
&\le2(u(p)-u(x^{(l)}))^2+2\left[2(u(x^{(0)})-u(x^{(1)}))^2+2(u(x^{(1)})-u(x^{(l)}))^2\right]\\
&=2(u(p)-u(x^{(l)}))^2+2^2(u(x^{(0)})-u(x^{(1)}))^2+2^2(u(x^{(1)})-u(x^{(l)}))^2\\
&\le\ldots\le2(u(p)-u(x^{(l)}))^2+2^2\sum_{i=0}^{l-1}2^i(u(x^{(i)})-u(x^{(i+1)}))^2.
\end{aligned}
$$
Integrating with respect to $x^{(0)}\in K_{w^{(0)}}$, \ldots, $x^{(l)}\in K_{w^{(l)}}$ and dividing by $\nu(K_{w^{(0)}})$, \ldots, $\nu(K_{w^{(l)}})$, we have
$$
\begin{aligned}
&\frac{1}{\nu(K_{w^{(0)}})}\int_{K_{w^{(0)}}}(u(p)-u(x^{(0)}))^2\nu(\md x^{(0)})\\
&\le\frac{2}{\nu(K_{w^{(l)}})}\int_{K_{w^{(l)}}}(u(p)-u(x^{(l)}))^2\nu(\md x^{(l)})\\
&+2^2\sum_{i=0}^{l-1}\frac{2^i}{\nu(K_{w^{(i)}})\nu(K_{w^{(i+1)}})}\int_{K_{w^{(i)}}}\int_{K_{w^{(i+1)}}}(u(x^{(i)})-u(x^{(i+1)}))^2\nu(\md x^{(i)})\nu(\md x^{(i+1)}).
\end{aligned}
$$
Now let us use $\nu(K_{w^{(i)}})=(1/8)^{n+ki}=3^{-\alpha(n+ki)}$. For the first term, by Lemma \ref{lem_holder}, we have
$$
\begin{aligned}
\frac{1}{\nu(K_{w^{(l)}})}\int_{K_{w^{(l)}}}(u(p)-u(x^{(l)}))^2\nu(\md x^{(l)})&\le \frac{cE(u)}{\nu(K_{w^{(l)}})}\int_{K_{w^{(l)}}}|p-x^{(l)}|^{\beta-\alpha}\nu(\md x^{(l)})\\
&\le{2}^{(\beta-\alpha)/2}cE(u){3}^{-(\beta-\alpha)(n+kl)}.
\end{aligned}
$$
For the second term, for all $x^{(i)}\in K_{w^{(i)}},x^{(i+1)}\in K_{w^{(i+1)}}$, we have
$$|x^{(i)}-x^{(i+1)}|\le\sqrt{2}\cdot3^{-(n+ki)},$$
hence
$$
\begin{aligned}
&\sum_{i=0}^{l-1}\frac{2^i}{\nu(K_{w^{(i)}})\nu(K_{w^{(i+1)}})}\int_{K_{w^{(i)}}}\int_{K_{w^{(i+1)}}}(u(x^{(i)})-u(x^{(i+1)}))^2\nu(\md x^{(i)})\nu(\md x^{(i+1)})\\
&\le\sum_{i=0}^{l-1}{2^{i}\cdot3^{\alpha k+2\alpha(n+ki)}}\int\limits_{K_{w^{(i)}}}\int\limits_{|x^{(i+1)}-x^{(i)}|\le\sqrt{2}\cdot3^{-(n+ki)}}(u(x^{(i)})-u(x^{(i+1)}))^2\nu(\md x^{(i)})\nu(\md x^{(i+1)}),
\end{aligned}
$$
and
$$
\begin{aligned}
&\frac{1}{\nu(K_w)}\int_{K_w}(u(p)-u(x))^2\nu(\md x)=\frac{1}{\nu(K_{w^{(0)}})}\int_{K_{w^{(0)}}}(u(p)-u(x^{(0)}))^2\nu(\md x^{(0)})\\
&\le 2\cdot{2}^{(\beta-\alpha)/2}cE(u)3^{-(\beta-\alpha)(n+kl)}\\
&+4\sum_{i=0}^{l-1}{2^{i}\cdot3^{\alpha k+2\alpha(n+ki)}}\int\limits_{K_{w^{(i)}}}\int\limits_{|x-y|\le\sqrt{2}\cdot3^{-(n+ki)}}(u(x)-u(y))^2\nu(\md x)\nu(\md y).
\end{aligned}
$$
Hence
$$
\begin{aligned}
&\sum_{w\in W_n}
{\sum_{\mbox{\tiny
$
\begin{subarray}{c}
p,q\in V_w\\
|p-q|=2^{-1}\cdot3^{-n}
\end{subarray}
$
}}}
(u(p)-u(q))^2\\
&\le8\sum_{w\in W_n}\sum_{p\in V_w}\frac{1}{\nu(K_w)}\int_{K_w}(u(p)-u(x))^2\nu(\md x)\\
&\le8\sum_{w\in W_n}\sum_{p\in V_w}\left(2\cdot{2}^{(\beta-\alpha)/2}cE(u)3^{-(\beta-\alpha)(n+kl)}\right.\\
&\left.+4\sum_{i=0}^{l-1}{2^{i}\cdot3^{\alpha k+2\alpha(n+ki)}}\int\limits_{K_{w^{(i)}}}\int\limits_{|x-y|\le\sqrt{2}\cdot3^{-(n+ki)}}(u(x)-u(y))^2\nu(\md x)\nu(\md y)\right).
\end{aligned}
$$
For the first term, we have
$$\sum_{w\in W_n}\sum_{p\in V_w}3^{-(\beta-\alpha)(n+kl)}=8\cdot8^n\cdot3^{-(\beta-\alpha)(n+kl)}=8\cdot3^{\alpha n-(\beta-\alpha)(n+kl)}.$$
For the second term, fix $i=0,\ldots,l-1$, different $p\in V_w$, $w\in W_n$ correspond to different $K_{w^{(i)}}$, hence
$$
\begin{aligned}
&\sum_{i=0}^{l-1}\sum_{w\in W_n}\sum_{p\in V_w}2^{i}\cdot3^{\alpha k+2\alpha(n+ki)}\int\limits_{K_{w^{(i)}}}\int\limits_{|x-y|\le\sqrt{2}\cdot3^{-(n+ki)}}(u(x)-u(y))^2\nu(\md x)\nu(\md y)\\
&\le\sum_{i=0}^{l-1}2^{i}\cdot3^{\alpha k+2\alpha(n+ki)}\int\limits_{K}\int\limits_{|x-y|\le\sqrt{2}\cdot3^{-(n+ki)}}(u(x)-u(y))^2\nu(\md x)\nu(\md y)\\
&=3^{\alpha k}\sum_{i=0}^{l-1}2^{i}\cdot3^{-(\beta-\alpha)(n+ki)}\left(3^{(\alpha+\beta)(n+ki)}\int\limits_{K}\int\limits_{|x-y|\le\sqrt{2}\cdot3^{-(n+ki)}}(u(x)-u(y))^2\nu(\md x)\nu(\md y)\right).
\end{aligned}
$$
For simplicity, denote
$$E_{n}(u)=3^{(\alpha+\beta)n}\int_{K}\int_{|x-y|\le\sqrt{2}\cdot3^{-n}}(u(x)-u(y))^2\nu(\md x)\nu(\md y).$$
We have
\begin{equation}\label{eqn_equiv1_1}
\begin{aligned}
&\sum_{w\in W_n}
{\sum_{\mbox{\tiny
$
\begin{subarray}{c}
p,q\in V_w\\
|p-q|=2^{-1}\cdot3^{-n}
\end{subarray}
$
}}}
(u(p)-u(q))^2\\
&\le128\cdot{2}^{(\beta-\alpha)/2}cE(u)3^{\alpha n-(\beta-\alpha)(n+kl)}+32\cdot3^{\alpha k}\sum_{i=0}^{l-1}2^{i}\cdot3^{-(\beta-\alpha)(n+ki)}E_{n+ki}(u).
\end{aligned}
\end{equation}
Hence
$$
\begin{aligned}
&\sum_{n=1}^\infty3^{(\beta-\alpha)n}\sum_{w\in W_n}
{\sum_{\mbox{\tiny
$
\begin{subarray}{c}
p,q\in V_w\\
|p-q|=2^{-1}\cdot3^{-n}
\end{subarray}
$
}}}
(u(p)-u(q))^2\\
&\le128\cdot{2}^{(\beta-\alpha)/2}cE(u)\sum_{n=1}^\infty3^{\beta n-(\beta-\alpha)(n+kl)}+32\cdot3^{\alpha k}\sum_{n=1}^\infty\sum_{i=0}^{l-1}2^{i}\cdot3^{-(\beta-\alpha)ki}E_{n+ki}(u).
\end{aligned}
$$
Take $l=n$, then
$$
\begin{aligned}
&\sum_{n=1}^\infty3^{(\beta-\alpha)n}\sum_{w\in W_n}
{\sum_{\mbox{\tiny
$
\begin{subarray}{c}
p,q\in V_w\\
|p-q|=2^{-1}\cdot3^{-n}
\end{subarray}
$
}}}
(u(p)-u(q))^2\\
&\le128\cdot{2}^{(\beta-\alpha)/2}cE(u)\sum_{n=1}^\infty3^{\left[\beta-(\beta-\alpha)(k+1)\right]n}+32\cdot3^{\alpha k}\sum_{n=1}^\infty\sum_{i=0}^{n-1}2^{i}\cdot3^{-(\beta-\alpha)ki}E_{n+ki}(u)\\
&=128\cdot{2}^{(\beta-\alpha)/2}cE(u)\sum_{n=1}^\infty3^{\left[\beta-(\beta-\alpha)(k+1)\right]n}+32\cdot3^{\alpha k}\sum_{i=0}^\infty2^{i}\cdot3^{-(\beta-\alpha)ki}\sum_{n=i+1}^{\infty}E_{n+ki}(u)\\
&\le128\cdot{2}^{(\beta-\alpha)/2}cE(u)\sum_{n=1}^\infty3^{\left[\beta-(\beta-\alpha)(k+1)\right]n}+32\cdot3^{\alpha k}\sum_{i=0}^\infty3^{\left[1-(\beta-\alpha)k\right]i}C_1E(u),
\end{aligned}
$$
where $C_1$ is some positive constant from Corollary \ref{cor_arbi}. Take $k\ge1$ sufficiently large such that $\beta-(\beta-\alpha)(k+1)<0$ and $1-(\beta-\alpha)k<0$, then above two series converge, hence
$$\sum_{n=1}^\infty3^{(\beta-\alpha)n}\sum_{w\in W_n}
{\sum_{\mbox{\tiny
$
\begin{subarray}{c}
p,q\in V_w\\
|p-q|=2^{-1}\cdot3^{-n}
\end{subarray}
$
}}}
(u(p)-u(q))^2\lesssim\int_K\int_K\frac{(u(x)-u(y))^2}{|x-y|^{\alpha+\beta}}\nu(\md x)\nu(\md y).$$
\end{proof}

\begin{mythm}\label{thm_equiv2}
For all $u\in C(K)$, we have
\begin{equation}\label{eqn_equiv2_1}
\int_K\int_K\frac{(u(x)-u(y))^2}{|x-y|^{\alpha+\beta}}\nu(\md x)\nu(\md y)\lesssim\sum_{n=1}^\infty3^{(\beta-\alpha)n}\sum_{w\in W_n}
{\sum_{\mbox{\tiny
$
\begin{subarray}{c}
p,q\in V_w\\
|p-q|=2^{-1}\cdot3^{-n}
\end{subarray}
$
}}}
(u(p)-u(q))^2,
\end{equation}
or equivalently for all $c\in(0,1)$
\begin{equation}\label{eqn_equiv2_2}
\begin{aligned}
&\sum_{n=2}^\infty3^{(\alpha+\beta)n}\int\limits_K\int\limits_{B(x,c3^{-n})}(u(x)-u(y))^2\nu(\md y)\nu(\md x)\\
&\lesssim\sum_{n=1}^\infty3^{(\beta-\alpha)n}\sum_{w\in W_n}
{\sum_{\mbox{\tiny
$
\begin{subarray}{c}
p,q\in V_w\\
|p-q|=2^{-1}\cdot3^{-n}
\end{subarray}
$
}}}
(u(p)-u(q))^2.
\end{aligned}
\end{equation}
\end{mythm}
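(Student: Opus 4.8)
\medskip\noindent The plan is to prove (\ref{eqn_equiv2_2}) for one conveniently small $c\in(0,1)$; this suffices because, by Corollary \ref{cor_arbi} (with $N=2$), the left-hand side of (\ref{eqn_equiv2_2}) is comparable to $\int_K\int_K|x-y|^{-(\alpha+\beta)}(u(x)-u(y))^2\,\nu(\md x)\nu(\md y)$ for every such $c$, which renders (\ref{eqn_equiv2_1}) and (\ref{eqn_equiv2_2}) equivalent. I will take $c$ so small (e.g.\ $c\le\tfrac13$) that, by the chain condition for SC, any two points at distance $<c3^{-n}$ lie in level-$n$ cells joined by a chain of at most $L$ level-$n$ cells (with $L$ an absolute constant), consecutive cells in the chain sharing a vertex of $V_n$; call this fact (c). Throughout write $S_j:=\sum_{w\in W_j}\sum_{p,q\in V_w,\,|p-q|=2^{-1}3^{-j}}(u(p)-u(q))^2$, so that the right-hand side of (\ref{eqn_equiv2_2}) is $\sum_{j\ge1}3^{(\beta-\alpha)j}S_j$.

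The core is a chaining argument in the spirit of \cite{Jon96}. For $x\in K$ fix an address $w(x)=w_1(x)w_2(x)\cdots$ (the lexicographically least one) and put $q_m(x):=f_{w(x)|_m}(p_0)\in V_{w(x)|_m}$. Since $\operatorname{diam}K_{w(x)|_m}=\sqrt2\cdot3^{-m}\to0$ and $u\in C(K)$, we have $u(q_m(x))\to u(x)$, hence $u(x)-u(q_n(x))=\sum_{m\ge n}\bigl(u(q_{m+1}(x))-u(q_m(x))\bigr)$. Two \emph{uniform} connectivity facts make this useful: (a) for each $w\in W_m$ the eight points of $V_w$ are the vertices of an $8$-cycle in the level-$m$ edge set, so any two of them are joined within $V_w$ by $\le4$ such edges; and (b) the level-$(m+1)$ subgraph carried by a single cell $K_w$, $w\in W_m$, is connected with diameter bounded by an absolute constant. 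Both are elementary geometric statements about SC that I would check by inspection. They give: $q_m(x)$ and $q_{m+1}(x)$ are joined by $O(1)$ level-$(m+1)$ edges inside $K_{w(x)|_m}$; and, for $|x-y|<c3^{-n}$, $q_n(x)$ and $q_n(y)$ are joined by $O(1)$ level-$n$ edges all lying within distance $O(3^{-n})$ of $x$ (walk around the boundary cycles of the cells of the chain from (c), using (a)). Splitting $u(x)-u(y)=\bigl(u(x)-u(q_n(x))\bigr)+\bigl(u(q_n(x))-u(q_n(y))\bigr)+\bigl(u(q_n(y))-u(y)\bigr)$ and applying Cauchy--Schwarz along each of the three paths bounds $(u(x)-u(y))^2$, for $|x-y|<c3^{-n}$, by a constant times a sum of $(u(p)-u(q))^2$ over these finitely many edges.

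Inserting this into the left side of (\ref{eqn_equiv2_2}) and summing is then routine, using that $\nu$ is $\alpha$-Ahlfors regular so $\nu(B(x,c3^{-n}))\asymp3^{-\alpha n}$, together with Fubini. The ``middle'' term $(u(q_n(x))-u(q_n(y)))^2$ is dominated by a bounded sum of $(u(p_e)-u(q_e))^2$ over level-$n$ edges $e$ near $x$; integrating in $y$ over $B(x,c3^{-n})$, then in $x$, and noting each level-$n$ edge is near an $x$-set of $\nu$-measure $\lesssim3^{-\alpha n}$, yields a contribution $\lesssim\sum_n3^{(\alpha+\beta)n}\cdot3^{-\alpha n}\cdot3^{-\alpha n}S_n=\sum_n3^{(\beta-\alpha)n}S_n$. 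For the first term (and, by symmetry in $x,y$, the third) I apply Cauchy--Schwarz to the telescoping sum with geometric weights: pick $\eta\in(1,3^\beta)$ (possible since $\beta>\alpha$, in particular $\beta>0$), so that
$$\bigl(u(x)-u(q_n(x))\bigr)^2\le\Bigl(\sum_{m\ge n}\eta^{-(m-n)}\Bigr)\sum_{m\ge n}\eta^{m-n}\bigl(u(q_{m+1}(x))-u(q_m(x))\bigr)^2\lesssim\sum_{m\ge n}\eta^{m-n}\bigl(u(q_{m+1}(x))-u(q_m(x))\bigr)^2.$$
Bounding each squared difference by the level-$(m+1)$ edge sum inside $K_{w(x)|_m}$ and partitioning $K$ into level-$m$ cells (each of measure $3^{-\alpha m}$, with a fixed bounding edge set) gives $\int_K(u(x)-u(q_n(x)))^2\,\nu(\md x)\lesssim\sum_{m\ge n}\eta^{m-n}3^{-\alpha m}S_{m+1}$, so this term contributes $\lesssim\sum_n3^{\beta n}\sum_{m\ge n}\eta^{m-n}3^{-\alpha m}S_{m+1}=\sum_m3^{-\alpha m}S_{m+1}\sum_{n\le m}3^{\beta n}\eta^{m-n}$, and since $\eta<3^\beta$ the inner sum is $\lesssim3^{\beta m}$, whence the whole is $\lesssim\sum_m3^{(\beta-\alpha)(m+1)}S_{m+1}\lesssim\sum_{j\ge1}3^{(\beta-\alpha)j}S_j$. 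Adding the three contributions gives (\ref{eqn_equiv2_2}).

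The analytic steps (Cauchy--Schwarz, Fubini, geometric sums, Ahlfors regularity) are routine; I expect the real work to be in pinning down the uniform connectivity facts (a)--(c), which is precisely where the non-p.c.f.\ structure of SC bites: cells can meet along entire boundary segments and $V_n$ contains edge midpoints, so one must check carefully that nearby cells chain with length independent of $n$ and that corners within one cell are linked by uniformly short edge paths. Granting this, the bookkeeping closes because $\beta>\alpha$ (in particular $\beta>0$) leaves room to place the weight $\eta$ strictly between $1$ and $3^\beta$.
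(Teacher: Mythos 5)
Your proof is correct, and it follows the same Jonsson-style chaining strategy as the paper's proof of Theorem \ref{thm_equiv2}: split $(u(x)-u(y))^2$ into a ``horizontal'' piece joining anchor vertices of the cells of $x$ and $y$ by $O(1)$ edges, plus two ``vertical'' pieces descending through the scales, then close the estimate with a geometrically weighted Cauchy--Schwarz and an edge-multiplicity count. The execution differs in two respects. First, the paper discretizes: it replaces $\nu$ by the uniform measures $\nu_m$ on $V_m$, runs the chain only down to level $m$ (terminating at a point of $V_m$, with at most four edges per scale), and then lets $m\to+\infty$ using the weak convergence $\nu_m\to\nu$; you instead integrate against $\nu$ directly and telescope all the way to the point $x$ via $u(q_m(x))\to u(x)$, trading the limiting step for a harmless measurability remark about $x\mapsto q_m(x)$ (the points with several addresses lie on cell intersections of Hausdorff dimension $1<\alpha$, hence form a $\nu$-null set). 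Second, your free weight $\eta^{m-n}$ with $1<\eta<3^{\beta}$ makes the final double sum close as soon as $\beta>0$, whereas the paper's fixed weight $4^{k-n}$ requires $3^{\beta}>4$; both conditions hold in the range $\beta>\alpha=\log8/\log3$ where the lemma is used, so nothing substantive is gained, but your bookkeeping is marginally more flexible. The connectivity facts (a)--(c) that you defer to inspection are exactly the geometric input the paper also relies on (the $8$-cycle structure of each $V_w$, the fact that intersecting level-$n$ cells share a vertex of $V_n$, and the bounded-diameter connectedness of the level-$(n+1)$ subgraph inside a level-$n$ cell), and they do hold for SC as you state them, so your argument is complete once they are written out.
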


\begin{proof}
Note $V_n=\cup_{w\in W_n}V_w$, it is obvious that its cardinal $\#V_n\asymp8^n=3^{\alpha n}$. Let $\nu_n$ be the measure on $V_n$ which assigns $1/\#V_n$ on each point of $V_n$, then $\nu_n$ converges weakly to $\nu$.

First, for $n\ge2,m>n$, we estimate
$$3^{(\alpha+\beta)n}\int_K\int_{B(x,c3^{-n})}(u(x)-u(y))^2\nu_m(\md y)\nu_m(\md x).$$
Note that
$$
\begin{aligned}
\int\limits_K\int\limits_{B(x,c3^{-n})}(u(x)-u(y))^2\nu_m(\md y)\nu_m(\md x)=\sum\limits_{w\in W_n}\int\limits_{K_w}\int\limits_{B(x,c3^{-n})}(u(x)-u(y))^2\nu_m(\md y)\nu_m(\md x).
\end{aligned}
$$
Fix $w\in W_n$, there exist at most nine $\tilde{w}\in W_n$ such that $K_{\tilde{w}}\cap K_w\ne\emptyset$, see Figure \ref{fig_Kw}.

\begin{figure}[ht]
\centering
\begin{tikzpicture}[scale=0.5]

\draw (0,0)--(6,0)--(6,6)--(0,6)--cycle;
\draw (2,0)--(2,6);
\draw (4,0)--(4,6);
\draw (0,2)--(6,2);
\draw (0,4)--(6,4);
\draw[very thick] (2,2)--(4,2)--(4,4)--(2,4)--cycle;

\draw (3,3) node {$K_w$};

\end{tikzpicture}
\caption{A Neighborhood of $K_w$}\label{fig_Kw}
\end{figure}
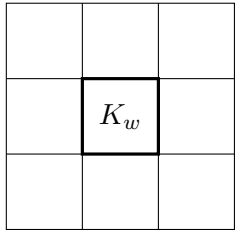

Let
$$K_w^*=
{\bigcup_{\mbox{\tiny
$
\begin{subarray}{c}
\tilde{w}\in W_n\\
K_{\tilde{w}}\cap K_w\ne\emptyset
\end{subarray}
$
}}}
K_{\tilde{w}}.$$
For all $x\in K_w$, $y\in B(x,c3^{-n})$, we have $y\in K_w^*$, hence
$$
\begin{aligned}
&\int_{K_w}\int_{B(x,c3^{-n})}(u(x)-u(y))^2\nu_m(\md y)\nu_m(\md x)\le\int_{K_w}\int_{K_w^*}(u(x)-u(y))^2\nu_m(\md y)\nu_m(\md x)\\
&=
{\sum_{\mbox{\tiny
$
\begin{subarray}{c}
\tilde{w}\in W_n\\
K_{\tilde{w}}\cap K_w\ne\emptyset
\end{subarray}
$
}}}
\int_{K_w}\int_{K_{\tilde{w}}}(u(x)-u(y))^2\nu_m(\md y)\nu_m(\md x).
\end{aligned}
$$
Note $\myset{P_w}=K_w\cap V_{n-1}$ for all $w\in W_n$. Fix $\tilde{w},w\in W_n$ with $K_{\tilde{w}}\cap K_w\ne\emptyset$. If $P_{\tilde{w}}\ne P_w$, then $|P_{\tilde{w}}-P_w|=2^{-1}\cdot3^{-(n-1)}$ or there exists a unique $z\in V_{n-1}$ such that

\begin{equation}\label{eqn_med}
\lvert P_{\tilde{w}}-z\rvert=\lvert P_w-z\rvert=2^{-1}\cdot3^{-(n-1)}.
\end{equation}
Let $z_1=P_{\tilde{w}}$, $z_3=P_w$ and
$$z_2=
\begin{cases}
P_{\tilde{w}}=P_w,&\text{if }P_{\tilde{w}}=P_w,\\
P_{\tilde{w}},&\text{if }|P_{\tilde{w}}-P_w|=2^{-1}\cdot3^{-(n-1)},\\
z,&\text{if }P_{\tilde{w}}\ne P_w\text{ and }z \text{ is given by Equation (\ref{eqn_med})}.
\end{cases}
$$
Then for all $x\in K_w$, $y\in K_{\tilde{w}}$, we have
$$
\begin{aligned}
&(u(x)-u(y))^2\\
&\le4\left[(u(y)-u(z_1))^2+(u(z_1)-u(z_2))^2+(u(z_2)-u(z_3))^2+(u(z_3)-u(x))^2\right].
\end{aligned}
$$
For $i=1,2$, we have
$$
\begin{aligned}
&\int_{K_w}\int_{K_{\tilde{w}}}(u(z_i)-u(z_{i+1}))^2\nu_m(\md y)\nu_m(\md x)=(u(z_i)-u(z_{i+1}))^2\left(\frac{\#(K_w\cap V_m)}{\#V_m}\right)^2\\
&\asymp(u(z_i)-u(z_{i+1}))^2\left(\frac{8^{m-n}}{8^m}\right)^2=3^{-2\alpha n}(u(z_i)-u(z_{i+1}))^2.
\end{aligned}
$$
Hence
$$
\begin{aligned}
&\sum_{w\in W_n}
{\sum_{\mbox{\tiny
$
\begin{subarray}{c}
\tilde{w}\in W_n\\
K_{\tilde{w}}\cap K_w\ne\emptyset
\end{subarray}
$
}}}
\int_{K_w}\int_{K_{\tilde{w}}}(u(x)-u(y))^2\nu_m(\md y)\nu_m(\md x)\\
&\lesssim3^{-\alpha n}\sum\limits_{w\in W_n}\int\limits_{K_w}(u(x)-u(P_w))^2\nu_m(\md x)+3^{-2\alpha n}\sum_{w\in W_{n-1}}
{\sum_{\mbox{\tiny
$
\begin{subarray}{c}
p,q\in V_w\\
|p-q|=2^{-1}\cdot3^{-(n-1)}
\end{subarray}
$
}}}
(u(p)-u(q))^2\\
&\asymp3^{-\alpha(m+n)}\sum\limits_{w\in W_n}\sum\limits_{x\in K_w\cap V_m}(u(x)-u(P_w))^2\\
&+3^{-2\alpha n}\sum_{w\in W_{n-1}}
{\sum_{\mbox{\tiny
$
\begin{subarray}{c}
p,q\in V_w\\
|p-q|=2^{-1}\cdot3^{-(n-1)}
\end{subarray}
$
}}}
(u(p)-u(q))^2.
\end{aligned}
$$
Let us estimate $(u(x)-u(P_w))^2$ for $x\in K_w\cap V_m$. We construct a finite sequence
$$p_1,\ldots,p_{4(m-n+1)},p_{4(m-n+1)+1}$$
such that $p_1=P_w$, $p_{4(m-n+1)+1}=x$ and for all $k=0,\ldots,m-n$, we have
$$p_{4k+1},p_{4k+2},p_{4k+3},p_{4k+4},p_{4(k+1)+1}\in V_{n+k},$$
and for all $i=1,2,3,4$, we have
$$\lvert p_{4k+i}-p_{4k+i+1}\rvert=0\text{ or }2^{-1}\cdot3^{-(n+k)}.$$
Then
$$
\begin{aligned}
\left(u(x)-u(P_w)\right)^2\lesssim\sum_{k=0}^{m-n}4^{k}&\left[(u(p_{4k+1})-u(p_{4k+2}))^2+(u(p_{4k+2})-u(p_{4k+3}))^2\right.\\
&\left.+(u(p_{4k+3})-u(p_{4k+4}))^2+(u(p_{4k+4})-u(p_{4(k+1)+1}))^2\right].
\end{aligned}
$$
For all $k=n,\ldots,m$, for all $p,q\in V_k\cap K_w$ with $|p-q|=2^{-1}\cdot 3^{-k}$, the term $(u(p)-u(q))^2$ occurs in the sum with times of the order $8^{m-k}=3^{\alpha(m-k)}$, hence
$$
\begin{aligned}
&3^{-\alpha(m+n)}\sum\limits_{w\in W_n}\sum\limits_{x\in K_w\cap V_m}(u(x)-u(P_w))^2\\
&\lesssim3^{-\alpha(m+n)}\sum_{k=n}^{m}4^{k-n}\cdot3^{\alpha(m-k)}\sum_{w\in W_k}
{\sum_{\mbox{\tiny
$
\begin{subarray}{c}
p,q\in V_w\\
|p-q|=2^{-1}\cdot3^{-k}
\end{subarray}
$
}}}
(u(p)-u(q))^2\\
&=\sum_{k=n}^{m}4^{k-n}\cdot3^{-\alpha(n+k)}\sum_{w\in W_k}
{\sum_{\mbox{\tiny
$
\begin{subarray}{c}
p,q\in V_w\\
|p-q|=2^{-1}\cdot3^{-k}
\end{subarray}
$
}}}
(u(p)-u(q))^2.
\end{aligned}
$$
Hence
$$
\begin{aligned}
&\int_K\int_{B(x,c3^{-n})}(u(x)-u(y))^2\nu_m(\md y)\nu_m(\md x)\\
&\lesssim\sum_{k=n}^{m}4^{k-n}\cdot3^{-\alpha(n+k)}\sum_{w\in W_k}
{\sum_{\mbox{\tiny
$
\begin{subarray}{c}
p,q\in V_w\\
|p-q|=2^{-1}\cdot3^{-k}
\end{subarray}
$
}}}
(u(p)-u(q))^2\\
&+3^{-2\alpha n}\sum_{w\in W_{n-1}}
{\sum_{\mbox{\tiny
$
\begin{subarray}{c}
p,q\in V_w\\
|p-q|=2^{-1}\cdot3^{-(n-1)}
\end{subarray}
$
}}}
(u(p)-u(q))^2.
\end{aligned}
$$
Letting $m\to+\infty$, we have
\begin{equation}\label{eqn_equiv2_3}
\begin{aligned}
&\int_K\int_{B(x,c3^{-n})}(u(x)-u(y))^2\nu(\md y)\nu(\md x)\\
&\lesssim\sum_{k=n}^\infty4^{k-n}\cdot3^{-\alpha(n+k)}\sum_{w\in W_k}
{\sum_{\mbox{\tiny
$
\begin{subarray}{c}
p,q\in V_w\\
|p-q|=2^{-1}\cdot3^{-k}
\end{subarray}
$
}}}
(u(p)-u(q))^2\\
&+3^{-2\alpha n}\sum_{w\in W_{n-1}}
{\sum_{\mbox{\tiny
$
\begin{subarray}{c}
p,q\in V_w\\
|p-q|=2^{-1}\cdot3^{-(n-1)}
\end{subarray}
$
}}}
(u(p)-u(q))^2.
\end{aligned}
\end{equation}
Hence
$$
\begin{aligned}
&\sum_{n=2}^\infty3^{(\alpha+\beta)n}\int_K\int_{B(x,c3^{-n})}(u(x)-u(y))^2\nu(\md y)\nu(\md x)\\
&\lesssim\sum_{n=2}^\infty\sum_{k=n}^\infty4^{k-n}\cdot3^{\beta n-\alpha k}\sum_{w\in W_k}
{\sum_{\mbox{\tiny
$
\begin{subarray}{c}
p,q\in V_w\\
|p-q|=2^{-1}\cdot3^{-k}
\end{subarray}
$
}}}
(u(p)-u(q))^2\\
&+\sum_{n=2}^\infty3^{(\beta-\alpha)n}\sum_{w\in W_{n-1}}
{\sum_{\mbox{\tiny
$
\begin{subarray}{c}
p,q\in V_w\\
|p-q|=2^{-1}\cdot3^{-(n-1)}
\end{subarray}
$
}}}
(u(p)-u(q))^2\\
&\lesssim\sum_{k=2}^\infty\sum_{n=2}^k4^{k-n}\cdot3^{\beta n-\alpha k}\sum_{w\in W_k}
{\sum_{\mbox{\tiny
$
\begin{subarray}{c}
p,q\in V_w\\
|p-q|=2^{-1}\cdot3^{-k}
\end{subarray}
$
}}}
(u(p)-u(q))^2\\
&+\sum_{n=1}^\infty3^{(\beta-\alpha)n}\sum_{w\in W_{n}}
{\sum_{\mbox{\tiny
$
\begin{subarray}{c}
p,q\in V_w\\
|p-q|=2^{-1}\cdot3^{-n}
\end{subarray}
$
}}}
(u(p)-u(q))^2\\
&\lesssim\sum_{n=1}^\infty3^{(\beta-\alpha)n}\sum_{w\in W_{n}}
{\sum_{\mbox{\tiny
$
\begin{subarray}{c}
p,q\in V_w\\
|p-q|=2^{-1}\cdot3^{-n}
\end{subarray}
$
}}}
(u(p)-u(q))^2.
\end{aligned}
$$
\end{proof}

\section{Proof of Theorem \ref{thm_bound}}\label{sec_bound}

First, we consider lower bound. We need some preparation.

\begin{myprop}\label{prop_lower}
Assume that $\beta\in(\alpha,+\infty)$. Let $f:[0,1]\to\R$ be a strictly increasing continuous function. Assume that the function $U(x,y)=f(x)$, $(x,y)\in K$ satisfies $\Ee_{\beta}(U,U)<+\infty$. Then $(\Ee_\beta,\Ff_\beta)$ is a regular Dirichlet form on $L^2(K;\nu)$.
\end{myprop}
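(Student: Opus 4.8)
The plan is to verify the three defining properties in turn: that $(\Ee_\beta,\Ff_\beta)$ is a closed symmetric form, that it is Markovian, and that it is regular. The first two hold for every $\beta>0$ and are routine, so the hypotheses on $f$ are needed only for regularity. For closedness, if $u_k\to u$ in $L^2(K;\nu)$ and $\Ee_\beta(u_k-u_m,u_k-u_m)\to 0$, I would pass to a subsequence converging $\nu$-a.e.\ and apply Fatou's lemma to the double integral, obtaining $\Ee_\beta(u-u_m,u-u_m)\le\liminf_k\Ee_\beta(u_k-u_m,u_k-u_m)$, which tends to $0$ as $m\to\infty$; in particular $u\in\Ff_\beta$. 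For the Markovian property, if $T$ is a normal contraction then the pointwise bound $(T(u(x))-T(u(y)))^2\le(u(x)-u(y))^2$ gives $\Ee_\beta(T\circ u,T\circ u)\le\Ee_\beta(u,u)$, and clearly $\|T\circ u\|_{L^2}\le\|u\|_{L^2}$.

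It remains to prove regularity, i.e.\ that $\Ff_\beta\cap C(K)$ is dense both in $C(K)$ for the uniform norm and in $\Ff_\beta$ for the norm $u\mapsto(\Ee_\beta(u,u)+\|u\|_{L^2(K;\nu)}^2)^{1/2}$. Since $\beta>\alpha$, Lemma \ref{lem_holder} shows that every $u\in\Ff_\beta$ has a H\"older continuous representative, so after the usual identification $\Ff_\beta\subset C(K)$; hence $\Ff_\beta\cap C(K)=\Ff_\beta$ and the second density is trivial. For the first, I would apply the Stone--Weierstrass theorem to the subset $\Ff_\beta\subset C(K)$.

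To check the hypotheses of Stone--Weierstrass: (i) $\Ff_\beta$ is a subalgebra, because each of its elements is continuous on the compact set $K$, hence bounded, and for $u,v\in\Ff_\beta$ the identity
$$(uv)(x)-(uv)(y)=u(x)\bigl(v(x)-v(y)\bigr)+v(y)\bigl(u(x)-u(y)\bigr)$$
yields $\Ee_\beta(uv,uv)\le 2\|u\|_\infty^2\Ee_\beta(v,v)+2\|v\|_\infty^2\Ee_\beta(u,u)<+\infty$, while closure under linear combinations is obvious; (ii) it contains the constants, since $\Ee_\beta(c,c)=0$; (iii) it separates points: by hypothesis $U\in\Ff_\beta$ where $U(x,y)=f(x)$, and the diagonal reflection $R(x,y)=(y,x)$ maps $K$ onto itself, preserving $\nu$ and the Euclidean distance and hence $\Ee_\beta$, so $V:=U\circ R\in\Ff_\beta$ with $V(x,y)=f(y)$; since $f$ is strictly increasing and continuous, $U$ separates points with distinct first coordinates and $V$ separates points with distinct second coordinates, so $\{U,V\}$ separates the points of $K\subset\R^2$. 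Stone--Weierstrass then gives that $\Ff_\beta$ is uniformly dense in $C(K)$ (and, since $C(K)$ is dense in $L^2(K;\nu)$, that the form is densely defined), completing the proof that $(\Ee_\beta,\Ff_\beta)$ is a regular Dirichlet form on $L^2(K;\nu)$.

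The argument is short, and there is no serious obstacle internal to the proposition: the one place requiring attention is the use of the standing assumption $\beta>\alpha$ through Lemma \ref{lem_holder}, which both forces $\Ff_\beta\subset C(K)$ (trivializing one half of regularity and supplying the sup-norm bounds for the algebra property) and reduces regularity to point separation. The genuine difficulty is hidden in the hypothesis: producing a strictly increasing continuous $f$ with $\Ee_\beta(U,U)<+\infty$ is precisely the "good function" construction carried out elsewhere in the paper, and here it is simply taken as given.
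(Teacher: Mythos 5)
Your proposal is correct and follows essentially the same route as the paper: reduce everything to uniform density of $\Ff_\beta$ in $C(K)$ (closedness via Fatou, Markovianity via normal contractions), then apply Stone--Weierstrass using the subalgebra property of $\Ff_\beta$ and the pair $U(x,y)=f(x)$, $V(x,y)=f(y)$ to separate points. Your extra details --- the product estimate for the algebra property and the observation that $V=U\circ R$ lies in $\Ff_\beta$ because the diagonal reflection preserves $K$, $\nu$ and the kernel --- are exactly the justifications the paper leaves implicit.
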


\begin{myrmk}
Above proposition means that only \emph{one} good enough function contained in the domain can ensure that the domain is large enough.
\end{myrmk}

\begin{proof}
We only need to show that $\Ff_\beta$ is uniformly dense in $C(K)$. Then $\Ff_\beta$ is dense in $L^2(K;\nu)$. Using Fatou's lemma, we have $\Ff_\beta$ is complete under $(\Ee_\beta)_1$ metric. It is obvious that $\Ee_\beta$ has Markovian property. Hence $(\Ee_\beta,\Ff_\beta)$ is a Dirichlet form on $L^2(K;\nu)$. Moreover, $\Ff_\beta\cap C(K)=\Ff_\beta$ is trivially $(\Ee_\beta)_1$ dense in $\Ff_\beta$ and uniformly dense in $C(K)$. Hence $(\Ee_\beta,\Ff_\beta)$ on $L^2(K;\nu)$ is regular.

Indeed, by assumption, $U\in\Ff_\beta$, $\Ff_\beta\ne\emptyset$. It is obvious that $\Ff_\beta$ is a sub-algebra of $C(K)$, that is, for all $u,v\in\Ff_\beta$, $c\in\R$, we have $u+v,cu,uv\in\Ff_\beta$. We show that $\Ff_\beta$ separates points. For all distinct $(x^{(1)},y^{(1)}),(x^{(2)},y^{(2)})\in K$, we have $x^{(1)}\ne x^{(2)}$ or $y^{(1)}\ne y^{(2)}$.

If $x^{(1)}\ne x^{(2)}$, then since $f$ is strictly increasing, we have
$$U(x^{(1)},y^{(1)})=f(x^{(1)})\ne f(x^{(2)})=U(x^{(2)},y^{(2)}).$$
If $y^{(1)}\ne y^{(2)}$, then let $V(x,y)=f(y)$, $(x,y)\in K$, we have $V\in\Ff_\beta$ and
$$V(x^{(1)},y^{(1)})=f(y^{(1)})\ne f(y^{(2)})=V(x^{(2)},y^{(2)}).$$
By Stone-Weierstrass theorem, $\Ff_\beta$ is uniformly dense in $C(K)$.
\end{proof}

Now, we give lower bound.

\begin{proof}[Proof of Lower Bound]
The point is to construct an explicit function. We define $f:[0,1]\to\R$ as follows. Let $f(0)=0$ and $f(1)=1$. First, we determine the values of $f$ at $1/3$ and $2/3$. We consider the minimum of the following function
$$\vphi(x,y)=3x^2+2(x-y)^2+3(1-y)^2,x,y\in\R.$$
By elementary calculation, $\vphi$ attains minimum $6/7$ at $(x,y)=(2/7,5/7)$. Assume that we have defined $f$ on $i/3^n$, $i=0,1,\ldots,3^n$. Then, for $n+1$, for all $i=0,1,\ldots,3^{n}-1$, we define
$$
f(\frac{3i+1}{3^{n+1}})=\frac{5}{7}f(\frac{i}{3^n})+\frac{2}{7}f(\frac{i+1}{3^n}),f(\frac{3i+2}{3^{n+1}})=\frac{2}{7}f(\frac{i}{3^n})+\frac{5}{7}f(\frac{i+1}{3^n}).
$$
By induction principle, we have the definition of $f$ on all triadic points. It is obvious that $f$ is uniformly continuous on the set of all triadic points. We extend $f$ to be continuous on $[0,1]$. It is obvious that $f$ is increasing. For all $x,y\in[0,1]$ with $x<y$, there exist triadic points $i/3^n,(i+1)/3^n\in(x,y)$, then $f(x)\le f(i/3^n)<f((i+1)/3^n)\le f(y)$, hence $f$ is strictly increasing.

Let $U(x,y)=f(x)$, $(x,y)\in K$. By induction, we have
$$\sum_{w\in W_{n+1}}
{\sum_{\mbox{\tiny
$
\begin{subarray}{c}
p,q\in V_w\\
|p-q|=2^{-1}\cdot3^{-(n+1)}
\end{subarray}
$
}}}
(U(p)-U(q))^2=\frac{6}{7}\sum_{w\in W_{n}}
{\sum_{\mbox{\tiny
$
\begin{subarray}{c}
p,q\in V_w\\
|p-q|=2^{-1}\cdot3^{-n}
\end{subarray}
$
}}}
(U(p)-U(q))^2\text{ for all }n\ge1.$$
Hence
\begin{equation}\label{eqn_lower}
\sum_{w\in W_{n}}
{\sum_{\mbox{\tiny
$
\begin{subarray}{c}
p,q\in V_w\\
|p-q|=2^{-1}\cdot3^{-n}
\end{subarray}
$
}}}
(U(p)-U(q))^2=\left(\frac{6}{7}\right)^n\text{ for all }n\ge1.
\end{equation}
For all $\beta\in(\log8/\log3,\log(8\cdot7/6)/\log3)$, we have $3^{\beta-\alpha}<7/6$. By Equation (\ref{eqn_lower}), we have
$$\sum_{n=1}^\infty3^{(\beta-\alpha)n}\sum_{w\in W_n}
{\sum_{\mbox{\tiny
$
\begin{subarray}{c}
p,q\in V_w\\
|p-q|=2^{-1}\cdot3^{-n}
\end{subarray}
$
}}}
(U(p)-U(q))^2<+\infty.$$
By Lemma \ref{lem_equiv}, $\Ee_\beta(U,U)<+\infty$. By Proposition \ref{prop_lower}, $(\Ee_\beta,\Ff_\beta)$ is a regular Dirichlet form on $L^2(K;\nu)$ for all $\beta\in(\log8/\log3,\log(8\cdot7/6)/\log3)$. Hence
$$\beta_*\ge\frac{\log(8\cdot\frac{7}{6})}{\log3}.$$
\end{proof}

\begin{myrmk}
The construction of above function is similar to that given in the proof of \cite[Theorem 2.6]{Bar13}. Indeed, above function is constructed in a self-similar way. Let $f_n:[0,1]\to\R$ be given by $f_0(x)=x$, $x\in[0,1]$ and for all $n\ge0$
$$
f_{n+1}(x)=
\begin{cases}
\frac{2}{7}f_n(3x),&\text{if }0\le x\le\frac{1}{3},\\
\frac{3}{7}f_n(3x-1)+\frac{2}{7},&\text{if }\frac{1}{3}<x\le\frac{2}{3},\\
\frac{2}{7}f_n(3x-2)+\frac{5}{7},&\text{if }\frac{2}{3}<x\le1.
\end{cases}
$$
It is obvious that
$$f_n(\frac{i}{3^n})=f(\frac{i}{3^n})\text{ for all }i=0,\ldots,3^n,n\ge0,$$
and
$$\max_{x\in[0,1]}\lvert f_{n+1}(x)-f_n(x)\rvert\le\frac{3}{7}\max_{x\in[0,1]}\lvert f_{n}(x)-f_{n-1}(x)\rvert\text{ for all }n\ge1,$$
hence $f_n$ converges uniformly to $f$ on $[0,1]$. Let $g_1,g_2,g_3:\R^2\to\R^2$ be given by
$$g_1(x,y)=\left(\frac{1}{3}x,\frac{2}{7}y\right),g_2(x,y)=\left(\frac{1}{3}x+\frac{1}{3},\frac{3}{7}y+\frac{2}{7}\right),g_3(x,y)=\left(\frac{1}{3}x+\frac{2}{3},\frac{2}{7}y+\frac{5}{7}\right).$$
Then $\myset{(x,f(x)):x\in[0,1]}$ is the unique non-empty compact set $G$ in $\R^2$ satisfying 
$$G=g_1(G)\cup g_2(G)\cup g_3(G).$$
\end{myrmk}

Second, we consider upper bound. We shrink SC to another fractal. Denote $\mathcal{C}$ as Cantor ternary set in $[0,1]$. Then $[0,1]\times\mathcal{C}$ is the unique non-empty compact set $\tilde{K}$ in $\R^2$ satisfying
$$\tilde{K}=\cup_{i=0,1,2,4,5,6}f_i(\tilde{K}).$$
Let
$$\tilde{V}_0=\myset{p_0,p_1,p_2,p_4,p_5,p_6},\tilde{V}_{n+1}=\cup_{i=0,1,2,4,5,6}f_i(\tilde{V}_n)\text{ for all }n\ge0.$$
Then $\myset{\tilde{V}_n}$ is an increasing sequence of finite sets and $[0,1]\times\mathcal{C}$ is the closure of $\cup_{n=0}^\infty\tilde{V}_n$. Let $\tilde{W}_0=\myset{\emptyset}$ and
$$\tilde{W}_n=\myset{w=w_1\ldots w_n:w_i=0,1,2,4,5,6,i=1,\ldots,n}\text{ for all }n\ge1.$$
For all $w=w_1\ldots w_n\in\tilde{W}_n$, let
$$\tilde{V}_w=f_{w_1}\circ\ldots\circ f_{w_n}(\tilde{V}_0).$$

\begin{proof}[Proof of Upper Bound]
Assume that $(\Ee_\beta,\Ff_\beta)$ is a regular Dirichlet form on $L^2(K;\nu)$, then there exists $u\in\Ff_\beta$ such that $u|_{\myset{0}\times[0,1]}=0$ and $u|_{\myset{1}\times[0,1]}=1$. By Lemma \ref{lem_equiv}, we have
\begin{equation}\label{eqn_upper}
\begin{aligned}
+\infty&>\sum_{n=1}^\infty3^{(\beta-\alpha)n}\sum_{w\in W_n}
{\sum_{\mbox{\tiny
$
\begin{subarray}{c}
p,q\in V_w\\
|p-q|=2^{-1}\cdot3^{-n}
\end{subarray}
$
}}}
(u(p)-u(q))^2\\
&\ge\sum_{n=1}^\infty3^{(\beta-\alpha)n}\sum_{w\in\tilde{W}_n}
{\sum_{\mbox{\tiny
$
\begin{subarray}{c}
p,q\in\tilde{V}_w\\
|p-q|=2^{-1}\cdot3^{-n}
\end{subarray}
$
}}}
(u(p)-u(q))^2\\
&=\sum_{n=1}^\infty3^{(\beta-\alpha)n}\sum_{w\in\tilde{W}_n}
{\sum_{\mbox{\tiny
$
\begin{subarray}{c}
p,q\in\tilde{V}_w\\
|p-q|=2^{-1}\cdot3^{-n}
\end{subarray}
$
}}}
((u|_{[0,1]\times\mathcal{C}})(p)-(u|_{[0,1]\times\mathcal{C}})(q))^2\\
&\ge\sum_{n=1}^\infty3^{(\beta-\alpha)n}\sum_{w\in\tilde{W}_n}
{\sum_{\mbox{\tiny
$
\begin{subarray}{c}
p,q\in\tilde{V}_w\\
|p-q|=2^{-1}\cdot3^{-n}
\end{subarray}
$
}}}
(\tilde{u}(p)-\tilde{u}(q))^2,
\end{aligned}
\end{equation}
where $\tilde{u}$ is the function on $[0,1]\times\mathcal{C}$ that is the minimizer of 
$$\sum_{n=1}^\infty3^{(\beta-\alpha)n}\sum_{w\in\tilde{W}_n}
{\sum_{\mbox{\tiny
$
\begin{subarray}{c}
p,q\in\tilde{V}_w\\
|p-q|=2^{-1}\cdot3^{-n}
\end{subarray}
$
}}}
(\tilde{u}(p)-\tilde{u}(q))^2:\tilde{u}|_{\myset{0}\times\mathcal{C}}=0,\tilde{u}|_{\myset{1}\times\mathcal{C}}=1,\tilde{u}\in C([0,1]\times\mathcal{C}).$$
By symmetry of $[0,1]\times\mathcal{C}$, $\tilde{u}(x,y)=x,(x,y)\in [0,1]\times\mathcal{C}$. By induction, we have
$$\sum_{w\in\tilde{W}_{n+1}}
{\sum_{\mbox{\tiny
$
\begin{subarray}{c}
p,q\in\tilde{V}_w\\
|p-q|=2^{-1}\cdot3^{-(n+1)}
\end{subarray}
$
}}}
(\tilde{u}(p)-\tilde{u}(q))^2=\frac{2}{3}\sum_{w\in\tilde{W}_n}
{\sum_{\mbox{\tiny
$
\begin{subarray}{c}
p,q\in\tilde{V}_w\\
|p-q|=2^{-1}\cdot3^{-n}
\end{subarray}
$
}}}
(\tilde{u}(p)-\tilde{u}(q))^2\text{ for all }n\ge1,$$
hence
$$\sum_{w\in\tilde{W}_{n}}
{\sum_{\mbox{\tiny
$
\begin{subarray}{c}
p,q\in\tilde{V}_w\\
|p-q|=2^{-1}\cdot3^{-n}
\end{subarray}
$
}}}
(\tilde{u}(p)-\tilde{u}(q))^2=\left(\frac{2}{3}\right)^n\text{ for all }n\ge1.$$
By Equation (\ref{eqn_upper}), we have
$$\sum_{n=1}^\infty3^{(\beta-\alpha)n}\left(\frac{2}{3}\right)^n<+\infty,$$
hence, $\beta<\log(8\cdot3/2)/\log3$. Hence
$$\beta_*\le\frac{\log(8\cdot\frac{3}{2})}{\log3}.$$
\end{proof}

\section{Resistance Estimates}\label{sec_resistance}

In this section, we give resistance estimates using electrical network techniques.

We consider two sequences of finite graphs related to $V_n$ and $W_n$, respectively.

For all $n\ge1$. Let $\calV_n$ be the graph with vertex set $V_n$ and edge set given by
$$\myset{(p,q):p,q\in V_n,|p-q|=2^{-1}\cdot3^{-n}}.$$
For example, we have the figure of $\calV_2$ in Figure \ref{fig_V2}.

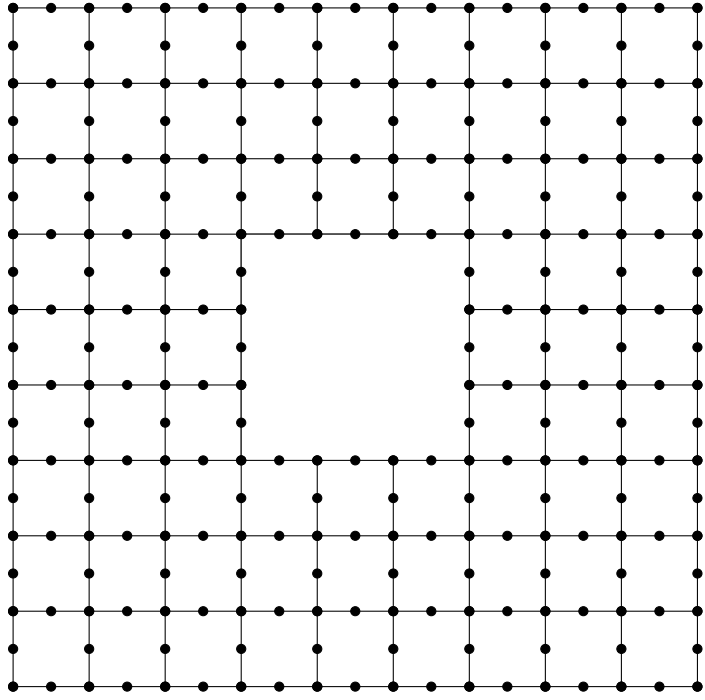
\begin{figure}[ht]
\centering
\begin{tikzpicture}

\foreach \x in {0,1,...,9}
\draw (\x,0)--(\x,9);

\foreach \y in {0,1,...,9}
\draw (0,\y)--(9,\y);

\draw[fill=white] (3,3)--(6,3)--(6,6)--(3,6)--cycle;

\foreach \x in {0,1,...,9}
\foreach \y in {0,0.5,1,...,9}
\draw[fill=black] (\x,\y) circle (0.06);

\foreach \y in {0,1,...,9}
\foreach \x in {0,0.5,1,...,9}
\draw[fill=black] (\x,\y) circle (0.06);

\draw[fill=white,draw=white] (3.25,3.25)--(5.75,3.25)--(5.75,5.75)--(3.25,5.75)--cycle;

\end{tikzpicture}
\caption{$\mathcal{V}_2$}\label{fig_V2}
\end{figure}

Let $\calW_n$ be the graph with vertex set $W_n$ and edge set given by
$$\myset{(w^{(1)},w^{(2)}):w^{(1)},w^{(2)}\in W_n,\mathrm{dim}_{\mathcal{H}}\left(K_{w^{(1)}}\cap K_{w^{(2)}}\right)=1}.$$
For example, we have the figure of $\calW_2$ in Figure \ref{fig_W2}.




\begin{figure}[ht]
\centering
\begin{tikzpicture}
\draw (0,0)--(8,0)--(8,8)--(0,8)--cycle;
\draw (2,0)--(2,8);
\draw (6,0)--(6,8);
\draw (0,2)--(8,2);
\draw (0,6)--(8,6);
\draw (3,0)--(3,2);
\draw (5,0)--(5,2);
\draw (0,3)--(2,3);
\draw (0,5)--(2,5);
\draw (6,3)--(8,3);
\draw (6,5)--(8,5);
\draw (3,6)--(3,8);
\draw (5,6)--(5,8);
\draw (2,1)--(3,1);
\draw (5,1)--(6,1);
\draw (1,2)--(1,3);
\draw (7,2)--(7,3);
\draw (1,5)--(1,6);
\draw (7,5)--(7,6);
\draw (2,7)--(3,7);
\draw (5,7)--(6,7);

\draw[fill=black] (0,0) circle (0.06);
\draw[fill=black] (1,0) circle (0.06);
\draw[fill=black] (2,0) circle (0.06);
\draw[fill=black] (3,0) circle (0.06);
\draw[fill=black] (4,0) circle (0.06);
\draw[fill=black] (5,0) circle (0.06);
\draw[fill=black] (6,0) circle (0.06);
\draw[fill=black] (7,0) circle (0.06);
\draw[fill=black] (8,0) circle (0.06);

\draw[fill=black] (0,1) circle (0.06);
\draw[fill=black] (2,1) circle (0.06);
\draw[fill=black] (3,1) circle (0.06);
\draw[fill=black] (5,1) circle (0.06);
\draw[fill=black] (6,1) circle (0.06);
\draw[fill=black] (8,1) circle (0.06);

\draw[fill=black] (0,2) circle (0.06);
\draw[fill=black] (1,2) circle (0.06);
\draw[fill=black] (2,2) circle (0.06);
\draw[fill=black] (3,2) circle (0.06);
\draw[fill=black] (4,2) circle (0.06);
\draw[fill=black] (5,2) circle (0.06);
\draw[fill=black] (6,2) circle (0.06);
\draw[fill=black] (7,2) circle (0.06);
\draw[fill=black] (8,2) circle (0.06);

\draw[fill=black] (0,3) circle (0.06);
\draw[fill=black] (1,3) circle (0.06);
\draw[fill=black] (2,3) circle (0.06);
\draw[fill=black] (6,3) circle (0.06);
\draw[fill=black] (7,3) circle (0.06);
\draw[fill=black] (8,3) circle (0.06);

\draw[fill=black] (0,4) circle (0.06);
\draw[fill=black] (2,4) circle (0.06);
\draw[fill=black] (6,4) circle (0.06);
\draw[fill=black] (8,4) circle (0.06);

\draw[fill=black] (0,5) circle (0.06);
\draw[fill=black] (1,5) circle (0.06);
\draw[fill=black] (2,5) circle (0.06);
\draw[fill=black] (6,5) circle (0.06);
\draw[fill=black] (7,5) circle (0.06);
\draw[fill=black] (8,5) circle (0.06);

\draw[fill=black] (0,6) circle (0.06);
\draw[fill=black] (1,6) circle (0.06);
\draw[fill=black] (2,6) circle (0.06);
\draw[fill=black] (3,6) circle (0.06);
\draw[fill=black] (4,6) circle (0.06);
\draw[fill=black] (5,6) circle (0.06);
\draw[fill=black] (6,6) circle (0.06);
\draw[fill=black] (7,6) circle (0.06);
\draw[fill=black] (8,6) circle (0.06);

\draw[fill=black] (0,7) circle (0.06);
\draw[fill=black] (2,7) circle (0.06);
\draw[fill=black] (3,7) circle (0.06);
\draw[fill=black] (5,7) circle (0.06);
\draw[fill=black] (6,7) circle (0.06);
\draw[fill=black] (8,7) circle (0.06);

\draw[fill=black] (0,8) circle (0.06);
\draw[fill=black] (1,8) circle (0.06);
\draw[fill=black] (2,8) circle (0.06);
\draw[fill=black] (3,8) circle (0.06);
\draw[fill=black] (4,8) circle (0.06);
\draw[fill=black] (5,8) circle (0.06);
\draw[fill=black] (6,8) circle (0.06);
\draw[fill=black] (7,8) circle (0.06);
\draw[fill=black] (8,8) circle (0.06);
\end{tikzpicture}
\caption{$\mathcal{W}_2$}\label{fig_W2}
\end{figure}
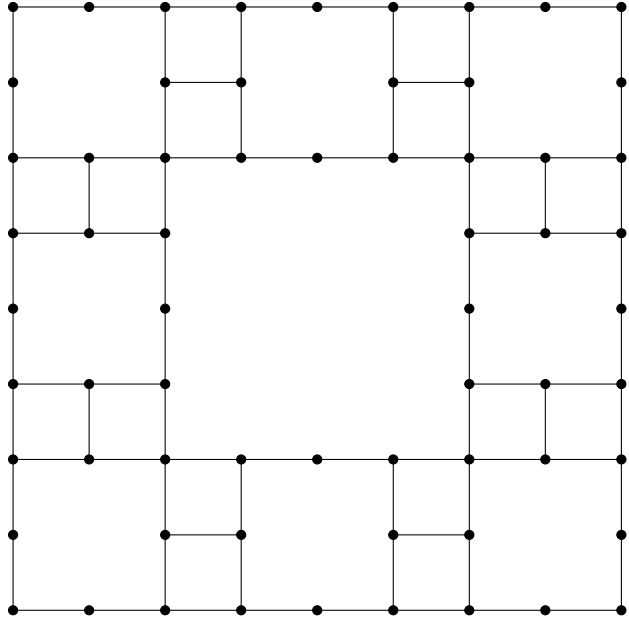


On $\calV_n$, the energy
$$
{\sum_{\mbox{\tiny
$
\begin{subarray}{c}
p,q\in V_n\\
|p-q|=2^{-1}\cdot3^{-n}
\end{subarray}
$
}}}
(u(p)-u(q))^2,u\in l(V_n),$$
is related to a weighted graph with the conductances of all edges equal to $1$. While the energy
$$\sum_{w\in W_n}
{\sum_{\mbox{\tiny
$
\begin{subarray}{c}
p,q\in V_w\\
|p-q|=2^{-1}\cdot3^{-n}
\end{subarray}
$
}}}
(u(p)-u(q))^2,u\in l(V_n),$$
is related to a weighted graph with the conductances of some edges equal to $1$ and the conductances of other edges equal to $2$, since the term $(u(p)-u(q))^2$ is added either once or twice.

Since
$$
\begin{aligned}
{\sum_{\mbox{\tiny
$
\begin{subarray}{c}
p,q\in V_n\\
|p-q|=2^{-1}\cdot3^{-n}
\end{subarray}
$
}}}
(u(p)-u(q))^2&\le\sum_{w\in W_n}
{\sum_{\mbox{\tiny
$
\begin{subarray}{c}
p,q\in V_w\\
|p-q|=2^{-1}\cdot3^{-n}
\end{subarray}
$
}}}
(u(p)-u(q))^2\\
&\le 2
{\sum_{\mbox{\tiny
$
\begin{subarray}{c}
p,q\in V_n\\
|p-q|=2^{-1}\cdot3^{-n}
\end{subarray}
$
}}}
(u(p)-u(q))^2,
\end{aligned}
$$
we use
$$D_n(u,u):=\sum_{w\in W_n}
{\sum_{\mbox{\tiny
$
\begin{subarray}{c}
p,q\in V_w\\
|p-q|=2^{-1}\cdot3^{-n}
\end{subarray}
$
}}}
(u(p)-u(q))^2,u\in l(V_n),$$
as the energy on $\calV_n$. Assume that $A,B$ are two disjoint subsets of $V_n$. Let
$$R_n(A,B)=\inf\myset{D_n(u,u):u|_A=0,u|_B=1,u\in l(V_n)}^{-1}.$$
Denote
$$R_n^V=R_n(V_n\cap\myset{0}\times[0,1],V_n\cap\myset{1}\times[0,1]),$$
$$R_n(x,y)=R_n(\myset{x},\myset{y}),x,y\in V_n.$$
It is obvious that $R_n$ is a metric on $V_n$, hence
$$R_n(x,y)\le R_n(x,z)+R_n(z,y)\text{ for all }x,y,z\in V_n.$$

On $\calW_n$, the energy
$$\frakD_n(u,u):=\sum_{w^{(1)}\sim_nw^{(2)}}(u(w^{(1)})-u(w^{(2)}))^2,u\in l(W_n),$$
is related to a weighted graph with the conductances of all edges equal to $1$. Assume that $A,B$ are two disjoint subsets of $W_n$. Let
$$\frakR_n(A,B)=\inf\myset{\frakD_n(u,u):u|_A=0,u|_B=1,u\in l(W_n)}^{-1}.$$
Denote
$$\frakR_n(w^{(1)},w^{(2)})=\frakR_n(\myset{w^{(1)}},\myset{w^{(2)}}),w^{(1)},w^{(2)}\in W_n.$$
It is obvious that $\frakR_n$ is a metric on $W_n$, hence
$$\frakR_n(w^{(1)},w^{(2)})\le\frakR_n(w^{(1)},w^{(3)})+\frakR_n(w^{(3)},w^{(2)})\text{ for all }w^{(1)},w^{(2)},w^{(3)}\in W_n.$$

The main result of this section is as follows.

\begin{mythm}\label{thm_resist}
There exists some positive constant $\rho\in\left[7/6,3/2\right]$ such that for all $n\ge1$
$$R_n^V\asymp\rho^n,$$
$$R_n(p_0,p_1)=\ldots=R_n(p_6,p_7)=R_n(p_7,p_0)\asymp\rho^n,$$
$$\frakR_n(0^n,1^n)=\ldots=\frakR_n(6^n,7^n)=\frakR_n(7^n,0^n)\asymp\rho^n.$$
\end{mythm}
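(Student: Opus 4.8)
The plan is to prove the three-way asymptotic equivalence $R_n^V \asymp R_n(p_i,p_{i+1}) \asymp \frakR_n(i^n,j^n) \asymp \rho^n$ by a combination of (i) identifying the natural scaling parameter $\rho$ via submultiplicativity/supermultiplicativity of resistances across one level of the self-similar subdivision, and (ii) upgrading the resulting two-sided sub-exponential bounds into a sharp $\asymp \rho^n$ statement using the known numerical bracket $[7/6,3/2]$ together with shorting/cutting comparisons between the $\calV_n$-network and the $\calW_n$-network. The equality of the various two-point resistances $R_n(p_0,p_1)=\cdots=R_n(p_7,p_0)$ and $\frakR_n(0^n,1^n)=\cdots=\frakR_n(7^n,0^n)$ is forced by the dihedral symmetry group of the square, which permutes the eight boundary points $p_0,\ldots,p_7$ transitively in a way compatible with the cell structure; I would dispose of these symmetry identities first as an easy lemma.

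First I would set up the one-step renormalization. Because $V_{n+1} = \cup_{i=0}^7 f_i(V_n)$ and the energy $D_{n+1}$ decomposes as a sum over the eight subcells $K_i$ (each a scaled copy of the level-$n$ network living on $f_i(V_n)$), the resistance $R_{n+1}^V$ of the whole square at level $n+1$ can be bounded above and below by the resistance of an \emph{eight-cell electrical network} whose individual cells each have resistance comparable to $R_n^V$ (the "side-to-side" resistance of one copy). Evaluating this fixed finite network — a $3\times3$ arrangement of square cells with the middle one deleted, boundary nodes on left and right shorted — gives $R_{n+1}^V \asymp \rho\, R_n^V$ for a constant $\rho$ that is, up to the implied constants, the per-level resistance scaling factor; the shorting inequality gives $R_{n+1}^V \le c_1 \rho R_n^V$ and the cutting (Nash–Williams / restricting test functions) inequality gives $R_{n+1}^V \ge c_2 \rho R_n^V$. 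Iterating, $R_n^V \asymp \rho^n$. The same one-step argument, applied with source and sink a single corner vertex rather than a whole side, yields $R_n(p_0,p_1) \asymp \rho^n$ with the \emph{same} $\rho$, because corner-to-corner and side-to-side resistances of one cell are comparable (bounded ratio, by finiteness of the level-$0$ network and monotonicity: shorting each side to a point only decreases resistance by a bounded factor). For the $\frakR_n$ statement I would use Lemma~\ref{lem_equivHK} in its resistance form, or more directly observe that the $\calW_n$-network is the "cell-adjacency" coarsening of the $\calV_n$-network and that passing between the two changes resistances by at most a bounded factor (each shared edge $K_{w^{(1)}}\cap K_{w^{(2)}}$ is a line segment carrying $\asymp 3^{n}$ of the unit edges of $\calV_{2n}$ in parallel, etc.), so $\frakR_n(i^n,j^n) \asymp R_n^V \asymp \rho^n$.

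To pin down $\rho \in [7/6, 3/2]$ rather than merely asserting existence of some $\rho>1$, I would invoke the classical shorting-and-cutting estimates of Barlow–Bass and Barlow–Bass–Sherwood \cite{BB90,BBS90}: shorting the carpet in one direction (collapsing the network of $[0,1]\times\mathcal C$) gives the lower bound $\rho \ge 7/6$, while cutting (deleting the cross-connections, i.e. the estimate already used in the proof of Theorem~\ref{thm_bound}'s upper bound, where the minimizer on $[0,1]\times\mathcal C$ has energy decaying like $(2/3)^n$) gives $\rho \le 3/2$. Concretely, these are exactly the computations $\veps_{n+1} \le \tfrac{?}{?}\,\veps_n$ and $\veps_{n+1}\ge\cdots$ for the normalized conductances $\veps_n = 1/R_n^V$; the monotonicity in $n$ of $\rho_n := R_{n+1}^V/R_n^V$ (again via shorting/cutting: subdividing can only be compared one way) forces the limiting ratio into $[7/6,3/2]$, and then the submultiplicative/supermultiplicative sandwich from the previous paragraph guarantees $R_n^V \asymp \rho^n$ for this limiting $\rho$.

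The main obstacle is the \textbf{lower bound on resistances}, i.e. the cutting/Nash–Williams side of the one-step inequality $R_{n+1}^V \ge c\,\rho R_n^V$ with a constant $c$ \emph{independent of $n$}. The upper (shorting) bound is soft — glue optimal level-$n$ test functions on the eight subcells, it costs only a bounded factor. The lower bound is the genuine content: one must show that any function on $V_{n+1}$ of small level-$(n+1)$ energy cannot efficiently transport "potential" from the left side to the right side, and the difficulty — this is precisely the non-p.c.f. feature of SC, where cells overlap along whole segments rather than at single points — is that the interface between adjacent cells is not a single vertex, so one cannot simply treat the cell resistances as resistors in a fixed finite series-parallel network. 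I expect to handle this by a combination of the Nash–Williams cut-set bound (choosing a family of cut sets separating the two sides, each of bounded "width" in terms of level-$n$ cells) and the already-established equivalence Lemma~\ref{lem_equiv} together with the uniform Harnack inequality to be proved in Section~\ref{sec_harnack}; in fact the logical structure of the paper suggests the resistance lower bound and the Harnack inequality are proved together by a bootstrap, first extending $R_n^V \gtrsim \rho^n$ to the infinite graphical carpet of Figure~\ref{fig_graphSC} and then pulling back to finite graphs. That bootstrap — resistance estimates $\Rightarrow$ Harnack on the infinite graph $\Rightarrow$ Harnack on finite graphs $\Rightarrow$ the missing uniform lower bound — is where the real work lies.
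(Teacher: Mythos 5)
Your proposal assembles the right ingredients in outline (symmetry for the equalities, a multiplicative structure for the existence of $\rho$, shorting/cutting for the bracket $[7/6,3/2]$, and a comparison between the $\calV_n$- and $\calW_n$-networks), but two of its load-bearing steps do not work as described. First, the iteration mechanism is wrong: from a one-step bound $c_2\rho R_n^V\le R_{n+1}^V\le c_1\rho R_n^V$ with $c_1>1>c_2$ you can only conclude $c_2^n\rho^n\lesssim R_n^V\lesssim c_1^n\rho^n$, which is not $R_n^V\asymp\rho^n$; and the ``monotonicity of $\rho_n=R_{n+1}^V/R_n^V$'' you invoke to repair this is an unsupported claim. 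What the paper actually uses is the two-variable sub/supermultiplicativity $C^{-1}x_nx_m\le x_{n+m}\le Cx_nx_m$ with $C$ uniform in both $n$ and $m$, proved for the symmetric resistances $R_n^V$, $R_n(p_1,p_5)$, $R_n(p_0,p_4)$ by discrete analogues of the flow and potential constructions of Barlow--Bass and McGillivray (Theorem \ref{thm_resist1}); a Fekete-type argument then gives $x_n\asymp\rho^n$ with a single $\rho$ and the constants never compound. Your fixed ``eight-cell network'' picture only delivers the one-step version, precisely because of the non-p.c.f.\ gluing along segments that you yourself point out.

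Second, your proposed escape route for the lower bound --- a bootstrap with the uniform Harnack inequality of Section \ref{sec_harnack} --- is circular in this paper: the Harnack inequality is \emph{derived from} Theorem \ref{thm_resist} (via resistance and Green function estimates on the infinite graph $\calV_\infty$), not proved alongside it. The lower bound is instead obtained by the potential (test-function) half of the flow/potential argument, which needs no Harnack input. Finally, the statement also requires the adjacent-pair estimates $R_n(p_0,p_1)\asymp\frakR_n(0^n,1^n)\asymp\rho^n$, which do not follow from your ``bounded-factor coarsening'' heuristic: the paper proves $R_n(p_0,p_1)\le\frakR_n(0^n,1^n)$ by shorting, the reverse inequality by explicit resistance-increasing network transformations from $\calW_n$ to $\calV_{n-1}$ (Proposition \ref{prop_resist3}), the lower bound $R_n(p_0,p_1)\ge\tfrac14R_n(p_1,p_5)$ by the triangle inequality for the resistance metric, and the upper bound $\frakR_n(0^n,1^n)\lesssim\rho^n$ by a separate chaining argument through the words $1^{i-1}0^{n+2-i}$ and $1^{i-1}2^{n+2-i}$, using a computable equivalent network for $\frakR_{n+1}(0^{n+1},12^n)$. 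None of these steps appear in your outline.
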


\begin{myrmk}
By triangle inequality, for all $i,j=0,\ldots,7,n\ge1$
$$R_{n}(p_i,p_j)\lesssim\rho^n,$$
$$\frakR_{n}(i^n,j^n)\lesssim\rho^n.$$
\end{myrmk}

We have a direct corollary as follows.

\begin{mycor}\label{cor_resist_upper}
For all $n\ge1,p,q\in V_n,w^{(1)},w^{(2)}\in W_n$
$$R_n(p,q)\lesssim\rho^n,$$
$$\frakR_n(w^{(1)},w^{(2)})\lesssim\rho^n.$$
\end{mycor}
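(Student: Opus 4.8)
The plan is to derive Corollary~\ref{cor_resist_upper} from Theorem~\ref{thm_resist} by a routine chaining argument based on the triangle inequality for the resistance metrics $R_n$ and $\frakR_n$, together with the fact that any two points of $V_n$ (resp.\ of $W_n$) can be joined by a chain of uniformly bounded length consisting of vertices of the special forms appearing in Theorem~\ref{thm_resist} at scale $n$.

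First I would treat the graph $\calW_n$. Given $w^{(1)},w^{(2)}\in W_n$, note that the cells $K_{w^{(1)}}$ and $K_{w^{(2)}}$ are contained in $K$, so their ``addresses'' differ only at finitely many digits; more precisely, writing $w^{(i)}=w^{(i)}_1\ldots w^{(i)}_n$, one can pass from $w^{(1)}$ to $w^{(2)}$ by changing one digit at a time, and at each step the two words involved, say $v$ and $v'$, satisfy $\mathrm{dim}_{\mathcal H}(K_v\cap K_{v'})=1$ or can be connected through a bounded number of neighbouring cells (this is the same neighbourhood structure used in the proof of Theorem~\ref{thm_equiv2}, cf.\ Figure~\ref{fig_Kw}, where a cell has at most nine neighbours). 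In any case one obtains a chain $w^{(1)}=v_0, v_1,\ldots,v_m=w^{(2)}$ in $W_n$ with $m\le C$ for an absolute constant $C$ and with each consecutive pair related by a scale-$n$ copy of one of the pairs $(0^n,1^n),\ldots,(7^n,0^n)$ up to the symmetry group of SC. By self-similarity of the network $\calW_n$ (restricting test functions to a sub-block of generation $n-k$ gives a copy of $\calW_k$ with the conductances only increased by the rest of the network, so the effective resistance between corresponding boundary vertices is $\le \frakR_k(i^k,j^k)$), each such consecutive resistance is $\lesssim \rho^{n}$ — here I would invoke the Remark after Theorem~\ref{thm_resist}, which already records $\frakR_n(i^n,j^n)\lesssim\rho^n$ for all $i,j$. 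Summing along the chain via the triangle inequality $\frakR_n(w^{(1)},w^{(2)})\le\sum_{l}\frakR_n(v_{l-1},v_l)\lesssim C\rho^n\lesssim\rho^n$ gives the claim.

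For $\calV_n$ the argument is entirely parallel. Any $p,q\in V_n$ lie in cells $K_{w},K_{w'}$ with $w,w'\in W_n$ and $p\in V_w$, $q\in V_{w'}$. Using the bound just obtained for $\frakR_n$ (or directly the chaining of cells) together with the fact that within a single cell $K_w$ the vertices $V_w$ are a scale-$n$ affine image of $V_0=\{p_0,\ldots,p_7\}$, and that $R_n(p_i,p_j)\lesssim\rho^n$ for all $i,j$ by the Remark after Theorem~\ref{thm_resist}, one builds a chain of uniformly bounded length in $V_n$ from $p$ to $q$ whose consecutive vertices are either the two marked boundary points of a common generation-$n$ cell (contributing $\lesssim\rho^n$ each by self-similarity of $D_n$ restricted to a subcell) or a shared boundary point of two adjacent cells. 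The triangle inequality for $R_n$ then yields $R_n(p,q)\lesssim\rho^n$.

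The only genuine point requiring care — and the step I expect to be the main (minor) obstacle — is the self-similarity/monotonicity bookkeeping: one must check that restricting the energy form $D_n$ (resp.\ $\frakD_n$) to a generation-$k$ sub-block and using that the remaining part of the network only \emph{adds} conductances shows that the effective resistance between corresponding boundary vertices inside that sub-block is at most the resistance $R_k(\cdot,\cdot)$ (resp.\ $\frakR_k(\cdot,\cdot)$) of the stand-alone network of generation $k$; this is the ``shorting'' direction of the network reduction and is standard, but it is what converts the generation-$n$ estimates of Theorem~\ref{thm_resist} into estimates usable along a chain. Once this monotonicity is in hand, the corollary follows purely formally from the triangle inequality and the uniform boundedness of the chain length, with no new constants beyond those in Theorem~\ref{thm_resist}.
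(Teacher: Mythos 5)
Your overall strategy (triangle inequality for the resistance metric plus a cutting/monotonicity step) is the right one, and the cutting step you flag as the ``only genuine point requiring care'' is indeed used in the paper. But the load-bearing claim in your argument is false: two arbitrary words $w^{(1)},w^{(2)}\in W_n$ can \emph{not} be joined by a chain of uniformly bounded length whose links are scale-$n$ copies of the pairs $(i^n,j^n)$ or adjacencies through boundedly many neighbouring cells. The graph $\calW_n$ has diameter of order $3^n$, and your own device of ``changing one digit at a time'' produces a chain of length up to $n$, not $C$; worse, when the $k$-th digit is changed the two resulting length-$n$ words lie in distinct generation-$k$ cells separated by a distance of order $3^{-k}$, i.e.\ by order $3^{n-k}$ cells of generation $n$, so they are neither $\sim_n$-adjacent nor reducible by cutting to one of the corner-to-corner resistances $\frakR_i(a^i,b^i)$ that Theorem \ref{thm_resist} controls (the suffixes after the common prefix are arbitrary words, not constant words). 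And if one settles for $n$ links each bounded by $\rho^n$, the triangle inequality only gives $n\rho^n$, which is not $\lesssim\rho^n$.

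The missing idea is the multi-scale telescoping together with the geometric decay of the link resistances, which is where $\rho>1$ enters. The paper connects every $w=w_1\ldots w_n$ to the single base point $0^n$ (two applications of the triangle inequality then handle a general pair) through the words $w^{(i+1)}=w_1\ldots w_{n-i}(w_{n-i})^{i}$, $i=0,\ldots,n-1$, followed by $0^n$. The $i$-th link joins $w_1\ldots w_{n-i}(w_{n-i+1})^{i}$ to $w_1\ldots w_{n-i}(w_{n-i})^{i}$; both words share the prefix $w_1\ldots w_{n-i}$, so cutting everything outside the cell $K_{w_1\ldots w_{n-i}}$ bounds this resistance in $\calW_n$ by $\frakR_i\bigl(w_{n-i+1}^i,w_{n-i}^i\bigr)\lesssim\rho^i$ in a stand-alone copy of $\calW_i$ --- note the exponent is $i$, the scale of the varying suffix, not $n$. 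The chain has $n+1$ terms, but the total is $\sum_{i=1}^{n}\rho^i\lesssim\rho^n$ precisely because $\rho\ge 7/6>1$. Without either a genuinely bounded chain (which does not exist) or this geometric summation, the corollary does not follow from Theorem \ref{thm_resist}.
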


\begin{proof}
We only need to show that $\frakR_n(w,0^n)\lesssim\rho^n$ for all $w\in W_n,n\ge1$. Then for all $w^{(1)},w^{(2)}\in W_n$
$$\frakR_n(w^{(1)},w^{(2)})\le\frakR_n(w^{(1)},0^n)+\frakR_n(w^{(2)},0^n)\lesssim\rho^n.$$
Similarly, we have the proof of $R_n(p,q)\lesssim\rho^n$ for all $p,q\in V_n,n\ge1$.

Indeed, for all $n\ge1,w=w_1\ldots w_n\in W_n$, we construct a finite sequence as follows.
$$
\begin{aligned}
w^{(1)}&=w_1\ldots w_{n-2}w_{n-1}w_n=w,\\
w^{(2)}&=w_1\ldots w_{n-2}w_{n-1}w_{n-1},\\
w^{(3)}&=w_1\ldots w_{n-2}w_{n-2}w_{n-2},\\
&\ldots\\
w^{(n)}&=w_1\ldots w_1w_1w_1,\\
w^{(n+1)}&=0\ldots 000=0^n.
\end{aligned}
$$
For all $i=1,\ldots,n-1$, by cutting technique
$$
\begin{aligned}
&\frakR_n(w^{(i)},w^{(i+1)})=\frakR_n(w_1\ldots w_{n-i}w_{n-i+1}\ldots w_{n-i+1},w_1\ldots w_{n-i}w_{n-i}\ldots w_{n-i})\\
&\le\frakR_i(w_{n-i+1}\ldots w_{n-i+1},w_{n-i}\ldots w_{n-i})=\frakR_i(w_{n-i+1}^i,w_{n-i}^i)\lesssim\rho^i.
\end{aligned}
$$
Since $\frakR_n(w^{(n)},w^{(n+1)})=\frakR_n(w_1^n,0^n)\lesssim\rho^n$, we have
$$\frakR_n(w,0^n)=\frakR_n(w^{(1)},w^{(n+1)})\le\sum_{i=1}^n\frakR_n(w^{(i)},w^{(i+1)})\lesssim\sum_{i=1}^n\rho^i\lesssim\rho^n.$$
\end{proof}

We need the following results for preparation.

First, we have resistance estimates for some symmetric cases.

\begin{mythm}\label{thm_resist1}
There exists some positive constant $\rho\in[7/6,3/2]$ such that for all $n\ge1$
$$R_n^V\asymp\rho^n,$$
$$R_n(p_1,p_5)=R_n(p_3,p_7)\asymp\rho^n,$$
$$R_n(p_0,p_4)=R_n(p_2,p_6)\asymp\rho^n.$$
\end{mythm}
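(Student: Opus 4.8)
We take the straight crossing resistance $R_n^V$ as the fundamental quantity. The plan is: (i) reduce the equalities in the statement to symmetry; (ii) prove $R_n^V\asymp\rho^n$ for a well-defined $\rho\in[7/6,3/2]$ by combining one-step shorting/cutting estimates with a two-sided multiplicativity of $R_n^V$; (iii) sandwich each of $R_n(p_1,p_5)$ and $R_n(p_0,p_4)$ between constant multiples of $R_n^V$. For (i): the unit square, hence $K$, the graph $\calV_n$ and the quadratic form $D_n$, is invariant under the dihedral group $D_4$; the rotation by $\pi/2$ carries $(p_1,p_5)$ to $(p_3,p_7)$ and $(p_0,p_4)$ to $(p_2,p_6)$ and carries the left/right sides to the bottom/top sides, so $R_n(p_1,p_5)=R_n(p_3,p_7)$, $R_n(p_0,p_4)=R_n(p_2,p_6)$, and $R_n^V$ equals also the bottom-to-top resistance. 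It thus remains to prove $R_n^V\asymp\rho^n$, $R_n(p_1,p_5)\asymp\rho^n$, $R_n(p_0,p_4)\asymp\rho^n$.

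For (ii), the core step, I would use the self-similar structure: $\calV_{n+m}$ is obtained by replacing each of the $8^m$ cells of the level-$m$ pattern by a copy of $\calV_n$, and in particular the level-$(n+1)$ graph is eight copies of the level-$n$ graph arranged in a ring around the central hole. Shorting, inside each level-$n$ cell, its four sides to four terminals only decreases resistance, and a scaling of the resulting level-$m$ network gives submultiplicativity $R_{n+m}^V\le C\,R_n^V R_m^V$; dually, a cutting argument together with a cell-by-cell unit flow gives $R_{n+m}^V\ge c\,R_n^V R_m^V$. Applying Fekete's lemma to the submultiplicative sequence $C R_n^V$ and to the supermultiplicative sequence $c R_n^V$ shows that $\rho:=\lim_n (R_n^V)^{1/n}$ exists and that $R_n^V\asymp\rho^n$. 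For the range of $\rho$: a one-step shorting of the eight-cell ring gives $R_{n+1}^V\le\frac32 R_n^V$, hence $\rho\le 3/2$, and a one-step cutting gives $R_{n+1}^V\ge\frac76 R_n^V$, hence $\rho\ge 7/6$; these one-step estimates are the shorting/cutting bounds of \cite{BB90,BBS90}.

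For (iii): the lower bounds $R_n(p_1,p_5)\ge R_n^V$ and $R_n(p_0,p_4)\ge R_n^V$ follow by shorting, since $R_n^V$ equals the bottom-to-top resistance, in which the whole bottom and top sides are identified, and identifying them further onto $p_1$ and $p_5$ only lowers the resistance (and symmetrically for $p_0,p_4$ via the left/right sides). For the matching upper bounds I would first control the auxiliary quantity $g_n:=R_n(p_0,\{x=1\}\cap V_n)$ (corner to the opposite full side): restricting currents to the bottom row of three level-one cells and moving a unit flow across them one fat shared edge at a time yields $g_n\le g_{n-1}+2R_{n-1}^V$, hence $g_n\lesssim\sum_{k\le n}\rho^k\asymp\rho^n$ because $\rho>1$. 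Then $R_n(p_0,p_2)$ (corner to adjacent corner) and $R_n(p_1,p_5)$ are bounded by routing a self-similar unit flow through a bounded number of level-one cells, each crossing contributing a term $g_{n-1}$ or $R_{n-1}^V$, so all of these are $\lesssim\rho^n$; in particular $R_n(p_0,p_4)\le R_n(p_0,p_2)+R_n(p_2,p_4)\lesssim\rho^n$ and $R_n(p_1,p_5)\lesssim\rho^n$.

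The two places where the real work lies are the super-multiplicative bound $R_{n+m}^V\ge c\,R_n^V R_m^V$ (equivalently, a matching lower bound on the crossing resistance) and the uniform upper bounds for the point-to-point resistances. Both are delicate for the same reason: SC is not p.c.f., so neighbouring cells meet along whole edges rather than at finitely many points, and one cannot reduce to a finite junction network as on p.c.f. fractals. The super-multiplicativity requires a genuine estimate ruling out an anomalously cheap crossing, and the point-to-point upper bounds require the self-similar flows to be spread along the fat interfaces rather than concentrated at corner vertices, so that crossing each level-one cell costs $O(\rho^{n-1})$ and not $O(3^{n-1})$; these are the carpet analogues of the flow constructions of Barlow and Bass, and I expect them to be the technical heart of the section.
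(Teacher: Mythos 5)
Your overall architecture coincides with the paper's: Theorem \ref{thm_resist1} is proved there by establishing the two-sided multiplicativity $\frac{1}{C}x_nx_m\le x_{n+m}\le Cx_nx_m$ for the resistances in question (citing the flow and potential constructions of Barlow--Bass and McGillivray for the substance, which is exactly the work you also defer to at the end), deducing $x_n\asymp\rho^n$, identifying the exponents through mutual comparability of the three quantities (your sandwich $R_n^V\le R_n(p_1,p_5)$, $R_n^V\le R_n(p_0,p_4)$ is precisely the paper's Proposition \ref{prop_resist2}), and extracting $\rho\in[7/6,3/2]$ from the one-step shorting and cutting bounds. So the plan is right, and your level of detail on the hard flow estimates is comparable to the paper's own proof, which is equally schematic there.

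The concrete problem is that each of your four key inequalities is derived from a monotonicity principle pointing the wrong way. Shorting (identifying vertices) \emph{decreases} effective resistance, so it can only produce \emph{lower} bounds on $R_{n+1}^V$ and $R_{n+m}^V$: it is shorting the vertical lines $x=1/3$ and $x=2/3$, together with a replicated test potential in each of the three resulting slabs, that gives $R_{n+1}^V\ge\tfrac{R_n^V}{3}+\tfrac{R_n^V}{2}+\tfrac{R_n^V}{3}=\tfrac76R_n^V$, and it is a test-potential (equivalently, conductance-submultiplicativity) argument that gives the supermultiplicativity $R_{n+m}^V\ge c\,R_n^VR_m^V$. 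Dually, cutting edges \emph{increases} resistance, and Thomson's principle bounds resistance from \emph{above} by the energy of any unit flow, so these are the tools for the upper bounds: cutting the edges across $y=1/3$ and $y=2/3$ leaves two conducting rows in parallel, each of resistance at most $3R_n^V$, whence $R_{n+1}^V\le\tfrac32R_n^V$; and the hierarchical unit flow gives $R_{n+m}^V\le C\,R_n^VR_m^V$. You assert exactly the opposite pairings both in step (ii) and in the derivation of $\rho\in[7/6,3/2]$ (your closing paragraph, by contrast, pairs flows with upper bounds correctly). As written, none of the four derivations goes through, because in each case the comparison network you construct bounds the true resistance from the wrong side; once the labels are exchanged, and granting the Barlow--Bass flow and potential constructions that both you and the paper treat as known input, your argument is the paper's.
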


\begin{proof}
The proof is similar to \cite[Theorem 5.1]{BB90} and \cite[Theorem 6.1]{McG02} where flow technique and potential technique are used. We need discrete version instead of continuous version.

Hence there exists some positive constant $C$ such that
$$\frac{1}{C}x_nx_m\le x_{n+m}\le Cx_nx_m\text{ for all }n,m\ge1,$$
where $x$ is any of above resistances. Since above resistances share the same complexity, there exists \emph{one} positive constant $\rho$ such that they are equivalent to $\rho^n$ for all $n\ge1$.

By shorting and cutting technique, we have $\rho\in[7/6,3/2]$, see \cite[Equation (2.6)]{Bar13} or \cite[Remarks 5.4]{BB99a}.
\end{proof}

Second, by symmetry and shorting technique, we have the following relations.

\begin{myprop}\label{prop_resist2}
For all $n\ge1$
$$R_n(p_0,p_1)\le\frakR_n(0^n,1^n),$$
$$R_n^V\le R_n(p_1,p_5)=R_n(p_3,p_7)\le\frakR_n(1^n,5^n)=\frakR_n(3^n,7^n),$$
$$R_n^V\le R_n(p_0,p_4)=R_n(p_2,p_6)\le\frakR_n(0^n,4^n)=\frakR_n(2^n,6^n).$$
\end{myprop}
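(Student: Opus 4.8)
The plan is to establish each inequality by an appropriate combination of symmetry reductions, the shorting (short-circuit) principle, and the monotonicity of effective resistance under identification of vertices and under passing to subnetworks. Recall the two basic electrical-network facts I will use repeatedly: (i) shorting together a set of vertices (i.e. adding edges of infinite conductance, equivalently identifying them) can only decrease effective resistance; (ii) if a network $\calV_n'$ is obtained from $\calV_n$ by deleting edges or vertices, then $R_n^{\calV_n'}(A,B)\ge R_n^{\calV_n}(A,B)$ whenever $A,B$ survive. The network $\calW_n$ on $W_n$ can be viewed as a quotient-type object relative to $\calV_n$: a harmonic function on $\calV_n$ that is constant on each cell $V_w$ descends to a function on $W_n$, and this correspondence respects energies up to the fact that $\frakD_n$ counts each cell-adjacency once while $D_n$ sums over the shared boundary vertices of adjacent cells. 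This is exactly the mechanism that turns the $\calW_n$-resistances into upper bounds for the $\calV_n$-resistances.

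For the first inequality $R_n(p_0,p_1)\le\frakR_n(0^n,1^n)$: let $u\in l(W_n)$ be the minimizer for $\frakR_n(0^n,1^n)^{-1}$, so $u(0^n)=0$, $u(1^n)=1$, and $\frakD_n(u,u)=\frakR_n(0^n,1^n)^{-1}$. Lift $u$ to $\tilde u\in l(V_n)$ by setting $\tilde u$ equal to $u(w)$ on all of $V_w$ — this is not single-valued on cell boundaries, so instead I will go the other way: take the minimizer $v$ on $\calV_n$ for $R_n(p_0,p_1)$ and compare energies by grouping the terms of $D_n(v,v)$ according to which pair of adjacent cells they lie between; shorting each cell $V_w$ to a single vertex collapses $\calV_n$ onto (a network dominating) $\calW_n$, and since $p_0\in V_{0^n}$, $p_1\in V_{1^n}$ before shorting, shorting only decreases resistance, giving $R_n(p_0,p_1)\le$ (resistance of the shorted network) $\le\frakR_n(0^n,1^n)$. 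The second and third displays are handled together by symmetry: the reflection symmetries of SC send $\{p_1,p_5\}$ to $\{p_3,p_7\}$ (rotation by $\pi/2$) and $\{p_0,p_4\}$ to $\{p_2,p_6\}$, which gives the claimed equalities $R_n(p_1,p_5)=R_n(p_3,p_7)$ and $R_n(p_0,p_4)=R_n(p_2,p_6)$ and likewise for $\frakR_n$; the inequality $R_n^V\le R_n(p_1,p_5)$ comes from shorting: the left slit $\{0\}\times[0,1]\cap V_n$ contains $p_0,p_6,p_7$ and the right slit contains $p_2,p_3,p_4$, so shorting the left slit to $p_?$ and the right slit appropriately, or more simply, noting that $R_n^V$ is the resistance between two whole sides while $R_n(p_1,p_5)$ is between two single points, and the midpoint $p_1$ of the bottom side together with the midpoint $p_5$ of the top side are separated by a ``thinner'' cut — the clean argument is: $R_n^V\le R_n(p_1,p_5)$ because shorting the bottom side (which contains $p_1$) and the top side (which contains $p_5$) only decreases resistance and turns $R_n(p_1,p_5)$ into $R_n^V$. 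Finally $R_n(p_1,p_5)\le\frakR_n(1^n,5^n)$ and $R_n(p_0,p_4)\le\frakR_n(0^n,4^n)$ follow by the same cell-shorting argument used for the first inequality, since $p_1\in V_{1^n}$, $p_5\in V_{5^n}$, $p_0\in V_{0^n}$, $p_4\in V_{4^n}$.

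The step I expect to require the most care is making the cell-shorting comparison between $\calV_n$ and $\calW_n$ fully rigorous — in particular verifying that after shorting each cell $V_w$ to a point the resulting network is dominated (edge by edge, in conductance) by $\calW_n$ with its unit conductances, rather than merely combinatorially similar. The subtlety is that two cells $K_{w^{(1)}}, K_{w^{(2)}}$ with $\dim_{\mathcal H}(K_{w^{(1)}}\cap K_{w^{(2)}})=1$ share a whole line segment of boundary vertices of $V_n$, so the shorted network would naively get a conductance proportional to the number of shared boundary vertices (which is $\asymp$ a power of $3$), not $1$; one must instead short only a well-chosen single representative boundary vertex per cell, or equivalently observe that the quotient map $V_n\to W_n$ is not a graph morphism and the correct statement is obtained by choosing, for each adjacent pair of cells, one connecting edge of $\calV_n$ and discarding the rest (which, being edge-deletion, increases $R_n$, the wrong direction) — so the actual route is the variational one: take the $\calW_n$-minimizer $u$, pull it back to the constant-on-cells function on $V_n$ (well-defined since the value on a shared boundary vertex is forced to agree only if $u$ is constant on cells, which it is by construction of the pullback as ``value of the cell''), and check $D_n(\tilde u,\tilde u)$ is comparable to $\frakD_n(u,u)$; handling the boundary-vertex multiplicity correctly here is the crux, and it is where the precise geometry of SC (each boundary vertex lies in a bounded number of cells) enters. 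Once that comparison is pinned down, the rest of the proposition is a routine assembly of symmetry and shorting.
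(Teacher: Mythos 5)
The parts of your argument that rest on genuine symmetry and set-monotonicity are fine and are exactly what the paper intends: the dihedral symmetries of $K$ give $R_n(p_1,p_5)=R_n(p_3,p_7)$, $R_n(p_0,p_4)=R_n(p_2,p_6)$ (and the analogous identities for $\frakR_n$), and since $R_n^V$ equals, by symmetry, the resistance between the full bottom and top sides, which contain $p_1$ and $p_5$ respectively, enlarging the boundary sets (equivalently, shorting each whole side onto one of its points) gives $R_n^V\le R_n(p_1,p_5)$ and $R_n^V\le R_n(p_0,p_4)$. The central inequalities $R_n(p_0,p_1)\le\frakR_n(0^n,1^n)$, $R_n(p_1,p_5)\le\frakR_n(1^n,5^n)$, $R_n(p_0,p_4)\le\frakR_n(0^n,4^n)$, however, are not established by your proposal, because the one mechanism you offer for them applies the shorting principle in the wrong direction. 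If you short each $V_w$ to a point, the resulting network has resistance \emph{at most} $R_n(p_0,p_1)$; your chain ``$R_n(p_0,p_1)\le$ (resistance of the shorted network) $\le\frakR_n(0^n,1^n)$'' asserts the opposite of the principle you invoke in the same sentence. (The operation is also degenerate: adjacent cells share three vertices of $V_n$ --- two corners and one edge midpoint, not ``$\asymp$ a power of $3$'' of them --- so identifying all of each $V_w$ collapses the connected network to a single vertex.) Your fallback, pulling the $\calW_n$-minimizer back to a cell-wise constant function on $V_n$, is ill-defined on shared vertices and in any case produces a test function for $R_n(p_0,p_1)^{-1}=\inf D_n$, which bounds $R_n$ from \emph{below} by a multiple of $\frakR_n$; that is the mechanism of Proposition \ref{prop_resist3}, not of the present statement. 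Every construction in your write-up transports data from $\calW_n$ to $\calV_n$, and all such transports can only prove $\frakR_n\lesssim R_n$.

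What is needed is the opposite transport: from an arbitrary $u\in l(V_n)$ with $u(p_0)=0$, $u(p_1)=1$, build $v\in l(W_n)$ with $v(0^n)=0$, $v(1^n)=1$ and $\frakD_n(v,v)\le C\,D_n(u,u)$ --- for instance $v(w)=u(P_w)$ or the average of $u$ over $V_w$, estimating $v(w^{(1)})-v(w^{(2)})$ for $w^{(1)}\sim_n w^{(2)}$ by Cauchy--Schwarz along a uniformly bounded path in the union of the two $8$-cycles and noting that each edge of $\calV_n$ is used a bounded number of times (equivalently, push a unit flow on $\calV_n$ forward to a unit flow on $\calW_n$). This yields $R_n(p_0,p_1)\le C\,\frakR_n(0^n,1^n)$, which is all that is used later, namely $\frakR_n(0^n,1^n)\gtrsim\rho^n$. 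You should not expect to achieve $C=1$: already for $n=1$ the two disjoint edge-cutsets around $p_0$ (the two unit-conductance edges at $p_0$, then the two unit-conductance edges leaving its neighbours $(1/6,0)$ and $(0,1/6)$) give $R_1(p_0,p_1)\ge\frac12+\frac12=1$ by the Nash--Williams inequality, while $\frakR_1(0,1)=\frac78$ on the $8$-cycle $\calW_1$; so the displayed inequalities must in any case be read up to a multiplicative constant, exactly as in Proposition \ref{prop_resist3}, and your search for an exact shorting argument could not have succeeded.
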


Third, we have the following relations.

\begin{myprop}\label{prop_resist3}
For all $n\ge1$
$$\frakR_n(0^n,1^n)\lesssim R_n(p_0,p_1),$$
$$\frakR_n(1^n,5^n)=\frakR_n(3^n,7^n)\lesssim R_n(p_1,p_5)=R_n(p_3,p_7),$$
$$\frakR_n(0^n,4^n)=\frakR_n(2^n,6^n)\lesssim R_n(p_0,p_4)=R_n(p_2,p_6).$$
\end{myprop}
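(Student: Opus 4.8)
The plan is to read both resistances through the Dirichlet (variational) principle and to transfer a \emph{potential} from the coarse network $\calW_n$ to the fine network $\calV_n$. Writing the effective conductances $C^{\calV}_n(p_0,p_1)=\inf\myset{D_n(u,u):u(p_0)=0,\,u(p_1)=1}=R_n(p_0,p_1)^{-1}$ and $C^{\calW}_n(0^n,1^n)=\inf\myset{\frakD_n(\tilde u,\tilde u):\tilde u(0^n)=0,\,\tilde u(1^n)=1}=\frakR_n(0^n,1^n)^{-1}$, the desired bound $\frakR_n(0^n,1^n)\lesssim R_n(p_0,p_1)$ is equivalent to $C^{\calV}_n\lesssim C^{\calW}_n$. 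Thus it suffices, given an arbitrary admissible cell-potential $\tilde u$ on $W_n$ with $\tilde u(0^n)=0$ and $\tilde u(1^n)=1$, to construct an admissible point-potential $u$ on $V_n$ with $u(p_0)=0$, $u(p_1)=1$ and $D_n(u,u)\lesssim\frakD_n(\tilde u,\tilde u)$; taking the infimum over $\tilde u$ then yields the claim. It is essential that the transfer runs from $\calW_n$ to $\calV_n$: transferring a $\calV_n$-potential to $\calW_n$ produces the reverse inequality, which is precisely the content of Proposition \ref{prop_resist2}.

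First I would define $u$ by \emph{local averaging}: for each $p\in V_n$ set $u(p)$ equal to the mean of $\tilde u(w)$ over those level-$n$ cells $w$ whose boundary contains $p$, i.e. with $p\in V_w$. Since $p_0$ is a corner of $K$ it lies on the boundary of the single cell $0^n$, and $p_1$ (the midpoint of the outer bottom edge of $K_{1^n}$) lies on the boundary of $1^n$ only; hence $u(p_0)=\tilde u(0^n)=0$ and $u(p_1)=\tilde u(1^n)=1$, so $u$ is admissible. For the energy, fix a cell $w\in W_n$ and a $\calV_n$-edge $(p,q)$ with $p,q\in V_w$; every such edge joins a corner of $K_w$ to an adjacent edge-midpoint of $K_w$. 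As $w$ itself contains both $p$ and $q$, I split $u(p)-u(q)=(u(p)-\tilde u(w))+(\tilde u(w)-u(q))$ and estimate each piece. The midpoint endpoint is shared by at most two cells, which are $\sim_n$-adjacent, so its contribution is controlled by a single edge increment of $\tilde u$; the corner endpoint is the delicate one.

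Each cell $w'$ sharing a boundary point with $w$ is joined to $w$ by a path in $\calW_n$ of uniformly bounded length, so $\tilde u(w')-\tilde u(w)$ telescopes along that path. By Cauchy--Schwarz, $(u(p)-\tilde u(w))^2$ and $(\tilde u(w)-u(q))^2$ are each bounded by a constant times $\sum_{e\in\mathcal N(w)}(\Delta_e\tilde u)^2$, where $\mathcal N(w)$ is the bounded set of $\calW_n$-edges on these connecting paths and $\Delta_e\tilde u$ is the increment of $\tilde u$ across the edge $e$. Summing over the (at most eight) $\calV_n$-edges inside each $w$ and then over all $w\in W_n$, and using that every $\calW_n$-edge lies in $\mathcal N(w)$ for only boundedly many $w$ (bounded geometry of $\calW_n$), I obtain $D_n(u,u)\lesssim\sum_e(\Delta_e\tilde u)^2=\frakD_n(\tilde u,\tilde u)$, as required.

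The hard part is exactly the step asserting bounded-length $\calW_n$-connectivity of the cells meeting at a common point, and this is where the non-p.c.f. nature of SC intervenes: two cells that touch only at a corner have a $0$-dimensional intersection, so they are \emph{not} $\sim_n$-adjacent and must be linked by a short detour through edge-adjacent (one-dimensionally meeting) cells. I would isolate this as a geometric lemma: around any grid-corner vertex of $V_n$ at most one of the incident level-$n$ squares is absent from SC (two absent squares never share a corner, since distinct maximal removed squares are separated by present cells), so the present cells at that vertex form a connected subgraph of $\calW_n$ of diameter bounded independently of $n$ and of the location. The three displayed inequalities for $(1^n,5^n),(3^n,7^n)$ and $(0^n,4^n),(2^n,6^n)$ follow by the identical construction, the equalities among them coming from the dihedral symmetry of SC permuting the distinguished vertices; equivalently, one may dualize the whole argument via Thomson's principle, mapping an optimal unit flow on $\calV_n$ to one on $\calW_n$ of comparable energy.
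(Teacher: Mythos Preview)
Your approach is correct in outline and is genuinely different from the paper's. The paper does not transfer potentials; it performs explicit electrical network reductions (Figures~\ref{fig_trans1}--\ref{fig_trans4}) that replace local pieces of $\calW_n$ by equivalent or resistance-increasing pieces, ultimately realising a weighted copy of $\calV_{n-1}$ with all conductances uniformly $\asymp1$; Rayleigh monotonicity then gives $\frakR_n\le C\cdot R_{n-1}$-type bounds. Your averaging map $\tilde u\mapsto u$ compares $\calW_n$ directly with $\calV_n$ at the \emph{same} scale, which is cleaner (no scale shift) and more portable to other carpets, at the price of needing the local connectivity lemma you isolate. Both routes rest on the same bounded-geometry fact, but the paper encodes it in pictures whereas you encode it in the averaging estimate.

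One correction is needed in your geometric lemma. The claim ``at most one of the incident level-$n$ squares is absent'' is false as stated: along the side of a hole of scale $3^{-k}$ with $k<n$, a corner vertex $p\in V_n$ has \emph{two} absent level-$n$ neighbours (for instance $p=(1/3,4/9)\in V_2$, where both right-hand squares lie in the central hole $[1/3,2/3]^2$). What \emph{is} true, and is exactly what you need, is that the absent level-$n$ squares incident to $p$ all lie in a single removed square (any two distinct removed squares, of any scales, have disjoint closures, since the smaller one sits in the interior of its parent cell, away from the boundary); since each removed square is convex, the present cells around $p$ are consecutive and hence $\sim_n$-connected with diameter at most~$2$. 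With this amended lemma your telescoping-plus-Cauchy--Schwarz estimate gives $D_n(u,u)\lesssim\frakD_n(\tilde u,\tilde u)$ exactly as you wrote, and the three displayed inequalities follow.
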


\begin{proof}
The idea is to use electrical network transformations to \emph{increase} resistances to transform weighted graph $\calW_n$ to weighted graph $\calV_{n-1}$.

First, we do the transformation in Figure \ref{fig_trans1} where the resistances of the resistors in the new network only depend on the shape of the networks in Figure \ref{fig_trans1} such that we obtain the weighted graph in Figure \ref{fig_trans2} where the resistances between any two points are larger than those in the weighted graph $\calW_n$. For $\frakR_n(i^n,j^n)$, we have the equivalent weighted graph in Figure \ref{fig_trans3}.




\begin{figure}[ht]
\centering
\begin{tikzpicture}
\draw (0,0)--(2,0)--(2,1)--(0,1)--cycle;
\draw (1,0)--(1,1);

\draw[fill=black] (0,0) circle (0.06);
\draw[fill=black] (1,0) circle (0.06);
\draw[fill=black] (2,0) circle (0.06);
\draw[fill=black] (0,1) circle (0.06);
\draw[fill=black] (1,1) circle (0.06);
\draw[fill=black] (2,1) circle (0.06);

\draw (3,0)--(3,1);
\draw (4,0)--(4,1);
\draw (5,0)--(5,1);
\draw (3,0.5)--(5,0.5);

\draw[fill=black] (3,0) circle (0.06);
\draw[fill=black] (4,0) circle (0.06);
\draw[fill=black] (5,0) circle (0.06);
\draw[fill=black] (3,1) circle (0.06);
\draw[fill=black] (4,1) circle (0.06);
\draw[fill=black] (5,1) circle (0.06);

\draw (2.5,0.5) node {$\Rightarrow$};

\end{tikzpicture}
\caption{First Transformation}\label{fig_trans1}
\end{figure}




\begin{figure}[ht]
\centering
\begin{tikzpicture}

\draw[fill=black] (0,0) circle (0.06);
\draw[fill=black] (0.5,0) circle (0.06);
\draw[fill=black] (1,0) circle (0.06);
\draw[fill=black] (1.5,0) circle (0.06);
\draw[fill=black] (2,0) circle (0.06);
\draw[fill=black] (2.5,0) circle (0.06);
\draw[fill=black] (3,0) circle (0.06);
\draw[fill=black] (3.5,0) circle (0.06);
\draw[fill=black] (4,0) circle (0.06);
\draw[fill=black] (4.5,0) circle (0.06);
\draw[fill=black] (5,0) circle (0.06);
\draw[fill=black] (5.5,0) circle (0.06);
\draw[fill=black] (6,0) circle (0.06);
\draw[fill=black] (6.5,0) circle (0.06);
\draw[fill=black] (7,0) circle (0.06);
\draw[fill=black] (7.5,0) circle (0.06);
\draw[fill=black] (8,0) circle (0.06);
\draw[fill=black] (8.5,0) circle (0.06);

\draw[fill=black] (0,0.5) circle (0.06);
\draw[fill=black] (1,0.5) circle (0.06);
\draw[fill=black] (1.5,0.5) circle (0.06);
\draw[fill=black] (2.5,0.5) circle (0.06);
\draw[fill=black] (3,0.5) circle (0.06);
\draw[fill=black] (4,0.5) circle (0.06);
\draw[fill=black] (4.5,0.5) circle (0.06);
\draw[fill=black] (5.5,0.5) circle (0.06);
\draw[fill=black] (6,0.5) circle (0.06);
\draw[fill=black] (7,0.5) circle (0.06);
\draw[fill=black] (7.5,0.5) circle (0.06);
\draw[fill=black] (8.5,0.5) circle (0.06);

\draw[fill=black] (0,1) circle (0.06);
\draw[fill=black] (0.5,1) circle (0.06);
\draw[fill=black] (1,1) circle (0.06);
\draw[fill=black] (1.5,1) circle (0.06);
\draw[fill=black] (2,1) circle (0.06);
\draw[fill=black] (2.5,1) circle (0.06);
\draw[fill=black] (3,1) circle (0.06);
\draw[fill=black] (3.5,1) circle (0.06);
\draw[fill=black] (4,1) circle (0.06);
\draw[fill=black] (4.5,1) circle (0.06);
\draw[fill=black] (5,1) circle (0.06);
\draw[fill=black] (5.5,1) circle (0.06);
\draw[fill=black] (6,1) circle (0.06);
\draw[fill=black] (6.5,1) circle (0.06);
\draw[fill=black] (7,1) circle (0.06);
\draw[fill=black] (7.5,1) circle (0.06);
\draw[fill=black] (8,1) circle (0.06);
\draw[fill=black] (8.5,1) circle (0.06);

\draw[fill=black] (0,1.5) circle (0.06);
\draw[fill=black] (0.5,1.5) circle (0.06);
\draw[fill=black] (1,1.5) circle (0.06);
\draw[fill=black] (3,1.5) circle (0.06);
\draw[fill=black] (3.5,1.5) circle (0.06);
\draw[fill=black] (4,1.5) circle (0.06);
\draw[fill=black] (4.5,1.5) circle (0.06);
\draw[fill=black] (5,1.5) circle (0.06);
\draw[fill=black] (5.5,1.5) circle (0.06);
\draw[fill=black] (7.5,1.5) circle (0.06);
\draw[fill=black] (8,1.5) circle (0.06);
\draw[fill=black] (8.5,1.5) circle (0.06);

\draw[fill=black] (0,2) circle (0.06);
\draw[fill=black] (1,2) circle (0.06);
\draw[fill=black] (3,2) circle (0.06);
\draw[fill=black] (4,2) circle (0.06);
\draw[fill=black] (4.5,2) circle (0.06);
\draw[fill=black] (5.5,2) circle (0.06);
\draw[fill=black] (7.5,2) circle (0.06);
\draw[fill=black] (8.5,2) circle (0.06);

\draw[fill=black] (0,2.5) circle (0.06);
\draw[fill=black] (0.5,2.5) circle (0.06);
\draw[fill=black] (1,2.5) circle (0.06);
\draw[fill=black] (3,2.5) circle (0.06);
\draw[fill=black] (3.5,2.5) circle (0.06);
\draw[fill=black] (4,2.5) circle (0.06);
\draw[fill=black] (4.5,2.5) circle (0.06);
\draw[fill=black] (5,2.5) circle (0.06);
\draw[fill=black] (5.5,2.5) circle (0.06);
\draw[fill=black] (7.5,2.5) circle (0.06);
\draw[fill=black] (8,2.5) circle (0.06);
\draw[fill=black] (8.5,2.5) circle (0.06);

\draw[fill=black] (0,3) circle (0.06);
\draw[fill=black] (0.5,3) circle (0.06);
\draw[fill=black] (1,3) circle (0.06);
\draw[fill=black] (1.5,3) circle (0.06);
\draw[fill=black] (2,3) circle (0.06);
\draw[fill=black] (2.5,3) circle (0.06);
\draw[fill=black] (3,3) circle (0.06);
\draw[fill=black] (3.5,3) circle (0.06);
\draw[fill=black] (4,3) circle (0.06);
\draw[fill=black] (4.5,3) circle (0.06);
\draw[fill=black] (5,3) circle (0.06);
\draw[fill=black] (5.5,3) circle (0.06);
\draw[fill=black] (6,3) circle (0.06);
\draw[fill=black] (6.5,3) circle (0.06);
\draw[fill=black] (7,3) circle (0.06);
\draw[fill=black] (7.5,3) circle (0.06);
\draw[fill=black] (8,3) circle (0.06);
\draw[fill=black] (8.5,3) circle (0.06);

\draw[fill=black] (0,3.5) circle (0.06);
\draw[fill=black] (1,3.5) circle (0.06);
\draw[fill=black] (1.5,3.5) circle (0.06);
\draw[fill=black] (2.5,3.5) circle (0.06);
\draw[fill=black] (3,3.5) circle (0.06);
\draw[fill=black] (4,3.5) circle (0.06);
\draw[fill=black] (4.5,3.5) circle (0.06);
\draw[fill=black] (5.5,3.5) circle (0.06);
\draw[fill=black] (6,3.5) circle (0.06);
\draw[fill=black] (7,3.5) circle (0.06);
\draw[fill=black] (7.5,3.5) circle (0.06);
\draw[fill=black] (8.5,3.5) circle (0.06);

\draw[fill=black] (0,4) circle (0.06);
\draw[fill=black] (0.5,4) circle (0.06);
\draw[fill=black] (1,4) circle (0.06);
\draw[fill=black] (1.5,4) circle (0.06);
\draw[fill=black] (2,4) circle (0.06);
\draw[fill=black] (2.5,4) circle (0.06);
\draw[fill=black] (3,4) circle (0.06);
\draw[fill=black] (3.5,4) circle (0.06);
\draw[fill=black] (4,4) circle (0.06);
\draw[fill=black] (4.5,4) circle (0.06);
\draw[fill=black] (5,4) circle (0.06);
\draw[fill=black] (5.5,4) circle (0.06);
\draw[fill=black] (6,4) circle (0.06);
\draw[fill=black] (6.5,4) circle (0.06);
\draw[fill=black] (7,4) circle (0.06);
\draw[fill=black] (7.5,4) circle (0.06);
\draw[fill=black] (8,4) circle (0.06);
\draw[fill=black] (8.5,4) circle (0.06);

\draw[fill=black] (0,4.5) circle (0.06);
\draw[fill=black] (0.5,4.5) circle (0.06);
\draw[fill=black] (1,4.5) circle (0.06);
\draw[fill=black] (1.5,4.5) circle (0.06);
\draw[fill=black] (2,4.5) circle (0.06);
\draw[fill=black] (2.5,4.5) circle (0.06);
\draw[fill=black] (3,4.5) circle (0.06);
\draw[fill=black] (3.5,4.5) circle (0.06);
\draw[fill=black] (4,4.5) circle (0.06);

\draw[fill=black] (0,5) circle (0.06);
\draw[fill=black] (1,5) circle (0.06);
\draw[fill=black] (1.5,5) circle (0.06);
\draw[fill=black] (2.5,5) circle (0.06);
\draw[fill=black] (3,5) circle (0.06);
\draw[fill=black] (4,5) circle (0.06);

\draw[fill=black] (0,5.5) circle (0.06);
\draw[fill=black] (0.5,5.5) circle (0.06);
\draw[fill=black] (1,5.5) circle (0.06);
\draw[fill=black] (1.5,5.5) circle (0.06);
\draw[fill=black] (2,5.5) circle (0.06);
\draw[fill=black] (2.5,5.5) circle (0.06);
\draw[fill=black] (3,5.5) circle (0.06);
\draw[fill=black] (3.5,5.5) circle (0.06);
\draw[fill=black] (4,5.5) circle (0.06);

\draw[fill=black] (0,6) circle (0.06);
\draw[fill=black] (0.5,6) circle (0.06);
\draw[fill=black] (1,6) circle (0.06);
\draw[fill=black] (3,6) circle (0.06);
\draw[fill=black] (3.5,6) circle (0.06);
\draw[fill=black] (4,6) circle (0.06);

\draw[fill=black] (0,6.5) circle (0.06);
\draw[fill=black] (1,6.5) circle (0.06);
\draw[fill=black] (3,6.5) circle (0.06);
\draw[fill=black] (4,6.5) circle (0.06);

\draw[fill=black] (0,7) circle (0.06);
\draw[fill=black] (0.5,7) circle (0.06);
\draw[fill=black] (1,7) circle (0.06);
\draw[fill=black] (3,7) circle (0.06);
\draw[fill=black] (3.5,7) circle (0.06);
\draw[fill=black] (4,7) circle (0.06);

\draw[fill=black] (0,7.5) circle (0.06);
\draw[fill=black] (0.5,7.5) circle (0.06);
\draw[fill=black] (1,7.5) circle (0.06);
\draw[fill=black] (1.5,7.5) circle (0.06);
\draw[fill=black] (2,7.5) circle (0.06);
\draw[fill=black] (2.5,7.5) circle (0.06);
\draw[fill=black] (3,7.5) circle (0.06);
\draw[fill=black] (3.5,7.5) circle (0.06);
\draw[fill=black] (4,7.5) circle (0.06);

\draw[fill=black] (0,8) circle (0.06);
\draw[fill=black] (1,8) circle (0.06);
\draw[fill=black] (1.5,8) circle (0.06);
\draw[fill=black] (2.5,8) circle (0.06);
\draw[fill=black] (3,8) circle (0.06);
\draw[fill=black] (4,8) circle (0.06);

\draw[fill=black] (0,8.5) circle (0.06);
\draw[fill=black] (0.5,8.5) circle (0.06);
\draw[fill=black] (1,8.5) circle (0.06);
\draw[fill=black] (1.5,8.5) circle (0.06);
\draw[fill=black] (2,8.5) circle (0.06);
\draw[fill=black] (2.5,8.5) circle (0.06);
\draw[fill=black] (3,8.5) circle (0.06);
\draw[fill=black] (3.5,8.5) circle (0.06);
\draw[fill=black] (4,8.5) circle (0.06);

\draw (0,0)--(8.5,0);
\draw (0,0)--(0,8.5);

\draw (1,1)--(1.5,1);
\draw (1,1)--(1,1.5);
\draw (1.25,0)--(1.25,1);
\draw (0,1.25)--(1,1.25);
\draw (1,0.5)--(1.5,0.5);
\draw (0.5,1)--(0.5,1.5);

\draw (1.5,1)--(3,1);
\draw (2.75,1)--(2.75,0);
\draw (2.5,0.5)--(3,0.5);

\draw (3,1)--(3,1.5);
\draw (4,1)--(4,1.5);
\draw (3,1.25)--(4,1.25);
\draw (3.5,1)--(3.5,1.5);
\draw (4,1)--(4.5,1);
\draw (4,0.5)--(4.5,0.5);
\draw (4.25,0)--(4.25,1);

\draw (4.5,1)--(4.5,1.5);
\draw (5,1)--(5,1.5);
\draw (5.5,1)--(5.5,1.5);
\draw (4.5,1.25)--(5.5,1.25);
\draw (5.5,1)--(6,1);
\draw (5.5,0.5)--(6,0.5);
\draw (5.75,0)--(5.75,1);
\draw (6,1)--(7,1);
\draw (7,1)--(7.5,1);
\draw (7,0.5)--(7.5,0.5);
\draw (7.25,0)--(7.25,1);
\draw (7.5,1)--(7.5,1.5);
\draw (8,1)--(8,1.5);
\draw (8.5,1)--(8.5,1.5);
\draw (7.5,1.25)--(8.5,1.25);

\draw (8.5,0)--(9,0);
\draw (8.5,0.5)--(9,0.5);
\draw (8.5,1)--(9,1);
\draw (8.75,0)--(8.75,1);

\draw (1,1.5)--(1,3);
\draw (0.5,2.5)--(0.5,3);
\draw (0,2.75)--(1,2.75);

\draw (1,3)--(3,3);
\draw (3,3)--(3,1);

\draw (3.5,2.5)--(3.5,3);
\draw (4,2.5)--(4,3);
\draw (3,2.75)--(4,2.75);
\draw (4,1.5)--(4.5,1.5);
\draw (4,2)--(4.5,2);
\draw (4,2.5)--(4.5,2.5);
\draw (4.25,1.5)--(4.25,2.5);

\draw (4.5,2.5)--(4.5,3);
\draw (5,2.5)--(5,3);
\draw (5.5,2.5)--(5.5,3);
\draw (4.5,2.75)--(5.5,2.75);
\draw (5.5,1.5)--(5.5,2.5);
\draw (5.5,3)--(7.5,3);
\draw (7.5,3)--(7.5,1.5);

\draw (8,2.5)--(8,3);
\draw (8.5,2.5)--(8.5,3);
\draw (7.5,2.75)--(8.5,2.75);

\draw (8.5,1.5)--(9,1.5);
\draw (8.5,2)--(9,2);
\draw (8.5,2.5)--(9,2.5);
\draw (8.75,1.5)--(8.75,2.5);

\draw (8.5,3)--(9,3);
\draw (8.5,3.5)--(9,3.5);
\draw (8.5,4)--(9,4);
\draw (8.75,3)--(8.75,4);

\draw (7,3.5)--(7.5,3.5);
\draw (7,4)--(7.5,4);
\draw (7.25,3)--(7.25,4);

\draw (5.5,3.5)--(6,3.5);
\draw (5.5,4)--(6,4);
\draw (5.75,3)--(5.75,4);

\draw (6,4)--(7,4);
\draw (4.5,4)--(5.5,4);

\draw (4,3)--(4.5,3);
\draw (4,3.5)--(4.5,3.5);
\draw (4,4)--(4.5,4);
\draw (4.25,3)--(4.25,4);

\draw (4,4)--(4,4.5);
\draw (3.5,4)--(3.5,4.5);
\draw (3,4)--(3,4.5);
\draw (3,4.25)--(4,4.25);

\draw (2.5,4)--(3,4);
\draw (2.5,3.5)--(3,3.5);
\draw (2.75,3)--(2.75,4);

\draw (1,4)--(1.5,4);
\draw (1,3.5)--(1.5,3.5);
\draw (1.25,3)--(1.25,4);

\draw (7.5,4)--(8.5,4);

\draw (0.5,4)--(0.5,4.5);
\draw (1,4)--(1,4.5);
\draw (0,4.25)--(1,4.25);

\draw (1.5,4)--(1.5,4.5);
\draw (2,4)--(2,4.5);
\draw (2.5,4)--(2.5,4.5);
\draw (1.5,4.25)--(2.5,4.25);

\draw (1,4.5)--(1.5,4.5);
\draw (1,5)--(1.5,5);
\draw (1,5.5)--(1.5,5.5);
\draw (1.25,4.5)--(1.25,5.5);

\draw (2.5,4.5)--(3,4.5);
\draw (2.5,5)--(3,5);
\draw (2.5,5.5)--(3,5.5);
\draw (2.75,4.5)--(2.75,5.5);

\draw (4,4.5)--(4,5.5);
\draw (1.5,5.5)--(2.5,5.5);

\draw (0.5,5.5)--(0.5,6);
\draw (1,5.5)--(1,6);
\draw (0,5.75)--(1,5.75);

\draw (3,5.5)--(3,6);
\draw (3.5,5.5)--(3.5,6);
\draw (4,5.5)--(4,6);
\draw (3,5.75)--(4,5.75);

\draw (1,6)--(1,7);
\draw (3,6)--(3,7);
\draw (4,6)--(4,7);

\draw (0.5,7)--(0.5,7.5);
\draw (1,7)--(1,7.5);
\draw (0,7.25)--(1,7.25);

\draw (3,7)--(3,7.5);
\draw (3.5,7)--(3.5,7.5);
\draw (4,7)--(4,7.5);
\draw (3,7.25)--(4,7.25);

\draw (1,7.5)--(3,7.5);

\draw (1,8)--(1.5,8);
\draw (1,8.5)--(1.5,8.5);
\draw (1.25,7.5)--(1.25,8.5);

\draw (2.5,8)--(3,8);
\draw (2.5,8.5)--(3,8.5);
\draw (2.75,7.5)--(2.75,8.5);

\draw (0,8.5)--(0,9);
\draw (0.5,8.5)--(0.5,9);
\draw (1,8.5)--(1,9);
\draw (1.5,8.5)--(1.5,9);
\draw (2,8.5)--(2,9);
\draw (2.5,8.5)--(2.5,9);
\draw (3,8.5)--(3,9);
\draw (3.5,8.5)--(3.5,9);
\draw (4,8.5)--(4,9);

\draw (0,8.75)--(1,8.75);
\draw (1.5,8.75)--(2.5,8.75);
\draw (3,8.75)--(4,8.75);

\draw (4,8.5)--(4,7.5);

\draw (2,9.5) node {\ldots};
\draw (9.5,2) node {\vdots};
\end{tikzpicture}
\caption{First Transformation}\label{fig_trans2}
\end{figure}





\begin{figure}[ht]
\centering
\begin{tikzpicture}

\draw[fill=black] (0,0) circle (0.06);
\draw[fill=black] (0.5,0) circle (0.06);
\draw[fill=black] (1,0) circle (0.06);
\draw[fill=black] (1.5,0) circle (0.06);
\draw[fill=black] (2,0) circle (0.06);
\draw[fill=black] (2.5,0) circle (0.06);
\draw[fill=black] (3,0) circle (0.06);
\draw[fill=black] (3.5,0) circle (0.06);
\draw[fill=black] (4,0) circle (0.06);
\draw[fill=black] (4.5,0) circle (0.06);
\draw[fill=black] (5,0) circle (0.06);
\draw[fill=black] (5.5,0) circle (0.06);
\draw[fill=black] (6,0) circle (0.06);
\draw[fill=black] (6.5,0) circle (0.06);
\draw[fill=black] (7,0) circle (0.06);
\draw[fill=black] (7.5,0) circle (0.06);
\draw[fill=black] (8,0) circle (0.06);
\draw[fill=black] (8.5,0) circle (0.06);

\draw[fill=black] (0,0.5) circle (0.06);
\draw[fill=black] (1,0.5) circle (0.06);
\draw[fill=black] (1.5,0.5) circle (0.06);
\draw[fill=black] (2.5,0.5) circle (0.06);
\draw[fill=black] (3,0.5) circle (0.06);
\draw[fill=black] (4,0.5) circle (0.06);
\draw[fill=black] (4.5,0.5) circle (0.06);
\draw[fill=black] (5.5,0.5) circle (0.06);
\draw[fill=black] (6,0.5) circle (0.06);
\draw[fill=black] (7,0.5) circle (0.06);
\draw[fill=black] (7.5,0.5) circle (0.06);
\draw[fill=black] (8.5,0.5) circle (0.06);

\draw[fill=black] (0,1) circle (0.06);
\draw[fill=black] (0.5,1) circle (0.06);
\draw[fill=black] (1,1) circle (0.06);
\draw[fill=black] (1.5,1) circle (0.06);
\draw[fill=black] (2,1) circle (0.06);
\draw[fill=black] (2.5,1) circle (0.06);
\draw[fill=black] (3,1) circle (0.06);
\draw[fill=black] (3.5,1) circle (0.06);
\draw[fill=black] (4,1) circle (0.06);
\draw[fill=black] (4.5,1) circle (0.06);
\draw[fill=black] (5,1) circle (0.06);
\draw[fill=black] (5.5,1) circle (0.06);
\draw[fill=black] (6,1) circle (0.06);
\draw[fill=black] (6.5,1) circle (0.06);
\draw[fill=black] (7,1) circle (0.06);
\draw[fill=black] (7.5,1) circle (0.06);
\draw[fill=black] (8,1) circle (0.06);
\draw[fill=black] (8.5,1) circle (0.06);

\draw[fill=black] (0,1.5) circle (0.06);
\draw[fill=black] (0.5,1.5) circle (0.06);
\draw[fill=black] (1,1.5) circle (0.06);
\draw[fill=black] (3,1.5) circle (0.06);
\draw[fill=black] (3.5,1.5) circle (0.06);
\draw[fill=black] (4,1.5) circle (0.06);
\draw[fill=black] (4.5,1.5) circle (0.06);
\draw[fill=black] (5,1.5) circle (0.06);
\draw[fill=black] (5.5,1.5) circle (0.06);
\draw[fill=black] (7.5,1.5) circle (0.06);
\draw[fill=black] (8,1.5) circle (0.06);
\draw[fill=black] (8.5,1.5) circle (0.06);

\draw[fill=black] (0,2) circle (0.06);
\draw[fill=black] (1,2) circle (0.06);
\draw[fill=black] (3,2) circle (0.06);
\draw[fill=black] (4,2) circle (0.06);
\draw[fill=black] (4.5,2) circle (0.06);
\draw[fill=black] (5.5,2) circle (0.06);
\draw[fill=black] (7.5,2) circle (0.06);
\draw[fill=black] (8.5,2) circle (0.06);

\draw[fill=black] (0,2.5) circle (0.06);
\draw[fill=black] (0.5,2.5) circle (0.06);
\draw[fill=black] (1,2.5) circle (0.06);
\draw[fill=black] (3,2.5) circle (0.06);
\draw[fill=black] (3.5,2.5) circle (0.06);
\draw[fill=black] (4,2.5) circle (0.06);
\draw[fill=black] (4.5,2.5) circle (0.06);
\draw[fill=black] (5,2.5) circle (0.06);
\draw[fill=black] (5.5,2.5) circle (0.06);
\draw[fill=black] (7.5,2.5) circle (0.06);
\draw[fill=black] (8,2.5) circle (0.06);
\draw[fill=black] (8.5,2.5) circle (0.06);

\draw[fill=black] (0,3) circle (0.06);
\draw[fill=black] (0.5,3) circle (0.06);
\draw[fill=black] (1,3) circle (0.06);
\draw[fill=black] (1.5,3) circle (0.06);
\draw[fill=black] (2,3) circle (0.06);
\draw[fill=black] (2.5,3) circle (0.06);
\draw[fill=black] (3,3) circle (0.06);
\draw[fill=black] (3.5,3) circle (0.06);
\draw[fill=black] (4,3) circle (0.06);
\draw[fill=black] (4.5,3) circle (0.06);
\draw[fill=black] (5,3) circle (0.06);
\draw[fill=black] (5.5,3) circle (0.06);
\draw[fill=black] (6,3) circle (0.06);
\draw[fill=black] (6.5,3) circle (0.06);
\draw[fill=black] (7,3) circle (0.06);
\draw[fill=black] (7.5,3) circle (0.06);
\draw[fill=black] (8,3) circle (0.06);
\draw[fill=black] (8.5,3) circle (0.06);

\draw[fill=black] (0,3.5) circle (0.06);
\draw[fill=black] (1,3.5) circle (0.06);
\draw[fill=black] (1.5,3.5) circle (0.06);
\draw[fill=black] (2.5,3.5) circle (0.06);
\draw[fill=black] (3,3.5) circle (0.06);
\draw[fill=black] (4,3.5) circle (0.06);
\draw[fill=black] (4.5,3.5) circle (0.06);
\draw[fill=black] (5.5,3.5) circle (0.06);
\draw[fill=black] (6,3.5) circle (0.06);
\draw[fill=black] (7,3.5) circle (0.06);
\draw[fill=black] (7.5,3.5) circle (0.06);
\draw[fill=black] (8.5,3.5) circle (0.06);

\draw[fill=black] (0,4) circle (0.06);
\draw[fill=black] (0.5,4) circle (0.06);
\draw[fill=black] (1,4) circle (0.06);
\draw[fill=black] (1.5,4) circle (0.06);
\draw[fill=black] (2,4) circle (0.06);
\draw[fill=black] (2.5,4) circle (0.06);
\draw[fill=black] (3,4) circle (0.06);
\draw[fill=black] (3.5,4) circle (0.06);
\draw[fill=black] (4,4) circle (0.06);
\draw[fill=black] (4.5,4) circle (0.06);
\draw[fill=black] (5,4) circle (0.06);
\draw[fill=black] (5.5,4) circle (0.06);
\draw[fill=black] (6,4) circle (0.06);
\draw[fill=black] (6.5,4) circle (0.06);
\draw[fill=black] (7,4) circle (0.06);
\draw[fill=black] (7.5,4) circle (0.06);
\draw[fill=black] (8,4) circle (0.06);
\draw[fill=black] (8.5,4) circle (0.06);

\draw[fill=black] (0,4.5) circle (0.06);
\draw[fill=black] (0.5,4.5) circle (0.06);
\draw[fill=black] (1,4.5) circle (0.06);
\draw[fill=black] (1.5,4.5) circle (0.06);
\draw[fill=black] (2,4.5) circle (0.06);
\draw[fill=black] (2.5,4.5) circle (0.06);
\draw[fill=black] (3,4.5) circle (0.06);
\draw[fill=black] (3.5,4.5) circle (0.06);
\draw[fill=black] (4,4.5) circle (0.06);

\draw[fill=black] (0,5) circle (0.06);
\draw[fill=black] (1,5) circle (0.06);
\draw[fill=black] (1.5,5) circle (0.06);
\draw[fill=black] (2.5,5) circle (0.06);
\draw[fill=black] (3,5) circle (0.06);
\draw[fill=black] (4,5) circle (0.06);

\draw[fill=black] (0,5.5) circle (0.06);
\draw[fill=black] (0.5,5.5) circle (0.06);
\draw[fill=black] (1,5.5) circle (0.06);
\draw[fill=black] (1.5,5.5) circle (0.06);
\draw[fill=black] (2,5.5) circle (0.06);
\draw[fill=black] (2.5,5.5) circle (0.06);
\draw[fill=black] (3,5.5) circle (0.06);
\draw[fill=black] (3.5,5.5) circle (0.06);
\draw[fill=black] (4,5.5) circle (0.06);

\draw[fill=black] (0,6) circle (0.06);
\draw[fill=black] (0.5,6) circle (0.06);
\draw[fill=black] (1,6) circle (0.06);
\draw[fill=black] (3,6) circle (0.06);
\draw[fill=black] (3.5,6) circle (0.06);
\draw[fill=black] (4,6) circle (0.06);

\draw[fill=black] (0,6.5) circle (0.06);
\draw[fill=black] (1,6.5) circle (0.06);
\draw[fill=black] (3,6.5) circle (0.06);
\draw[fill=black] (4,6.5) circle (0.06);

\draw[fill=black] (0,7) circle (0.06);
\draw[fill=black] (0.5,7) circle (0.06);
\draw[fill=black] (1,7) circle (0.06);
\draw[fill=black] (3,7) circle (0.06);
\draw[fill=black] (3.5,7) circle (0.06);
\draw[fill=black] (4,7) circle (0.06);

\draw[fill=black] (0,7.5) circle (0.06);
\draw[fill=black] (0.5,7.5) circle (0.06);
\draw[fill=black] (1,7.5) circle (0.06);
\draw[fill=black] (1.5,7.5) circle (0.06);
\draw[fill=black] (2,7.5) circle (0.06);
\draw[fill=black] (2.5,7.5) circle (0.06);
\draw[fill=black] (3,7.5) circle (0.06);
\draw[fill=black] (3.5,7.5) circle (0.06);
\draw[fill=black] (4,7.5) circle (0.06);

\draw[fill=black] (0,8) circle (0.06);
\draw[fill=black] (1,8) circle (0.06);
\draw[fill=black] (1.5,8) circle (0.06);
\draw[fill=black] (2.5,8) circle (0.06);
\draw[fill=black] (3,8) circle (0.06);
\draw[fill=black] (4,8) circle (0.06);

\draw[fill=black] (0,8.5) circle (0.06);
\draw[fill=black] (0.5,8.5) circle (0.06);
\draw[fill=black] (1,8.5) circle (0.06);
\draw[fill=black] (1.5,8.5) circle (0.06);
\draw[fill=black] (2,8.5) circle (0.06);
\draw[fill=black] (2.5,8.5) circle (0.06);
\draw[fill=black] (3,8.5) circle (0.06);
\draw[fill=black] (3.5,8.5) circle (0.06);
\draw[fill=black] (4,8.5) circle (0.06);

\draw (0,0)--(8.5,0);
\draw (0,0)--(0,8.5);

\draw (1,1)--(1.5,1);
\draw (1,1)--(1,1.5);
\draw (1.25,0)--(1.25,1);
\draw (0,1.25)--(1,1.25);

\draw (1.5,1)--(3,1);
\draw (2.75,1)--(2.75,0);

\draw (3,1)--(3,1.5);
\draw (4,1)--(4,1.5);
\draw (3,1.25)--(4,1.25);
\draw (4,1)--(4.5,1);
\draw (4.25,0)--(4.25,1);

\draw (4.5,1)--(4.5,1.5);
\draw (5.5,1)--(5.5,1.5);
\draw (4.5,1.25)--(5.5,1.25);
\draw (5.5,1)--(6,1);
\draw (5.75,0)--(5.75,1);
\draw (6,1)--(7,1);
\draw (7,1)--(7.5,1);
\draw (7.25,0)--(7.25,1);
\draw (7.5,1)--(7.5,1.5);
\draw (8.5,1)--(8.5,1.5);
\draw (7.5,1.25)--(8.5,1.25);

\draw (8.5,0)--(9,0);
\draw (8.5,1)--(9,1);
\draw (8.75,0)--(8.75,1);

\draw (1,1.5)--(1,3);
\draw (0,2.75)--(1,2.75);

\draw (1,3)--(3,3);
\draw (3,3)--(3,1);

\draw (4,2.5)--(4,3);
\draw (3,2.75)--(4,2.75);
\draw (4,1.5)--(4.5,1.5);
\draw (4,2.5)--(4.5,2.5);
\draw (4.25,1.5)--(4.25,2.5);

\draw (4.5,2.5)--(4.5,3);
\draw (5.5,2.5)--(5.5,3);
\draw (4.5,2.75)--(5.5,2.75);
\draw (5.5,1.5)--(5.5,2.5);
\draw (5.5,3)--(7.5,3);
\draw (7.5,3)--(7.5,1.5);

\draw (8.5,2.5)--(8.5,3);
\draw (7.5,2.75)--(8.5,2.75);

\draw (8.5,1.5)--(9,1.5);
\draw (8.5,2.5)--(9,2.5);
\draw (8.75,1.5)--(8.75,2.5);

\draw (8.5,3)--(9,3);
\draw (8.5,4)--(9,4);
\draw (8.75,3)--(8.75,4);

\draw (7,4)--(7.5,4);
\draw (7.25,3)--(7.25,4);

\draw (5.5,4)--(6,4);
\draw (5.75,3)--(5.75,4);

\draw (6,4)--(7,4);
\draw (4.5,4)--(5.5,4);

\draw (4,3)--(4.5,3);
\draw (4,4)--(4.5,4);
\draw (4.25,3)--(4.25,4);

\draw (4,4)--(4,4.5);
\draw (3,4)--(3,4.5);
\draw (3,4.25)--(4,4.25);

\draw (2.5,4)--(3,4);
\draw (2.75,3)--(2.75,4);

\draw (1,4)--(1.5,4);
\draw (1.25,3)--(1.25,4);

\draw (7.5,4)--(8.5,4);

\draw (1,4)--(1,4.5);
\draw (0,4.25)--(1,4.25);

\draw (1.5,4)--(1.5,4.5);
\draw (2.5,4)--(2.5,4.5);
\draw (1.5,4.25)--(2.5,4.25);

\draw (1,4.5)--(1.5,4.5);
\draw (1,5.5)--(1.5,5.5);
\draw (1.25,4.5)--(1.25,5.5);

\draw (2.5,4.5)--(3,4.5);
\draw (2.5,5.5)--(3,5.5);
\draw (2.75,4.5)--(2.75,5.5);

\draw (4,4.5)--(4,5.5);
\draw (1.5,5.5)--(2.5,5.5);

\draw (1,5.5)--(1,6);
\draw (0,5.75)--(1,5.75);

\draw (3,5.5)--(3,6);
\draw (4,5.5)--(4,6);
\draw (3,5.75)--(4,5.75);

\draw (1,6)--(1,7);
\draw (3,6)--(3,7);
\draw (4,6)--(4,7);

\draw (1,7)--(1,7.5);
\draw (0,7.25)--(1,7.25);

\draw (3,7)--(3,7.5);
\draw (4,7)--(4,7.5);
\draw (3,7.25)--(4,7.25);

\draw (1,7.5)--(3,7.5);

\draw (1,8.5)--(1.5,8.5);
\draw (1.25,7.5)--(1.25,8.5);

\draw (2.5,8.5)--(3,8.5);
\draw (2.75,7.5)--(2.75,8.5);

\draw (0,8.5)--(0,9);
\draw (1,8.5)--(1,9);
\draw (1.5,8.5)--(1.5,9);
\draw (2.5,8.5)--(2.5,9);
\draw (3,8.5)--(3,9);
\draw (4,8.5)--(4,9);

\draw (0,8.75)--(1,8.75);
\draw (1.5,8.75)--(2.5,8.75);
\draw (3,8.75)--(4,8.75);

\draw (4,8.5)--(4,7.5);

\draw (2,9.5) node {\ldots};
\draw (9.5,2) node {\vdots};
\end{tikzpicture}
\caption{First Transformation}\label{fig_trans3}
\end{figure}





\begin{figure}[ht]
\centering
\begin{tikzpicture}

\draw (0.25-1,0.25)--(0.25-1,-0.25)--(-0.25-1,-0.25)--(-0.25-1,0.25)--cycle;
\draw (0-1,0.25)--(0-1,0.75);
\draw (0-1,-0.25)--(0-1,-0.75);
\draw (0.25-1,0)--(0.75-1,0);
\draw (-0.25-1,0)--(-0.75-1,0);

\draw[fill=black] (0.25-1,0.25) circle (0.06);
\draw[fill=black] (0.25-1,-0.25) circle (0.06);
\draw[fill=black] (-0.25-1,-0.25) circle (0.06);
\draw[fill=black] (-0.25-1,0.25) circle (0.06);
\draw[fill=white] (-1,0) circle (0.06);

\draw (0,0) node {$\Rightarrow$};

\draw (0.25,0)--(1.75,0);
\draw (1,-0.75)--(1,0.75);

\draw[fill=black] (0.25+1,0.25) circle (0.06);
\draw[fill=black] (0.25+1,-0.25) circle (0.06);
\draw[fill=black] (-0.25+1,-0.25) circle (0.06);
\draw[fill=black] (-0.25+1,0.25) circle (0.06);
\draw[fill=white] (1,0) circle (0.06);

\draw (-5,-1.5)--(-5,-2)--(-4.5,-2);
\draw[fill=black] (-5,-1.5) circle (0.06);
\draw[fill=black] (-5,-2) circle (0.06);
\draw[fill=black] (-4.5,-2) circle (0.06);
\draw[fill=white] (-5,-1.75) circle (0.06);
\draw[fill=white] (-4.75,-2) circle (0.06);
\draw[fill=white] (-4.75,-1.75) circle (0.06);

\draw (-4,-2) node {$\Rightarrow$};

\draw (-3,-1.75)--(-2.5,-1.75)--(-2.5,-2);
\draw (-2.75,-2)--(-2.75,-1.5)--(-3,-1.5);
\draw[fill=black] (-5+2,-1.5) circle (0.06);
\draw[fill=black] (-5+2,-2) circle (0.06);
\draw[fill=black] (-4.5+2,-2) circle (0.06);
\draw[fill=white] (-5+2,-1.75) circle (0.06);
\draw[fill=white] (-4.75+2,-2) circle (0.06);
\draw[fill=white] (-4.75+2,-1.75) circle (0.06);

\draw (3,-2)--(3,-1.5)--(3.5,-1.5);
\draw[fill=black] (3,-2) circle (0.06);
\draw[fill=black] (3,-1.5) circle (0.06);
\draw[fill=black] (3.5,-1.5) circle (0.06);
\draw[fill=white] (3,-1.75) circle (0.06);
\draw[fill=white] (3.25,-1.5) circle (0.06);
\draw[fill=white] (3.25,-1.75) circle (0.06);

\draw (4,-2) node {$\Rightarrow$};

\draw (5,-1.75)--(5.5,-1.75)--(5.5,-1.5);
\draw (5.25,-1.5)--(5.25,-2)--(5,-2);
\draw[fill=black] (5,-2) circle (0.06);
\draw[fill=black] (5,-1.5) circle (0.06);
\draw[fill=black] (5.5,-1.5) circle (0.06);
\draw[fill=white] (5,-1.75) circle (0.06);
\draw[fill=white] (5.25,-1.5) circle (0.06);
\draw[fill=white] (5.25,-1.75) circle (0.06);

\draw (-5,-2.5)--(-4.5,-2.5)--(-4.5,-3);
\draw[fill=black] (-5,-2.5) circle (0.06);
\draw[fill=black] (-4.5,-2.5) circle (0.06);
\draw[fill=black] (-4.5,-3) circle (0.06);
\draw[fill=white] (-4.75,-2.5) circle (0.06);
\draw[fill=white] (-4.5,-2.75) circle (0.06);
\draw[fill=white] (-4.75,-2.75) circle (0.06);

\draw (-4,-3) node {$\Rightarrow$};

\draw (-2.75,-2.5)--(-2.75,-3)--(-2.5,-3);
\draw (-3,-2.5)--(-3,-2.75)--(-2.5,-2.75);
\draw[fill=black] (-5+2,-2.5) circle (0.06);
\draw[fill=black] (-4.5+2,-2.5) circle (0.06);
\draw[fill=black] (-4.5+2,-3) circle (0.06);
\draw[fill=white] (-4.75+2,-2.5) circle (0.06);
\draw[fill=white] (-4.5+2,-2.75) circle (0.06);
\draw[fill=white] (-4.75+2,-2.75) circle (0.06);

\draw (3,-3)--(3.5,-3)--(3.5,-2.5);
\draw[fill=black] (3,-3) circle (0.06);
\draw[fill=black] (3.5,-3) circle (0.06);
\draw[fill=black] (3.5,-2.5) circle (0.06);
\draw[fill=white] (3.25,-3) circle (0.06);
\draw[fill=white] (3.5,-2.75) circle (0.06);
\draw[fill=white] (3.25,-2.75) circle (0.06);

\draw (4,-3) node {$\Rightarrow$};

\draw (5,-3)--(5,-2.75)--(5.5,-2.75);
\draw (5.25,-3)--(5.25,-2.5)--(5.5,-2.5);
\draw[fill=black] (3+2,-3) circle (0.06);
\draw[fill=black] (3.5+2,-3) circle (0.06);
\draw[fill=black] (3.5+2,-2.5) circle (0.06);
\draw[fill=white] (3.25+2,-3) circle (0.06);
\draw[fill=white] (3.5+2,-2.75) circle (0.06);
\draw[fill=white] (3.25+2,-2.75) circle (0.06);
\end{tikzpicture}
\caption{Second Transformation}\label{fig_trans4}
\end{figure}
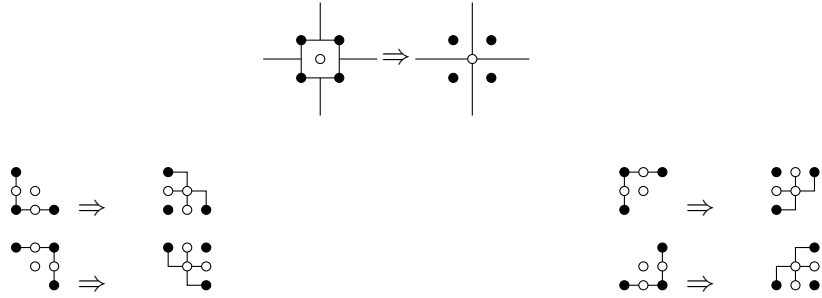


Second, we do the transformations in Figure \ref{fig_trans4} where the resistances of the resistors in the new networks only depend on the shape of the networks in Figure \ref{fig_trans4} such that we obtain a weighted graph with vertex set $V_{n-1}$ and all conductances equivalent to $1$. Moreover, the resistances between any two points are larger than those in the weighted graph $\calW_n$, hence we obtain the desired result.
\end{proof}

Now we estimate $R_n(p_0,p_1)$ and $\frakR_n(0^n,1^n)$ as follows.

\begin{proof}[Proof of Theorem \ref{thm_resist}]
The idea is that replacing one point by one network should increase resistances by multiplying the resistance of an individual network.

By Proposition \ref{prop_resist2} and Proposition \ref{prop_resist3}, we have for all $n\ge1$
$$R_n(p_0,p_1)\asymp\frakR_n(0^n,1^n).$$
By Theorem \ref{thm_resist1} and Proposition \ref{prop_resist2}, we have for all $n\ge1$
$$\frakR_n(0^n,1^n)\ge R_n(p_0,p_1)\ge\frac{1}{4}R_n(p_1,p_5)\asymp\rho^n.$$
We only need to show that for all $n\ge1$
$$\frakR_n(0^n,1^n)\lesssim\rho^n.$$

First, we estimate $\frakR_{n+1}(0^{n+1},12^n)$. Cutting certain edges in $\calW_{n+1}$, we obtain the electrical network in Figure \ref{fig_resist1} which is equivalent to the electrical networks in Figure \ref{fig_resist2}.




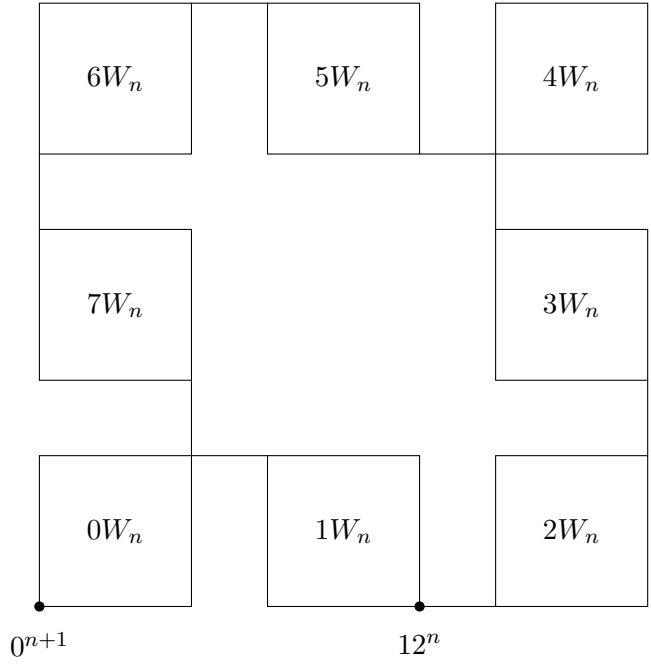
\begin{figure}[ht]
\centering
\begin{tikzpicture}
\draw (0,0)--(2,0)--(2,2)--(0,2)--cycle;
\draw (3,0)--(5,0)--(5,2)--(3,2)--cycle;
\draw (6,0)--(8,0)--(8,2)--(6,2)--cycle;
\draw (0,3)--(2,3)--(2,5)--(0,5)--cycle;
\draw (6,3)--(8,3)--(8,5)--(6,5)--cycle;
\draw (0,6)--(2,6)--(2,8)--(0,8)--cycle;
\draw (3,6)--(5,6)--(5,8)--(3,8)--cycle;
\draw (6,6)--(8,6)--(8,8)--(6,8)--cycle;

\draw (2,2)--(2,3);
\draw (2,2)--(3,2);
\draw (0,5)--(0,6);
\draw (2,8)--(3,8);
\draw (5,6)--(6,6);
\draw (6,6)--(6,5);
\draw (8,3)--(8,2);
\draw (5,0)--(6,0);

\draw[fill=black] (0,0) circle (0.06);
\draw[fill=black] (5,0) circle (0.06);

\draw (0,-0.5) node {$0^{n+1}$};
\draw (5,-0.5) node {$12^n$};

\draw (1,1) node {$0W_n$};
\draw (4,1) node {$1W_n$};
\draw (7,1) node {$2W_n$};
\draw (7,4) node {$3W_n$};
\draw (7,7) node {$4W_n$};
\draw (4,7) node {$5W_n$};
\draw (1,7) node {$6W_n$};
\draw (1,4) node {$7W_n$};

\end{tikzpicture}
\caption{An Equivalent Electrical Network}\label{fig_resist1}
\end{figure}





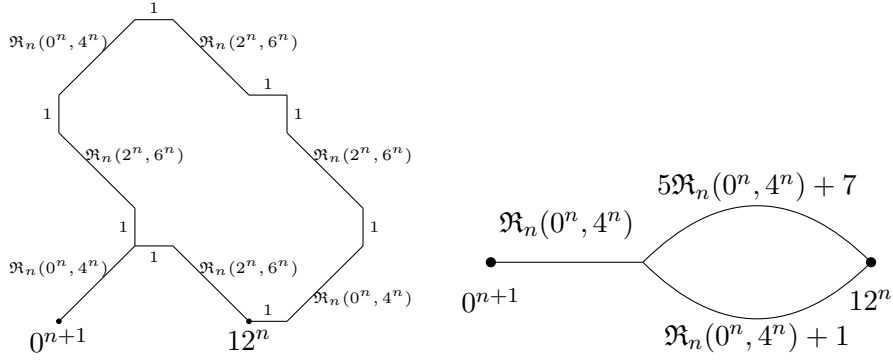
\begin{figure}[ht]
\centering
\subfigure{
\begin{tikzpicture}[scale=0.5]
\draw (0,0)--(2,2);
\draw (5,0)--(3,2);
\draw (6,0)--(8,2);
\draw (2,3)--(0,5);
\draw (8,3)--(6,5);
\draw (0,6)--(2,8);
\draw (5,6)--(3,8);

\draw (2,2)--(2,3);
\draw (2,2)--(3,2);
\draw (0,5)--(0,6);
\draw (2,8)--(3,8);
\draw (5,6)--(6,6);
\draw (6,6)--(6,5);
\draw (8,3)--(8,2);
\draw (5,0)--(6,0);

\draw[fill=black] (0,0) circle (0.06);
\draw[fill=black] (5,0) circle (0.06);

\draw (0,-0.5) node {$0^{n+1}$};
\draw (5,-0.5) node {$12^n$};

\draw (0,1.4) node {\tiny{$\mathfrak{R}_n(0^n,4^n)$}};
\draw (2,4.4) node {\tiny{$\mathfrak{R}_n(2^n,6^n)$}};
\draw (0,7.4) node {\tiny{$\mathfrak{R}_n(0^n,4^n)$}};
\draw (5,7.4) node {\tiny{$\mathfrak{R}_n(2^n,6^n)$}};
\draw (8,4.4) node {\tiny{$\mathfrak{R}_n(2^n,6^n)$}};
\draw (5,1.4) node {\tiny{$\mathfrak{R}_n(2^n,6^n)$}};
\draw (8,0.6) node {\tiny{$\mathfrak{R}_n(0^n,4^n)$}};

\draw (2.5,1.7) node {\tiny{$1$}};
\draw (1.7,2.5) node {\tiny{$1$}};
\draw (-0.3,5.5) node {\tiny{$1$}};
\draw (2.5,8.3) node {\tiny{$1$}};
\draw (5.5,6.3) node {\tiny{$1$}};
\draw (6.3,5.5) node {\tiny{$1$}};
\draw (8.3,2.5) node {\tiny{$1$}};
\draw (5.5,0.3) node {\tiny{$1$}};

\end{tikzpicture}}
\hspace{0.01pt}
\subfigure{
\begin{tikzpicture}
\draw (0,0)--(2,0);
\draw (2,0)..controls (3,1) and (4,1) ..(5,0);
\draw (2,0)..controls (3,-1) and (4,-1) ..(5,0);

\draw[fill=black] (0,0) circle (0.06);
\draw[fill=black] (5,0) circle (0.06);

\draw (0,-0.5) node {$0^{n+1}$};
\draw (5,-0.5) node {$12^n$};

\draw (1,0.5) node {$\mathfrak{R}_n(0^n,4^n)$};
\draw (3.5,1) node {$5\mathfrak{R}_n(0^n,4^n)+7$};
\draw (3.5,-1) node {$\mathfrak{R}_n(0^n,4^n)+1$};

\end{tikzpicture}
}
\caption{Equivalent Electrical Networks}\label{fig_resist2}
\end{figure}


Hence
$$
\begin{aligned}
\frakR_{n+1}(0^{n+1},12^n)&\le\frakR_n(0^n,4^n)+\frac{\left(5\frakR_n(0^n,4^n)+7\right)\left(\frakR_n(0^n,4^n)+1\right)}{\left(5\frakR_n(0^n,4^n)+7\right)+\left(\frakR_n(0^n,4^n)+1\right)}\\
&\lesssim\frakR_n(0^n,4^n)+\frac{5}{6}\frakR_n(0^n,4^n)=\frac{11}{6}\frakR_n(0^n,4^n)\lesssim\rho^{n+1}.
\end{aligned}
$$
Second, from $0^{n+1}$ to $1^{n+1}$, we construct a finite sequence as follows. For $i=1,\ldots,n+2$,
$$
w^{(i)}=
\begin{cases}
1^{i-1}0^{n+2-i},\text{ if }i\text{ is an odd number},\\
1^{i-1}2^{n+2-i},\text{ if }i\text{ is an even number}.\\
\end{cases}
$$
By cutting technique, if $i$ is an odd number, then
$$
\begin{aligned}
&\frakR_{n+1}(w^{(i)},w^{(i+1)})=\frakR_{n+1}(1^{i-1}0^{n+2-i},1^{i}2^{n+1-i})\\
&\le\frakR_{n+2-i}(0^{n+2-i},12^{n+1-i})\lesssim\rho^{n+2-i}.
\end{aligned}
$$
If $i$ is an even number, then
$$
\begin{aligned}
&\frakR_{n+1}(w^{(i)},w^{(i+1)})=\frakR_{n+1}(1^{i-1}2^{n+2-i},1^{i}0^{n+1-i})\\
&\le\frakR_{n+2-i}(2^{n+2-i},10^{n+1-i})=\frakR_{n+2-i}(0^{n+2-i},12^{n+1-i})\lesssim\rho^{n+2-i}.
\end{aligned}
$$
Hence
$$
\begin{aligned}
&\frakR_{n+1}(0^{n+1},1^{n+1})=\frakR_{n+1}(w^{(1)},w^{(n+2)})\\
&\le\sum_{i=1}^{n+1}\frakR_{n+1}(w^{(i)},w^{(i+1)})\lesssim\sum_{i=1}^{n+1}\rho^{n+2-i}=\sum_{i=1}^{n+1}\rho^{i}\lesssim\rho^{n+1}.
\end{aligned}
$$
\end{proof}

\section{Uniform Harnack Inequality}\label{sec_harnack}

In this section, we give uniform Harnack inequality as follows.

\begin{mythm}\label{thm_harnack}
There exist some constants $C\in(0,+\infty),\delta\in(0,1)$ such that for all $n\ge1,x\in K,r>0$, for all nonnegative harmonic function $u$ on $V_n\cap B(x,r)$, we have
$$\max_{V_n\cap B(x,\delta r)}u\le C\min_{V_n\cap B(x,\delta r)}u.$$
\end{mythm}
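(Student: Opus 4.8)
The plan is to reduce the statement to an elliptic Harnack inequality on the infinite graphical Sierpi\'nski carpet $K_\infty$ of Figure \ref{fig_graphSC} (the increasing limit of the rescaled graphs $3^nV_n$, a connected graph of uniformly bounded degree whose natural conductances are $1$ or $2$ as in the definition of $D_n$, so that all vertex masses and conductances are $\asymp1$), following the strongly-recurrent-graph philosophy of \cite{BCK05}. The first and main step is to transfer Theorem \ref{thm_resist} and Corollary \ref{cor_resist_upper} from the special symmetric configurations to arbitrary balls of $K_\infty$: using self-similarity of the carpet together with the series/parallel and shorting/cutting reductions already developed in Section \ref{sec_resistance}, one shows that for every vertex $x$ and every $R\ge1$, with $d$ the graph metric,
$$\#\myset{y:d(x,y)\le R}\asymp R^\alpha,\qquad R_{\mathrm{eff}}(x,B(x,R)^c)\asymp R^{\beta^*-\alpha},\qquad R_{\mathrm{eff}}(x,y)\lesssim d(x,y)^{\beta^*-\alpha},$$
where $\beta^*-\alpha=\log\rho/\log3>0$ since $\rho\ge7/6$, so that $R^{\beta^*-\alpha}\asymp\rho^n$ for $R\asymp3^n$. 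The lower bound on $R_{\mathrm{eff}}(x,B(x,R)^c)$ comes from a Nash--Williams/cutting argument once one locates around $x$ a rescaled level-$n$ block across which the resistance is $\asymp\rho^n$ by Theorem \ref{thm_resist}; the two upper bounds follow from Corollary \ref{cor_resist_upper} and monotonicity of effective resistance under enlarging the sink.

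Granting these estimates, the elliptic Harnack inequality on $K_\infty$ is soft. Fix a ball $B=B(x_0,R)$ and points $x,y\in B(x_0,\delta R)$ with $\delta\le1/2$. Since $B(x,(1-\delta)R)\subseteq B$, the estimates of the first step give $R_{\mathrm{eff}}(x,B^c)\gtrsim R^{\beta^*-\alpha}$ and $R_{\mathrm{eff}}(x,y)\lesssim(\delta R)^{\beta^*-\alpha}$. The function $q(z)=\PP_z(T_x<\tau_B)$ is the voltage with $q(x)=1$, $q|_{B^c}=0$, harmonic off $x$, and the flow-energy/Cauchy--Schwarz identity $1-q(y)=q(x)-q(y)=\langle i,\theta_{x,y}\rangle\le\|i\|\sqrt{R_{\mathrm{eff}}(x,y)}$ together with $\|i\|^2=1/R_{\mathrm{eff}}(x,B^c)$ yields the standard bound
$$\PP_y(T_x<\tau_B)\ \ge\ 1-\sqrt{R_{\mathrm{eff}}(x,y)/R_{\mathrm{eff}}(x,B^c)}\ \ge\ 1-C\delta^{(\beta^*-\alpha)/2}.$$
Now if $u\ge0$ is harmonic on $B$, then $u(z)=\EE_z[u(X_{\tau_B})]$ because the walk leaves the finite set $B$ almost surely, and conditioning on $\myset{T_x<\tau_B}$ and using the strong Markov property together with $u\ge0$ gives $u(y)\ge u(x)\,\PP_y(T_x<\tau_B)$. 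Choosing an absolute $\delta$ so small that $1-C\delta^{(\beta^*-\alpha)/2}\ge\tfrac12$ yields $u(y)\ge\tfrac12u(x)$ for all $x,y\in B(x_0,\delta R)$, i.e. $\max_{B(x_0,\delta R)}u\le2\min_{B(x_0,\delta R)}u$.

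It remains to transfer this to the finite graphs $\calV_n$. If $r$ is comparable to $\mathrm{diam}(K)$ or larger, then $V_n\cap B(x,r)$ is so large that $u$ is harmonic on a finite connected graph, hence constant; if $r\lesssim3^{-n}$ then $V_n\cap B(x,\delta r)$ has $O(1)$ points; in both cases there is nothing to prove. Otherwise, rescaling by $3^n$ turns $V_n\cap B(x,r)$ into a ball of radius $\asymp R\gg1$ in the finite truncation $3^nV_n$ of $K_\infty$. If this ball is interior to the truncation it coincides, as a weighted graph, with the corresponding ball of $K_\infty$ and the previous step applies verbatim; a ball meeting the outer boundary $\partial([0,1]^2)$ is reduced to the interior case by reflecting $u$ across the relevant sides finitely many times---harmonicity is preserved since the boundary vertices carry a Neumann-type relation---inside a reflected carpet graph to which the estimates of the first step apply with unchanged constants. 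Rescaling back gives the inequality with an absolute $C$ and $\delta$.

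The technical heart of this argument is the first step: promoting the resistance bounds of Section \ref{sec_resistance} from the symmetric configurations to all balls of $K_\infty$, uniformly in scale, where the non p.c.f. structure of the carpet makes the bookkeeping of cutsets and of the self-similar annular decomposition around a general vertex the delicate point, and where Theorem \ref{thm_resist} supplies precisely the "resistance of one block" input that this bookkeeping consumes. Everything after that---the hitting-probability estimate and the Harnack argument itself---is routine for strongly recurrent weighted graphs with volume regularity.
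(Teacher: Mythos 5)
Your proposal follows essentially the same route as the paper: extend the resistance estimates of Section \ref{sec_resistance} to the infinite graphical carpet $V_\infty$, deduce the elliptic Harnack inequality there from resistance and volume regularity, and then transfer it back to the finite graphs $\calV_n$. The only difference is cosmetic --- where you write out the standard hitting-probability/effective-resistance derivation of the Harnack inequality, the paper instead quotes the Green function estimates of \cite[Proposition 6.11]{GHL14} and the Harnack inequality of \cite[Lemma 10.2]{GT01} and \cite[Theorem 3.12]{GH14a}, and your reflection argument fills in the transfer step that the paper leaves implicit.
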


\begin{myrmk}
The point of above theorem is that the constant $C$ is \emph{uniform} in $n$.
\end{myrmk}

The idea is as follows. First, we use resistance estimates in finite graphs $V_n$ to obtain resistance estimates in an infinite graph $V_\infty$. Second, we obtain Green function estimates in $V_\infty$. Third, we obtain elliptic Harnack inequality in $V_\infty$. Finally, we transfer elliptic Harnack inequality in $V_\infty$ to uniform Harnack inequality in $V_n$.

Let $\calV_\infty$ be the graph with vertex set $V_\infty=\cup_{n=0}^\infty3^nV_n$ and edge set given by
$$\myset{(p,q):p,q\in V_\infty,|p-q|=2^{-1}}.$$
We have the figure of $\calV_\infty$ in Figure \ref{fig_graphSC}.

Locally, $\calV_\infty$ is like $\calV_n$. Let the conductances of all edges be $1$. Let $d$ be the graph distance, that is, $d(p,q)$ is the minimum of the lengths of all paths connecting $p$ and $q$. It is obvious that
$$d(p,q)\asymp|p-q|\text{ for all }p,q\in V_\infty.$$

By shorting and cutting technique, we reduce $\calV_\infty$ to $\calV_n$ to obtain resistance estimates as follows.
$$R(x,y)\asymp\rho^{\frac{\log d(x,y)}{\log 3}}=d(x,y)^{\frac{\log\rho}{\log3}}=d(x,y)^\gamma\text{ for all }x,y\in V_\infty,$$
where $\gamma=\log\rho/\log3$.

Let $g_B$ be the Green function in a ball $B$. We have Green function estimates as follows.

\begin{mythm}(\cite[Proposition 6.11]{GHL14})\label{thm_green}
There exist some constants $C\in(0,+\infty),\eta\in(0,1)$ such that for all $z\in V_\infty,r>0$, we have
$$g_{B(z,r)}(x,y)\le Cr^\gamma\text{ for all }x,y\in B(z,r),$$
$$g_{B(z,r)}(z,y)\ge\frac{1}{C}r^\gamma\text{ for all }y\in B(z,\eta r).$$
\end{mythm}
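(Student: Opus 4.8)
The plan is to obtain Theorem \ref{thm_green} from the general Green-function bounds for resistance forms of \cite[Proposition 6.11]{GHL14}, so that the work reduces to verifying its hypotheses for the infinite network $\calV_\infty$ equipped with the graph distance $d$ and the counting measure $\mu$ (mass $1$ at each vertex), together with mild structural conditions such as the chain condition, which $\calV_\infty$ inherits from SC. Two facts are the substance of the verification. First, \emph{Ahlfors $\alpha$-regularity of $\mu$}: $\mu(B(z,r))=\#\{x\in V_\infty:d(z,x)<r\}\asymp r^\alpha$ uniformly in $z\in V_\infty$ and $r\ge1$. This is read off from the self-similar construction: $\calV_n$ has $\#V_n\asymp 8^n=3^{\alpha n}$ vertices and graph diameter $\asymp 3^n$, and $\calV_\infty=\bigcup_n 3^n\calV_n$ is assembled by gluing rescaled copies of such cell blocks, so for $3^n\asymp r$ the ball $B(z,r)$ both contains and is covered by a bounded number of these blocks, each carrying $\asymp 8^n\asymp r^\alpha$ vertices. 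Second, the \emph{resistance scaling} $R(x,y)\asymp d(x,y)^\gamma$ with $\gamma=\log\rho/\log3=\beta^*-\alpha$, which is exactly the estimate recorded just before the theorem (obtained by shorting and cutting $\calV_\infty$ down to the finite networks $\calV_n$ and invoking Theorem \ref{thm_resist}, together with $d\asymp|\cdot-\cdot|$). With these in place, \cite[Proposition 6.11]{GHL14} delivers both inequalities, its exponent $\beta-\alpha$ being our $\gamma$.

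It is worth recording why the two bounds hold. For the upper bound, reciprocity and the maximum principle give $g_{B(z,r)}(x,y)\le g_{B(z,r)}(x,x)=R\big(x,V_\infty\setminus B(z,r)\big)$; since $x\in B(z,r)$ one can pick a vertex $y_0\notin B(z,r)$ with $d(x,y_0)\lesssim r$, whence $R(x,V_\infty\setminus B(z,r))\le R(x,y_0)\asymp d(x,y_0)^\gamma\lesssim r^\gamma$. For the lower bound, put $h:=g_{B(z,r)}(z,\cdot)$; then $h$ vanishes off $B(z,r)$, $h(z)=R(z,V_\infty\setminus B(z,r))$, and $\calE(h,h)=h(z)$, so the defining property of the resistance metric gives, for every $y$,
$$(h(z)-h(y))^2\le R(z,y)\,\calE(h,h)=R(z,y)\,h(z),$$
i.e. $h(y)\ge h(z)-\sqrt{R(z,y)\,h(z)}$. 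Combining the lower resistance bound $h(z)\gtrsim r^\gamma$ with $h(z)\lesssim r^\gamma$ (the upper bound just shown) and $R(z,y)\le C(\eta r)^\gamma$ for $y\in B(z,\eta r)$ yields $h(y)\ge(c-C'\eta^{\gamma/2})\,r^\gamma$, which is $\ge\tfrac1C r^\gamma$ once $\eta$ is fixed small enough.

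The main obstacle is the lower resistance estimate $R\big(z,V_\infty\setminus B(z,r)\big)\gtrsim r^\gamma$, equivalently $\mathrm{cap}\big(z,V_\infty\setminus B(z,r)\big)\lesssim r^{-\gamma}$. The naive test function $u(x)=(1-d(z,x)/r)_+$ only gives $\mathrm{cap}\lesssim r^{\alpha-2}$, i.e. $R\gtrsim r^{2-\alpha}$, and this is \emph{too weak}: since $\alpha+\gamma=\beta^*>2$ we have $2-\alpha<\gamma$. One must instead use the self-similarity of $\calV_\infty$ together with the finite-network estimates of Theorem \ref{thm_resist} — for instance by splitting $B(z,r)$ into dyadic-type annuli and adding their crossing resistances with the series law — to see that the constriction near $z$ is heavy enough. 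This is precisely the fractal input that \cite[Proposition 6.11]{GHL14} organizes once volume regularity, resistance scaling and the chain condition are available, which is why that result is quoted here rather than reproved from scratch.
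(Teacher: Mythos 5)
The paper gives no proof of this statement at all --- it simply records the estimate as a quotation of \cite[Proposition 6.11]{GHL14}, to be applied once the two-sided resistance estimate $R(x,y)\asymp d(x,y)^\gamma$ on $V_\infty$ has been established in the preceding paragraph --- and your proposal follows exactly this route, verifying the volume regularity and resistance scaling that the cited proposition needs and then correctly reconstructing the standard mechanics behind it (upper bound via $g_{B}(x,y)\le g_B(x,x)=R(x,B^c)$, lower bound via the resistance-metric inequality applied to $h=g_B(z,\cdot)$ with $\eta$ small). Your additional observation that the genuine content is the annulus lower bound $R(z,B(z,r)^c)\gtrsim r^\gamma$, which does not follow from a naive cutoff since $\beta^*>2$, is accurate and in fact more careful than the paper's one-line citation.
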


We obtain elliptic Harnack inequality in $V_\infty$ as follows.

\begin{mythm}(\cite[Lemma 10.2]{GT01},\cite[Theorem 3.12]{GH14a})\label{thm_harnack_infinite}
There exist some constants $C\in(0,+\infty)$, $\delta\in(0,1)$ such that for all $z\in V_\infty,r>0$, for all nonnegative harmonic function $u$ on $V_\infty\cap B(z,r)$, we have
$$\max_{B(z,\delta r)}u\le C\min_{B(z,\delta r)}u.$$
\end{mythm}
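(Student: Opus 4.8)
The plan is to deduce the elliptic Harnack inequality on $\calV_\infty$ directly from the two-sided Green function estimates of Theorem \ref{thm_green} by a short probabilistic argument, in the spirit of \cite{GT01,GH14a}. The only inputs are Theorem \ref{thm_green} and the fact that $\calV_\infty$ has uniformly bounded vertex degrees; Theorem \ref{thm_green} in turn rests only on the resistance regularity $R(x,y)\asymp d(x,y)^{\gamma}$ and the volume regularity $\#B(x,R)\asymp R^{\alpha}$ of $\calV_\infty$ recorded above, both of which are uniform in the scale thanks to the shorting/cutting reduction of $\calV_\infty$ to the finite graphs $\calV_n$.

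First I would fix the constant $\eta\in(0,1)$ from Theorem \ref{thm_green} and set $\delta=\eta/4$. Let $u\ge0$ be harmonic on $\calV_\infty\cap B(z,r)$, let $\tau$ be the exit time of the random walk from $B(z,r)$, and fix vertices $x,y\in B(z,\delta r)$. Then $d(x,y)\le2\delta r\le\eta r/2$ and $B(x,r/2)\subseteq B(z,r)$, so monotonicity of the Green function in the domain together with the lower bound of Theorem \ref{thm_green} applied to the ball $B(x,r/2)$ gives
$$g_{B(z,r)}(x,y)\ \ge\ g_{B(x,r/2)}(x,y)\ \ge\ c\,(r/2)^{\gamma}\ \asymp\ r^{\gamma},$$
while the upper bound of Theorem \ref{thm_green} gives $g_{B(z,r)}(y,y)\le Cr^{\gamma}$. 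By the first-entrance decomposition of the symmetric Green kernel, $g_{B(z,r)}(x,y)=\PP^{x}(\sigma_y<\tau)\,g_{B(z,r)}(y,y)$ (the vertex degrees being bounded), where $\sigma_y$ is the hitting time of $y$; hence $\PP^{x}(\sigma_y<\tau)\ge c_0>0$ with $c_0$ independent of $n,z,r$.

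Now $u(X_{k\wedge\tau})$ is a bounded martingale under $\PP^{x}$ and $u(y)=\EE^{y}[u(X_\tau)]$, so the strong Markov property at $\sigma_y$ yields
$$u(x)=\EE^{x}[u(X_\tau)]\ \ge\ \EE^{x}\!\big[u(X_\tau);\,\sigma_y<\tau\big]=\PP^{x}(\sigma_y<\tau)\,u(y)\ \ge\ c_0\,u(y),$$
using $u\ge0$ in the first inequality. Since $x,y\in B(z,\delta r)$ were arbitrary, exchanging them gives $u(x)\asymp u(y)$ on $B(z,\delta r)$, that is $\max_{B(z,\delta r)}u\le C\min_{B(z,\delta r)}u$ with $C=1/c_0$.

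The argument is not hard once Theorem \ref{thm_green} is in hand; the one point needing care is that Theorem \ref{thm_green} bounds $g_{B(z,r)}(\cdot,\cdot)$ from below only when one argument is the centre $z$, so to compare two arbitrary near-centre points $x,y$ one must pass to the auxiliary ball $B(x,r/2)$, which is what dictates the choice $\delta=\eta/4$ and what uses the uniformity in scale of the constants. The independence of $c_0$ (hence of $C$) from $n$ is automatic because the estimate never refers to $n$; this uniformity is exactly what will later allow the inequality to be transferred from $\calV_\infty$ back to the finite graphs $\calV_n$ in Theorem \ref{thm_harnack}.
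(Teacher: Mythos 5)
Your argument is correct and is essentially the standard derivation behind the results the paper invokes here: the theorem is justified in the text only by the citations \cite[Lemma 10.2]{GT01} and \cite[Theorem 3.12]{GH14a}, and those proofs run exactly as you do, passing from the centred lower bound of Theorem \ref{thm_green} to an off-centre one via an auxiliary ball $B(x,r/2)$ and then using the hitting-probability identity $g_{B}(x,y)=\PP^{x}(\sigma_y<\tau)\,g_{B}(y,y)$ together with optional stopping for the nonnegative harmonic function. So the proposal supplies a correct self-contained proof of a step the paper delegates to the references, with no genuine gap (only minor conventions to fix, e.g.\ that $u$ must be defined and nonnegative at the exit vertices so that $u(X_\tau)\ge0$).
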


\begin{myrmk}
We give an alternative approach as follows. It was proved in \cite{BCK05} that sub-Gaussian heat kernel estimates are equivalent to resistance estimates for random walks on fractal graph under strongly recurrent condition. Hence we obtain sub-Gaussian heat kernel estimates, see \cite[Example 4]{BCK05}. It was proved in \cite[Theorem 3.1]{GT02} that sub-Gaussian heat kernel estimates imply elliptic Harnack inequality. Hence we obtain elliptic Harnack inequality in $V_\infty$.
\end{myrmk}

Now we obtain Theorem \ref{thm_harnack} directly.

\section{Weak Monotonicity Results}\label{sec_monotone}

In this section, we give two weak monotonicity results.

For all $n\ge1$, let
$$a_n(u)=\rho^n\sum_{w\in W_n}
{\sum_{\mbox{\tiny
$
\begin{subarray}{c}
p,q\in V_w\\
|p-q|=2^{-1}\cdot3^{-n}
\end{subarray}
$
}}}
(u(p)-u(q))^2,u\in l(V_n).$$

We have one weak monotonicity result as follows.

\begin{mythm}\label{thm_monotone1}
There exists some positive constant $C$ such that for all $n,m\ge1,u\in l(V_{n+m})$, we have
$$a_n(u)\le Ca_{n+m}(u).$$
\end{mythm}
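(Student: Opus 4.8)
The plan is to prove the statement by reducing the estimate on $\calV_{n+m}$ to a comparison between the energy functional $a_{n+m}$ and the energy functional $a_n$, exploiting the self-similar structure together with the resistance estimates of Theorem \ref{thm_resist}. By iterating, it suffices to treat the case $m=1$, i.e.\ to show $a_n(u)\le Ca_{n+1}(u)$ with $C$ independent of $n$ and $u$; the general case then follows since $a_n(u)\le Ca_{n+1}(u)\le\cdots\le C^m a_{n+m}(u)$, and one absorbs the (bounded, by the end of the argument) geometric factor---or, better, one proves the one-step inequality in the sharper form $a_n(u)\le C a_{n+1}(u)$ where the same $C$ works at every level, so that a single application of monotonicity in $m$ handles everything. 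Actually the cleanest route is: prove directly that for \emph{every} $w\in W_n$ and every $u\in l(V_{n+1})$,
$$
\rho\!\!\sum_{\mbox{\tiny$\begin{subarray}{c}p,q\in V_w\\|p-q|=2^{-1}3^{-n}\end{subarray}$}}\!\!(u(p)-u(q))^2\;\le\;C\sum_{i=0}^7\;\sum_{\mbox{\tiny$\begin{subarray}{c}p,q\in V_{wi}\\|p-q|=2^{-1}3^{-(n+1)}\end{subarray}$}}\!\!(u(p)-u(q))^2,
$$
and then sum over $w\in W_n$.

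The key step is the cell-wise estimate. Fix a word $w\in W_n$; after rescaling by $f_w$ this reduces to a statement purely about $\calV_1$ versus $\calV_0$: for $u\in l(V_1)$,
$$
\rho\!\!\sum_{\mbox{\tiny$\begin{subarray}{c}p,q\in V_0\\|p-q|=2^{-1}\end{subarray}$}}\!\!(u(p)-u(q))^2\;\le\;C\,D_1(u,u).
$$
Here $D_1(u,u)$ is the $\calV_1$-energy, and the left side is (a constant multiple of) the $\calV_0$-energy of the restriction $u|_{V_0}$. By the triangle inequality for the resistance metric $R_1$ on $V_1$ (available since $R_1$ is a metric), and by Theorem \ref{thm_resist} which gives $R_1(p_i,p_j)\lesssim\rho$ for all boundary vertices $p_i,p_j$, we get for each edge $\{p_i,p_j\}$ of $\calV_0$ that $(u(p_i)-u(p_j))^2\le R_1(p_i,p_j)\,D_1(u,u)\lesssim\rho\,D_1(u,u)$. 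Summing over the (finitely many, namely $8$) edges of $\calV_0$ yields the cell-wise bound with $C$ absolute. Rescaling back via $f_w$ and the self-similar identities $V_w=f_w(V_0)$, $V_{wi}=f_w(V_i)$ preserves the inequality since all the relevant quantities (differences of $u$, the combinatorial edge structure) are invariant under the affine map $f_w$, and the energy on $K_w$ at scale $n+1$ is exactly $\sum_{i=0}^7$ of the energies on $K_{wi}$ at scale $n+1$.

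Summing the cell-wise inequality over $w\in W_n$ gives
$$
a_n(u)=\rho^n\!\!\sum_{w\in W_n}\sum_{\mbox{\tiny$\begin{subarray}{c}p,q\in V_w\\|p-q|=2^{-1}3^{-n}\end{subarray}$}}\!\!(u(p)-u(q))^2\;\le\;C\rho^{n}\cdot\frac{1}{\rho}\!\!\sum_{w\in W_n}\sum_{i=0}^7\sum_{\mbox{\tiny$\begin{subarray}{c}p,q\in V_{wi}\\|p-q|=2^{-1}3^{-(n+1)}\end{subarray}$}}\!\!(u(p)-u(q))^2
$$
$$
=\;C\rho^{n+1}\!\!\sum_{w'\in W_{n+1}}\sum_{\mbox{\tiny$\begin{subarray}{c}p,q\in V_{w'}\\|p-q|=2^{-1}3^{-(n+1)}\end{subarray}$}}\!\!(u(p)-u(q))^2\cdot\rho^{-1}\;=\;C\,a_{n+1}(u),
$$
where the bookkeeping of the factor $\rho$ works out precisely because $a_n$ carries the weight $\rho^n$ and we gained one resistance factor $\rho$ from Theorem \ref{thm_resist} in passing from scale $n$ to scale $n+1$. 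Iterating in $m$ gives $a_n(u)\le C^m a_{n+m}(u)$; to get a constant independent of $m$ one notes that this weak monotonicity will later be complemented by the reverse inequality, but for the statement as written one may simply observe that it suffices to take the constant depending on $m$---however, the intended and stronger conclusion is the $m$-independent one, which follows because the one-step constant $C$ above can be taken to be the \emph{same} absolute constant at each level, and the telescoping is really $a_n(u)\le C\,a_{n+1}(u)$ applied once together with the elementary fact $a_{n+1}(u)\le a_{n+m}(u)\cdot(\text{something bounded})$; cleanest is to prove the stronger per-step bound and invoke it once.

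\textbf{Main obstacle.} The delicate point is the passage between the two different weighted graphs implicit in $a_n$: the energy $\sum_{w\in W_n}\sum_{p,q\in V_w}$ counts certain edges twice (it is the graph $\calV_n$ with some conductances $1$ and some $2$, as noted in the text preceding $D_n$), so one must be careful that the cell-wise comparison is done for the correct functional $D_n$ and that the resistance estimates of Theorem \ref{thm_resist}, which are stated for $R_n$ defined via $D_n$, are the ones being applied. A second subtlety is ensuring the constant is genuinely uniform in $n$: this is exactly what the self-similar rescaling buys us---every cell $K_w$ with $w\in W_n$ looks identical to $K$ after applying $f_w^{-1}$, so the single constant coming from the finite graph $\calV_1$ versus $\calV_0$ comparison propagates to all scales without degradation. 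I expect the entire argument to be short once the $m=1$ cell-wise estimate is set up correctly; the only real work is matching conventions between $D_n$, $R_n$, $a_n$, and the statement of Theorem \ref{thm_resist}.
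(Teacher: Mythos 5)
Your cell-wise estimate is the right idea and is essentially the paper's key step, but your reduction to $m=1$ followed by iteration creates a genuine gap. Iterating $a_n(u)\le Ca_{n+1}(u)$ gives only $a_n(u)\le C^m a_{n+m}(u)$, and the theorem asserts a single constant valid for \emph{all} $n,m\ge1$; this uniformity in $m$ is the entire content of the ``weak monotonicity'' statement and is exactly what is used later (e.g.\ together with Proposition \ref{prop_ele}(2), which requires $x_n\le Cx_{n+m}$ with one $C$ for all $m$ in order to conclude $\sup_n x_n\le C\varliminf_n x_n$). Your proposed repairs do not close this: the claim that ``$a_{n+1}(u)\le a_{n+m}(u)\cdot(\text{something bounded})$'' is precisely the statement being proved, so invoking it is circular, and nothing in your argument forces the one-step constant to be $\le1$.

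The fix is to run your own argument directly from level $n$ to level $n+m$ in a single step, which is what the paper does. For $w\in W_n$ and $p,q\in V_w$ with $|p-q|=2^{-1}\cdot3^{-n}$, pull back by $f_w$ and use the resistance metric at level $m$ (not level $1$): by the cutting technique and Corollary \ref{cor_resist_upper},
$$(u(p)-u(q))^2\le R_m\bigl(f_w^{-1}(p),f_w^{-1}(q)\bigr)\sum_{v\in W_m}\ {\sum_{\mbox{\tiny$\begin{subarray}{c}x,y\in V_{wv}\\|x-y|=2^{-1}\cdot3^{-(n+m)}\end{subarray}$}}}(u(x)-u(y))^2\le C\rho^m\sum_{v\in W_m}\ {\sum_{\mbox{\tiny$\begin{subarray}{c}x,y\in V_{wv}\\|x-y|=2^{-1}\cdot3^{-(n+m)}\end{subarray}$}}}(u(x)-u(y))^2,$$
where the constant $C$ in $R_m(\cdot,\cdot)\le C\rho^m$ is uniform in $m$ by Corollary \ref{cor_resist_upper}. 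Summing over the boundedly many such pairs $p,q$ and over $w\in W_n$, and multiplying by $\rho^n$, gives $a_n(u)\le Ca_{n+m}(u)$ with one constant for all $n,m$. You already have all the ingredients (you cite Theorem \ref{thm_resist}); you only need to apply the $m$-level resistance bound once instead of telescoping the $1$-level bound.
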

\begin{proof}
For all $w\in W_n,p,q\in V_w$ with $|p-q|=2^{-1}\cdot3^{-n}$, by cutting technique and Corollary \ref{cor_resist_upper}
$$
\begin{aligned}
\left(u(p)-u(q)\right)^2&\le R_m(f_w^{-1}(p),f_w^{-1}(q))
\sum_{v\in W_m}
{\sum_{\mbox{\tiny
$
\begin{subarray}{c}
x,y\in V_{wv}\\
|x-y|=2^{-1}\cdot3^{-(n+m)}
\end{subarray}
$
}}}
(u(x)-u(y))^2\\
&\le C\rho^m\sum_{v\in W_m}
{\sum_{\mbox{\tiny
$
\begin{subarray}{c}
x,y\in V_{wv}\\
|x-y|=2^{-1}\cdot3^{-(n+m)}
\end{subarray}
$
}}}
(u(x)-u(y))^2.
\end{aligned}
$$
Hence
$$
\begin{aligned}
a_n(u)&=\rho^n\sum_{w\in W_n}
{\sum_{\mbox{\tiny
$
\begin{subarray}{c}
p,q\in V_w\\
|p-q|=2^{-1}\cdot3^{-n}
\end{subarray}
$
}}}
(u(p)-u(q))^2\\
&\le\rho^n\sum_{w\in W_n}
{\sum_{\mbox{\tiny
$
\begin{subarray}{c}
p,q\in V_w\\
|p-q|=2^{-1}\cdot3^{-n}
\end{subarray}
$
}}}
\left(C\rho^m\sum_{v\in W_m}
{\sum_{\mbox{\tiny
$
\begin{subarray}{c}
x,y\in V_{wv}\\
|x-y|=2^{-1}\cdot3^{-(n+m)}
\end{subarray}
$
}}}
(u(x)-u(y))^2\right)\\
&=C\rho^{n+m}\sum_{w\in W_{n+m}}
{\sum_{\mbox{\tiny
$
\begin{subarray}{c}
p,q\in V_w\\
|p-q|=2^{-1}\cdot3^{-(n+m)}
\end{subarray}
$
}}}
(u(p)-u(q))^2=Ca_{n+m}(u).
\end{aligned}
$$
\end{proof}

For all $n\ge1$, let
$$b_n(u)=\rho^n
{\sum_{\mbox{\tiny
$
\begin{subarray}{c}
w^{(1)}\sim_nw^{(2)}
\end{subarray}
$
}}}
(P_nu(w^{(1)})-P_nu(w^{(2)}))^2,u\in L^2(K;\nu).$$

We have another weak monotonicity result as follows.
\begin{mythm}\label{thm_monotone2}
There exists some positive constant $C$ such that for all $n,m\ge1,u\in L^2(K;\nu)$, we have
$$b_n(u)\le Cb_{n+m}(u).$$
\end{mythm}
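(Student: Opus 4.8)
The plan is to imitate the proof of Theorem \ref{thm_monotone1}, but since $b_n$ is built from the cell averages $P_nu$ rather than from values on $V_n$, the first step is to rewrite a difference of averages as an average of differences at level $n+m$. Since $K_w=\bigcup_{v\in W_m}K_{wv}$ with $\nu(K_{wv})=8^{-m}\nu(K_w)$, we have $P_nu(w)=8^{-m}\sum_{v\in W_m}P_{n+m}u(wv)$, so for $w^{(1)}\sim_n w^{(2)}$
$$P_nu(w^{(1)})-P_nu(w^{(2)})=\frac{1}{8^{2m}}\sum_{v,v'\in W_m}\bigl(P_{n+m}u(w^{(1)}v)-P_{n+m}u(w^{(2)}v')\bigr),$$
and by the Cauchy--Schwarz inequality
$$\bigl(P_nu(w^{(1)})-P_nu(w^{(2)})\bigr)^2\le\frac{1}{8^{2m}}\sum_{v,v'\in W_m}\bigl(P_{n+m}u(w^{(1)}v)-P_{n+m}u(w^{(2)}v')\bigr)^2.$$

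For each pair $w^{(1)}\sim_n w^{(2)}$ let $G=G_{w^{(1)},w^{(2)}}$ be the subgraph of $\calW_{n+m}$ spanned by those cells $K_z$, $z\in W_{n+m}$, with $K_z\subseteq K_{w^{(1)}}\cup K_{w^{(2)}}$, with all conductances equal to $1$. Because the adjacency relation $\sim$ is preserved by the similarities $f_{w^{(i)}}$, the part of $G$ on the vertices $w^{(i)}v$, $v\in W_m$, is isomorphic as a weighted graph to $\calW_m$, so by Corollary \ref{cor_resist_upper} the effective resistance in $G$ between any two such vertices is $\lesssim\rho^m$; and since $\mathrm{dim}_{\mathcal{H}}(K_{w^{(1)}}\cap K_{w^{(2)}})=1$, the two cells $K_{w^{(1)}}$ and $K_{w^{(2)}}$ share at least one pair of adjacent level-$(n+m)$ subcells, so by the series law and the cutting technique the effective resistance in $G$ between any vertex of the form $w^{(1)}v$ and any vertex of the form $w^{(2)}v'$ is $\lesssim\rho^m$, uniformly in $n$, $m$ and in the chosen pair. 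Applying the standard electrical inequality $(\phi(a)-\phi(b))^2\le R_G(a,b)\,\calE_G(\phi)$ with $\phi=P_{n+m}u$ and $\calE_G(\phi)=\sum_{\{x,y\}\text{ edge of }G}(\phi(x)-\phi(y))^2$, and then summing over $v,v'\in W_m$, we obtain
$$\bigl(P_nu(w^{(1)})-P_nu(w^{(2)})\bigr)^2\lesssim\rho^m\,\calE_{G_{w^{(1)},w^{(2)}}}(P_{n+m}u).$$

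Finally, multiply by $\rho^n$ and sum over all $w^{(1)}\sim_n w^{(2)}$. An edge $\{x,y\}$ of $\calW_{n+m}$ can occur in $\calE_{G_{w^{(1)},w^{(2)}}}$ only if the level-$n$ ancestors of $x$ and of $y$ both lie in $\{w^{(1)},w^{(2)}\}$; since in SC a level-$n$ cell has at most four $\sim_n$-neighbours, every such edge lies in at most a bounded number $C_0$ of the subgraphs $G_{w^{(1)},w^{(2)}}$. Hence
$$\sum_{w^{(1)}\sim_n w^{(2)}}\calE_{G_{w^{(1)},w^{(2)}}}(P_{n+m}u)\le C_0\,\frakD_{n+m}(P_{n+m}u,P_{n+m}u),$$
and multiplying through by $\rho^{n+m}$ together with $b_{n+m}(u)=\rho^{n+m}\frakD_{n+m}(P_{n+m}u,P_{n+m}u)$ gives $b_n(u)\le C b_{n+m}(u)$ with $C$ independent of $n,m,u$ (if $b_{n+m}(u)=+\infty$ there is nothing to prove).

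The main obstacle is the uniform resistance estimate in the second paragraph: one must run the computation \emph{inside} the subgraph $G_{w^{(1)},w^{(2)}}$ — i.e. invoke Corollary \ref{cor_resist_upper} only within a single level-$n$ cell and then glue across one crossing edge — so that the local energies $\calE_{G_{w^{(1)},w^{(2)}}}$, summed over the edges $w^{(1)}\sim_n w^{(2)}$, overlap only boundedly; using the resistance metric of the full graph $\calW_{n+m}$ would make the final summation diverge. The remaining inputs — self-similarity of the adjacency relation and the bound on the number of neighbours of a cell — are elementary geometric facts about SC.
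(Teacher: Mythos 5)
Your proof is correct and follows essentially the same route as the paper: the paper factors the argument through the averaging operator $M_{n,m}$ (using $P_nu=M_{n,m}(P_{n+m}u)$) and then bounds $\frakR_{w^{(1)}W_m\cup w^{(2)}W_m}(w^{(1)}v,w^{(2)}v)\le\frakR_m(v,i^m)+1+\frakR_m(v,j^m)\lesssim\rho^m$ exactly as in your second paragraph, i.e.\ computing the resistance inside the two-cell subgraph via self-similarity, Corollary \ref{cor_resist_upper} and one crossing edge, and finishes with the same bounded-overlap count of the local energies. The only differences are cosmetic (you use pairs $(v,v')$ with a Cauchy--Schwarz over $8^{2m}$ terms where the paper pairs $w^{(1)}v$ with $w^{(2)}v$ over $8^m$ terms).
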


\begin{myrmk}
This result was also obtained in \cite[Proposition 5.2]{KZ92}. Here we give a direct proof using resistance estimates.
\end{myrmk}

This result can be reduced as follows.

For all $n\ge1$, let
$$B_n(u)=\rho^n\sum_{w^{(1)}\sim_nw^{(2)}}(u(w^{(1)})-u(w^{(2)}))^2,u\in l(W_n).$$

For all $n,m\ge1$, let $M_{n,m}:l(W_{n+m})\to l(W_n)$ be a mean value operator given by
$$(M_{n,m}u)(w)=\frac{1}{8^m}\sum_{v\in W_m}u(wv),w\in W_n,u\in l(W_{n+m}).$$

\begin{mythm}\label{thm_monotonicity2}
There exists some positive constant $C$ such that for all $n,m\ge1,u\in l(W_{n+m})$, we have
$$B_n(M_{n,m}u)\le CB_{n+m}(u).$$
\end{mythm}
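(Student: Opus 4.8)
The plan is to follow the template of the proof of Theorem~\ref{thm_monotone1}, with the vertex graphs $\calV_k$ replaced by the cell graphs $\calW_k$. Fix $n,m\ge1$ and $u\in l(W_{n+m})$, and set $\bar u:=M_{n,m}u\in l(W_n)$, so that $\bar u(w)=8^{-m}\sum_{v\in W_m}u(wv)$. For an adjacent pair $w^{(1)}\sim_n w^{(2)}$ in $\calW_n$, let $G=G_{w^{(1)},w^{(2)}}$ be the subgraph of $\calW_{n+m}$ induced on $\myset{w^{(1)}v:v\in W_m}\cup\myset{w^{(2)}v:v\in W_m}$ with all conductances equal to $1$, and write $\frakD_G(u,u)=\sum_{(a,b)\in G}(u(a)-u(b))^2$ for its Dirichlet energy. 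The core of the argument is the per-pair estimate $(\bar u(w^{(1)})-\bar u(w^{(2)}))^2\lesssim\rho^m\,\frakD_G(u,u)$. Granting this, multiplying by $\rho^n$ and summing over all pairs $w^{(1)}\sim_n w^{(2)}$, and using that each edge of $\calW_{n+m}$ contributes to $\frakD_G$ for only a bounded number of such pairs, one gets $B_n(\bar u)\lesssim\rho^{n+m}\sum_{v^{(1)}\sim_{n+m}v^{(2)}}(u(v^{(1)})-u(v^{(2)}))^2=CB_{n+m}(u)$.

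To prove the per-pair estimate, observe first that, since each $f_{w^{(i)}}$ is a similarity, one has $w^{(i)}v\sim_{n+m}w^{(i)}v'$ in $\calW_{n+m}$ if and only if $v\sim_m v'$ in $\calW_m$; thus the subgraph of $G$ on the descendants of $w^{(i)}$ is a copy of $\calW_m$, and Corollary~\ref{cor_resist_upper} yields $\frakR_m(v,v')\lesssim\rho^m$ for all $v,v'\in W_m$. Since $\mathrm{dim}_{\mathcal{H}}(K_{w^{(1)}}\cap K_{w^{(2)}})=1$, the cells $K_{w^{(1)}}$ and $K_{w^{(2)}}$ share a side, so $\calW_{n+m}$ contains at least one edge $(w^{(1)}v_0,w^{(2)}v_0')$ joining a descendant of $w^{(1)}$ to a descendant of $w^{(2)}$, and this edge has resistance $1\le\rho^m$. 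Cutting all such crossing edges but one (Rayleigh monotonicity) and applying the series law across the remaining bridge gives $R_G(w^{(1)}v,w^{(2)}v')\le\frakR_m(v,v_0)+1+\frakR_m(v_0',v')\lesssim\rho^m$ for all $v,v'\in W_m$. Writing $\bar u(w^{(1)})-\bar u(w^{(2)})=8^{-2m}\sum_{v,v'\in W_m}(u(w^{(1)}v)-u(w^{(2)}v'))$, the Cauchy--Schwarz inequality over the $8^{2m}$ summands together with the standard bound $(u(a)-u(b))^2\le R_G(a,b)\,\frakD_G(u,u)$ gives
\[
\big(\bar u(w^{(1)})-\bar u(w^{(2)})\big)^2\le\frac{1}{8^{2m}}\sum_{v,v'\in W_m}\big(u(w^{(1)}v)-u(w^{(2)}v')\big)^2\le\frac{1}{8^{2m}}\sum_{v,v'\in W_m}R_G(w^{(1)}v,w^{(2)}v')\,\frakD_G(u,u)\lesssim\rho^m\,\frakD_G(u,u).
\]

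Finally, to justify the bounded multiplicity in the summation over $w^{(1)}\sim_n w^{(2)}$: an edge of $\calW_{n+m}$ with both endpoints among the descendants of a single cell $K_w\in W_n$ is counted in $\frakD_{G_{w^{(1)},w^{(2)}}}$ only for pairs with $w\in\myset{w^{(1)},w^{(2)}}$, hence for a universally bounded number of pairs, since every cell of $\calW_n$ has boundedly many $\sim_n$-neighbors; an edge whose endpoints lie in descendants of two distinct level-$n$ cells forces those cells to be $\sim_n$-adjacent and is counted for exactly one pair. Therefore $\sum_{w^{(1)}\sim_n w^{(2)}}\frakD_{G_{w^{(1)},w^{(2)}}}(u,u)\le C\sum_{v^{(1)}\sim_{n+m}v^{(2)}}(u(v^{(1)})-u(v^{(2)}))^2$, which combined with the per-pair estimate proves the theorem. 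I expect the main obstacle to be the geometric bookkeeping specific to the non-p.c.f.\ structure of SC --- verifying that the level-$(n+m)$ adjacency graph induced inside a level-$n$ cell is exactly $\calW_m$, exhibiting a crossing edge between any two $\sim_n$-adjacent cells, and pinning down the bounded multiplicity above --- rather than the resistance input, which is supplied directly by Corollary~\ref{cor_resist_upper}. (Applying the theorem with $P_{n+m}u$ in place of $u$ and using $M_{n,m}\circ P_{n+m}=P_n$ then gives $b_n(u)\le Cb_{n+m}(u)$, i.e.\ Theorem~\ref{thm_monotone2}.)
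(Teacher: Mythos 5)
Your proposal is correct and follows essentially the same route as the paper: a per-pair resistance bound $\lesssim\rho^m$ obtained by cutting down to the two blocks of descendants, routing through one crossing edge via the resistance triangle inequality and Corollary \ref{cor_resist_upper}, then an averaging inequality and a bounded-multiplicity count over adjacent pairs. The only cosmetic difference is that you apply Cauchy--Schwarz over all $8^{2m}$ pairs $(v,v')$, whereas the paper pairs $w^{(1)}v$ with $w^{(2)}v$ diagonally and uses Jensen over the $8^m$ values of $v$; both yield the same estimate.
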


\begin{proof}[Proof of Theorem \ref{thm_monotone2} using Theorem \ref{thm_monotonicity2}]
For all $u\in L^2(K;\nu)$, note that
$$P_nu=M_{n,m}(P_{n+m}u),$$
hence
$$
\begin{aligned}
b_n(u)&=\rho^n\sum_{w^{(1)}\sim_nw^{(2)}}(P_nu(w^{(1)})-P_nu(w^{(2)}))^2=B_n(P_nu)\\
&=B_n(M_{n,m}(P_{n+m}u))\le CB_{n+m}(P_{n+m}u)\\
&=C\rho^{n+m}\sum_{w^{(1)}\sim_{n+m}w^{(2)}}(P_{n+m}u(w^{(1)})-P_{n+m}u(w^{(2)}))^2=Cb_{n+m}(u).
\end{aligned}
$$
\end{proof}

\begin{proof}[Proof of Theorem \ref{thm_monotonicity2}]
Fix $n\ge1$. Assume that $W\subseteq W_n$ is connected, that is, for all $w^{(1)},w^{(2)}\in W$, there exists a finite sequence $\myset{v^{(1)},\ldots,v^{(k)}}\subseteq W$ such that $v^{(1)}=w^{(1)},v^{(k)}=w^{(2)}$ and $v^{(i)}\sim_nv^{(i+1)}$ for all $i=1,\ldots,k-1$. Let
$$\frakD_W(u,u):=
{\sum_{\mbox{\tiny
$
\begin{subarray}{c}
w^{(1)},w^{(2)}\in W\\
w^{(1)}\sim_nw^{(2)}
\end{subarray}
$
}}}
(u(w^{(1)})-u(w^{(2)}))^2,u\in l(W).$$
For all $w^{(1)},w^{(2)}\in W$, let
$$
\begin{aligned}
\frakR_W(w^{(1)},w^{(2)})&=\inf\myset{\frakD_W(u,u):u(w^{(1)})=0,u(w^{(2)})=1,u\in l(W)}^{-1}\\
&=\sup\myset{\frac{(u(w^{(1)})-u(w^{(2)}))^2}{\frakD_W(u,u)}:\frakD_W(u,u)\ne0,u\in l(W)}.
\end{aligned}
$$
It is obvious that
$$(u(w^{(1)})-u(w^{(2)}))^2\le\frakR_W(w^{(1)},w^{(2)})\frakD_W(u,u)\text{ for all }w^{(1)},w^{(2)}\in W,u\in l(W),$$
and $\frakR_W$ is a metric on $W$, hence
$$\frakR_W(w^{(1)},w^{(2)})\le\frakR_W(w^{(1)},w^{(3)})+\frakR_W(w^{(3)},w^{(2)})\text{ for all }w^{(1)},w^{(2)},w^{(3)}\in W.$$

Fix $w^{(1)}\sim_nw^{(2)}$, there exist $i,j=0,\ldots,7$ such that $w^{(1)}i^m\sim_{n+m}w^{(2)}j^m$, see Figure \ref{fig_monotonicity}.




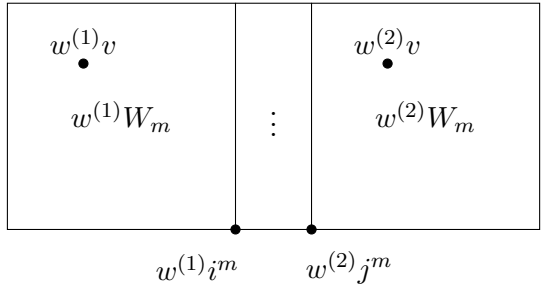
\begin{figure}[ht]
\centering
\begin{tikzpicture}

\draw (0,0)--(3,0)--(3,3)--(0,3)--cycle;
\draw (4,0)--(7,0)--(7,3)--(4,3)--cycle;
\draw (3,0)--(4,0);
\draw (3,3)--(4,3);

\draw (3.5,1.5) node {$\vdots$};

\draw[fill=black] (3,0) circle (0.06);
\draw[fill=black] (4,0) circle (0.06);

\draw (1.5,1.5) node {$w^{(1)}W_m$};
\draw (5.5,1.5) node {$w^{(2)}W_m$};
\draw (2.5,-0.5) node {$w^{(1)}i^m$};
\draw (4.5,-0.5) node {$w^{(2)}j^m$};

\draw (1,2.5) node {$w^{(1)}v$};
\draw[fill=black] (1,2.2) circle (0.06);
\draw (5,2.5) node {$w^{(2)}v$};
\draw[fill=black] (5,2.2) circle (0.06);

\end{tikzpicture}
\caption{$w^{(1)}W_m$ and $w^{(2)}W_m$}\label{fig_monotonicity}
\end{figure}


Fix $v\in W_m$
$$(u(w^{(1)}v)-u(w^{(2)}v))^2\le\frakR_{w^{(1)}W_m\cup w^{(2)}W_m}(w^{(1)}v,w^{(2)}v)\frakD_{w^{(1)}W_m\cup w^{(2)}W_m}(u,u).$$
By cutting technique and Corollary \ref{cor_resist_upper}
$$
\begin{aligned}
&\frakR_{w^{(1)}W_m\cup w^{(2)}W_m}(w^{(1)}v,w^{(2)}v)\\
&\le\frakR_{w^{(1)}W_m\cup w^{(2)}W_m}(w^{(1)}v,w^{(1)}i^m)+\frakR_{w^{(1)}W_m\cup w^{(2)}W_m}(w^{(1)}i^m,w^{(2)}j^m)\\
&+\frakR_{w^{(1)}W_m\cup w^{(2)}W_m}(w^{(2)}j^m,w^{(2)}v)\\
&\le\frakR_m(v,i^m)+1+\frakR_m(v,j^m)\lesssim\rho^m.
\end{aligned}
$$
Hence
$$
\begin{aligned}
&(u(w^{(1)}v)-u(w^{(2)}v))^2\lesssim\rho^m\frakD_{w^{(1)}W_m\cup w^{(2)}W_m}(u,u)\\
&=\rho^m\left(\frakD_{w^{(1)}W_m}(u,u)+\frakD_{w^{(2)}W_m}(u,u)\right.\\
&\left.+
{\sum_{\mbox{\tiny
$
\begin{subarray}{c}
v^{(1)},v^{(2)}\in W_m\\
w^{(1)}v^{(1)}\sim_{n+m}w^{(2)}v^{(2)}
\end{subarray}
$
}}}
(u(w^{(1)}v^{(1)})-u(w^{(2)}v^{(2)}))^2\right).
\end{aligned}
$$
Hence
$$
\begin{aligned}
&\left(M_{n,m}u(w^{(1)})-M_{n,m}u(w^{(2)})\right)^2=\left(\frac{1}{8^m}\sum_{v\in W_m}\left(u(w^{(1)}v)-u(w^{(2)}v)\right)\right)^2\\
&\le\frac{1}{8^m}\sum_{v\in W_m}\left(u(w^{(1)}v)-u(w^{(2)}v)\right)^2\\
&\lesssim\rho^m\left(\frakD_{w^{(1)}W_m}(u,u)+\frakD_{w^{(2)}W_m}(u,u)\right.\\
&\left.+
{\sum_{\mbox{\tiny
$
\begin{subarray}{c}
v^{(1)},v^{(2)}\in W_m\\
w^{(1)}v^{(1)}\sim_{n+m}w^{(2)}v^{(2)}
\end{subarray}
$
}}}
(u(w^{(1)}v^{(1)})-u(w^{(2)}v^{(2)}))^2\right).
\end{aligned}
$$
In the summation with respect to $w^{(1)}\sim_nw^{(2)}$, the terms $\frakD_{w^{(1)}W_m}(u,u),\frakD_{w^{(2)}W_m}(u,u)$ are summed at most $8$ times, hence
$$
\begin{aligned}
B_n(M_{n,m}u)&=\rho^n\sum_{w^{(1)}\sim_nw^{(2)}}\left(M_{n,m}u(w^{(1)})-M_{n,m}u(w^{(2)})\right)^2\\
&\lesssim\rho^n\sum_{w^{(1)}\sim_nw^{(2)}}\rho^m\left(\frakD_{w^{(1)}W_m}(u,u)+\frakD_{w^{(2)}W_m}(u,u)\right.\\
&\left.+
{\sum_{\mbox{\tiny
$
\begin{subarray}{c}
v^{(1)},v^{(2)}\in W_m\\
w^{(1)}v^{(1)}\sim_{n+m}w^{(2)}v^{(2)}
\end{subarray}
$
}}}
(u(w^{(1)}v^{(1)})-u(w^{(2)}v^{(2)}))^2\right)\\
&\le8\rho^{n+m}\sum_{w^{(1)}\sim_{n+m}w^{(2)}}\left(u(w^{(1)})-u(w^{(2)})\right)^2=8B_{n+m}(u).
\end{aligned}
$$
\end{proof}

\section{One Good Function}\label{sec_good}

In this section, we construct \emph{one} good function with energy property and separation property.

By standard argument, we have H\"older continuity from Harnack inequality as follows.

\begin{mythm}\label{thm_holder}
For all $0\le\delta_1<\veps_1<\veps_2<\delta_2\le1$, there exist some positive constants $\theta=\theta(\delta_1,\delta_2,\veps_1,\veps_2)$, $C=C(\delta_1,\delta_2,\veps_1,\veps_2)$ such that for all $n\ge1$, for all bounded harmonic function $u$ on $V_n\cap(\delta_1,\delta_2)\times[0,1]$, we have
$$|u(x)-u(y)|\le C|x-y|^\theta\left(\max_{V_n\cap[\delta_1,\delta_2]\times[0,1]}|u|\right)\text{ for all }x,y\in V_n\cap[\veps_1,\veps_2]\times[0,1].$$
\end{mythm}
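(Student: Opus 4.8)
The plan is to derive Hölder continuity from the uniform Harnack inequality (Theorem \ref{thm_harnack}) by the classical oscillation-decay (Moser/De Giorgi) iteration, carried out uniformly in $n$. First I would fix the strips $F=[\delta_1,\delta_2]\times[0,1]$ and $G=[\veps_1,\veps_2]\times[0,1]$, and set $d=\min\{\veps_1-\delta_1,\delta_2-\veps_2\}>0$, so that for every $x\in G$ the Euclidean ball $B(x,d)$ stays inside the open strip $(\delta_1,\delta_2)\times\R$ where $u$ is harmonic on $V_n$. The harmonicity is on the graph $\calV_n$, and since $V_n\cap B(x,r)$ is a connected graph for the relevant range of $r$ (using $d(p,q)\asymp|p-q|$ on these graphs, which holds by the chain condition for SC as in Section \ref{sec_harnack}), Theorem \ref{thm_harnack} applies: there are $C_0\ge1$, $\delta\in(0,1)$, independent of $n$, with $\max_{V_n\cap B(x,\delta r)}v\le C_0\min_{V_n\cap B(x,\delta r)}v$ for every nonnegative $v$ harmonic on $V_n\cap B(x,r)$.

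The key step is the standard oscillation lemma. For $x\in G$ and $0<r\le d$, write $M(r)=\max_{V_n\cap B(x,r)}u$, $m(r)=\min_{V_n\cap B(x,r)}u$, $\mathrm{osc}(r)=M(r)-m(r)$. Apply Harnack to the two nonnegative harmonic functions $M(r)-u$ and $u-m(r)$ on $V_n\cap B(x,r)$: this gives
$$
M(r)-m(\delta r)\le C_0\bigl(M(r)-M(\delta r)\bigr),\qquad
M(\delta r)-m(r)\le C_0\bigl(m(\delta r)-m(r)\bigr),
$$
and adding these yields $\mathrm{osc}(\delta r)\le \frac{C_0-1}{C_0+1}\,\mathrm{osc}(r)=:\lambda\,\mathrm{osc}(r)$ with $\lambda\in(0,1)$ independent of $n$ and of $x$. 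Iterating, $\mathrm{osc}(\delta^k d)\le\lambda^k\,\mathrm{osc}(d)\le \lambda^k\cdot 2\max_{F\cap V_n}|u|$. For $x,y\in G\cap V_n$ with $|x-y|<d$, choose $k$ with $\delta^{k+1}d\le |x-y|<\delta^k d$; then $y\in B(x,\delta^k d)$, so $|u(x)-u(y)|\le \mathrm{osc}(\delta^k d)\le 2\lambda^k\max_{F\cap V_n}|u|$, and $\lambda^k\le C_1|x-y|^\theta$ with $\theta=\log(1/\lambda)/\log(1/\delta)$ and $C_1$ depending only on $\delta,\lambda,d$. For $|x-y|\ge d$ the bound is trivial after enlarging the constant, since $|u(x)-u(y)|\le 2\max_{F\cap V_n}|u|\le 2d^{-\theta}|x-y|^\theta\max_{F\cap V_n}|u|$. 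This proves the claim with $\theta,C$ depending only on $\delta_1,\delta_2,\veps_1,\veps_2$ (through $d$ and the Harnack constants), not on $n$.

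The main obstacle is purely bookkeeping of geometry: one must check that the balls $B(x,r)$ with $x\in G$, $r\le d$, really do lie in the region where $u$ is harmonic \emph{and} that $V_n\cap B(x,r)$ is connected with graph distance comparable to Euclidean distance, so that Theorem \ref{thm_harnack} is genuinely applicable with a constant $\delta$ that does not shrink with $n$; this uses the self-similar structure of SC (the chain condition) exactly as in Section \ref{sec_harnack}. A minor point is that $B(x,\delta r)$ in Theorem \ref{thm_harnack} must be nonempty and contain $y$, which forces the range $|x-y|<\delta^0 d=d$ handled above and the choice of $k$; for small $n$ or very small $r$ the balls may contain only finitely many points of $V_n$, but the inequality $\mathrm{osc}(\delta r)\le\lambda\,\mathrm{osc}(r)$ still holds trivially once $V_n\cap B(x,\delta r)$ is a single point or empty, so no separate argument is needed. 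Everything else is the textbook iteration.
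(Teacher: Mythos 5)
Your proposal is correct and is precisely the ``standard argument'' the paper invokes: the paper's proof consists of a one-line reference to \cite[Theorem 3.9]{BB89}, which is exactly this oscillation-decay iteration deriving uniform H\"older continuity from the uniform Harnack inequality of Theorem \ref{thm_harnack}. The computation $\mathrm{osc}(\delta r)\le\frac{C_0-1}{C_0+1}\mathrm{osc}(r)$ and the subsequent iteration are carried out correctly, with the constants visibly independent of $n$.
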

\begin{proof}
The proof is similar to \cite[Theorem 3.9]{BB89}.
\end{proof}

For all $n\ge1$. Let $u_n\in l(V_n)$ satisfy $u_n|_{V_n\cap\myset{0}\times[0,1]}=0,u_n|_{V_n\cap\myset{1}\times[0,1]}=1$ and
$$D_n(u_n,u_n)=\sum_{w\in W_n}
{\sum_{\mbox{\tiny
$
\begin{subarray}{c}
p,q\in V_w\\
|p-q|=2^{-1}\cdot3^{-n}
\end{subarray}
$
}}}
(u_n(p)-u_n(q))^2=(R_n^V)^{-1}.$$
Then $u_n$ is harmonic on $V_n\cap(0,1)\times[0,1]$, $u_n(x,y)=1-u_n(1-x,y)=u_n(x,1-y)$ for all $(x,y)\in V_n$ and
$$u_n|_{V_n\cap\myset{\frac{1}{2}}\times[0,1]}=\frac{1}{2},u_n|_{V_n\cap[0,\frac{1}{2})\times[0,1]}<\frac{1}{2},u_n|_{V_n\cap(\frac{1}{2},1]\times[0,1]}>\frac{1}{2}.$$
By Arzel\`a-Ascoli theorem, Theorem \ref{thm_holder} and diagonal argument, there exist some subsequence still denoted by $\myset{u_n}$ and some function $u$ on $K$ with $u|_{\myset{0}\times[0,1]}=0$ and $u|_{\myset{1}\times[0,1]}=1$ such that $u_n$ converges uniformly to $u$ on $K\cap[\veps_1,\veps_2]\times[0,1]$ for all $0<\veps_1<\veps_2<1$. Hence $u$ is continuous on $K\cap(0,1)\times[0,1]$, $u_n(x)\to u(x)$ for all $x\in K$ and $u(x,y)=1-u(1-x,y)=u(x,1-y)$ for all $(x,y)\in K$.

\begin{myprop}\label{prop_u}
The function $u$ given above has the following properties.
\begin{enumerate}[(1)]
\item There exists some positive constant $C$ such that
$$a_n(u)\le C\text{ for all }n\ge1.$$
\item For all $\beta\in(\alpha,\log(8\rho)/\log3)$, we have
$$E_{\beta}(u,u)<+\infty.$$
Hence $u\in C^{\frac{\beta-\alpha}{2}}(K)$.
\item
$$u|_{K\cap\myset{\frac{1}{2}}\times[0,1]}=\frac{1}{2},u|_{K\cap[0,\frac{1}{2})\times[0,1]}<\frac{1}{2},u|_{K\cap(\frac{1}{2},1]\times[0,1]}>\frac{1}{2}.$$
\end{enumerate}
\end{myprop}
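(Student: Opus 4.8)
The plan is to obtain items (1)--(3) from three facts already available: the resistance estimate $R_n^V\asymp\rho^n$ of Theorem~\ref{thm_resist}, the weak monotonicity $a_n\le C\,a_{n+m}$ of Theorem~\ref{thm_monotone1}, and the uniform Harnack inequality of Theorem~\ref{thm_harnack}, together with the construction of $u$ as the subsequential pointwise limit of the minimizers $u_n$. For item~(1), fix $n$ and let $m\ge n$. Since $u_m\in l(V_m)$, Theorem~\ref{thm_monotone1} gives $a_n(u_m)\le C\,a_m(u_m)$, while the normalization $D_m(u_m,u_m)=(R_m^V)^{-1}$ together with Theorem~\ref{thm_resist} gives
$$a_m(u_m)=\rho^m D_m(u_m,u_m)=\rho^m(R_m^V)^{-1}\asymp 1 .$$
Hence $a_n(u_m)\le C'$ with $C'$ independent of $n,m$. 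As $V_n$ is finite and $u_m\to u$ pointwise on $V_n$, letting $m\to\infty$ yields $a_n(u)=\lim_m a_n(u_m)\le C'$.

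For item~(2), rewriting each inner sum via $a_n$, item~(1) gives
$$E_\beta(u,u)=\sum_{n=1}^{\infty}3^{(\beta-\alpha)n}\rho^{-n}a_n(u)\le C'\sum_{n=1}^{\infty}\left(\frac{3^{\beta-\alpha}}{\rho}\right)^{n},$$
and this geometric series converges precisely when $3^{\beta-\alpha}<\rho$, i.e. when $\beta<\alpha+\log\rho/\log 3=\log(8\rho)/\log 3$; so $E_\beta(u,u)<+\infty$ for all $\beta\in(\alpha,\log(8\rho)/\log 3)$. For the Hölder assertion one first checks that $u\in C(K)$: item~(1) gives $|u(p)-u(q)|\le(C')^{1/2}\rho^{-n/2}\asymp|p-q|^{\log\rho/(2\log 3)}$ for every level-$n$ adjacent pair $p,q\in V_n$, so the chaining argument used in the proof of Theorem~\ref{thm_equiv2} shows that $u$ is Hölder continuous on the dense set $\bigcup_n V_n$ and therefore extends to, and coincides with, a continuous function on $K$. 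Then Lemma~\ref{lem_equiv} gives $\Ee_\beta(u,u)\asymp E_\beta(u,u)<+\infty$, and the remark following Lemma~\ref{lem_holder} gives $u\in C^{(\beta-\alpha)/2}(K)$.

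For item~(3), the symmetries $u_n(x,y)=1-u_n(1-x,y)=u_n(x,1-y)$ and the relations $u_n|_{V_n\cap\{1/2\}\times[0,1]}=1/2$, $u_n<1/2$ on $V_n\cap[0,1/2)\times[0,1]$, $u_n>1/2$ on $V_n\cap(1/2,1]\times[0,1]$ pass to the limit and, using the density of $\bigcup_n V_n$ and the continuity from item~(2), give $u|_{K\cap\{1/2\}\times[0,1]}=1/2$, $u\le1/2$ on the left half, and $u\ge1/2$ on the right half. To make these inequalities strict in the open region, suppose $u(z_0)=1/2$ for some $z_0\in K$ whose first coordinate lies in $(0,1/2)$. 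The functions $v_n:=\frac12-u_n$ are nonnegative on the left half and harmonic on $V_n\cap(0,1)\times[0,1]$, so choosing $r>0$ with $B(z_0,r)\subset(0,1/2)\times[0,1]$ and applying Theorem~\ref{thm_harnack} yields $\max_{V_n\cap B(z_0,\delta r)}v_n\le C\min_{V_n\cap B(z_0,\delta r)}v_n\le C\,v_n(z_0')$ for $z_0'\in V_n$ close to $z_0$; letting $n\to\infty$ forces $u\equiv1/2$ on $K\cap B(z_0,\delta r)$. Thus $\{u=1/2\}$ is open (and, by continuity, closed) in the connected set $K\cap(0,1/2)\times[0,1]$, hence is all of it, contradicting $u=0$ on $\{0\}\times[0,1]$ (using that $u\to0$ near the left edge, itself a consequence of the Hölder bound). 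Therefore $u<1/2$ on $K\cap[0,1/2)\times[0,1]$, and by reflection $u>1/2$ on $K\cap(1/2,1]\times[0,1]$. The one genuinely delicate step is this last one: converting the strict inequality for $u_n$ into a strict inequality for the limit is a strong maximum principle statement, and it is exactly here that the Harnack inequality of Theorem~\ref{thm_harnack}, uniform in $n$, is indispensable, together with the connectedness of $K\cap(0,1/2)\times[0,1]$; items~(1) and~(2) are a routine assembly of estimates proved earlier.
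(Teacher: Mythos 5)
Your proposal is correct and takes essentially the same route as the paper: part (1) via Theorem \ref{thm_monotone1} applied to $u_m$ together with the normalization $a_m(u_m)=\rho^m(R_m^V)^{-1}\asymp1$ from Theorem \ref{thm_resist1}; part (2) by summing the geometric series and invoking Lemma \ref{lem_equiv} and Lemma \ref{lem_holder}; part (3) by the same Harnack-based contradiction argument using Theorem \ref{thm_harnack}. The only cosmetic differences are that you propagate $u\equiv\frac{1}{2}$ by an open--closed--connectedness argument on balls where the paper applies Harnack on vertical strips $[\veps_1,\veps_2]\times[0,1]$, and that you make explicit the continuity of $u$ up to the edges $\myset{0}\times[0,1]$ and $\myset{1}\times[0,1]$ (needed before Lemma \ref{lem_equiv} can be applied), a point the paper leaves implicit.
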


\begin{proof}
(1) By Theorem \ref{thm_resist1} and Theorem \ref{thm_monotone1}, for all $n\ge1$, we have
$$
\begin{aligned}
&a_n(u)=\lim_{m\to+\infty}a_{n}(u_{n+m})\le C\varliminf_{m\to+\infty}a_{n+m}(u_{n+m})\\
&=C\varliminf_{m\to+\infty}\rho^{n+m}D_{n+m}(u_{n+m},u_{n+m})=C\varliminf_{m\to+\infty}\rho^{n+m}\left(R_{n+m}^V\right)^{-1}\le C.
\end{aligned}
$$

(2) By (1), for all $\beta\in(\alpha,\log(8\rho)/\log3)$, we have
$$E_{\beta}(u,u)=\sum_{n=1}^\infty\left(3^{\beta-\alpha}\rho^{-1}\right)^na_n(u)\le C\sum_{n=1}^\infty\left(3^{\beta-\alpha}\rho^{-1}\right)^n<+\infty.$$
By Lemma \ref{lem_equiv} and Lemma \ref{lem_holder}, we have $u\in C^{\frac{\beta-\alpha}{2}}(K)$.

(3) It is obvious that
$$u|_{K\cap\myset{\frac{1}{2}}\times[0,1]}=\frac{1}{2},u|_{K\cap[0,\frac{1}{2})\times[0,1]}\le\frac{1}{2},u|_{K\cap(\frac{1}{2},1]\times[0,1]}\ge\frac{1}{2}.$$
By symmetry, we only need to show that
$$u|_{K\cap(\frac{1}{2},1]\times[0,1]}>\frac{1}{2}.$$
Suppose there exists $(x,y)\in K\cap(1/2,1)\times[0,1]$ such that $u(x,y)=1/2$. Since $u_n-\frac{1}{2}$ is a nonnegative harmonic function on $V_n\cap(\frac{1}{2},1)\times[0,1]$, by Theorem \ref{thm_harnack}, for all $1/2<\veps_1<x<\veps_2<1$, there exists some positive constant $C=C(\veps_1,\veps_2)$ such that for all $n\ge1$
$$\max_{V_n\cap[\veps_1,\veps_2]\times[0,1]}\left(u_n-\frac{1}{2}\right)\le C\min_{V_n\cap[\veps_1,\veps_2]\times[0,1]}\left(u_n-\frac{1}{2}\right).$$
Since $u_n$ converges uniformly to $u$ on $K\cap[\veps_1,\veps_2]\times[0,1]$, we have
$$\sup_{K\cap[\veps_1,\veps_2]\times[0,1]}\left(u-\frac{1}{2}\right)\le C\inf_{K\cap[\veps_1,\veps_2]\times[0,1]}\left(u-\frac{1}{2}\right)=0.$$
Hence
$$u-\frac{1}{2}=0\text{ on }K\cap[\veps_1,\veps_2]\times[0,1]\text{ for all }\frac{1}{2}<\veps_1<x<\veps_2<1.$$
Hence
$$u=\frac{1}{2}\text{ on }K\cap(\frac{1}{2},1)\times[0,1].$$
By continuity, we have
$$u=\frac{1}{2}\text{ on }K\cap[\frac{1}{2},1]\times[0,1],$$
contradiction!
\end{proof}

\section{Proof of Theorem \ref{thm_walk}}\label{sec_walk}
First, we consider upper bound. Assume that $(\Ee_\beta,\Ff_\beta)$ is a regular Dirichlet form on $L^2(K;\nu)$, then there exists $u\in\Ff_\beta$ such that $u|_{\myset{0}\times[0,1]}=0$ and $u|_{\myset{1}\times[0,1]}=1$. Hence
$$
\begin{aligned}
+\infty&>E_\beta(u,u)=\sum_{n=1}^\infty3^{(\beta-\alpha)n}D_n(u,u)\ge\sum_{n=1}^\infty3^{(\beta-\alpha)n}D_n(u_n,u_n)\\
&=\sum_{n=1}^\infty3^{(\beta-\alpha)n}\left(R_n^V\right)^{-1}\ge C\sum_{n=1}^\infty\left(3^{\beta-\alpha}\rho^{-1}\right)^n.
\end{aligned}
$$
Hence $3^{\beta-\alpha}\rho^{-1}<1$, that is, $\beta<{\log\left(8\rho\right)}/{\log3}=\beta^*$. Hence $\beta_*\le\beta^*$.

Second, we consider lower bound. Similar to the proof of Proposition \ref{prop_lower}, to show that $(\calE_\beta,\calF_\beta)$ is a regular Dirichlet form on $L^2(K;\nu)$ for all $\beta\in(\alpha,\beta^*)$, we only need to show that $\Ff_\beta$ separates points.

Let $u\in C(K)$ be the function in Proposition \ref{prop_u}. By Proposition \ref{prop_u} (2), we have $E_{\beta}(u,u)<+\infty$, hence $u\in\Ff_\beta$. 

For all distinct $z_1=(x_1,y_1),z_2=(x_2,y_2)\in K$, without lose of generality, we may assume that $x_1<x_2$. Replacing $z_i$ by $f_w^{-1}(z_i)$ with some $w\in W_n$ and some $n\ge1$, we only have the following cases.
\begin{enumerate}[(1)]
\item $x_1\in[0,\frac{1}{2}),x_2\in[\frac{1}{2},1]$.
\item $x_1\in[0,\frac{1}{2}],x_2\in(\frac{1}{2},1]$.
\item $x_1,x_2\in[0,\frac{1}{2})$, there exist distinct $w_1,w_2\in\myset{0,1,5,6,7}$ such that
$$z_1\in K_{w_1}\backslash K_{w_2}\text{ and }z_2\in K_{w_2}\backslash K_{w_1}.$$
\item $x_1,x_2\in(\frac{1}{2},1]$, there exist distinct $w_1,w_2\in\myset{1,2,3,4,5}$ such that
$$z_1\in K_{w_1}\backslash K_{w_2}\text{ and }z_2\in K_{w_2}\backslash K_{w_1}.$$
\end{enumerate}
For the first case, $u(z_1)<{1}/{2}\le u(z_2)$. For the second case, $u(z_1)\le{1}/{2}<u(z_2)$.




\begin{figure}[ht]
\centering
\begin{tikzpicture}[scale=0.5]

\draw (0,0)--(6,0)--(6,6)--(0,6)--cycle;
\draw (2,0)--(2,6);
\draw (4,0)--(4,6);
\draw (0,2)--(6,2);
\draw (0,4)--(6,4);

\draw (1,1) node {$K_0$};
\draw (3,1) node {$K_1$};
\draw (5,1) node {$K_2$};
\draw (5,3) node {$K_3$};
\draw (5,5) node {$K_4$};
\draw (3,5) node {$K_5$};
\draw (1,5) node {$K_6$};
\draw (1,3) node {$K_7$};

\end{tikzpicture}
\caption{The Location of $z_1,z_2$}\label{fig_characterization}
\end{figure}


For the third case. If $w_1,w_2$ do \emph{not} belong to the same one of the following sets
$$\myset{0,1},\myset{7},\myset{5,6},$$
then we construct a function $w$ as follows. Let $v(x,y)=u(y,x)$ for all $(x,y)\in K$, then
$$v|_{[0,1]\times\myset{0}}=0,v|_{[0,1]\times\myset{1}}=1,$$
$$v(x,y)=v(1-x,y)=1-v(x,1-y)\text{ for all }(x,y)\in K,$$
$$E_\beta(v,v)=E_\beta(u,u)<+\infty.$$
Let
$$w=
\begin{cases}
v\circ f_i^{-1}-1,&\text{on }K_i,i=0,1,2,\\
v\circ f_i^{-1},&\text{on }K_i,i=3,7,\\
v\circ f_i^{-1}+1,&\text{on }K_i,i=4,5,6,\\
\end{cases}
$$
then $w\in C(K)$ is well-defined and $E_\beta(w,w)<+\infty$, hence $w\in\Ff_\beta$. Moreover, $w(z_1)\ne w(z_2)$, $w|_{[0,1]\times\myset{0}}=-1,w|_{[0,1]\times\myset{1}}=2,w(x,y)=w(1-x,y)=1-w(x,1-y)$ for all $(x,y)\in K$.

If $w_1,w_2$ \emph{do} belong to the same one of the following sets
$$\myset{0,1},\myset{7},\myset{5,6},$$
then it can only happen that $w_1,w_2\in\myset{0,1}$ or $w_1,w_2\in\myset{5,6}$, without lose of generality, we may assume that $w_1=0$ and $w_2=1$, then $z_1\in K_0\backslash K_1$ and $z_2\in K_1\backslash K_0$.

Let
$$
w=
\begin{cases}
u\circ f_i^{-1}-1,&\text{on }K_i,i=0,6,7,\\
u\circ f_i^{-1},&\text{on }K_i,i=1,5,\\
u\circ f_i^{-1}+1,&\text{on }K_i,i=2,3,4,\\
\end{cases}
$$
then $w\in C(K)$ is well-defined and $E_{\beta}(w,w)<+\infty$, hence $w\in\Ff_\beta$. Moreover $w(z_1)\ne w(z_2)$, $w|_{\myset{0}\times[0,1]}=-1,w|_{\myset{1}\times[0,1]}=2,w(x,y)=w(x,1-y)=1-w(1-x,y)$ for all $(x,y)\in K$.

For the forth case, by reflection about $\myset{\frac{1}{2}}\times[0,1]$, we reduce to the third case.

Hence $\Ff_\beta$ separates points, hence $(\Ee_\beta,\Ff_\beta)$ is a regular Dirichlet form on $L^2(K;\nu)$ for all $\beta\in(\alpha,\beta^*)$, hence $\beta_*\ge\beta^*$.

In conclusion, $\beta_*=\beta^*$.

\section{Proof of Theorem \ref{thm_BM}}\label{sec_BM}

In this section, we use $\Gamma$-convergence technique to construct a local regular Dirichlet form on $L^2(K;\nu)$ which corresponds to the BM. The idea of this construction is from \cite{KS05}.

The construction of local Dirichlet forms on p.c.f. self-similar sets relies heavily on some monotonicity result which is ensured by some compatibility condition, see \cite{Kig93,Kig01}. Our key observation is that even with some weak monotonicity results, we still apply $\Gamma$-convergence technique to obtain some limit.

We need some preparation about $\Gamma$-convergence.

In what follows, $K$ is a locally compact separable metric space and $\nu$ is a Radon measure on $K$ with full support. We say that $(\Ee,\Ff)$ is a \emph{closed form on $L^2(K;\nu)$ in the wide sense} if $\mathcal{F}$ is complete under the inner product $\Ee_1$ but $\Ff$ is not necessary to be dense in $L^2(K;\nu)$. If $(\Ee,\Ff)$ is a closed form on $L^2(K;\nu)$ in the wide sense, we extend $\Ee$ to be $+\infty$ outside $\Ff$, hence the information of $\Ff$ is encoded in $\Ee$.

\begin{mydef}\label{def_gamma}
Let $\Ee^n,\Ee$ be closed forms on $L^2(K;\nu)$ in the wide sense. We say that $\Ee^n$ is $\Gamma$-convergent to $\Ee$ if the following conditions are satisfied.
\begin{enumerate}[(1)]
\item For all $\myset{u_n}\subseteq L^2(K;\nu)$ that converges strongly to $u\in L^2(K;\nu)$, we have
$$\varliminf_{n\to+\infty}\Ee^n(u_n,u_n)\ge\Ee(u,u).$$
\item For all $u\in L^2(K;\nu)$, there exists a sequence $\myset{u_n}\subseteq L^2(K;\nu)$ converging strongly to $u$ in $L^2(K;\nu)$ such that
$$\varlimsup_{n\to+\infty}\Ee^n(u_n,u_n)\le\Ee(u,u).$$
\end{enumerate}
\end{mydef}

We have the following result about $\Gamma$-convergence.

\begin{myprop}\label{prop_gamma}(\cite[Proposition 6.8, Theorem 8.5, Theorem 11.10, Proposition 12.16]{Dal93})
Let $\myset{(\Ee^n,\Ff^n)}$ be a sequence of closed forms on $L^2(K;\nu)$ in the wide sense, then there exist some subsequence $\myset{(\Ee^{n_k},\Ff^{n_k})}$ and some closed form $(\Ee,\Ff)$ on $L^2(K;\nu)$ in the wide sense such that $\Ee^{n_k}$ is $\Gamma$-convergent to $\Ee$.
\end{myprop}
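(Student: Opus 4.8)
The plan is to obtain the statement from the general compactness theorem for $\Gamma$-convergence on a separable metrizable space, together with the two structural facts that $\Gamma$-limits are automatically lower semicontinuous and that, in the present quadratic setting, the parallelogram identity passes to the $\Gamma$-limit. Since $L^2(K;\nu)$ is a separable Hilbert space, its strong topology is separable and metrizable, which is all the abstract machinery needs. Throughout, each $\Ee^n$ is extended by $+\infty$ outside $\Ff^n$, as in Definition \ref{def_gamma}.

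\emph{Step 1 (abstract compactness).} First, fix a countable base $\{B_j\}_{j\ge1}$ of the strong topology of $L^2(K;\nu)$, say balls of rational radius centred at the points of a countable dense set, and put $m_n(j)=\inf_{B_j}\Ee^n\in[0,+\infty]$. By a diagonal extraction over $j$ one passes to a subsequence, still denoted $\{\Ee^{n}\}$, along which $\lim_{n\to\infty}m_n(j)$ exists for every $j$. Then define
$$\Ee(u,u)=\sup\{\,\lim_{n\to\infty}m_n(j):\ u\in B_j\,\},\qquad u\in L^2(K;\nu).$$
The (routine) De Giorgi verification shows that $\Ee^{n}$ $\Gamma$-converges to $\Ee$: for the liminf inequality, if $u_n\to u$ and $u\in B_j$ then $u_n\in B_j$ eventually, so $\liminf_n\Ee^n(u_n,u_n)\ge\lim_n m_n(j)$, and one takes the supremum over such $j$; for the recovery sequence, one chooses along a sequence of shrinking neighbourhoods of $u$ almost-minimizers of $\Ee^n$ on those neighbourhoods and diagonalizes.

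\emph{Step 2 ($\Ee$ is a lower semicontinuous quadratic form).} Lower semicontinuity of $\Ee$ for strong $L^2$-convergence is a general feature of $\Gamma$-limits (combine, for $u_k\to u$, the recovery sequences of the $u_k$ with the liminf inequality). Nonnegativity and the $2$-homogeneity $\Ee(tu,tu)=t^2\Ee(u,u)$ descend trivially from the $\Ee^n$. For the parallelogram identity, given $u,v$ choose recovery sequences $u_n\to u$, $v_n\to v$ with $\Ee^n(u_n,u_n)\to\Ee(u,u)$ and $\Ee^n(v_n,v_n)\to\Ee(v,v)$; then $u_n\pm v_n\to u\pm v$, and from the liminf inequality and the parallelogram law for each $\Ee^n$,
$$
\begin{aligned}
\Ee(u+v,u+v)+\Ee(u-v,u-v)&\le\liminf_{n\to\infty}\big(\Ee^n(u_n+v_n,u_n+v_n)+\Ee^n(u_n-v_n,u_n-v_n)\big)\\
&=2\Ee(u,u)+2\Ee(v,v).
\end{aligned}
$$
Applying this with $u,v$ replaced by $\tfrac12(u+v),\tfrac12(u-v)$ gives the reverse inequality, so $\Ee$ satisfies the parallelogram identity on the linear subspace $\Ff:=\{u\in L^2(K;\nu):\Ee(u,u)<+\infty\}$ and therefore polarizes to a nonnegative symmetric bilinear form.

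\emph{Step 3 (closedness in the wide sense), and the main obstacle.} It remains to check that $\Ff$ is $\Ee_1$-complete, where $\Ee_1(u,v)=\Ee(u,v)+\langle u,v\rangle_{L^2}$: if $\{u_k\}\subseteq\Ff$ is $\Ee_1$-Cauchy, it is $L^2$-Cauchy, so $u_k\to u$ in $L^2$ for some $u$; letting $\ell\to\infty$ in $\Ee(u_k-u_\ell,u_k-u_\ell)$ and using lower semicontinuity of $\Ee$ yields $\Ee(u_k-u,u_k-u)\to0$, hence $u\in\Ff$ and $u_k\to u$ in $\Ee_1$-norm. We do not assert density of $\Ff$ in $L^2(K;\nu)$, which is exactly why $(\Ee,\Ff)$ is only a closed form \emph{in the wide sense}, as claimed. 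The one genuinely non-formal point is the parallelogram identity in Step 2: the $\Gamma$-liminf inequality is one-sided, so one must exploit both the recovery sequences and the reparametrization $(u,v)\mapsto(\tfrac12(u+v),\tfrac12(u-v))$ to upgrade it to an equality — without the quadratic structure of the $\Ee^n$ the $\Gamma$-limit would in general only be convex and $2$-homogeneous, which is strictly weaker than being a quadratic form. Everything else is the standard diagonal compactness argument, which is why the result may simply be quoted from \cite{Dal93}.
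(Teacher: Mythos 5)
Your argument is correct: Step 1 is the standard countable-base diagonal compactness proof of $\Gamma$-compactness on a separable metric space, Step 2 correctly upgrades the one-sided parallelogram inequality to an identity via the substitution $(u,v)\mapsto(\tfrac12(u+v),\tfrac12(u-v))$ and $2$-homogeneity, and Step 3 is the standard equivalence between lower semicontinuity and closedness. The paper offers no proof of this proposition at all — it is quoted directly from \cite{Dal93} — and your three steps are precisely faithful reconstructions of the four cited results there (compactness, lower semicontinuity of $\Gamma$-limits, quadraticity of $\Gamma$-limits of quadratic forms, and the closed-form correspondence), so nothing further is needed.
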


In what follows, $K$ is SC and $\nu$ is Hausdorff measure.

We need an elementary result as follows.

\begin{myprop}\label{prop_ele}
Let $\myset{x_n}$ be a sequence of nonnegative real numbers.
\begin{enumerate}[(1)]
\item $$\varliminf_{n\to+\infty}x_n\le\varliminf_{\lambda\uparrow1}(1-\lambda)\sum_{n=1}^\infty\lambda^nx_n\le\varlimsup_{\lambda\uparrow1}(1-\lambda)\sum_{n=1}^\infty\lambda^nx_n\le\varlimsup_{n\to+\infty}x_n\le\sup_{n\ge1}x_n.$$
\item If there exists some positive constant $C$ such that
$$x_n\le Cx_{n+m}\text{ for all }n,m\ge1,$$
then
$$\sup_{n\ge1}x_n\le C\varliminf_{n\to+\infty}x_n.$$
\end{enumerate}
\end{myprop}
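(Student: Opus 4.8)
The plan is to prove the two parts independently; both are elementary and amount to bookkeeping with the geometric series, so I expect no genuine obstacle beyond keeping track of a few degenerate cases.

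For part (1), the two inner inequalities $\varliminf\le\varlimsup$ and $\varlimsup_{n\to+\infty}x_n\le\sup_{n\ge1}x_n$ are immediate, so only the outermost two inequalities need an argument, and these are exactly the classical Abelian statements for Abel summation. Everything rests on the identity $(1-\lambda)\sum_{n=N}^{\infty}\lambda^n=\lambda^N$, valid for all $N\ge1$ and $\lambda\in(0,1)$, which exhibits $(1-\lambda)\sum_{n=1}^{\infty}\lambda^n x_n$ as essentially a weighted average of the $x_n$ with $n\ge N$, the weights totalling $\lambda^N\to1$. For the first inequality, set $L=\varliminf_{n\to+\infty}x_n$: if $L=0$ it is trivial by nonnegativity, and otherwise, given $0<\veps<L$ (or an arbitrary $M$ if $L=+\infty$), pick $N$ with $x_n\ge L-\veps$ (resp.\ $x_n\ge M$) for $n\ge N$, so that $(1-\lambda)\sum_{n=1}^{\infty}\lambda^n x_n\ge(L-\veps)(1-\lambda)\sum_{n=N}^{\infty}\lambda^n=(L-\veps)\lambda^N$; letting $\lambda\uparrow1$ and then $\veps\downarrow0$ gives $\varliminf_{\lambda\uparrow1}(1-\lambda)\sum_n\lambda^n x_n\ge L$. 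For the last (fourth) inequality it suffices to treat $S:=\varlimsup_{n\to+\infty}x_n<+\infty$, which forces $\sup_n x_n<+\infty$ and hence convergence of the Abel series for $\lambda<1$; given $\veps>0$ choose $N$ with $x_n\le S+\veps$ for $n\ge N$ and split $(1-\lambda)\sum_{n=1}^{\infty}\lambda^n x_n=(1-\lambda)\sum_{n=1}^{N-1}\lambda^n x_n+(1-\lambda)\sum_{n\ge N}\lambda^n x_n$; the first, finite, sum tends to $0$ as $\lambda\uparrow1$, while the second is $\le(S+\veps)\lambda^N\le S+\veps$, so $\varlimsup_{\lambda\uparrow1}(1-\lambda)\sum_n\lambda^n x_n\le S+\veps$, and $\veps\downarrow0$ concludes.

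For part (2), fix $N\ge1$. Every $k>N$ can be written as $k=N+m$ with $m=k-N\ge1$, so the hypothesis gives $x_N\le Cx_k$ for all such $k$; hence $x_N\le C\inf_{k>N}x_k\le C\varliminf_{k\to+\infty}x_k$, where the last step uses that $\inf_{k\ge N+1}x_k$ is nondecreasing in $N$ with supremum $\varliminf_{k\to+\infty}x_k$. Taking the supremum over $N\ge1$ yields $\sup_{n\ge1}x_n\le C\varliminf_{n\to+\infty}x_n$, as required. The only mild subtleties in the whole argument are the degenerate cases $L=0$, $L=+\infty$ and $S=+\infty$ in part (1) and the remark that $\varlimsup_n x_n<+\infty$ guarantees the Abel series converges; none of these is a real difficulty, and this is the step I would flag merely as the one to state carefully rather than one that is hard.
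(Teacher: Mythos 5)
Your proof is correct and complete, and it is exactly the elementary $\veps$-$N$ argument that the paper invokes (the paper's own ``proof'' consists of the single sentence that the result is elementary by an $\veps$-$N$ argument, so you have simply supplied the details it omits). The degenerate cases you flag ($L=0$, $L=+\infty$, $S=+\infty$, and the convergence of the Abel series when $\varlimsup_n x_n<+\infty$) are handled properly, and part (2) is the right one-line deduction from the hypothesis.
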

\begin{proof}
The proof is elementary using $\veps$-$N$ argument.
\end{proof}

Take $\myset{\beta_n}\subseteq(\alpha,\beta^*)$ with $\beta_n\uparrow\beta^*$. By Proposition \ref{prop_gamma}, there exist some subsequence still denoted by $\myset{\beta_n}$ and some closed form $(\Ee,\Ff)$ on $L^2(K;\nu)$ in the wide sense such that $(\beta^*-\beta_n)\frakE_{\beta_n}$ is $\Gamma$-convergent to $\Ee$. Without lose of generality, we may assume that
$$0<\beta^*-\beta_n<\frac{1}{n+1}\text{ for all }n\ge1.$$

We have the characterization of $(\Ee,\Ff)$ on $L^2(K;\nu)$ as follows.

\begin{mythm}\label{thm_gamma}
$$
\begin{aligned}
&\Ee(u,u)\asymp\sup_{n\ge1}3^{(\beta^*-\alpha)n}\sum_{w\in W_n}
{\sum_{\mbox{\tiny
$
\begin{subarray}{c}
p,q\in V_w\\
|p-q|=2^{-1}\cdot3^{-n}
\end{subarray}
$
}}}
(u(p)-u(q))^2,\\
&\Ff=\myset{u\in C(K):\sup_{n\ge1}3^{(\beta^*-\alpha)n}\sum_{w\in W_n}
{\sum_{\mbox{\tiny
$
\begin{subarray}{c}
p,q\in V_w\\
|p-q|=2^{-1}\cdot3^{-n}
\end{subarray}
$
}}}
(u(p)-u(q))^2<+\infty}.
\end{aligned}
$$
Moreover, $(\calE,\calF)$ is a regular closed form on $L^2(K;\nu)$.
\end{mythm}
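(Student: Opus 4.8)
The plan is to first identify $(\Ee,\Ff)$ with the explicit functional on the right, and then deduce regularity by a Stone--Weierstrass argument; the form $(\Ee,\Ff)$ is a closed form on $L^2(K;\nu)$ in the wide sense by construction. Set $\lambda_n:=3^{\beta_n-\beta^*}\in(0,1)$. Since $\beta^*-\alpha=\log\rho/\log3$, one has $3^{\beta^*-\alpha}=\rho$, so $3^{\beta_n-\alpha}/\rho=\lambda_n$ and hence the \emph{identity} $\frakE_{\beta_n}(u,u)=\sum_{k\ge1}\lambda_n^k\,b_k(u)$ for all $u\in L^2(K;\nu)$, where $b_k$ is as in Section \ref{sec_monotone}; moreover $\beta^*-\beta_n\asymp 1-\lambda_n$ (the ratio tends to $1/\log3$) and $\lambda_n\uparrow1$. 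Similarly, for $u\in C(K)$, $E_\beta(u,u)=\sum_k\lambda^k a_k(u)$ with $\lambda=3^{\beta-\alpha}/\rho$.

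\emph{Identifying $\Ee$.} For the upper bound I would apply the $\varliminf$-inequality of $\Gamma$-convergence (Definition \ref{def_gamma}(1)) to the constant sequence $u_n\equiv u$, obtaining $\Ee(u,u)\le\varliminf_n(\beta^*-\beta_n)\frakE_{\beta_n}(u,u)=\varliminf_n(\beta^*-\beta_n)\sum_k\lambda_n^k b_k(u)$, and then bound this by $\lesssim\sup_{k\ge1}b_k(u)$ using $\beta^*-\beta_n\asymp1-\lambda_n$ and Proposition \ref{prop_ele}(1). For the lower bound, fix $m\ge1$; for an \emph{arbitrary} sequence $u_n\to u$ in $L^2(K;\nu)$, discard the terms $k<m$, use the weak monotonicity $b_m(u_n)\le C b_k(u_n)$ for $k\ge m$ (Theorem \ref{thm_monotone2}), and sum the geometric series to get $(\beta^*-\beta_n)\frakE_{\beta_n}(u_n,u_n)\gtrsim\lambda_n^m b_m(u_n)$. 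Since $b_m$ is continuous on $L^2(K;\nu)$ (each $P_m u(w)$ is a bounded linear functional of $u$ and $b_m$ is a finite sum of squares of such), and $\lambda_n^m\to1$, taking $\varliminf_n$ gives $\varliminf_n(\beta^*-\beta_n)\frakE_{\beta_n}(u_n,u_n)\gtrsim b_m(u)$ for this sequence; choosing it to be a recovery sequence (Definition \ref{def_gamma}(2)) forces $\Ee(u,u)\gtrsim b_m(u)$, hence $\Ee(u,u)\gtrsim\sup_{m\ge1}b_m(u)$. Thus $\Ee(u,u)\asymp\sup_{m\ge1}b_m(u)$ on $L^2(K;\nu)$.

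\emph{From $b_n$ to $a_n$ and the domain.} For $u\in C(K)$ and $\beta$ in a compact subinterval of $(\alpha,\beta^*)$, Lemma \ref{lem_equiv} and Lemma \ref{lem_equivHK} give $\sum_k\lambda^k a_k(u)=E_\beta(u,u)\asymp\Ee_\beta(u,u)\asymp\frakE_\beta(u,u)=\sum_k\lambda^k b_k(u)$ with implied constants uniform in such $\beta$, i.e. uniform for $\lambda$ near $1$. Combining Proposition \ref{prop_ele} with the weak monotonicity of $a_n$ (Theorem \ref{thm_monotone1}) and of $b_n$ (Theorem \ref{thm_monotone2}) yields $\sup_k a_k(u)\asymp\varliminf_{\lambda\uparrow1}(1-\lambda)\sum_k\lambda^k a_k(u)$ and likewise for $b_k$; letting $\lambda\uparrow1$ in the equivalence then gives $\sup_k a_k(u)\asymp\sup_k b_k(u)$ for $u\in C(K)$. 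Finally, if $u\in L^2(K;\nu)$ has $\sup_k b_k(u)<\infty$, then $\Ee_\beta(u,u)\lesssim(1-\lambda)^{-1}\sup_k b_k(u)<\infty$ for every $\beta\in(\alpha,\beta^*)$, so Lemma \ref{lem_holder} forces $u$ to agree a.e. with a function in $C^{(\beta-\alpha)/2}(K)$. Since $a_n(u)=\rho^n D_n(u,u)=3^{(\beta^*-\alpha)n}D_n(u,u)$, this is exactly the asserted description of $\Ff$ and of $\Ee(\cdot,\cdot)$ up to constants.

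\emph{Regularity, and the hard part.} Because $\Ff\subseteq C(K)$ and $K$ is compact, regularity of $(\Ee,\Ff)$ reduces to uniform density of $\Ff$ in $C(K)$ (which then also gives density in $L^2(K;\nu)$, so the form is genuinely densely defined). The inequality $(uv(p)-uv(q))^2\le2\|u\|_\infty^2(v(p)-v(q))^2+2\|v\|_\infty^2(u(p)-u(q))^2$ gives $a_n(uv)\le2\|u\|_\infty^2 a_n(v)+2\|v\|_\infty^2 a_n(u)$, so $\Ff$ is a subalgebra of $C(K)$ containing the constants, and by Stone--Weierstrass it suffices to show $\Ff$ separates points. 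Here I would use the good function $u$ of Proposition \ref{prop_u}: it lies in $\Ff$ (by (1) $\sup_n a_n(u)<\infty$, by (2) $u\in C(K)$), separates points in the first coordinate across $\myset{1/2}\times[0,1]$, and $v(x,y):=u(y,x)\in\Ff$ does so in the second coordinate. For a general pair $z_1\ne z_2$ one passes to $f_w^{-1}(z_i)$ and glues together translated copies of $u$ and $v$, exactly as in the proof of Theorem \ref{thm_walk}; the one new point is that such a glued function stays in $\Ff$, which follows from the self-similar identity: if $w=c_i+v'\circ f_i^{-1}$ on $K_i$ for $i=0,\dots,7$ with $v'\in\Ff$ and $w\in C(K)$, then every level-$n$ cell ($n\ge1$) lies in a single $K_i$ and $f_i$ contracts by $1/3$, so $D_n(w,w)=\sum_{i=0}^7 D_{n-1}(v',v')$, i.e. $a_n(w)=8\rho\,a_{n-1}(v')$, whence $\sup_n a_n(w)<\infty$ (and the same handles iterated gluings). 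I expect the two delicate points to be the $\varliminf$-half of Step 2, where the weak monotonicity of Theorem \ref{thm_monotone2} together with the recovery sequence is what lets one recover the full supremum over \emph{all} scales from an arbitrary approximating sequence, and---as flagged in the introduction---the separation of points of $\Ff$, where the resistance estimates and the uniform Harnack inequality enter through the good function of Proposition \ref{prop_u}.
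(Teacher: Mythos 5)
Your proposal is correct and follows essentially the same route as the paper: the two halves of the $\Gamma$-convergence definition combined with the weak monotonicity results (Theorems \ref{thm_monotone1}, \ref{thm_monotone2}), Proposition \ref{prop_ele}, the equivalences of Lemmas \ref{lem_equiv} and \ref{lem_equivHK} (whose constants must indeed be taken uniform as $\beta_n\uparrow\beta^*$, a point the paper uses implicitly and you rightly flag), Lemma \ref{lem_holder} to place $\Ff$ inside $C(K)$, and Stone--Weierstrass with the good function of Proposition \ref{prop_u} for regularity. Your reorganization of the lower bound (fixing $m$ and using $L^2$-continuity of $b_m$ along the recovery sequence) and your explicit verification that the glued separating functions satisfy $a_n(w)=8\rho\,a_{n-1}(v')$ are only cosmetic refinements of, respectively, the paper's two-step comparison of $b_k(u)$ with $\varliminf_n b_n(u_n)$ and its appeal to the proof of Theorem \ref{thm_walk}.
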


\begin{proof}
Recall that $\rho=3^{\beta^*-\alpha}$, then
$$
\begin{aligned}
E_{\beta}(u,u)&=\sum_{n=1}^\infty3^{(\beta-\alpha)n}\sum_{w\in W_n}
{\sum_{\mbox{\tiny
$
\begin{subarray}{c}
p,q\in V_w\\
|p-q|=2^{-1}\cdot3^{-n}
\end{subarray}
$
}}}
(u(p)-u(q))^2=\sum_{n=1}^\infty3^{(\beta-\beta^*)n}a_n(u),\\
\frakE_\beta(u,u)&=\sum_{n=1}^\infty3^{(\beta-\alpha)n}
{\sum_{\mbox{\tiny
$
\begin{subarray}{c}
w^{(1)}\sim_nw^{(2)}
\end{subarray}
$
}}}
\left(P_nu(w^{(1)})-P_nu(w^{(2)})\right)^2=\sum_{n=1}^\infty3^{(\beta-\beta^*)n}b_n(u).
\end{aligned}
$$

We use weak monotonicity results Theorem \ref{thm_monotone1}, Theorem \ref{thm_monotone2} and elementary result Proposition \ref{prop_ele}.

For all $u\in L^2(K;\nu)$, there exists $\myset{u_n}\subseteq L^2(K;\nu)$ converging strongly to $u$ in $L^2(K;\nu)$ such that
$$
\begin{aligned}
&\Ee(u,u)\ge\varlimsup_{n\to+\infty}(\beta^*-\beta_n)\frakE_{\beta_n}(u_n,u_n)=\varlimsup_{n\to+\infty}(\beta^*-\beta_n)\sum_{k=1}^\infty3^{(\beta_n-\beta^*)k}b_k(u_n)\\
&\ge\varlimsup_{n\to+\infty}(\beta^*-\beta_n)\sum_{k=n+1}^\infty3^{(\beta_n-\beta^*)k}b_k(u_n)\ge C\varlimsup_{n\to+\infty}(\beta^*-\beta_n)\sum_{k=n+1}^\infty3^{(\beta_n-\beta^*)k}b_n(u_n)\\
&=C\varlimsup_{n\to+\infty}\left\{b_n(u_n)\left[(\beta^*-\beta_n)\frac{3^{(\beta_n-\beta^*)(n+1)}}{1-3^{\beta_n-\beta^*}}\right]\right\}.
\end{aligned}
$$
Since $0<\beta^*-\beta_n<1/(n+1)$, we have $3^{(\beta_n-\beta^*)(n+1)}>1/3$. Since
$$\lim_{n\to+\infty}\frac{\beta^*-\beta_n}{1-3^{\beta_n-\beta^*}}=\frac{1}{\log3},$$
there exists some positive constant $C$ such that 
$$(\beta^*-\beta_n)\frac{3^{(\beta_n-\beta^*)(n+1)}}{1-3^{\beta_n-\beta^*}}\ge C\text{ for all }n\ge1.$$
Hence
$$\Ee(u,u)\ge C\varlimsup_{n\to+\infty}b_n(u_n).$$
Since $u_n\to u$ in $L^2(K;\nu)$, for all $k\ge1$, we have
$$b_k(u)=\lim_{n\to+\infty}b_k(u_n)=\lim_{k\le n\to+\infty}b_k(u_n)\le C\varliminf_{n\to+\infty}b_n(u_n).$$
For all $m\ge1$, we have
$$
\begin{aligned}
(\beta^*-\beta_m)\sum_{k=1}^\infty3^{(\beta_m-\beta^*)k}b_k(u)&\le C(\beta^*-\beta_m)\sum_{k=1}^\infty3^{(\beta_m-\beta^*)k}\varliminf_{n\to+\infty}b_n(u_n)\\
&=C(\beta^*-\beta_m)\frac{3^{\beta_m-\beta^*}}{1-3^{\beta_m-\beta^*}}\varliminf_{n\to+\infty}b_n(u_n).
\end{aligned}
$$
Hence $\calE(u,u)<+\infty$ implies $\frakE_{\beta_m}(u,u)<+\infty$, by Lemma \ref{lem_holder}, we have $\calF\subseteq C(K)$. Hence
$$\varliminf_{m\to+\infty}(\beta^*-\beta_m)\sum_{k=1}^\infty3^{(\beta_m-\beta^*)k}b_k(u)\le C\varliminf_{n\to+\infty}b_n(u_n).$$
Hence for all $u\in\calF\subseteq C(K)$, we have
$$
\begin{aligned}
\Ee(u,u)&\ge C\varlimsup_{n\to+\infty}b_n(u_n)\ge C\varliminf_{n\to+\infty}b_n(u_n)\ge C\varliminf_{m\to+\infty}(\beta^*-\beta_m)\sum_{k=1}^\infty3^{(\beta_m-\beta^*)k}b_k(u)\\
&\ge C\varliminf_{m\to+\infty}(\beta^*-\beta_m)\sum_{k=1}^\infty3^{(\beta_m-\beta^*)k}a_k(u)\ge C\sup_{n\ge1}a_n(u).
\end{aligned}
$$

On the other hand, for all $u\in\calF\subseteq C(K)$, we have
$$
\begin{aligned}
&\Ee(u,u)\le\varliminf_{n\to+\infty}(\beta^*-\beta_n)\frakE_{\beta_n}(u,u)\\
&\le C\varliminf_{n\to+\infty}(\beta^*-\beta_n)E_{\beta_n}(u,u)=C\varliminf_{n\to+\infty}(\beta^*-\beta_n)\sum_{k=1}^\infty3^{(\beta_n-\beta^*)k}a_k(u)\\
&=C\varliminf_{n\to+\infty}\frac{\beta^*-\beta_n}{1-3^{\beta_n-\beta^*}}(1-3^{\beta_n-\beta^*})\sum_{k=1}^\infty3^{(\beta_n-\beta^*)k}a_k(u)\le C\sup_{n\ge1}a_n(u).
\end{aligned}
$$
Therefore, for all $u\in\calF\subseteq C(K)$, we have
$$\Ee(u,u)\asymp\sup_{n\ge1}a_n(u)=\sup_{n\ge1}3^{(\beta^*-\alpha)n}\sum_{w\in W_n}
{\sum_{\mbox{\tiny
$
\begin{subarray}{c}
p,q\in V_w\\
|p-q|=2^{-1}\cdot3^{-n}
\end{subarray}
$
}}}
(u(p)-u(q))^2,$$
and
$$\Ff=\myset{u\in C(K):\sup_{n\ge1}3^{(\beta^*-\alpha)n}\sum_{w\in W_n}
{\sum_{\mbox{\tiny
$
\begin{subarray}{c}
p,q\in V_w\\
|p-q|=2^{-1}\cdot3^{-n}
\end{subarray}
$
}}}
(u(p)-u(q))^2<+\infty}.$$

It is obvious that the function $u\in C(K)$ in Proposition \ref{prop_u} is in $\Ff$. Similar to the proof of Theorem \ref{thm_walk}, we have $\Ff$ is uniformly dense in $C(K)$. Hence $(\Ee,\Ff)$ is a regular closed form on $L^2(K;\nu)$.
\end{proof}

Now we prove Theorem \ref{thm_BM} as follows.

\begin{proof}[Proof of Theorem \ref{thm_BM}]
For all $n\ge1,u\in l(V_{n+1})$, we have
$$
\begin{aligned}
&\rho\sum_{i=0}^7a_n(u\circ f_i)=\rho\sum_{i=0}^7\rho^n\sum_{w\in W_n}
{\sum_{\mbox{\tiny
$
\begin{subarray}{c}
p,q\in V_w\\
|p-q|=2^{-1}\cdot3^{-n}
\end{subarray}
$
}}}
(u\circ f_i(p)-u\circ f_i(q))^2\\
&=\rho^{n+1}\sum_{w\in W_{n+1}}
{\sum_{\mbox{\tiny
$
\begin{subarray}{c}
p,q\in V_w\\
|p-q|=2^{-1}\cdot3^{-(n+1)}
\end{subarray}
$
}}}
(u(p)-u(q))^2=a_{n+1}(u).
\end{aligned}
$$
Hence for all $n,m\ge1,u\in l(V_{n+m})$, we have
$$\rho^m\sum_{w\in W_m}a_n(u\circ f_w)=a_{n+m}(u).$$
For all $u\in\calF,n\ge1,w\in W_n$, we have
$$\sup_{k\ge1}a_k(u\circ f_w)\le\sup_{k\ge1}\sum_{w\in W_n}a_k(u\circ f_w)=\rho^{-n}\sup_{k\ge1}a_{n+k}(u)\le\rho^{-n}\sup_{k\ge1}a_{k}(u)<+\infty,$$
hence $u\circ f_w\in\calF$.

Let
$$\mybar{\Ee}^{(n)}(u,u)=\rho^n\sum_{w\in W_n}\Ee(u\circ f_w,u\circ f_w),u\in\Ff,n\ge1.$$
Then
$$
\begin{aligned}
\mybar{\Ee}^{(n)}(u,u)&\ge C\rho^n\sum_{w\in W_n}\varlimsup_{k\to+\infty}a_k(u\circ f_w)\ge C\rho^n\varlimsup_{k\to+\infty}\sum_{w\in W_n}a_k(u\circ f_w)\\
&=C\varlimsup_{k\to+\infty}a_{n+k}(u)\ge C\sup_{k\ge1}a_k(u).
\end{aligned}
$$
Similarly
$$
\begin{aligned}
\mybar{\Ee}^{(n)}(u,u)&\le C\rho^n\sum_{w\in W_n}\varliminf_{k\to+\infty}a_k(u\circ f_w)\le C\rho^n\varliminf_{k\to+\infty}\sum_{w\in W_n}a_k(u\circ f_w)\\
&=C\varliminf_{k\to+\infty}a_{n+k}(u)\le C\sup_{k\ge1}a_k(u).
\end{aligned}
$$
Hence
$$\mybar{\Ee}^{(n)}(u,u)\asymp\sup_{k\ge1}a_k(u)\text{ for all }u\in\Ff,n\ge1.$$
Moreover, for all $u\in\Ff$, $n\ge1$, we have
$$
\begin{aligned}
&\mybar{\Ee}^{(n+1)}(u,u)=\rho^{n+1}\sum_{w\in W_{n+1}}\Ee(u\circ f_w,u\circ f_w)=\rho^{n+1}\sum_{i=0}^7\sum_{w\in W_n}\Ee(u\circ f_i\circ f_w,u\circ f_i\circ f_w)\\
&=\rho\sum_{i=0}^7\left(\rho^n\sum_{w\in W_n}\Ee((u\circ f_i)\circ f_w,(u\circ f_i)\circ f_w)\right)=\rho\sum_{i=0}^7\mybar{\Ee}^{(n)}(u\circ f_i,u\circ f_i).
\end{aligned}
$$
Let
$$\tilde{\Ee}^{(n)}(u,u)=\frac{1}{n}\sum_{l=1}^n\mybar{\Ee}^{(l)}(u,u),u\in\calF,n\ge1.$$
It is obvious that
$$\tilde{\Ee}^{(n)}(u,u)\asymp\sup_{k\ge1}a_k(u)\text{ for all }u\in\Ff,n\ge1.$$
Since $(\Ee,\Ff)$ is a regular closed form on $L^2(K;\nu)$, by \cite[Definition 1.3.8, Remark 1.3.9, Definition 1.3.10, Remark 1.3.11]{CF12}, we have $(\Ff,\Ee_1)$ is a separable Hilbert space. Let $\myset{u_i}_{i\ge1}$ be a dense subset of $(\Ff,\Ee_1)$. For all $i\ge1$, $\myset{\tilde{\Ee}^{(n)}(u_i,u_i)}_{n\ge1}$ is a bounded sequence. By diagonal argument, there exists a subsequence $\myset{n_k}_{k\ge1}$ such that $\myset{\tilde{\Ee}^{(n_k)}(u_i,u_i)}_{k\ge1}$ converges for all $i\ge1$. Since
$$\tilde{\Ee}^{(n)}(u,u)\asymp\sup_{k\ge1}a_k(u)\asymp\Ee(u,u)\text{ for all }u\in\Ff,n\ge1,$$
we have $\myset{\tilde{\Ee}^{(n_k)}(u,u)}_{k\ge1}$ converges for all $u\in\Ff$. Let
$$\Ee_{\loc}(u,u)=\lim_{k\to+\infty}\tilde{\Ee}^{(n_k)}(u,u)\text{ for all }u\in\Ff_{\loc}:=\Ff.$$
Then
$$\Ee_{\loc}(u,u)\asymp\sup_{k\ge1}a_k(u)\asymp\Ee(u,u)\text{ for all }u\in\calF_\loc=\calF.$$
Hence $(\Ee_\loc,\Ff_\loc)$ is a regular closed form on $L^2(K;\nu)$. It is obvious that $1\in\Ff_\loc$ and $\Ee_\loc(1,1)=0$, by \cite[Lemma 1.6.5, Theorem 1.6.3]{FOT11}, we have $(\Ee_\loc,\Ff_\loc)$ on $L^2(K;\nu)$ is conservative.

For all $u\in\Ff_\loc=\calF$, we have $u\circ f_i\in\calF=\Ff_\loc$ for all $i=0,\ldots,7$ and
$$
\begin{aligned}
&\rho\sum_{i=0}^7\Ee_\loc(u\circ f_i,u\circ f_i)=\rho\sum_{i=0}^7\lim_{k\to+\infty}\tilde{\Ee}^{(n_k)}(u\circ f_i,u\circ f_i)\\
&=\rho\sum_{i=0}^7\lim_{k\to+\infty}\frac{1}{n_k}\sum_{l=1}^{n_k}\mybar{\Ee}^{(l)}(u\circ f_i,u\circ f_i)=\lim_{k\to+\infty}\frac{1}{n_k}\sum_{l=1}^{n_k}\left[\rho\sum_{i=0}^7\mybar{\Ee}^{(l)}(u\circ f_i,u\circ f_i)\right]\\
&=\lim_{k\to+\infty}\frac{1}{n_k}\sum_{l=1}^{n_k}\mybar{\Ee}^{(l+1)}(u,u)=\lim_{k\to+\infty}\frac{1}{n_k}\sum_{l=2}^{n_k+1}\mybar{\Ee}^{(l)}(u,u)\\
&=\lim_{k\to+\infty}\left[\frac{1}{n_k}\sum_{l=1}^{n_k}\mybar{\Ee}^{(l)}(u,u)+\frac{1}{n_k}\mybar{\Ee}^{(n_k+1)}(u,u)-\frac{1}{n_k}\mybar{\Ee}^{(1)}(u,u)\right]\\
&=\lim_{k\to+\infty}\tilde{\Ee}^{(n_k)}(u,u)=\Ee_\loc(u,u).
\end{aligned}
$$
Hence $(\Ee_\loc,\Ff_\loc)$ on $L^2(K;\nu)$ is self-similar.

For all $u,v\in\Ff_\loc$ satisfying $\mathrm{supp}(u),\mathrm{supp}(v)$ are compact and $v$ is constant in an open neighborhood $U$ of $\mathrm{supp}(u)$, we have $K\backslash U$ is compact and $\mathrm{supp}(u)\cap(K\backslash U)=\emptyset$, hence $\delta=\mathrm{dist}(\mathrm{supp}(u),K\backslash U)>0$. Taking sufficiently large $n\ge1$ such that $3^{1-n}<\delta$, by self-similarity, we have
$$\Ee_\loc(u,v)=\rho^n\sum_{w\in W_n}\Ee_\loc(u\circ f_w,v\circ f_w).$$
For all $w\in W_n$, we have $u\circ f_w=0$ or $v\circ f_w$ is constant, hence $\Ee_\loc(u\circ f_w,v\circ f_w)=0$, hence $\Ee_\loc(u,v)=0$, that is, $(\Ee_\loc,\Ff_\loc)$ on $L^2(K;\nu)$ is strongly local.

For all $u\in\calF_\loc$, it is obvious that $u^+,u^-,1-u,\mybar{u}=(0\vee u)\wedge1\in\calF_\loc$ and
$$\calE_\loc(u,u)=\calE_\loc(1-u,1-u).$$
Since $u^+u^-=0$ and $(\calE_\loc,\calF_\loc)$ on $L^2(K;\nu)$ is strongly local, we have $\calE_\loc(u^+,u^-)=0$. Hence
$$
\begin{aligned}
\calE_\loc(u,u)&=\calE_\loc(u^+-u^-,u^+-u^-)=\calE_\loc(u^+,u^+)+\calE_\loc(u^-,u^-)-2\calE_\loc(u^+,u^-)\\
&=\calE_\loc(u^+,u^+)+\calE_\loc(u^-,u^-)\ge\calE_\loc(u^+,u^+)=\calE_\loc(1-u^+,1-u^+)\\
&\ge\calE_\loc((1-u^+)^+,(1-u^+)^+)=\calE_\loc(1-(1-u^+)^+,1-(1-u^+)^+)=\calE_\loc(\mybar{u},\mybar{u}),
\end{aligned}
$$
that is, $(\calE_\loc,\calF_\loc)$ on $L^2(K;\nu)$ is Markovian. Hence $(\calE_\loc,\calF_\loc)$ is a self-similar strongly local regular Dirichlet form on $L^2(K;\nu)$.
\end{proof}

\begin{myrmk}
The idea of the construction of $\mybar{\calE}^{(n)},\tilde{\calE}^{(n)}$ is from \cite[Section 6]{KZ92}. The proof of Markovain property is from the proof of \cite[Theorem 2.1]{BBKT10}.
\end{myrmk}

\section{Proof of Theorem \ref{thm_Besov}}\label{sec_Besov}

Theorem \ref{thm_Besov} is a special case of the following result.

\begin{myprop}\label{prop_equiv_local}
For all $\beta\in(\alpha,+\infty),u\in C(K)$, we have
$$\sup_{n\ge1}3^{(\beta-\alpha)n}\sum_{w\in W_n}
{\sum_{\mbox{\tiny
$
\begin{subarray}{c}
p,q\in V_w\\
|p-q|=2^{-1}\cdot3^{-n}
\end{subarray}
$
}}}
(u(p)-u(q))^2\asymp[u]_{B^{2,\infty}_{\alpha,\beta}(K)}.$$
\end{myprop}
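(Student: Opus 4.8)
The plan is to regard this proposition as the $B^{2,\infty}$-analogue of Lemma \ref{lem_equiv} and to prove the two inequalities separately, recycling the arguments behind Theorems \ref{thm_equiv1} and \ref{thm_equiv2} but keeping track of suprema over $n$ instead of sums over $n$. Throughout I write $D_n(u,u)=\sum_{w\in W_n}\sum_{p,q\in V_w,\;|p-q|=2^{-1}\cdot3^{-n}}(u(p)-u(q))^2$ as in Section \ref{sec_resistance}, so that the left-hand side of the claimed equivalence is $\sup_{n\ge1}3^{(\beta-\alpha)n}D_n(u,u)$; I also abbreviate $I_n(u):=3^{(\alpha+\beta)n}\int_K\int_{B(x,3^{-n})}(u(x)-u(y))^2\,\nu(\md y)\,\nu(\md x)$, so that the right-hand side is $\sup_{n\ge1}I_n(u)$.

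For the inequality $\sup_n I_n(u)\lesssim\sup_n 3^{(\beta-\alpha)n}D_n(u,u)$ I would invoke directly the estimate (\ref{eqn_equiv2_3}) already obtained inside the proof of Theorem \ref{thm_equiv2}, namely that for every $n\ge1$
$$\int_K\int_{B(x,3^{-n})}(u(x)-u(y))^2\,\nu(\md y)\,\nu(\md x)\;\lesssim\;\sum_{k=n}^{\infty}4^{\,k-n}3^{-\alpha(n+k)}D_k(u,u)\;+\;3^{-2\alpha n}D_{n-1}(u,u),$$
where for $n=1$ the last term is read as $3^{-2\alpha}$ times the octagon energy $\sum_{p,q\in V_0,\;|p-q|=2^{-1}}(u(p)-u(q))^2$, which is $\lesssim D_1(u,u)$ since $V_0\subset V_1$ and $\calV_1$ is a connected finite graph; the derivation of (\ref{eqn_equiv2_3}) only uses $B(x,3^{-n})\cap K\subseteq K_w^*$ for $x\in K_w$, $w\in W_n$, which holds for the open ball of radius $3^{-n}$. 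Multiplying by $3^{(\alpha+\beta)n}$ turns the $k$-th summand into $(4\cdot3^{-\beta})^{\,k-n}\bigl(3^{(\beta-\alpha)k}D_k(u,u)\bigr)$ and the last term into a constant times $3^{(\beta-\alpha)(n-1)}D_{n-1}(u,u)$; since $\beta>\alpha=\log8/\log3>\log4/\log3$ we have $4\cdot3^{-\beta}<1$, the geometric series converges, and $I_n(u)\le C\sup_{k}3^{(\beta-\alpha)k}D_k(u,u)$ with $C$ independent of $n$. Taking $\sup_n$ gives this direction.

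For the reverse inequality $\sup_n 3^{(\beta-\alpha)n}D_n(u,u)\lesssim\sup_n I_n(u)$ I would run the proof of Theorem \ref{thm_equiv1} essentially verbatim up to the chaining construction: fix $n$, $w\in W_n$, $p=f_w(p_{w_{n+1}})\in V_w$, let $w^{(i)}$ be $w$ followed by $ki$ copies of $w_{n+1}$ (so $K_{w^{(i)}}=f_w\circ f_{w_{n+1}}^{\,ki}(K)$), and estimate $\nu(K_w)^{-1}\int_{K_w}(u(p)-u(x))^2\nu(\md x)$ by telescoping along points $x^{(i)}\in K_{w^{(i)}}$. The one change is the treatment of the chain remainder: instead of stopping at level $l$ and invoking the Hölder bound of Lemma \ref{lem_holder} — which would force the full $B^{2,2}$-energy $E_\beta(u,u)$ into the estimate, a quantity not controlled by $\sup_n I_n(u)$ — I let $l\to\infty$ and note that $p$ is the fixed point of $f_{w_{n+1}}$, hence $p\in K_{w^{(l)}}$ for all $l$, so the remainder $2\,\nu(K_{w^{(l)}})^{-1}\int_{K_{w^{(l)}}}(u(p)-u(x^{(l)}))^2\nu(\md x^{(l)})$ is at most $2\,\omega_u\bigl(\sqrt{2}\cdot3^{-(n+kl)}\bigr)^2\to0$ by uniform continuity of $u\in C(K)$. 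This yields, for each $p\in V_w$,
$$\frac{1}{\nu(K_w)}\int_{K_w}(u(p)-u(x))^2\,\nu(\md x)\;\lesssim\;\sum_{i=0}^{\infty}2^{\,i}\,3^{\alpha k+2\alpha(n+ki)}\int_{K_{w^{(i)}}}\int_{B(x,\sqrt{2}\cdot3^{-(n+ki)})}(u(x)-u(y))^2\,\nu(\md y)\,\nu(\md x).$$
Summing over $p\in V_w$ and $w\in W_n$ (for fixed $i$ the cells $K_{w^{(i)}}$ are pairwise disjoint subsets of $K$), using $B(x,\sqrt{2}\cdot3^{-m})\subseteq B(x,3^{-(m-1)})$ to bound each inner double integral by $3^{-(\alpha+\beta)(m-1)}\sup_j I_j(u)$ with $m=n+ki$, and collecting powers of $3$, one arrives at $3^{(\beta-\alpha)n}D_n(u,u)\lesssim 3^{\alpha k}\sum_{i\ge0}\bigl(2\cdot3^{-(\beta-\alpha)k}\bigr)^{i}\sup_j I_j(u)$; choosing the chain step $k$ large enough (depending on $\beta$) that $2\cdot3^{-(\beta-\alpha)k}<1$ makes the series converge with a bound independent of $n$.

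I expect this second part to be the crux. In the summed ($B^{2,2}$) setting of Theorem \ref{thm_equiv1} one can afford the Hölder embedding, but in the present $\sup$ setting the $B^{2,2}$-energy is unavailable, and the substitute — pushing the chain to infinity and absorbing the tail into the modulus of continuity of $u$ — has to be organized so that the resulting geometric series still converge; convergence is secured by $\beta>\alpha>\log4/\log3$ together with the freedom to take $k$ arbitrarily large. The only minor subtlety in the first part is the $n=1$ boundary case, handled by comparing the energy on the octagon $V_0$ with $D_1(u,u)$ via the resistance metric on the finite graph $\calV_1$.
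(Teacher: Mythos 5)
Your proposal is correct, and the first direction (bounding $\sup_nI_n(u)$ by $\sup_n3^{(\beta-\alpha)n}D_n(u,u)$ via Equation (\ref{eqn_equiv2_3})) is exactly what the paper does. In the other direction you follow the same chaining skeleton as Theorem \ref{thm_equiv1} but handle the chain remainder differently: the paper keeps the finite chain of length $l=n$ and controls the remainder by the H\"older embedding, using not Lemma \ref{lem_holder} but its $B^{2,\infty}$ counterpart, Lemma \ref{lem_holder_local}, which bounds $|u(x)-u(y)|^2$ by $cF(u)|x-y|^{\beta-\alpha}$ with $F(u)=[u]_{B^{2,\infty}_{\alpha,\beta}(K)}$ --- so the ``full $B^{2,2}$-energy'' obstruction you worried about does not actually arise in the paper's route. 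Your substitute --- sending $l\to\infty$ and killing the remainder via uniform continuity, which is legitimate precisely because $p_{w_{n+1}}$ is the fixed point of $f_{w_{n+1}}$ so that $p\in K_{w^{(l)}}$ for every $l$ --- is valid and has the advantage of not requiring the embedding lemma at all; the resulting geometric series $\sum_i(2\cdot3^{-(\beta-\alpha)k})^i$ converges for $k$ large exactly as you say. Two cosmetic points: for fixed $i\ge1$ the cells $K_{w^{(i)}}$ attached to distinct pairs $(w,p)$ are distinct level-$(n+ki)$ cells, hence have disjoint interiors rather than being literally pairwise disjoint (they may share boundary points when $p\in V_w\cap V_{w'}$ for adjacent $w,w'$), which is all that is needed to sum the integrals over $K$; and the index shifts at $n=1$ (the term $I_0$, and the octagon energy on $V_0$) need the same routine normalization that Corollary \ref{cor_arbi} provides in the summed setting. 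Neither affects the argument.
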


Similar to non-local case, we need the following preparation.

\begin{mylem}(\cite[Theorem 4.11 (\rmnum{3})]{GHL03})\label{lem_holder_local}
Let $u\in L^2(K;\nu)$ and
$$F(u):=\sup_{n\ge1}3^{(\alpha+\beta)n}\int_K\int_{B(x,3^{-n})}(u(x)-u(y))^2\nu(\md y)\nu(\md x),$$
then
$$|u(x)-u(y)|^2\le cF(u)|x-y|^{\beta-\alpha}\text{ for }\nu\text{-almost every }x,y\in K,$$
where $c$ is some positive constant.
\end{mylem}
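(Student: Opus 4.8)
The plan is to prove the estimate by a Campanato--Morrey type argument built on ball averages, exploiting that $\nu$ is Ahlfors $\alpha$-regular on $K$, i.e. $\nu(B(x,r))\asymp r^\alpha$ for all $x\in K$ and $0<r\le\mathrm{diam}\,K$. For $x\in K$ and $n\ge0$ write $B_n(x)=B(x,3^{-n})$ and set the ball average $\bar u_n(x)=\nu(B_n(x))^{-1}\int_{B_n(x)}u\,\md\nu$. Since $(K,\nu)$ is doubling (being Ahlfors regular), the Lebesgue differentiation theorem gives $\bar u_n(x)\to u(x)$ for $\nu$-almost every $x$. The whole argument then reduces to controlling the increments $\bar u_{n+1}(x)-\bar u_n(x)$ by $F(u)$ with geometric decay in $n$.

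The central step is an oscillation estimate. First I would record the variance identity, valid for any ball $B$ with $\bar u_B=\nu(B)^{-1}\int_B u\,\md\nu$:
$$\int_B\bigl(u-\bar u_B\bigr)^2\md\nu=\frac{1}{2\nu(B)}\int_B\int_B(u(z)-u(w))^2\,\md\nu(z)\md\nu(w).$$
Applied to $B=B_n(x)$, every pair $z,w\in B_n(x)$ satisfies $w\in B(z,2\cdot3^{-n})\subseteq B(z,3^{-(n-1)})$, so the inner double integral is dominated by $\int_K\int_{B(z,3^{-(n-1)})}(u(z)-u(w))^2\,\md\nu(w)\md\nu(z)\le 3^{-(\alpha+\beta)(n-1)}F(u)$ directly from the definition of $F(u)$; the scale shift from $3^{-n}$ to $3^{-(n-1)}$ costs only a bounded factor because $F$ is a supremum over all scales. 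Combined with $\nu(B_n(x))\asymp3^{-\alpha n}$ this gives $\int_{B_n(x)}(u-\bar u_n(x))^2\,\md\nu\lesssim3^{-\beta n}F(u)$. Since $B_{n+1}(x)\subseteq B_n(x)$, Jensen's inequality yields
$$\bigl(\bar u_{n+1}(x)-\bar u_n(x)\bigr)^2\le\frac{1}{\nu(B_{n+1}(x))}\int_{B_{n+1}(x)}(u-\bar u_n(x))^2\,\md\nu\lesssim3^{-(\beta-\alpha)n}F(u).$$
This is where $\beta>\alpha$ is essential: the resulting bound $|\bar u_{n+1}(x)-\bar u_n(x)|\lesssim3^{-(\beta-\alpha)n/2}\sqrt{F(u)}$ is summable in $n$.

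With this in hand I would telescope. For $\nu$-a.e. $x$, summing the increments from level $N$ upward and using $\bar u_n(x)\to u(x)$ gives $|u(x)-\bar u_N(x)|\lesssim3^{-(\beta-\alpha)N/2}\sqrt{F(u)}$. To bound $|u(x)-u(y)|$ for a.e. $x,y$, I would choose $N$ with $3^{-(N+1)}\le|x-y|<3^{-N}$ and split
$$|u(x)-u(y)|\le|u(x)-\bar u_N(x)|+|\bar u_N(x)-\bar u_N(y)|+|\bar u_N(y)-u(y)|.$$
The outer terms are each $\lesssim3^{-(\beta-\alpha)N/2}\sqrt{F(u)}\asymp|x-y|^{(\beta-\alpha)/2}\sqrt{F(u)}$. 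For the middle term, $|x-y|<3^{-N}$ forces $B_N(x)\cup B_N(y)\subseteq B(x,2\cdot3^{-N})$, with all three balls of comparable measure $\asymp3^{-\alpha N}$; comparing each of $\bar u_N(x),\bar u_N(y)$ to the average over $B(x,2\cdot3^{-N})$ and reusing the oscillation estimate bounds it by $\lesssim3^{-(\beta-\alpha)N/2}\sqrt{F(u)}$ as well. Squaring gives $|u(x)-u(y)|^2\lesssim F(u)|x-y|^{\beta-\alpha}$, the claim. Note this telescoping handles all pairs $x,y$ (and $|x-y|$ up to $\mathrm{diam}\,K$ by starting at $N=0$), so no chaining of balls is needed.

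The main obstacle I anticipate is bookkeeping rather than conceptual: making the scale shift in the inner radius (from the defining $3^{-n}$ to the $2\cdot3^{-n}$, or $4\cdot3^{-n}$, that arises when comparing averages over nested or overlapping balls) rigorous while keeping all constants uniform in $n$. This is precisely where the sup-structure of $F(u)$ is convenient, since enlarging the radius by a bounded factor changes $F(u)$ only by a multiplicative constant. A secondary point to justify carefully is the a.e.\ convergence $\bar u_n(x)\to u(x)$, which rests solely on the doubling property supplied by Ahlfors regularity of $\nu$; in particular no heat kernel estimate is invoked, consistent with the self-contained spirit of the paper.
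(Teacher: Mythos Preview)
Your proof is correct and is essentially the standard Campanato--Morrey telescoping argument that underlies the cited result. Note, however, that the paper does not supply its own proof of this lemma at all: it merely quotes \cite[Theorem 4.11 (iii)]{GHL03} and records the corollary that $F(u)<+\infty$ implies $u\in C^{(\beta-\alpha)/2}(K)$. So there is no in-paper proof to compare against; what you have written is a self-contained substitute for the external citation, and it matches the approach of that reference (ball averages, variance identity, geometric decay of increments via Ahlfors regularity and the $\sup$-definition of $F(u)$, then telescoping and comparison of overlapping averages at the matched scale).

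One small point worth making explicit in a final write-up: the argument uses $\beta>\alpha$ to sum the telescoping series, which you flag but the lemma as stated does not hypothesize. In the paper this is harmless because the ambient Proposition assumes $\beta\in(\alpha,+\infty)$, but if you present the lemma in isolation you should record that assumption.
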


\begin{myrmk}
If $F(u)<+\infty$, then $u\in C^{\frac{\beta-\alpha}{2}}(K)$.
\end{myrmk}

\begin{proof}[Proof of Proposition \ref{prop_equiv_local}]
The proof is very similar to that of Lemma \ref{lem_equiv}. We only point out the differences. To show that LHS$\lesssim$RHS, by the proof of Theorem \ref{thm_equiv1}, we still have Equation (\ref{eqn_equiv1_1}) where $E(u)$ is replaced by $F(u)$. Then
$$
\begin{aligned}
&3^{(\beta-\alpha)n}\sum_{w\in W_n}
{\sum_{\mbox{\tiny
$
\begin{subarray}{c}
p,q\in V_w\\
|p-q|=2^{-1}\cdot3^{-n}
\end{subarray}
$
}}}
(u(p)-u(q))^2\\
&\le128\cdot2^{(\beta-\alpha)/2}cF(u)3^{\beta n-(\beta-\alpha)(n+kl)}+32\cdot3^{\alpha k}\sum_{i=0}^{l-1}2^i\cdot3^{-(\beta-\alpha)ki}E_{n+ki}(u).
\end{aligned}
$$
Take $l=n$, then
$$
\begin{aligned}
&3^{(\beta-\alpha)n}\sum_{w\in W_n}
{\sum_{\mbox{\tiny
$
\begin{subarray}{c}
p,q\in V_w\\
|p-q|=2^{-1}\cdot3^{-n}
\end{subarray}
$
}}}
(u(p)-u(q))^2\\
&\le128\cdot2^{(\beta-\alpha)/2}cF(u)3^{[\beta-(\beta-\alpha)(k+1)]n}+32\cdot3^{\alpha k}\sum_{i=0}^{n-1}2^i\cdot3^{-(\beta-\alpha)ki}E_{n+ki}(u)\\
&\le128\cdot2^{(\beta-\alpha)/2}cF(u)3^{[\beta-(\beta-\alpha)(k+1)]n}+32\cdot3^{\alpha k}\sum_{i=0}^{\infty}3^{[1-(\beta-\alpha)k]i}\left(\sup_{n\ge1}E_{n}(u)\right).
\end{aligned}
$$
Take $k\ge1$ sufficiently large such that $\beta-(\beta-\alpha)(k+1)<0$ and $1-(\beta-\alpha)k<0$, then
$$
\begin{aligned}
&\sup_{n\ge1}3^{(\beta-\alpha)n}\sum_{w\in W_n}
{\sum_{\mbox{\tiny
$
\begin{subarray}{c}
p,q\in V_w\\
|p-q|=2^{-1}\cdot3^{-n}
\end{subarray}
$
}}}
(u(p)-u(q))^2\\
&\lesssim\sup_{n\ge1}3^{(\alpha+\beta)n}\int_K\int_{B(x,3^{-n})}(u(x)-u(y))^2\nu(\md y)\nu(\md x).
\end{aligned}
$$

To show that LHS$\gtrsim$RHS, by the proof of Theorem \ref{thm_equiv2}, we still have Equation (\ref{eqn_equiv2_3}). Then
$$
\begin{aligned}
&\sup_{n\ge2}3^{(\alpha+\beta)n}\int_K\int_{B(x,c3^{-n})}(u(x)-u(y))^2\nu(\md y)\nu(\md x)\\
&\lesssim\sup_{n\ge2}\sum_{k=n}^\infty4^{k-n}\cdot3^{\beta n-\alpha k}\sum_{w\in W_k}
{\sum_{\mbox{\tiny
$
\begin{subarray}{c}
p,q\in V_w\\
|p-q|=2^{-1}\cdot3^{-k}
\end{subarray}
$
}}}
(u(p)-u(q))^2\\
&+\sup_{n\ge2}3^{(\beta-\alpha)n}\sum_{w\in W_{n-1}}
{\sum_{\mbox{\tiny
$
\begin{subarray}{c}
p,q\in V_w\\
|p-q|=2^{-1}\cdot3^{-(n-1)}
\end{subarray}
$
}}}
(u(p)-u(q))^2\\
&\lesssim\sup_{n\ge2}\sum_{k=n}^\infty4^{k-n}\cdot3^{\beta(n-k)}\left(\sup_{k\ge1}3^{(\beta-\alpha)k}\sum_{w\in W_k}
{\sum_{\mbox{\tiny
$
\begin{subarray}{c}
p,q\in V_w\\
|p-q|=2^{-1}\cdot3^{-k}
\end{subarray}
$
}}}
(u(p)-u(q))^2\right)\\
&+\sup_{n\ge1}3^{(\beta-\alpha)n}\sum_{w\in W_n}
{\sum_{\mbox{\tiny
$
\begin{subarray}{c}
p,q\in V_w\\
|p-q|=2^{-1}\cdot3^{-n}
\end{subarray}
$
}}}
(u(p)-u(q))^2\\
&\lesssim\sup_{n\ge1}3^{(\beta-\alpha)n}\sum_{w\in W_n}
{\sum_{\mbox{\tiny
$
\begin{subarray}{c}
p,q\in V_w\\
|p-q|=2^{-1}\cdot3^{-n}
\end{subarray}
$
}}}
(u(p)-u(q))^2.
\end{aligned}
$$

\end{proof}

We have the following properties of Besov spaces for large exponents.

\begin{mycor}\label{cor_chara}
$B^{2,2}_{\alpha,\beta^*}(K)=\myset{\text{constant functions}}$, $B^{2,\infty}_{\alpha,\beta^*}(K)$ is uniformly dense in $C(K)$. $B^{2,2}_{\alpha,\beta}(K)=B^{2,\infty}_{\alpha,\beta}(K)=\myset{\text{constant functions}}$ for all $\beta\in(\beta^*,+\infty)$.
\end{mycor}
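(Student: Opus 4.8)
The plan is to reduce all three assertions to the level quantities $a_n(u)$ introduced in Section \ref{sec_monotone}, and then to exploit the weak monotonicity $a_n(u)\le Ca_{n+m}(u)$ of Theorem \ref{thm_monotone1}. The elementary fact behind everything is that if a sequence of nonnegative numbers $\{x_n\}$ satisfies $x_n\le Cx_{n+m}$ for all $n,m\ge1$ and $\liminf_{n\to+\infty}x_n=0$, then $x_n=0$ for every $n$ (immediate from Proposition \ref{prop_ele}(2), which gives $\sup_n x_n\le C\liminf_n x_n=0$).

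First I would set up the dictionary. For $u\in C(K)$, Lemma \ref{lem_equiv1} in its seminorm form, together with Corollary \ref{cor_arbi} to absorb the $n=0$ term and the constant in the ball radius, and Lemma \ref{lem_equiv} give $[u]_{B^{2,2}_{\alpha,\beta}(K)}\asymp\Ee_\beta(u,u)\asymp E_\beta(u,u)=\sum_{n\ge1}3^{(\beta-\beta^*)n}a_n(u)$, using $\rho=3^{\beta^*-\alpha}$; similarly Proposition \ref{prop_equiv_local} gives $[u]_{B^{2,\infty}_{\alpha,\beta}(K)}\asymp\sup_{n\ge1}3^{(\beta-\beta^*)n}a_n(u)$. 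Moreover, for any $\beta>\alpha$, Lemma \ref{lem_holder} (resp. Lemma \ref{lem_holder_local}) shows that a function in $B^{2,2}_{\alpha,\beta}(K)$ (resp. $B^{2,\infty}_{\alpha,\beta}(K)$) has a continuous representative, so in all three claims we may assume $u\in C(K)$ from the outset.

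With this in hand the three claims are short. If $u\in B^{2,2}_{\alpha,\beta^*}(K)$, then $\sum_{n\ge1}a_n(u)=E_{\beta^*}(u,u)<+\infty$, hence $a_n(u)\to0$, and Theorem \ref{thm_monotone1} then forces $a_n(u)=0$ for every $n$; since each graph $\calV_n$ is connected and $\bigcup_n V_n$ is dense in $K$, a continuous $u$ with all $a_n(u)=0$ is constant, which proves $B^{2,2}_{\alpha,\beta^*}(K)=\myset{\text{constant functions}}$. If $u\in B^{2,\infty}_{\alpha,\beta}(K)$ with $\beta>\beta^*$, then $\sup_n3^{(\beta-\beta^*)n}a_n(u)<+\infty$ gives $a_n(u)\lesssim3^{-(\beta-\beta^*)n}\to0$, and the same monotonicity argument yields $a_n(u)\equiv0$ and thus $u$ constant; combined with the trivial inclusions $\myset{\text{constant functions}}\subseteq B^{2,2}_{\alpha,\beta}(K)\subseteq B^{2,\infty}_{\alpha,\beta}(K)$ this gives $B^{2,2}_{\alpha,\beta}(K)=B^{2,\infty}_{\alpha,\beta}(K)=\myset{\text{constant functions}}$ for all $\beta\in(\beta^*,+\infty)$. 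Finally, Proposition \ref{prop_equiv_local} and Lemma \ref{lem_holder_local} identify $B^{2,\infty}_{\alpha,\beta^*}(K)$ with the space $\Ff_\loc$ of Theorem \ref{thm_BM} (this is precisely Theorem \ref{thm_Besov}), and $\Ff_\loc$ is uniformly dense in $C(K)$ because $(\Ee_\loc,\Ff_\loc)$ is a regular Dirichlet form on $L^2(K;\nu)$ with $\Ff_\loc\subseteq C(K)$ (alternatively one invokes directly the Stone--Weierstrass argument of Theorem \ref{thm_gamma} based on the good function of Proposition \ref{prop_u}); hence $B^{2,\infty}_{\alpha,\beta^*}(K)$ is uniformly dense in $C(K)$.

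I do not expect a serious obstacle: the argument is bookkeeping built on results already available. The only points needing a line of care are that the equivalences of Lemma \ref{lem_equiv}, Lemma \ref{lem_equiv1}, Corollary \ref{cor_arbi} and Proposition \ref{prop_equiv_local} are used at the critical exponent $\beta=\beta^*$ and for $\beta>\beta^*$ (they are stated for all $\beta\in(\alpha,+\infty)$), that passing to a continuous representative via Lemma \ref{lem_holder}/\ref{lem_holder_local} is legitimate, and, above all, the rigidity step ``$a_n(u)\to0\Rightarrow a_n(u)\equiv0$'', which is where Theorem \ref{thm_monotone1} (applied with $\liminf_{m\to+\infty}a_{n+m}(u)=0$) does the real work.
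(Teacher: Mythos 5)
Your proposal is correct and follows essentially the same route as the paper: the paper likewise deduces the density statement from Theorem \ref{thm_BM} together with Theorem \ref{thm_Besov}, and proves the triviality statements by combining the seminorm equivalences (Lemma \ref{lem_equiv}, Proposition \ref{prop_equiv_local}) with the weak monotonicity of Theorem \ref{thm_monotone1}, merely phrased in the contrapositive (a non-constant continuous $u$ has some $a_N(u)>0$, whence $a_n(u)\gtrsim a_N(u)$ for $n>N$ and the sum or weighted sup diverges). Your extra remarks on passing to continuous representatives via Lemma \ref{lem_holder} and Lemma \ref{lem_holder_local} and on connectivity of $\calV_n$ are legitimate and only make explicit what the paper leaves implicit.
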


\begin{proof}
By Theorem \ref{thm_BM} and Theorem \ref{thm_Besov}, we have $B^{2,\infty}_{\alpha,\beta^*}(K)$ is uniformly dense in $C(K)$. Assume that $u\in C(K)$ is non-constant, then there exists $N\ge1$ such that $a_N(u)>0$. By Theorem \ref{thm_monotone1}, for all $\beta\in[\beta^*,+\infty)$, we have
$$\sum_{n=1}^\infty3^{(\beta-\beta^*)n}a_n(u)\ge\sum_{n=N+1}^\infty3^{(\beta-\beta^*)n}a_n(u)\ge C\sum_{n=N+1}^\infty3^{(\beta-\beta^*)n}a_N(u)=+\infty,$$
for all $\beta\in(\beta^*,+\infty)$, we have
$$\sup_{n\ge1}3^{(\beta-\beta^*)n}a_n(u)\ge\sup_{n\ge N+1}3^{(\beta-\beta^*)n}a_n(u)\ge C\sup_{n\ge N+1}3^{(\beta-\beta^*)n}a_N(u)=+\infty.$$

By Lemma \ref{lem_equiv} and Proposition \ref{prop_equiv_local}, we have $B^{2,2}_{\alpha,\beta}(K)=\myset{\text{constant functions}}$ for all $\beta\in[\beta^*,+\infty)$ and $B^{2,\infty}_{\alpha,\beta}(K)=\myset{\text{constant functions}}$ for all $\beta\in(\beta^*,+\infty)$.
\end{proof}

\section{Proof of Theorem \ref{thm_hk}}\label{sec_hk}

We use effective resistance as follows.

Let $(M,d,\mu)$ be a metric measure space and $(\calE,\calF)$ a regular Dirichlet form on $L^2(M;\mu)$. Assume that $A,B$ are two disjoint subsets of $M$. Define \emph{effective resistance} as
$$R(A,B)=\inf\myset{\calE(u,u):u|_A=0,u|_B=1,u\in\calF\cap C_0(M)}^{-1}.$$
Denote
$$R(x,B)=R(\myset{x},B),R(x,y)=R(\myset{x},\myset{y}),x,y\in M.$$
It is obvious that if $A_1\subseteq A_2$, $B_1\subseteq B_2$, then
$$R(A_1,B_1)\ge R(A_2,B_2).$$

\begin{proof}[Proof of Theorem \ref{thm_hk}]
First, we show that
$$R(x,y)\asymp|x-y|^{\beta^*-\alpha}\text{ for all }x,y\in K.$$
By Lemma \ref{lem_holder_local}, we have
$$(u(x)-u(y))^2\le c\calE_\loc(u,u)|x-y|^{\beta^*-\alpha}\text{ for all }x,y\in K,u\in\calF_\loc,$$
hence
$$R(x,y)\lesssim|x-y|^{\beta^*-\alpha}\text{ for all }x,y\in K.$$
On the other hand, we claim
$$R(x,B(x,r)^c)\asymp r^{\beta^*-\alpha}\text{ for all }x\in K,r>0\text{ with }B(x,r)^c\ne\emptyset.$$
Indeed, fix $C>0$. If $u\in\calF_\loc$ satisfies $u(x)=1$, $u|_{B(x,r)^c}=0$, then $\tilde{u}:y\mapsto u(x+C(y-x))$ satisfies $\tilde{u}\in\calF_\loc$, $\tilde{u}(x)=1$, $\tilde{u}|_{B(x,Cr)^c}=0$. By Theorem \ref{thm_BM}, it is obvious that
$$\calE_\loc(\tilde{u},\tilde{u})\asymp C^{-(\beta^*-\alpha)}\calE_\loc(u,u),$$
hence
$$R(x,B(x,Cr)^c)\asymp C^{\beta^*-\alpha}R(x,B(x,r)^c).$$
Hence
$$R(x,B(x,r)^c)\asymp r^{\beta^*-\alpha}.$$
For all $x,y\in K$, we have
$$R(x,y)\ge R(x,B(x,|x-y|)^c)\asymp|x-y|^{\beta^*-\alpha}.$$

Then, we follow a standard analytic approach as follows. First, we obtain Green function estimates as in \cite[Proposition 6.11]{GHL14}. Then, we obtain heat kernel estimates as in \cite[Theorem 3.14]{GH14a}. Note that we are dealing with compact set, the final estimates only hold for some finite time $t\in(0,1)$.
\end{proof}

\bibliographystyle{siam}

\def\cprime{$'$}

\end{document}